\theoremstyle{definition}
\newtheorem{Def}{Definition}[section]
\newtheorem{Exa}[Def]{Examples}
\newtheorem{Rem}[Def]{Remark}
\theoremstyle{plain}
\newtheorem{Thm}[Def]{Theorem}
\newtheorem{Lem}[Def]{Lemma}
\newtheorem{Pro}[Def]{Proposition}
\newtheorem{Cor}[Def]{Corollary}
\newtheorem{Que}[Def]{Question}
\def\Box{{\bf Box}}
\def\IN{\mathbb N}\def\IR{\mathbb R}\def\IC{\mathbb C}\def\IZ{\mathbb Z}\def\IH{\mathbb H}
\def\C{\mathcal C}\def\B{\mathcal B}\def\D{\mathcal D}\def\E{\mathcal E}\def\K{\mathcal K}\def\F{\mathcal F}\def\L{\mathcal L}\def\H{\mathcal H}\def\N{\mathcal N}\def\I{\mathcal I}
\def\sB{\mathscr B}\def\sL{\mathscr L}\def\sH{\mathscr H}\def\fB{\mathit B}
\def\supp{\textup{supp}}
\def\prop{\textup{Prop}}
\def\diam{\textup{diam}}
\def\ox{\otimes}
\def\wh{\widehat}
\def\wt{\widetilde}
\def\ox{\otimes}
\def\Ga{\Gamma}
\author[L.~Guo]{Liang Guo}
\address[L.~Guo]{Shanghai Institute for Mathematics and Interdisciplinary Sciences, Shanghai, 200433, P.~R.~China}
\email{liangguo@simis.cn}
\author[Q.~Wang]{Qin Wang}
\address[Q.~Wang]{Research Center for Operator Algebras, School of Mathematical Sciences, East China Normal University, Shanghai, 200241, P.~R.~China.}
\email{qwang@math.ecnu.edu.cn}
\title{Geometric Banach property (T) for metric spaces via Banach representations of Roe algebras}
\date{\today}
\begin{document}
\maketitle

\begin{abstract}
In this paper, we introduce a notion of geometric Banach property (T) for metric spaces, which jointly generalizes Banach property (T) for groups and geometric property (T) for metric spaces. Our framework is achieved by Banach representations of Roe algebras of metric spaces. We show that geometric Banach property (T) is a coarse geometric invariant, and it is equivalent to the existence of the Kazhdan projections in the Banach-Roe algebras.  Further, we study the implications of this property for sequences of finite Cayley graphs, establishing two key results: 1. geometric Banach property (T) of such sequences implies Banach property (T) for their limit groups; 2. while the Banach coarse fixed point property implies geometric Banach property (T), the converse fails.  Additionally, we provide a geometric characterization of V.~Lafforgue's strong Banach property (T) for a residually finite group in terms of geometric Banach property (T) of its box spaces.
\end{abstract}

\tableofcontents

\section{Introduction}

As a landmark concept introduced by D.~Kazhdan in \cite{Kazhdan1967}, property (T) reveals the rigidty of unitary representations of locally compact groups on Hilbert spaces. It leads to profound applications in the fields of representation theory, geometric group theory, dynamical systems, mathematical physics, etc., with deep impacts across pure mathematics and applied mathematics. Recall that a group $\Ga$ has property (T) if every unitary representation of $\Ga$ that weakly contains the trivial representation must actually contain the trivial representation. Based on our interests, we have compiled some connections between property (T) and higher index theory:
\begin{itemize}
\item G.~Margulis \cite{Margulis1973} used property (T) to provide the first explicit construction of expander graphs, which can not be coarsely embedded into Hilbert space \cite{Gromov2003} and, furthermore, leads to counterexamples to the (maximal) coarse Baum-Connes conjecture, see \cite{WilYu2012a, WilYu2012b}.
\item The standard Dirac-dual Dirac method in studying the Baum-Connes conjecture is no longer effective for discrete groups with property (T), and the Baum-Connes conjecture with coefficients may not hold for such groups, see \cite{HLS2002}.
\end{itemize}
So far, the higher index problem for groups with property (T) remains an important unresolved issue in noncommutative geometry. One is referred to \cite{KazhdanT} for further details.

With the invention of \emph{Banach $KK$-theory} by V.~Lafforgue \cite{Lafforgue2002}, it was discovered that the Banach version of Dirac-dual-Dirac method could be adapted for certain groups with property (T) (for example, cocompact lattices of certain linear groups) within the framework of Banach $KK$-theory. However, this adaptation requires not only considering unitary representations of the group algebra but also extending the analysis to group representations on certain Banach spaces (e.g., $L^p$-spaces). In 2007, U.~Bader, A.~Furman, T.~Gelander, and N.~Monod introduced a Banach space version of property (T) for group representations on Banach spaces in \cite{BFGM2007}, and extensively discussed the distinctions and connections between this Banach variant and the classical version of property (T). They also proposed the conjecture that higher-rank algebraic groups should possess Banach property (T) with respect to the class of all super-reflexive Banach spaces. Simultaneously, V.~Lafforgue proved that the special linear group $SL_3(\mathbb F)$ over a non-Archimedean local field $\mathbb F$ has Banach property (T) with respect to all super-reflexive Banach spaces in \cite{Lafforgue2008}. This result is later generated by B.~Liao \cite{Liao2014} to all almost simple connected linear algebraic groups over a non-Archimedean field. Using this result, he constructed the first example of an expander graph that cannot be coarsely embedded into any super-reflexive Banach space, which is now referred to as a \emph{super expander}. In fact, in \cite{Lafforgue2008}, Lafforgue even introduced a stronger notion of \emph{strong Banach property (T)} and demonstrated that his innovative Banach $KK$-theory approach is no longer applicable to the Baum-Connes conjecture for groups with this stronger property. In contrast to the non-Archimedean case, the Banach property (T) for algebraic groups over $\IR$ and their lattices remained poorly understood for decades. However, after years of development, this problem was finally resolved in recent breakthroughs. In \cite{Oppen2023}, I.~Oppenheim proved that $SL_3(\IZ)$ has Banach Property (T) with respect to the class of super-reflexive Banach spaces. Very recently, this result is extended to all higher-rank algebraic groups by de Laat and de la Salle in \cite{LS2023}, thus fully resolving the conjecture posed in \cite{BFGM2007}.

Analogues to classical property (T), Banach property (T) admits serveral equivalent characterizations. In \cite{BFGM2007}, it is proved that a group has Banach property (T) with respect to a Banach space if every affine isometric action of the group on this space has a fixed point. This partially generalizes the Delorme-Guichardet theorem from Hilbert spaces to Banach spaces. In \cite{DN2019}, C.~Drutu and P.~Nowak introduced the notion of Kazhdan projection in Banach group algebras with respect to a family of uniformly convex Banach spaces, showing that a group has uniform Banach property (T) if and only if its Banach group algebra contains such a projection.

There exists a striking parallel between the development of representation theory and operator algebras theory for groups and the development of coarse geometry and operator algebra theory for metric spaces. We summarize key comparative aspects of these two frameworks in Table \ref{table: group vs coarse geometry}.  

{\begin{table}[h]
\centering
\begin{tabular}{|c|c|c|}
\hline
\multicolumn{1}{|l|}{}& Groups $\Ga$ & Metric Spaces $X$\\
\hline
\multirow{2}{*}{\begin{tabular}[c]{@{}c@{}}Operator algebra and \\ Higher index Theroy\end{tabular}} &  $C^*_r\Ga$: \emph{Group $C^*$-algebra}                                                         & $C^*(X)$: \emph{Roe algebra}\\
\cline{2-3} & Baum-Connes conjecture & Coarse Baum-Connes conjecture \\ 
\hline
\multirow{3}{*}{\begin{tabular}[c]{@{}c@{}}\\Representations \\ on \\ Hilbert spaces\end{tabular}}   & \begin{tabular}[c]{@{}c@{}}Amenablitiy\\ \cite{VonNeumann}\end{tabular} & \begin{tabular}[c]{@{}c@{}}Property A\\ \cite{Yu2000}\end{tabular} \\ \cline{2-3} 
& \begin{tabular}[c]{@{}c@{}}Haagerup property\\ \cite{Haagerup}\end{tabular} & \begin{tabular}[c]{@{}c@{}}Coarse embedding into Hilbert space\\ \cite{Gromov1993}\end{tabular} \\
\cline{2-3}  & \begin{tabular}[c]{@{}c@{}}Property (T)\\ \cite{Kazhdan1967}\end{tabular}             & \begin{tabular}[c]{@{}c@{}}Geometric property (T)\\ \cite{WY2014}\end{tabular}          \\ \hline
Banach Representations                                                                                 & \begin{tabular}[c]{@{}c@{}}Banach property (T)\\ \cite{BFGM2007}\end{tabular}         & \textbf{Geometric Banach property (T)}                                                          \\ \hline
\end{tabular}
\caption{Group vs. Coarse geometry}
\label{table: group vs coarse geometry}
\end{table}}

In \cite{WilYu2012b}, R. Willett and G. Yu first introduced the concept of \emph{geometric property (T)} as a coarse geometric analogue of Kazhdan's property (T), and show that the maximal coarse Baum-Connes conjecture fails for metric spaces possessing this property. It turns out that a residually finite group has property (T) if and only if any of its box spaces have geometric property (T). Subsequently, in \cite{WY2014}, they further characterized geometric property (T) by using the language of representations of \emph{Roe algebras} on Hilbert spaces and proved its fundamental permanence properties. Within this framework, the formulation of geometric property (T) achieves a relavant formal parallel with the classical property (T) for groups. Later, J.~Winkel generalized geometric property (T) to non-discrete spaces in \cite{Winkel2021}, and I.~Vergara provided a characterization of geometric property (T) in terms of the existence of \emph{Kazhdan projection} in \cite{Vergara2024}.

As evident from the table, there should naturally exist a corresponding notion: \emph{geometric Banach property (T)}. In recent years, the study of the $L^p$-(coarse) Baum-Connes conjecture has garnered increasing attention in noncommutative geometry, making the investigation of Roe algebra representations on Banach spaces a natural and pivotal direction. Inspired by these motivations, the primary objective of this paper is to investigate Banach space representations of Roe algebras and explore a Banach analogue of geometric property (T). Parallel to the case of group algebras, we continue to employ the language of invariant vectors and almost invariant vectors to characterize representations of Roe algebras. The description of invariant and almost invariant vectors for Roe algebra representations on Hilbert spaces has already been established in \cite{WY2014}. In this work, we adopt an analogous framework to analyze representations of Roe algebras on Banach spaces, leading to the following definition of geometric Banach property (T):

\begin{Def}[Definition \ref{def: geom T for a single space} \& \ref{def: family case of Banach Geom T}]
Let $X$ be a metric space with bounded geometry, and $\sB$ a family of Banach spaces. We say $X$ has \emph{geometric property ($T_\sB$)} if there exists $R>0$ such that, for any $\fB\in\sB$ and an isometric representation $\pi:\IC_u[X]\to\L(\fB)$, there exists $c>0$ such that
$$\sup_{\supp(V)\in\Delta_R\atop{V\text{ is a partial translation}}}\|\wt\pi(V)[\xi]-\wt\pi(\chi_{R_V})[\xi]\|\geq c,$$
for any unit vector $[\xi]\in \fB/\fB^{\pi}$, where $\fB^\pi$ is the subspace of invariant vectors, where $\wt\pi$ is the induced representation on the quotient space $\fB/\fB^{\pi}$, $R_V$ is the range of the partial translation $V$. The number $c$ is called a \emph{spectral gap}.
\end{Def}

In this work, we primarily study representations of Roe algebras on \emph{uniformly convex Banach spaces}, motivated by analogous developments in group theory. First of all, one does not have an orthogonal complement of a subspace in a Banach space. In \cite{BFGM2007}, a notion of \emph{complemented representations} for group actions on Banach spaces is introduced to compensate for the lack of orthogonal complements in general Banach spaces. In such representations, the annihilator of the invariant subspace under the dual representation precisely corresponds to a complemented direct sum of the invariant subspace of the original group representation. Crucially, the spectral gap of the group representation equals the spectral gap of the group action restricted to this annihilator subspace. It is also proved that every group representation on a uniformly convex Banach space is automatically complemented.

Naturally, we seek to adapt this framework to Banach representations of Roe algebras. However, this extension is non-trivial since metric spaces do not have group structure in general: unlike in group representations, invariant subspaces of Roe algebra representations are \emph{not} necessarily subrepresentations, as partial translations may map invariant vectors outside the subspace. To address this issue, we propose a new characterization of invariant vectors (Lemma \ref{lem: complemented representation}), showing that a vector is invariant if and only if it remains invariant under all invertible partial translations. This allows us to define and analyze complemented representations of Roe algebras in this generalized setting.

As a first taste of geometric Banach property (T), we prove the following result:

\begin{Thm}\begin{itemize}
\item[(1)](Theorem \ref{thm: TB to TB*}) Let $X$ be a separated disjoint union of a sequence of finite metric spaces which is monogenic, $\fB$ is a uniformly convex Banach space, then $X$ has geometric property ($T_{\fB}$) if and only if $X$ has geometric property ($T_{\fB^*}$).
\item[(2)](Theorem \ref{thm: coarse invariance}, coarse invariance) Let $X, Y$ be metric spaces with bounded geometry, and let $\sB$ be a uniformly convex family of Banach spaces which is closed under taking subspaces and finite direct sums. If $X$ is coarsely equivalent to $Y$, then $X$ has geometric property ($T_\sB$) if and only if $Y$ has geometric property ($T_\sB$).
\end{itemize}\end{Thm}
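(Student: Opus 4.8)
\emph{Part (1).} Since $\fB$ is uniformly convex it is reflexive by the Milman--Pettis theorem, so the canonical embedding $\fB\hookrightarrow\fB^{**}$ is an isometric isomorphism. The plan is to pass between a representation and its contragredient, and to reduce the defining inequality, which a priori is a supremum over all partial translations supported in $\Delta_R$, to a lower bound for a single operator. This reduction is exactly what monogenicity provides: if the coarse structure of $X=\bigsqcup_n X_n$ is generated by one invertible partial translation $t$, then for a fixed scale $R$ every partial translation supported in $\Delta_R$ is, modulo the diagonal elements of $\IC_u[X]$, a bounded word in $t$, so the defining condition becomes a lower bound on $T:=\wt\pi(t)-\wt\pi(\chi_{R_t})$ acting on $\fB/\fB^\pi$. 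By Lemma \ref{lem: complemented representation} a vector is $\pi$-invariant precisely when it is fixed by every invertible partial translation, and together with monogenicity this yields $\fB^\pi=\ker\bigl(\pi(t)-\pi(\chi_{R_t})\bigr)$. Thus geometric property $(T_\fB)$ is the assertion that the reduced minimum modulus $\gamma\bigl(\pi(t)-\pi(\chi_{R_t})\bigr)$ is positive; note that a positive gap forces $T$ to be bounded below modulo its kernel, hence to have closed range.

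The duality then rests on the classical self-duality of the reduced minimum modulus: for a bounded operator $S$ with closed range one has $\gamma(S)=\gamma(S^*)$, while the Banach closed-range theorem identifies $\ker(S^*)$ with the annihilator of the range of $S$. Given an isometric representation $\rho$ on $\fB^*$, form its contragredient $\rho^\vee(a):=\rho(a^*)^*$ on $\fB^{**}=\fB$; reflexivity gives $(\rho^\vee)^\vee=\rho$, so every representation on $\fB^*$ arises this way. Writing $S:=\rho^\vee(t)-\rho^\vee(\chi_{R_t})$, a direct computation using that $\rho^\vee$ carries the invertible isometry $\rho(t)$ to $(\rho(t)^{-1})^*$ shows that the generator operator attached to $\rho$ is, after composition with an invertible isometry, the adjoint $S^*$, whence its kernel equals $(\fB^*)^{\rho}$ and its reduced minimum modulus equals $\gamma(S^*)=\gamma(S)$. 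Therefore $\rho$ has a spectral gap if and only if $\rho^\vee$ does; since $\rho^\vee$ is a representation on the uniformly convex space $\fB$, property $(T_\fB)$ supplies a positive gap for $\rho^\vee$ at the scale $R$ determined by $t$, and hence for $\rho$ at the same scale. The reverse implication is symmetric, applying reflexivity once more. The point requiring care is the bookkeeping identifying $(\fB^*)^{\rho}$ with $\ker S^*$ through the involution of $\IC_u[X]$, and the verification that the scale $R$ coming from monogenicity is genuinely uniform in the representation.

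\emph{Part (2).} For coarse invariance I would first reduce to a coarsely dense inclusion: gluing $X$ and $Y$ along the given coarse equivalence produces a bounded geometry space $Z$ that contains isometric coarse copies of both $X$ and $Y$ as coarsely dense subspaces, so it suffices to show that a coarsely dense inclusion $Y\subset Z$ preserves geometric property $(T_\sB)$ in both directions. The passage from $Z$ to $Y$ is by compression: the characteristic function $\chi_Y\in\IC_u[Z]$ is an idempotent multiplier, so for a representation $\pi$ of $\IC_u[Z]$ on some $\fB\in\sB$ the range $\fB_Y:=\pi(\chi_Y)\fB$ is a complemented subspace, again in $\sB$ since $\sB$ is closed under subspaces and uniform convexity passes to subspaces, and $a\mapsto\pi(\chi_Y a\chi_Y)|_{\fB_Y}$ is an isometric representation of $\IC_u[Y]$. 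The passage from $Y$ to $Z$ is by amplification: choosing a nearest-point map $p:Z\to Y$ whose fibres have cardinality at most some $N$ (bounded geometry), a representation of $\IC_u[Y]$ on $\fB'$ induces one of $\IC_u[Z]$ on the finite direct sum $(\fB')^{\oplus N}\in\sB$, where closure of $\sB$ under finite direct sums is used precisely to accommodate the bounded fibres.

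In each direction one checks, using Lemma \ref{lem: complemented representation} and the complemented structure of the invariant subspace, that invariant and almost-invariant vectors are carried to invariant and almost-invariant vectors, the defining supremum changing only by a controlled dilation of the scale $R$ (governed by the propagation of $p$ and of the coarse equivalence) and the gap $c$ by a controlled factor (governed by the norm of the idempotent $\pi(\chi_Y)$ and of the amplification, both bounded by constants depending only on the coarse data). The main obstacle here, relative to the Hilbert space argument of \cite{WY2014}, is the absence of orthogonal projections: the compression onto $\fB_Y$ must be effected by the bounded idempotent $\pi(\chi_Y)$ and its complemented range, so controlling the gap quantitatively requires the uniform bounds on these idempotents furnished by the isometric representation together with the complemented-representation framework, in place of the automatic norm-one compressions available over Hilbert space.
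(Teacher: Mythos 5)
In Part (1) your proposal has a fatal gap at the very first reduction. Monogenicity of $X$ means that some entourage $E_0$ generates the coarse structure (every entourage is contained in some $E_0^{\circ n}$); it does \emph{not} mean that the coarse structure is generated by a single invertible partial translation $t$, nor that every partial translation supported in $E_0$ is a piecewise word in $t$. What is actually available is Lemma \ref{lem: full partial translation}: a \emph{finite family} $A_0,\dots,A_n$ of full partial translations such that every partial translation supported in $E_0$ decomposes as $\sum_i\chi_iA_i$. Already for a disjoint union of finite Cayley graphs on two generators there is no single partial bijection whose words account for both generators, so your identifications $\fB^\pi=\ker\bigl(\pi(t)-\pi(\chi_{R_t})\bigr)$ and ``spectral gap $=\gamma\bigl(\pi(t)-\pi(\chi_{R_t})\bigr)>0$'' are not available; the gap is by definition a statement about a supremum over the whole family of partial translations supported in $E_0$.

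Nor can this be repaired by packaging the family into one operator. Put $T_i=\pi(A_i)-1$ and $S\colon\fB\to\bigoplus_{\infty}\fB$, $S\xi=(T_i\xi)_i$ (sup norm on the finite sum); then indeed $\ker S=\fB^\pi$ and the gap of $\pi$ is $\gamma(S)$. But the gap of the dual representation $\pi^*$ is governed by the \emph{column} operator $f\mapsto(T_i^*f)_i$ --- note $\pi^*(A_i)=\pi(A_i^{-1})^*$, so $\pi^*(A_i)-1=-(\pi(A_i)^*)^{-1}T_i^*$ differs from $T_i^*$ only by an invertible isometry --- whereas the Banach adjoint $S^*$ is the \emph{row} operator $(f_i)\mapsto\sum_iT_i^*f_i$. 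The classical identity $\gamma(S)=\gamma(S^*)$ relates the column of the $T_i$ to the row of the $T_i^*$; there is no formal duality between the column of the $T_i$ and the column of the $T_i^*$. This structural failure shows up in the fact that your argument uses uniform convexity only through reflexivity: if it worked, the theorem would hold for every reflexive space. The paper instead uses uniform convexity in an essential, nonlinear way: the duality map $\xi\mapsto f_\xi$ satisfies $f_{\pi(A)\xi}=\pi^*(A)f_\xi$ for full partial translations, and its \emph{uniform continuity} (\cite[Proposition A.4]{BL2000}) transports an almost-invariant sequence in $\fB_\pi$ to an almost-invariant sequence for $\pi^*$, giving the contrapositive. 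Some such quantitative, nonlinear device is what your linear-duality scheme cannot replace.

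Part (2) is, in outline, the paper's own argument for Theorem \ref{thm: coarse invariance}: compression of a representation by the idempotent $\pi(\chi_Y)$ (where closure of $\sB$ under subspaces enters) and amplification over the fibres of a nearest-point map to a finite direct sum $(\fB')^{\oplus N}$ (where closure under finite direct sums enters); your gluing-space reduction is an inessential variant of the paper's reduction to an injective coarse equivalence factoring through $X\times N$. However, the step you assert but do not prove --- that ``invariant and almost-invariant vectors are carried to invariant and almost-invariant vectors'' --- is the entire technical content. In the paper this is Lemma \ref{lem: from X to Y} (the invariant vectors of the amplification are exactly the diagonal vectors $(\xi,\dots,\xi)$ with $\xi\in\fB^{\pi}$, proved using the cyclic shift in $M_N$), together with Lemma \ref{lem: from Y to X} and Corollary \ref{cor: from Y to X} (a vector is invariant for the big representation iff its component over the net is invariant and the remaining components are its images under explicit partial translations $V^{(n)}$); these rest on the full-partial-translation machinery of Lemma \ref{lem: invariant vector in group language} and are not automatic. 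With those correspondences supplied, your Part (2) would go through; as written it is a correct plan rather than a proof.
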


In addition, we will further elaborate on the concept of geometric Banach property (T) through the following interconnected perspectives.

\subsection{Kazhdan projection}

The earliest instances of the Kazhdan projection were defined in the context of a group's property (T), as seen in works such as \cite{CM1981, Valette1984, Valette1992}. In these studies, property (T) of a group could be characterized by the existence of a Kazhdan projection within the maximal group $C^*$-algebra. In the subsequence, this result has been extended in two directions. In \cite{Lafforgue2008}, the notion of Kazhdan projections is generalized to the setting of Banach property (T) by V.~Lafforgue. Moreover, Lafforgue introduce the notion of \emph{strong property (T)} by using the existence of such a projection. In \cite{DN2019}, Drutu and Nowak proved that a group has uniform property ($T_\fB$) if and only if there exists a Kazhdan projection in $C_{\fB,\max}(\Gamma)$, where $C_{\fB,\max}(\Gamma)$ denotes the completion of the group algebra $C\Gamma$ with respect to the norm
$$\|a\|_{\fB,\max}=\sup\{\|\pi(a)\|_{\fB}\mid \pi:\IC\Ga\to\L(\fB)\text{ is a representation}\},$$
for any $a\in\IC\Ga$, also see \cite{dlS2016} for the construction of Kazhdan projection and its relationship with strong Banach property (T). On the other hand, I.~Vergara proved in \cite{Vergara2024} that a bounded geometry metric space $X$ has geometric property (T) if and only if there exists a Kazhdan projection in the maximal Roe algebra associated with $X$. In this paper, we will generalize these results to the geometric version of the Banach property (T).

For a uniformly convex family of Banach spaces $\sB$, we define the norm $\|\cdot\|_{\sB,\max}$ on $\IC_u[X]$ by
$$\|a\|_{\sB,\max}=\sup\{\|\pi(a)\|\mid \pi:\IC_u[X]\to\L(\fB)\text{ is a representation},\fB\in\sB\}$$
for any $a\in\IC_u[X]$. The maximal $\sB$-Roe algebra $C_{\sB,\max}(X)$ is defined to be the completion of $\IC_u[X]$ under the norm $\|\cdot\|_{\sB,\max}$.

\begin{Thm}[Theorem \ref{thm: Kazhdan projection for Lp case}]
Let $\sB$ be a uniformly convex family of Banach spaces which is closed under taking ultraproducts. Then the following are equivalent \begin {itemize}
\item[(1)] $X$ has uniform geometric property ($T_{\sB}$);
\item[(2)] there exists an idempotent $p\in C_{\sB,\max}(X)$ such that for any representation $\pi:\IC_u[X]\to\L(\fB)$ with $\fB\in\sB$, $\pi(p)$ is the idempotent onto the invariant subspace $\fB^\pi$ along the annhilator space $\fB_{\pi}$.
\end{itemize}\end{Thm}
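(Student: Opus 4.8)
The plan is to prove the equivalence of uniform geometric property $(T_\sB)$ with the existence of a Kazhdan idempotent in $C_{\sB,\max}(X)$, following the template of the Hilbert-space result of Vergara and the Banach group-algebra result of Drutu--Nowak, but adapting each step to accommodate the absence of orthogonal complements and of a group structure.

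\textbf{The direction $(2)\Rightarrow(1)$.} First I would assume the idempotent $p$ exists and extract a uniform spectral gap. Fix any representation $\pi:\IC_u[X]\to\L(\fB)$ with $\fB\in\sB$. Since $\pi(p)$ projects onto $\fB^\pi$ along $\fB_\pi$, the induced operator $\wt\pi(p)$ on the quotient $\fB/\fB^\pi$ is zero. Now $p$, lying in the completion $C_{\sB,\max}(X)$, can be approximated in the maximal norm by an element $a\in\IC_u[X]$; writing $a$ as a finite sum of partial translations supported in some fixed $\Delta_R$, one sees that $\wt\pi(a)$ is close to $0$ uniformly over all $\pi$ and $\fB\in\sB$ by definition of $\|\cdot\|_{\sB,\max}$. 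Since $a$ is a genuine finite combination of the characteristic-function and partial-translation operators appearing in the definition, the fact that $\wt\pi(a)[\xi]$ is uniformly small while $a$ itself is not a small element of the algebra forces, for unit $[\xi]$, a lower bound on $\sup_{V}\|\wt\pi(V)[\xi]-\wt\pi(\chi_{R_V})[\xi]\|$; this is the desired spectral gap $c$, and crucially it is uniform in $\fB\in\sB$ because the approximation of $p$ by $a$ is norm-based and hence independent of the chosen representation. The $R$ witnessing uniform property $(T_\sB)$ is the propagation of $a$.

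\textbf{The direction $(1)\Rightarrow(2)$.} This is the harder half and where the ultraproduct hypothesis enters. Assuming uniform geometric property $(T_\sB)$ with parameters $R$ and $c$, I would construct $p$ as a limit of an approximate Kazhdan sequence. The standard strategy is to build a sequence $(a_n)\subset\IC_u[X]$ whose image $\pi(a_n)$ converges, for every $\pi$, to the idempotent onto $\fB^\pi$ along $\fB_\pi$; the spectral gap guarantees that the ``non-invariant part'' is uniformly contracted, so that an averaging or functional-calculus construction applied to a suitable self-adjoint-like combination of partial translations converges in $\|\cdot\|_{\sB,\max}$. The key structural input is Lemma \ref{lem: complemented representation}, which characterizes invariant vectors via invariance under all invertible partial translations and thereby identifies $\fB^\pi$ and its complement $\fB_\pi$ as a genuine complemented decomposition; combined with uniform convexity this makes the target idempotent well defined for each $\pi$. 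To show the candidate sequence is Cauchy in $\|\cdot\|_{\sB,\max}$ — equivalently, uniformly Cauchy across all representations — I would argue by contradiction: if it were not Cauchy, there would be representations $\pi_n$ and unit vectors realizing a non-vanishing gap, and passing to the ultraproduct representation on $\prod_{\mathcal U}\fB_n$ (which remains in $\sB$ by the closure-under-ultraproducts hypothesis) would produce a vector that is almost invariant yet non-invariant, contradicting the uniform spectral gap.

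\textbf{Main obstacle.} The central difficulty is verifying that $\pi(p)$ is genuinely the idempotent \emph{onto} $\fB^\pi$ \emph{along} $\fB_\pi$, rather than merely an idempotent whose range contains the invariant vectors. In the Hilbert setting this is automatic from self-adjointness and orthogonality; here one must use the complemented-representation machinery to show that the complementary summand is exactly the annihilator-type space $\fB_\pi$, and that the approximating idempotents respect this splitting uniformly. The ultraproduct step is what makes this uniform: without closure under ultraproducts one could only produce, for each representation separately, an idempotent with the right properties, but not a single element of $C_{\sB,\max}(X)$ mapping correctly under all representations simultaneously. I expect the bulk of the technical work to lie in showing the approximating sequence converges in the maximal norm and that its limit's image is the projection along the correct complement, with the partial-translation bookkeeping (tracking supports within $\Delta_R$ and the ranges $R_V$) being routine but notationally heavy.
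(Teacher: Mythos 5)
Your proposal has the right ingredients in view (the Laplacian-type average of full partial translations, the complemented-decomposition Lemma~\ref{lem: complemented representation}, ultraproducts), but it deploys them in the wrong places, and the step on which your direction $(2)\Rightarrow(1)$ rests has a genuine gap. You correctly observe that $\wt\pi(p)=0$ on $\fB/\fB^\pi$, so an approximant $a\in\IC_u[X]$ with $\|a-p\|_{\sB,\max}<\varepsilon$ satisfies $\|\wt\pi(a)\|\leq\varepsilon$ for every representation on a space in $\sB$. But the inference ``$\wt\pi(a)$ small while $a$ is not small forces a spectral gap'' does not follow. Concretely: if $[\xi]$ is a $\delta$-almost invariant unit vector, writing $a=\sum_i f_iV_i$ with supports in $\Delta_R$ gives $\|\wt\pi(a)[\xi]-\wt\pi(\Phi(a))[\xi]\|\leq C_R\,\delta\sum_i\|f_i\|_\infty$, hence $\|\wt\pi(\Phi(a))[\xi]\|\leq\varepsilon+C_R\,\delta\sum_i\|f_i\|_\infty$. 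To reach a contradiction you need $\wt\pi(\Phi(a))$ to be bounded \emph{below}, i.e.\ the row sums $\Phi(a)$ to be close to $1$. Nothing in condition (2) of the theorem provides this: $p$ is an abstract idempotent of the completion, $\Phi$ does not extend to $C_{\sB,\max}(X)$, and for an abstract uniformly convex family $\sB$ the norm $\|\cdot\|_{\sB,\max}$ need not control row sums at all (contrast with the paper's Theorem~\ref{thm: property T via Kazhdan projection}, where the approximants are the powers $A^k$ and $\Phi(A^k)=1$ exactly). This is precisely the point where the paper uses the ultraproduct hypothesis, and only here: assuming (1) fails, one takes representations $\pi_n$ on $\fB_n\in\sB$ and unit vectors $\xi_n\in(\fB_n)_{\pi_n}$ that are $\tfrac1n$-almost invariant, forms the ultraproduct representation $\pi_\omega$ on $\fB_\omega=\prod_\omega\fB_n\in\sB$, and notes that $[\xi_n]$ is a genuinely invariant unit vector, so $\pi_\omega(p)[\xi_n]=[\xi_n]$ has norm $1$, while $\pi_n(p)\xi_n=p_{\pi_n}\xi_n=0$ for every $n$ forces that same norm to be $0$.

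Your direction $(1)\Rightarrow(2)$ is salvageable but its proposed mechanism is both misplaced and, as sketched, broken. The paper does not need ultraproducts here: by Lemma~\ref{lem: spectral gap and Markov kernel} together with uniform convexity of the family, a uniform gap $c$ yields a single $\lambda<1$ (depending only on $c$ and the modulus of convexity of $\sB$) with $\|\pi(A)|_{\fB_\pi}\|\leq\lambda$ for \emph{every} representation on \emph{every} $\fB\in\sB$; hence $\|A^k-A^m\|_{\sB,\max}\leq 2C\lambda^{\min(k,m)}$ and $(A^k)$ is Cauchy in the maximal norm by a direct geometric estimate, with limit $p$ satisfying $Q_\pi(p)=p_\pi$ (this also settles your ``main obstacle'': the limit is the idempotent onto $\fB^\pi$ \emph{along} $\fB_\pi$ because each $\pi(A^k)\to p_\pi$ in operator norm, $p_\pi$ being defined through the complemented decomposition of Lemma~\ref{lem: complemented representation}). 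Your alternative — proving Cauchyness by contradiction via an ultraproduct — does not go through as described: failure of Cauchyness produces vectors $\eta_j$ and exponents $k_j<l_j$ with $\|\pi_j(A^{k_j}-A^{l_j})\eta_j\|>\varepsilon_0$, where the \emph{operators} vary with $j$; these vectors are not almost invariant, and there is no single element of $\IC_u[X]$ whose ultraproduct evaluation captures the diagonal quantities $\pi_j(A^{k_j}-A^{l_j})\eta_j$, so no almost-invariant-yet-non-invariant vector emerges. In short: the uniform convexity of $\sB$ is what powers $(1)\Rightarrow(2)$, and closure under ultraproducts is what powers $(2)\Rightarrow(1)$; your proposal has these two hypotheses doing each other's jobs, and the substitute arguments do not close.
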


The Kazhdan projection in our construction arises from a functional calculus of the \emph{Laplacian operator} in Roe algebras. A key component of our proof establishes the equivalence between a Roe algebra representation having a spectral gap and the Laplacian operator possessing a spectral gap under this representation (Lemma \ref{lem: spectral gap and Markov kernel}).

\subsection{Coarse fixed point property}

In the realm of analytic group theory, there exists a profound connection between linear isometric representations of groups and their (affine) isometric group actions. This connection is particularly striking when the representation space and the action space are both Hilbert spaces. The celebrated Delorme-Guichardet theorem reveals that a countable discrete group has Kazhdan’s property (T) if and only if every isometric action of the group on a Hilbert space has a fixed point. This property is also known as the fixed point property. Delorme \cite{Del1977} and Guichardet \cite{Gui1972} independently proved the sufficiency and necessity of this theorem, respectively.

Given the correspondence between analytic group theory and coarse geometry discussed earlier, it is natural to expect an analogous fixed point property in the context of coarse geometry. In \cite{TW2022}, R. Tessera and J. Winkel introduced a coarse fixed point property for sequences of finite Cayley graphs. The bounded product of such graphs naturally carries a bornological structure, which can be viewed as a dual to topological structures. In this setting, the controlled isometric actions of the group align precisely with the coarse geometric structure (analogous to how continuous isometric actions align with topological groups). If every controlled isometric action of the bounded product group on a Hilbert space admits a fixed point, the sequence of Cayley graphs is said to have the \emph{coarse fixed point property}, denoted by coarse property $(F_\H)$. Replacing Hilbert space by a Banach space, we then have the Banach version of coarse fixed point property. In this paper, we study the relation between geometric Banach property (T) and Banach coarse fixed point property.

\begin{Thm}
Let $X=\bigsqcup_{n\in\IN}\Ga_n$ be the separated disjoint union of a sequence of finite Cayley graphs with uniformly finite generators, and $\fB$ a uniformly convex Banach space. \begin{itemize}
\item[(1)] (Theorem \ref{thm: FB to TB})
If $X$ has coarse property ($F_\fB$), then $X$ has geometric property ($T_\fB$). However, the converse does not hold.
\item[(2)] (Theorem \ref{thm: TH to FLp})
If $X$ has geometric property (T), then for any $p\in (1,2]$ and any subspace $\fB$ of any $L^p$-space $L^p(\mu)$, $X$ also has coarse property ($F_{\fB}$).
\item[(3)] (Theorem \ref{thm: lp vs l2}) 
As a corollary, if $X$ has geometric (Hilbert) property (T), then for any $p\in (1,\infty)\backslash\{2\}$, $X$ has uniform geometric property $(T_{\sL^p})$.
\end{itemize}\end{Thm}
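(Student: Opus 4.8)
These three statements are of rather different natures, so I would prove them separately and obtain the third as a corollary of the first two together with the duality Theorem \ref{thm: TB to TB*}. For part (1) I would prove $(F_\fB)\Rightarrow(T_\fB)$ by contraposition, running a coarse, Banach-space version of Guichardet's cocycle construction. If $X$ fails geometric property $(T_\fB)$, then by definition there is an isometric representation $\pi\colon\IC_u[X]\to\L(\fB)$ and a sequence of unit vectors $[\xi_k]\in\fB/\fB^\pi$ whose defect $\epsilon_k:=\sup_V\|\wt\pi(V)[\xi_k]-\wt\pi(\chi_{R_V})[\xi_k]\|$, the supremum over partial translations $V$ with $\supp(V)\in\Delta_R$, tends to $0$ while the $[\xi_k]$ stay non-invariant. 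Using the complemented-representation description of invariant vectors (Lemma \ref{lem: complemented representation}), I would lift the $[\xi_k]$ to honest almost invariant, non-invariant vectors in the annihilator complement $\fB_\pi$ and, via the Roe-algebra/bounded-product-group dictionary, view $\pi$ as a controlled isometric representation of $G=\prod_n\Gamma_n$. From these vectors I would assemble a $1$-cocycle $b\colon G\to\fB$ with slowly increasing weights, chosen so that $b$ is finite on each group element and controlled (using $\epsilon_k\to0$) yet unbounded (using the absence of invariant vectors in the quotient); the resulting controlled affine isometric action has no fixed point, contradicting $(F_\fB)$. For the failure of the converse I would take $X$ to be a box space of a residually finite hyperbolic group $\Gamma$ with property (T), such as a cocompact lattice in $Sp(n,1)$: then $X$ has geometric property (T), hence geometric property $(T_{\sL^p})$ for every $p\in(1,\infty)\setminus\{2\}$ by part (3), whereas for $p$ large the proper isometric $\ell^p$-action of $\Gamma$ (a theorem of Yu) transfers to an unbounded controlled affine action over the box space, so $X$ fails $(F_{\ell^p})$.

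For part (2) the plan is to reduce the $L^p$ fixed-point problem to the Hilbert one by a negative-type embedding. Let $\fB\subseteq L^p(\mu)$ with $p\in(1,2]$, and let $G$ act on $\fB$ by a controlled affine isometric action with cocycle $b$. Combining the cocycle identity with isometry gives $\|b(g)-b(h)\|_p=\|b(g^{-1}h)\|_p$, so the function $\psi(g)=\|b(g)\|_p^{\,p}$ is the pullback under $b$ of the kernel $(x,y)\mapsto\|x-y\|_p^{\,p}$, which is conditionally negative definite on $L^p$ for $p\in(0,2]$ by Schoenberg's theorem; hence $\psi$ is a conditionally negative definite function on $G$. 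The associated GNS/Schoenberg construction then yields a controlled affine isometric action of $G$ on a Hilbert space $\H$ with cocycle $c$ satisfying $\|c(g)\|_\H=\|b(g)\|_p^{\,p/2}$, so control of $b$ gives control of $c$ and the two cocycles are bounded or unbounded simultaneously. Invoking the Hilbert case, in which geometric property (T) implies coarse property $(F_\H)$, the Hilbert action has a fixed point; thus $c$, and therefore $b$, is bounded, and the uniform convexity of $\fB$ produces a circumcenter fixing the original action.

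Finally, part (3) combines the previous two. For $p\in(1,2]$, part (2) gives coarse property $(F_\fB)$ for every subspace $\fB$ of every $L^p(\mu)$, and part (1) upgrades each such $(F_\fB)$ to geometric property $(T_\fB)$; taking $\fB=L^p$ yields geometric property $(T_{\sL^p})$ on $(1,2]$. For $p\in[2,\infty)$ I would pass to the conjugate exponent $q\in(1,2]$ and apply Theorem \ref{thm: TB to TB*}, identifying geometric property $(T_{\sL^q})$ with geometric property $(T_{(\sL^q)^*})$, that is $(T_{\sL^p})$. Uniformity of the spectral gap over the whole family $\sL^p$ then follows because the negative-type reduction and the duality each preserve the quantitative gap up to constants depending only on $p$, together with the uniform modulus of convexity of $\sL^p$.

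I expect the crux of the whole argument to lie in part (2): ensuring that the coarse (control) structure is respected by the nonlinear Schoenberg embedding, so that, carried out over the bounded product group $G$, it sends controlled cocycles to controlled cocycles and reflects boundedness faithfully. In part (1) the parallel difficulty is to make control and unboundedness of the cocycle coexist in the Banach category, where orthogonal complements are unavailable and one must instead rely on uniform convexity and the complemented-representation lemma.
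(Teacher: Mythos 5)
Your proof of part (1) has a genuine gap at its central step. You propose to build the unbounded controlled cocycle as a weighted sum $b=\sum_k a_k b_{\xi_k}$ of coboundaries of almost invariant vectors, valued in $\fB$ itself. The problem is that control and unboundedness cannot both be extracted from such a sum inside a single space: if $\sum_k a_k\xi_k$ converges in $\fB$, then $b$ is itself a coboundary and the associated affine action has a fixed point, so no contradiction is obtained; if it diverges, $b$ is still a pointwise limit of coboundaries, and to show it is \emph{not} a coboundary (equivalently, by the bounded-orbit lemma, that its orbits are unbounded) one needs a lower bound on $\|b(g_k)\|$ for suitable group elements $g_k$. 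In the classical Delorme--Guichardet argument this lower bound comes from coordinate-wise separation in an infinite direct sum $\bigoplus_k\fB$ --- exactly the device unavailable here, since the $\ell^2$-sum of copies of $\fB$ is no longer $\fB$, so at best you would contradict $(F_{\oplus\fB})$, not $(F_\fB)$. Inside $\fB$ the terms $a_j(\xi_j-\pi(g_k)\xi_j)$ for $j\ne k$ can cancel the $k$-th term, and ``absence of invariant vectors'' gives no control over this cancellation; you in fact name this difficulty in your closing remark, but the weighted-sum construction does not resolve it. The paper sidesteps the construction entirely with Guichardet's functional-analytic argument: the coboundary map $\tau:\fB\to Z^1_{con}(\pi)$, $\xi\mapsto b_\xi$, is bounded with kernel $\fB^\pi$; if $B^1(\pi)=Z^1_{con}(\pi)$, then $B^1(\pi)$ is complete (the paper first proves $Z^1_{con}(\pi)$ is a Banach space), so the open mapping theorem makes $\tau^{-1}$ bounded, which is precisely the Poincar\'e-type inequality $\|[\xi]\|\le M\sup_{g\in S}\|\pi(g)\xi-\xi\|$, i.e.\ a spectral gap via Lemma \ref{lem: complemented representation} --- contradicting the assumed failure of $(T_\fB)$. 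Existence of a controlled non-coboundary cocycle then follows non-constructively. Your part (1) should be repaired along these lines.

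The rest of your proposal is sound. Your counterexample to the converse is correct and non-circular (it uses only the forward implications), and it is essentially the example the paper records after Theorem \ref{thm: TH to FLp} for the failure of $(T)\Rightarrow(F_{\ell^p})$ at large $p$ via \cite{Yu2005}; the paper's official counterexample (Example \ref{exa: counter example of TB to FB}) is even simpler: $\fB=\IR$ with $X$ a box space of $\IZ$, where $O(\IR)=\IZ_2$ forces all linear parts to be trivial. Your part (2) is a valid but genuinely different route: you pull back the conditionally negative definite kernel along the cocycle to get $\psi(g)=\|b(g)\|_p^p$, pass through Schoenberg's affine correspondence to a controlled Hilbert-space action with $\|c(g)\|=\|b(g)\|_p^{p/2}$, and invoke the Hilbert-space theorem $(T)\Rightarrow(F_\H)$ of Tessera--Winkel \cite{TW2022}, finishing with the circumcenter argument of \cite{BFGM2007}. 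The paper instead gives a self-contained argument: it exponentiates the kernel ($\langle\cdot,\cdot\rangle_s=e^{-s\|\xi-\eta\|^p}$), builds a linear representation of $\IC_u[X]$ on $L^2(\mu_\phi)\ox\H_s$, and uses the spectral gap furnished by geometric property (T) to produce an invariant vector, hence a bounded orbit. Your shortcut buys brevity at the cost of an external black box. Part (3) follows the paper's own scheme (parts (1)+(2) on $(1,2]$, duality via Theorem \ref{thm: TB to TB*} for $p\ge 2$); for uniformity you should simply cite Proposition \ref{pro: uniform Tlp and Tlp}, which makes uniformity automatic from $(T_{L^p(\mu)})$ for all $L^p$-spaces via $L^p$-direct sums, rather than tracking constants through the reductions.
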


\subsection{Residually finite groups and their Box spaces}

There exists a profound correspondence between the analytic properties of groups and the coarse geometric properties of their box spaces, analogous to the group-coarse geometry parallels discussed earlier. In \cite{WilYu2012b}, R. Willett and G. Yu proved, while first introducing geometric property (T), that a group has property (T) if and only if all its box spaces have geometric property (T). Building on this, in \cite{GQW2024}, we, in collaboration with J. Qian, introduced the concept of limit spaces, which for box spaces corresponds precisely to the residually finite groups generating them. In this work, we employ the framework of limit spaces to establish the following result:

\begin{Thm}\begin{itemize}
\item[(1)] (Proposition \ref{pro: banach property T for limit group}) Let $(\Ga_n)_{n\in\IN}$ be a sequence of finite groups and $X=\bigsqcup_{n\in\IN}$ the separated disjoint union of $(\Ga_n)$. For any $p\in(1,\infty)$ and a free ultrafilter $\omega$, if $X$ has geometric property $(T_{\sL^p})$, then $\Ga^\infty_{\omega}$ has property $(T_{\sL^p})$.
\item[(2)] (Theorem \ref{thm: box iff group in lp})
Let $\Ga$ be a finitely generated, residually finite group. For any $p\in(1,\infty)$, denote by $\sL^p$ the family of all $L^p$-spaces. Then the following are equivalent:
\begin{itemize}
\item[$\bullet$] $\Ga$ has property ($T_{\sL^p}$);
\item[$\bullet$] for any filtration $\{\Ga_n\}$, $\Box_{\{\Ga_n\}}(\Ga)$ has geometric property ($T_{\sL^p}$);
\item[$\bullet$] there exists a filtration $\{\Ga_n\}$ such that $\Box_{\{\Ga_n\}}(\Ga)$ has geometric property ($T_{\sL^p}$).
\end{itemize}
\end{itemize}
\end{Thm}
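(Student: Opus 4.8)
The plan is to route both parts through a single bridge, the Laplacian $\Delta=\sum_{s\in S}(1-V_s)$ associated with a generating set $S$, whose spectral gap simultaneously governs geometric property $(T_{\sL^p})$ of the space and property $(T_{\sL^p})$ of the relevant group. Throughout I would invoke Lemma \ref{lem: spectral gap and Markov kernel} to replace the defining supremum over all partial translations by the single requirement that $\pi(\Delta)$ have a spectral gap away from the invariant subspace; this is precisely what allows the finitely many generator partial translations $V_s$ to carry all the information and to be matched with the generating set on the group side.

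For part (1) I would argue by transfer of spectral gap along the limit-space correspondence of \cite{GQW2024}. Given an isometric representation $\rho\colon\Ga^\infty_\omega\to\L(\fB)$ with $\fB\in\sL^p$, each generator $s$ is an $\omega$-germ of uniformly bounded-length elements $(g_n)$, $g_n\in\Ga_n$, whose component-wise right translations assemble into an invertible partial translation $V_s\in\IC_u[X]$ of uniformly bounded propagation. Using the correspondence I would produce from $\rho$ an isometric representation $\pi$ of $\IC_u[X]$ on an $\sL^p$-space in which $\rho$ sits as a subrepresentation with matching invariant subspace, so that $\rho(\Delta_{\Ga^\infty_\omega})$ is a reduction of $\pi(\Delta_X)$; since $\sL^p$ is closed under ultraproducts the target remains in $\sL^p$. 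Because $X$ has geometric property $(T_{\sL^p})$, Lemma \ref{lem: spectral gap and Markov kernel} furnishes a uniform spectral gap $c>0$ for $\pi(\Delta_X)$ on $\fB/\fB^{\pi}$, and passing to the reducing subspace transports this lower bound to $\rho(\Delta_{\Ga^\infty_\omega})$ on $\fB/\fB^{\rho}$. As $\rho$ was arbitrary, $\Ga^\infty_\omega$ has property $(T_{\sL^p})$. The identification of invariant vectors on both sides is where Lemma \ref{lem: complemented representation} enters: a vector is $\pi$-invariant exactly when it is fixed by all invertible partial translations, in particular by the $V_s$, so the quotients $\fB/\fB^{\pi}$ and $\fB/\fB^{\rho}$ correspond.

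For part (2) I would close the cycle of three conditions. The step from ``for any filtration'' to ``there exists a filtration'' is immediate, as $\Ga$ is finitely generated and residually finite and hence admits a filtration. For ``there exists a filtration $\Rightarrow$ $\Ga$ has $(T_{\sL^p})$'' I would apply part (1) to $X=\Box_{\{\Ga_n\}}(\Ga)=\bigsqcup_n\Ga/\Ga_n$: since $\bigcap_n\Ga_n=\{e\}$, the injectivity radii of the Cayley graphs $\Ga/\Ga_n$ tend to infinity, so every bounded-length $\omega$-germ is detected in a single ball of $\Ga$ and the limit group $\Ga^\infty_\omega$ is canonically identified with $\Ga$; part (1) then yields $(T_{\sL^p})$ for $\Ga$. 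For the last implication ``$\Ga$ has $(T_{\sL^p})\Rightarrow$ every box space has geometric $(T_{\sL^p})$'' I would transfer a Kazhdan projection. A word in $S$ is trivial in $\Ga$ iff it is trivial in every $\Ga/\Ga_n$, so $s\mapsto V_s$ extends to a unital homomorphism $\phi\colon\IC\Ga\to\IC_u[X]$ with $\phi(\Delta_\Ga)=\Delta_X$; for every isometric $\sL^p$-representation $\pi$ of $\IC_u[X]$ the composite $\pi\circ\phi$ is an isometric $\sL^p$-representation of $\Ga$, so $\phi$ is contractive for the maximal norms and extends to $C_{\sL^p,\max}(\Ga)\to C_{\sL^p,\max}(X)$. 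By \cite{DN2019} property $(T_{\sL^p})$ supplies a Kazhdan projection $p_\Ga\in C_{\sL^p,\max}(\Ga)$, realized as a holomorphic-functional-calculus idempotent of $\Delta_\Ga$; applying $\phi$ and using $\phi(\Delta_\Ga)=\Delta_X$ together with Lemma \ref{lem: spectral gap and Markov kernel} shows $\phi(p_\Ga)$ is the spectral idempotent of $\Delta_X$ at $0$, which by Lemma \ref{lem: complemented representation} is exactly the projection onto $\fB^\pi$ for each $\pi$. Hence $\phi(p_\Ga)$ is the Kazhdan projection of $X$, and Theorem \ref{thm: Kazhdan projection for Lp case} delivers geometric property $(T_{\sL^p})$.

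The main obstacle I anticipate is Banach-geometric rather than combinatorial: invariant subspaces carry no orthogonal complement, so both the matching of the quotients $\fB/\fB^\pi$ with their group-side counterparts and the assertion that $\phi(p_\Ga)$ is precisely the idempotent onto $\fB^\pi$ along the annihilator must be argued through the intrinsic characterization of invariant vectors (Lemma \ref{lem: complemented representation}) rather than by orthogonality. The most delicate point is verifying that the holomorphic functional calculus defining $p_\Ga$ is transported by the contractive homomorphism $\phi$ to the correct spectral idempotent of $\Delta_X$ with no loss of spectral gap; here the equivalence between a representation's spectral gap and the Laplacian's spectral gap (Lemma \ref{lem: spectral gap and Markov kernel}), and the closure of $\sL^p$ under ultraproducts so that Theorem \ref{thm: Kazhdan projection for Lp case} applies uniformly, are exactly the tools that make the transport valid.
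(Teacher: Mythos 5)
The core of part (1) is missing from your proposal. Given an isometric representation $\rho$ of the limit group $\Ga^\infty_\omega$ on $\fB\in\sL^p$, you assert that ``using the correspondence'' one produces a representation $\pi$ of $\IC_u[X]$ on an $L^p$-space containing $\rho$ as a subrepresentation with matching invariant subspace -- but this induction step is precisely the hard content of Proposition \ref{pro: banach property T for limit group}, and it cannot be obtained from the metric limit-space correspondence alone. A representation of the group $\Ga^\infty_\omega$ (equivalently, of $\prod^b_n\Ga_n$) does not determine a representation of $\IC_u[X]$: one must also specify a compatible action of $\ell^\infty(X)$, since $\IC_u[X]$ is generated by $\ell^\infty(X)$ together with the partial translations. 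The paper resolves this by identifying the quotient $\IC_u[X]/\I^\infty_\omega$ with the algebraic crossed product $C_c(Y\rtimes\Ga^\infty_\omega)$ (Lemma \ref{lem: crossed product}), producing a $\Ga^\infty_\omega$-invariant probability measure $\mu_{\phi_\omega}$ on $Y$ from averaging states, and then forming the covariant pair $V\ox\rho$ and $M\ox 1$ on the Lebesgue--Bochner space $L^p(\mu_{\phi_\omega},\fB)$, into which $\fB$ embeds equivariantly as $\xi\mapsto 1_{\mu_{\phi_\omega}}\ox\xi$. None of this appears in your sketch, and your appeal to closure of $\sL^p$ under ultraproducts is the wrong tool here: what keeps the target space in $\sL^p$ is the identity $L^p(\mu,L^p(\nu))\cong L^p(\mu\times\nu)$ (ultraproduct closure is only needed for the Kazhdan-projection theorem, Theorem \ref{thm: Kazhdan projection for Lp case}). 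The invariant-subspace matching is also subtler than you claim, since the relevant comparison is between $\fB/\fB^\rho$ and $L^p(\mu_{\phi_\omega},\fB)/L^p(\mu_{\phi_\omega},\fB)^\pi$, not between two quotients of the same space.

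For part (2), your implication ``there exists a filtration $\Rightarrow$ $\Ga$ has $(T_{\sL^p})$'' correctly reduces to part (1) via the identification of the limit groups of a box space with $\Ga$, so it inherits the gap above. Your route for ``$\Ga$ has $(T_{\sL^p})$ $\Rightarrow$ every box space has geometric $(T_{\sL^p})$'' is genuinely different from the paper's: the paper (Proposition \ref{pro: example of box spaces}) argues elementarily by contraposition, restricting any Roe-algebra representation along $\iota:\IC\Ga\to\IC_u[X]$ and transporting almost invariant vectors using Lemma \ref{lem: invariant spaces coincide}, with no completions or projections needed; you instead push a Kazhdan projection through the extension of $\iota$ to maximal completions, which parallels the paper's proof of the strong-property-(T) statement (Theorem \ref{thm: strong property (T)}). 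Your version can be made rigorous, but two points need care: the Drutu--Nowak theorem \cite{DN2019} requires \emph{uniform} property $(T_{\sL^p})$ (which holds here because $\sL^p$ is closed under $\ell^p$-direct sums, but you should say so), and the conclusion that $Q_\pi(\phi(p_\Ga))$ equals the idempotent onto $\fB^\pi$ \emph{along the annihilator $\fB_\pi$} requires checking that the group-side and Roe-side annihilators coincide (a dual version of Lemma \ref{lem: invariant spaces coincide}), not merely that the invariant subspaces do.
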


We summarize some parallel properties between residually finite groups and box spaces as follows (cf. \cite{Roe2003} for the first, \cite{CWW2013} for the second, \cite{WilYu2012b} for the third)
\begin{equation*}\begin{split}
\Gamma\mbox{ is amenable}&\iff\Box(\Gamma)\mbox{ has Yu's property A,}\\
\Gamma\mbox{ is a-T-menable}&\iff\Box(\Gamma)\mbox{ admits a fibred coarse embedding into Hilbert space,}\\
\Gamma\mbox{ has property (T)}&\iff\Box(\Gamma)\mbox{ has geometric property (T),}\\
\Gamma\mbox{ has property ($T_{\sL^p}$)}&\iff\Box(\Gamma)\mbox{ has geometric property ($T_{\sL^p}$)}.
\end{split}\end{equation*}
where $\Box(\Gamma)$ is the box space of $\Gamma$ according to any filtrations of $\Gamma$. Meanwhile, borrowing the idea of limit space, we also discuss the relationship between relative expanders with \emph{FCE-by-FCE} structure and geometric property (T). The FCE-by-FCE structure is introduced in \cite{DGWY2025}, in which we together with J.~Deng and G.~Yu prove that the coarse Novikov conjecture holds for spaces with such a structure. However, we are not able to prove the maximal coarse Baum-Connes conjecture for such spaces. Thus it is natural to ask whether such space has geometric property (T). We provide a negative answer in this paper.

\begin{Thm}[Theorem \ref{thm: FCE-by-FCE violates (T)}]
A sequence of group extensions with an FCE-by-FCE structure can never have geometric property (T).
\end{Thm}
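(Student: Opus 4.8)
The plan is to argue by contradiction, producing from the FCE-by-FCE structure a single Hilbert-space representation of the Roe algebra together with a sequence of almost invariant vectors that stay uniformly bounded away from the invariant subspace, thereby destroying any spectral gap. Since the statement concerns the Hilbert version of geometric property (T), I would first invoke the Willett--Yu characterization from \cite{WY2014} (equivalently Lemma \ref{lem: spectral gap and Markov kernel} specialized to $\sB=\{\H\}$): geometric property (T) of $X=\bigsqcup_n G_n$ is equivalent to the existence of $R>0$ and $c>0$ such that every representation $\pi:\IC_u[X]\to\L(\H)$ on a Hilbert space satisfies
$$\sup_{\supp(V)\in\Delta_R,\ V\text{ a partial translation}}\|\wt\pi(V)[\xi]-\wt\pi(\chi_{R_V})[\xi]\|\geq c$$
for every unit vector $[\xi]\in\H/\H^\pi$. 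Thus it suffices to build one representation in which this inequality fails.

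Next I would unpack the FCE-by-FCE structure: a sequence of extensions $1\to N_n\to G_n\to Q_n\to 1$ for which the fibres $\{N_n\}$ and the quotients $\{Q_n\}$ each carry fibred coarse embeddings into Hilbert space, compatibly with the extension. The engine of the proof is the classical conversion of a coarse embedding into almost invariant vectors: applying the Gaussian/exponential functor to the fibred embedding of the fibres yields, for each $n$, a unit vector $\xi_n$ in a Hilbert space carrying a representation of $\IC_u[G_n]$, with two properties. First, control of the embedding ($\|s_x(y)-s_x(y')\|\le\rho_+(d(y,y'))$) forces $\xi_n$ to vary slowly: for any partial translation $V$ with $\supp(V)\in\Delta_R$ one gets $\|\pi(V)\xi_n-\pi(\chi_{R_V})\xi_n\|\le\varepsilon_n(R)$ with $\varepsilon_n(R)\to 0$. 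Second, properness of the embedding ($\|s_x(y)-s_x(y')\|\ge\rho_-(d(y,y'))$ with $\rho_-\to\infty$) together with $\diam(N_n)\to\infty$ forces $\xi_n$ to be spread out along the fibre, so that its projection onto the invariant subspace $\H^\pi$ (the vectors locally constant on each $G_n$) tends to $0$; after passing to $[\xi_n]\in\H/\H^\pi$ and renormalizing, $[\xi_n]$ is a unit vector. Combining the two properties gives $\sup_V\|\wt\pi(V)[\xi_n]-\wt\pi(\chi_{R_V})[\xi_n]\|\to 0$, contradicting the uniform lower bound $c$.

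The hard part will be globalizing the fibrewise construction across the extension, that is, controlling partial translations that move points in the base (quotient) direction rather than along a single fibre. Such a translation can carry a point from one fibre coset to another, and a nontrivial extension may twist the fibres so that the naively glued vectors $\xi_n$ jump. This is exactly where the ``-by-FCE'' hypothesis on the quotients $\{Q_n\}$ must be used: the fibred embedding of $Q_n$ controls the base displacement on scale $R$, and the compatibility of the two fibred embeddings lets me assemble a genuine field of Hilbert spaces over $X$ on which $\IC_u[X]$ acts and on which the glued vectors remain almost invariant up to an error controlled by $\rho_+$ that vanishes as $n\to\infty$. Verifying that the local charts patch with controlled overlap error, and that the resulting global vectors are simultaneously almost invariant under all $\Delta_R$-supported partial translations and bounded away from $\H^\pi$, is the technical core; the remaining estimates are routine once the Gaussian functor is set up.
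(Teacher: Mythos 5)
Your reduction to exhibiting one Hilbert-space representation without spectral gap is fine, and your Step 1 (Gaussian functor on the fibre embedding gives fibrewise slowly-varying, spread-out vectors) is standard. The genuine gap is the step you defer as ``the technical core'': gluing these fibrewise vectors across the extension using the FCE of the quotients is not a routine patching argument --- it is impossible in general, and the obstruction is precisely the phenomenon this theorem is about. Take the congruence box space of $\Ga=\IZ^2\rtimes SL_2(\IZ)$, i.e.\ $\Ga_n=(\IZ/n)^2\rtimes SL_2(\IZ/n)$ with fibres $N_n=(\IZ/n)^2$ (an amenable, hence coarsely embeddable, sequence) and quotients $Q_n=SL_2(\IZ/n)$ (a box space of an a-T-menable group, hence FCE by \cite{CWW2013}). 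This is an FCE-by-FCE sequence, but relative property (T) of the pair $(\Ga,\IZ^2)$ makes it a \emph{relative expander}: any family of unit vectors that is almost invariant under all bounded-propagation partial translations (in particular under the translations induced by the generators of $\Ga$, cf.\ Lemma \ref{lem: invariant spaces coincide}) must be close to the $N_n$-invariant vectors, i.e.\ almost constant along fibres. Your vectors $\xi_n$ are, by the properness $\rho_-$ of the fibre embedding, spread out along the fibres --- this is exactly the property you invoke to keep them away from $\H^{\pi}$ --- so no choice of charts or patching can make them almost invariant under the base-direction translations: the two properties you need are mutually exclusive in the very examples the theorem addresses. (A second, smaller gap: you use $\diam(N_n)\to\infty$, but unboundedness of $(\Ga_n)$ does not imply unboundedness of the fibres; when the fibres are uniformly bounded, your construction produces nothing, and the failure of geometric (T) must come entirely from the quotient direction, which your proposal uses only as ``control'', never as a source of almost invariant vectors.)

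The paper avoids constructing almost invariant vectors from the FCE data altogether. It passes to limit groups: geometric property (T) of $\bigsqcup_n\Ga_n$ forces property (T) of the limit group $\Ga^{\infty}_{\omega}$ (Proposition \ref{pro: banach property T for limit group} with $\fB=\H$, $p=2$), while the FCE hypotheses force a-T-menability of $N^{\infty}_{\omega}$ and $Q^{\infty}_{\omega}$ (limits of conditionally negative type kernels, Lemma \ref{lem: property for limit group}); then classical group theory applied to the exact sequence $1\to N^{\infty}_{\omega}\to\Ga^{\infty}_{\omega}\to Q^{\infty}_{\omega}\to 1$ of Lemma \ref{lem: exact sequence of limit groups} --- (T) passes to quotients and to the pair $(\Ga^{\infty}_{\omega},N^{\infty}_{\omega})$, (T) plus Haagerup implies finiteness, and a-T-menable-by-finite is a-T-menable --- yields the contradiction. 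The Delorme--Guichardet fixed-point machinery hidden in ``(T) $+$ Haagerup $\Rightarrow$ finite'' is exactly what absorbs the relative-property-(T) twisting that blocks your direct gluing, and the implicit case division (quotient limit group infinite versus finite) is what decides whether the witnessing representations come from the base or from the fibre. Your proposal has no counterpart of either ingredient, so as written it cannot be completed.
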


Furthermore, we introduce a geometric Banach version of Lafforgue’s strong property (T) by the existence of the Kazhdan projection and provide a geometric characterization of a residually finite group with strong Banach property (T).

\begin{Thm}[Theorem \ref{thm: strong property (T)}]
Let $\sB$ be a uniformly convex family of Banach spaces, additionally closed under duality, conjugation, ultraproduct and $L^2$-Lebesgue-Bochner tensor product, and let $\Ga$ be a countable, discrete, residually finite group. Then the following are equivalent:\begin{itemize}
\item[(1)] $\Ga$ has strong Banach property (T) with respect to $\sB$;
\item[(2)] all box spaces of $\Ga$ have geometric strong Banach property (T) with respect to $\sB$ and $\D_\ell$;
\item[(3)] there exists a box space of $\Ga$ which has geometric strong Banach property (T) associated with $\sB$ and $\D_\ell$.
\end{itemize}\end{Thm}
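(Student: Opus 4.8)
The plan is to prove the cyclic chain $(1)\Rightarrow(2)\Rightarrow(3)\Rightarrow(1)$, with $(2)\Rightarrow(3)$ immediate since box spaces exist for every filtration. As in the non-strong analogue (Theorem \ref{thm: box iff group in lp}), the heart of the matter is a dictionary between $\ell$-controlled Banach representations of the group algebra $\IC\Ga$ and propagation-and-length-controlled representations of the uniform algebra $\IC_u[\Box(\Ga)]$, now carried out at the level of the \emph{strong} completions in which the Kazhdan projection must live. First I would fix the definitions: strong Banach property (T) of $\Ga$ with respect to $\sB$ is the existence of a Kazhdan projection in the completion of $\IC\Ga$ under the family of $\ell$-weighted norms indexed by a small growth rate $s>0$, while geometric strong Banach property (T) of a box space with respect to $\sB$ and $\D_\ell$ is the existence of such a projection in the corresponding $\D_\ell$-controlled strong completion of the Roe algebra, projecting onto $\fB^\pi$ along $\fB_\pi$ for every relevant representation $\pi$.

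The central technical step is the representation correspondence. For a filtration $\{\Ga_n\}$ with $\bigcap_n\Ga_n=\{e\}$, each block $\Ga/\Ga_n$ of $\Box_{\{\Ga_n\}}(\Ga)$ carries the left convolution action of $\Ga$, so a representation $\pi:\IC_u[\Box(\Ga)]\to\L(\fB)$ restricts on each block to a representation of $\IC\Ga$ factoring through the finite quotient $\Ga/\Ga_n$, while conversely a compatible family of finite-quotient representations assembles into a block-diagonal representation of the Roe algebra. I would then prove that under this correspondence the propagation radius of an element of the Roe algebra matches the word length in $\Ga$, so that the $\D_\ell$-weighting on the box-space side corresponds exactly to the $\ell$-weighting defining the strong norm on $\IC\Ga$. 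This is where the closure of $\sB$ under duality, conjugation, ultraproduct and $L^2$-Lebesgue-Bochner tensor product enters: these are precisely the operations appearing in the operator estimates that control the family of strong norms, and they must keep one inside $\sB$.

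For $(1)\Rightarrow(2)$, I would push the group Kazhdan projection forward to the block-diagonal element of $\IC_u[\Box(\Ga)]$ acting on each block by convolution with the kernel $(c_g)$, where the coefficients $c_g$ decay fast enough in $\ell$. The length matching of the previous step shows this element lies in the $\D_\ell$-controlled strong Roe completion, and the fact that the group projection projects onto invariant vectors for every $\ell$-controlled representation of $\Ga$, combined with the correspondence, shows its image projects onto $\fB^\pi$ along $\fB_\pi$ for every relevant Roe representation. This is exactly geometric strong Banach property (T) for that box space, and since all blocks use the word metric the argument is uniform over all filtrations.

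The direction $(3)\Rightarrow(1)$ carries the main obstacle. Given a Kazhdan projection in the strong Roe completion of one box space, I would use the limit-space machinery of \cite{GQW2024} (as in Proposition \ref{pro: banach property T for limit group}) to descend it to $\Ga$: because $\bigcap_n\Ga_n=\{e\}$, the group $\Ga$ embeds into the limit group $\Ga^\infty_\omega$, and residual finiteness guarantees that $\ell$-controlled representations factoring through the finite quotients $\Ga/\Ga_n$ are rich enough to detect strong property (T). The difficulty is twofold. First, one must verify that the blockwise projection arises from a single central convolution kernel on $\IC\Ga$ whose coefficients are independent of the block index $n$, so that it converges to one element of the strong completion rather than a mere family; controlling the growth rate $s$ uniformly across blocks, while the block metrics only coarsely approximate the word metric, is the delicate point and I expect it to require the uniformity built into the definition of geometric strong property (T). Second, one must show that \emph{every} $\ell$-controlled representation of $\Ga$ in $\sB$ — not only those factoring through finite quotients — is governed by the descended projection; here residual finiteness together with the ultraproduct-closure of $\sB$ lets me approximate an arbitrary such representation by finite-quotient ones in the strong topology, completing the argument.
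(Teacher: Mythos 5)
Your chain $(1)\Rightarrow(2)\Rightarrow(3)$ follows the paper's route: the canonical inclusion $\iota:\IC\Ga\to\IC_u[\Box(\Ga)]$ is contractive for the strong norms because every representation of $\IC_u[X]$ with $(\delta_\ell,s,c)$-small exponential growth restricts along $\iota$ to an $(\ell,s,c)$-small exponential growth representation of $\IC\Ga$ with the same invariant vectors (Lemma \ref{lem: invariant spaces coincide}), so the image of the group Kazhdan projection is a Kazhdan projection in $\C_{\delta_\ell,s,c}(X)$. The genuine gap is in $(3)\Rightarrow(1)$, precisely at what you call your ``second difficulty''. You propose to control an arbitrary $(\ell,s,c)$-representation of $\Ga$ on a space in $\sB$ by approximating it ``in the strong topology'' by representations factoring through the finite quotients $\Ga/\Ga_n$, invoking residual finiteness and ultraproduct closure. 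Neither hypothesis yields such a density statement: residual finiteness only says the finite quotients separate points of $\Ga$, and density of finite-quotient representations among all representations is an FD-type property which certainly does not follow from these hypotheses and is delicate (open or failing) exactly for the property (T) groups this theorem concerns. Moreover, even if some approximation were available it would not suffice, because the property you must transfer --- that $\pi(p)$ is the idempotent onto the invariant vectors --- is not stable under limits of representations: a representation factoring through a finite quotient splits into its invariant part and a complement with no invariant vectors, while a general small-growth representation (e.g.\ a regular-type one) may have no invariant vectors at all, so invariant subspaces behave discontinuously along any such approximation scheme.

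The paper circumvents both of your difficulties with two ideas absent from your proposal. First, the hard half of the norm comparison, $\|a\|_{\ell,s,c}\leq\|\iota(a)\|_{\delta_\ell,s,c}$, is proved not by approximation but by induction: given an arbitrary $(\ell,s,c)$-representation $\rho$ of $\IC\Ga$ on $\fB\in\sB$, the construction of Proposition \ref{pro: from box spaces to group} produces a $(\delta_\ell,s,c)$-representation of $\IC_u[X]$ on the Lebesgue--Bochner space $L^2(\mu_\phi,\fB)$, where $\mu_\phi$ is a $\Ga$-invariant measure on the boundary obtained from a cluster point of the normalized counting states, and this representation contains $\rho$ as a subrepresentation; this is exactly where closure of $\sB$ under $L^2$-Lebesgue--Bochner tensor products is used (not in ``operator estimates controlling the norms'', as you suggest). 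Hence $\iota$ extends to an isometry $\C_{\ell,s,c}(\Ga)\to\C_{\delta_\ell,s,c}(X)$. Second, your ``first difficulty'' (that the Roe-algebra projection should come from a single element of $\IC\Ga$) resolves itself: by the argument of Theorem \ref{thm: property T via Kazhdan projection}, which carries over to small exponential growth representations, any Kazhdan projection in $\C_{\delta_\ell,s,c}(X)$ is necessarily the limit of the powers $A^k$ of the averaging element $A=\iota(a)$ with $a=\frac{1}{\#S}\sum_{s\in S}\frac{1+s}{2}\in\IC\Ga$. Since $A^k=\iota(a^k)$ and $\iota$ is isometric, the sequence $a^k$ converges in $\C_{\ell,s,c}(\Ga)$, and by \cite[Theorem 4.4]{DN2019} its limit is the Kazhdan projection for $\Ga$; this gives $(3)\Rightarrow(1)$ with no approximation of representations at all.
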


\subsubsection*{Outline}

The paper is organized as follows.
In Section \ref{sec: Geometric Banach property (T)}, we introduce the concept of geometric Banach property (T) associated with a Banach space and families of Banach spaces, along with illustrative examples.
Section \ref{sec: Representations of Roe algebras on uniformly convex Banach spaces} focuses on representations of Roe algebras on uniformly convex Banach spaces and defines the notion of complemented representations for Roe algebras.
In Section \ref{sec: Kazhdan Idempotent}, we introduce the notion of Kazhdan projections and characterize geometric property ($T_{\fB}$) through their existence.
Section \ref{sec: Geometric property (T) towards infinity} explores the relationship between the geometric property ($T_{\fB}$) of a sequence of finite Cayley graphs and property ($T_{\fB}$) of their limit groups, culminating in a proof of the equivalence between (strong) property ($T_{\sL^p}$) of a residually finite group and strong geometric property ($T_{\sL^p}$) of its associated box spaces. As an application, we prove the incompatibility of FCE-by-FCE structures with geometric property (T).
Section \ref{sec: fixed point property} studies the coarse fixed point property for groups and employs this property to analyze the connection between classical geometric property (T) and its $L^p$-analogues.
In Section \ref{sec: Coarse invariance}, we prove the coarse invariance of geometric Banach property (T).
In Section \ref{sec: Open questions}, we summarize some open questions.

%\subsection*{Acknowledgement}
%We would like to thank Ignacio Vergara for the inspiring discussions during the CJK Functional Analysis Conference in Harbin, which motivated us to pursue this work. We also thank Jin Qian and Chen Zhang for their stimulating discussions and valuable suggestions throughout the writing process of this paper.

\section{Geometric Banach property (T)}\label{sec: Geometric Banach property (T)}

In this section, we shall recall some background on coarse geometry and introduce the notion of geometric Banach property (T) associated with a Banach space and a family of Banach spaces.

Let $(X,d)$ be a discrete \emph{extended metric space}, which means $d$ could take the value $\infty$. Throughout this paper, we shall always assume $X$ to have \emph{bounded geometry}, i.e., $\sup_{x\in X}\#B(x,R)<\infty$ for any $R>0$. The \emph{$R$-diagonal} is defined to be
$$\Delta_R=\{(x,y)\in X\times X\mid d(x,y)\leq R\}.$$
A subset $E\subseteq X\times X$ is called an \emph{entourage} if $E\subseteq\Delta_R$ for some $R>0$. The \emph{coarse structure} $\E$ of $X$ associated with this metric is the set of all entourages. An entourage is symmetric if the transposition of $E$ is equal to $E$, i.e.,
$$E^{T}=\{(x,y)\mid (y,x)\in E\}=E.$$
For two entourage $E,F\in\E$, their \emph{composition} is defined as
$$E\circ F=\{(x,y)\mid \exists z\in X\text{ such that }(x,z)\in E,(z,y)\in F\}.$$
We denoted by $E^{\circ n}=E\circ E\circ\cdots\circ E$ the $n$-times composition of $E$. It is direct to check that the composition of two entourages is still an entourage. The space $X$ is called \emph{monogenic} if there exists $E\in\E$ which generates the coarse structure of $X$, which means for any $F\in\E$, there exists $n\in\IN$ such that $F\subseteq E^{\circ n}$.

We shall use a similar convention as in \cite{WY2014}. Throughout this paper, when we say $X$ is a space, we mean it is a countable, monogenic, extended metric space with bounded geometry. Set $E_0\in\E$ to be the symmetric generating entourage. For example, $E_0$ can be taken as $\Delta_R$ for some $R\geq 0$ since $\E$ is generated by $\bigcup_{R\in\IN}\Delta_{R}$.

\begin{Def}
The \emph{algebraic uniform Roe algebra}, denoted by $\IC_u[X]$, is the set of all complex number valued $X$-by-$X$ matrices $T=(T_{xy})_{x,y\in X}$ satisfying that\begin{itemize}
\item $\sup\{|T_{xy}|\mid x,y\in X\}<\infty$;
\item the \emph{propagation} of $T$, defined by $\prop(T)=\sup\{d(x,y)\mid T_{xy}\ne 0\}$, is finite.
\end{itemize}
The \emph{support} of $T$ is defined by $\supp(T)=\{(x,y)\in X\times X\mid T_{xy}\ne 0\}$.
\end{Def}

For any $T\in\IC_u[X]$, it is direct to see that $\supp(T)\in\E$. An operator $V\in\IC_u[X]$ is called a \emph{partial translation} if $V_{xy}$ is equal to either $1$ or $0$, and for any $x\in X$, there is at most one element of the form $(x,y)$ or $(y,x)$ in $\supp(V)$. For a partial translation, there exists a \emph{local bijection} associated with this operator defined by $t_V: D_V\to R_V$ with $D_V,R_V\subseteq X$ such that the graph of $t_V$,
$$\text{graph}(t_V)=\{(x,y)\in R_V\times D_V\mid t_V(y)=x\}$$
is equal to the support of $V$, i.e., $\text{graph}(t_V)=\supp(V)$. Conversely, for a local bijection $t: D\to R$ with $\text{graph}(t)$ is an entourage, one can also define the associated partial translation
$$(V_t)_{xy}=\left\{\begin{aligned}&1,&& t(y)=x;\\&0,&&\text{otherwise.}\end{aligned}\right.$$
For convenience, such a local bijective $t$ is also called a partial translation. Define the linear map $\Phi:\IC_u[X]\to\ell^{\infty}(X)$ by
$$(\Phi(T))(x)=\sum_{y\in X}T_{xy}.$$
For a partial translation $t: D\to R$, one can easily check that $\Phi(V_t)=\chi_R$, where $\chi_R$ is the characteristic function on the range $R$ of $t$. In particular, a partial translation $V$ is \emph{full} if the range and domain of $t_V$ are both $X$.

For any $T\in\IC_u[X]$, one can always decompose $T$ into the form
$$T=\sum_{i=1}^Nf_i\cdot V_i$$
where $f_i\in\ell^{\infty}(X)$ and $V_i$ is a partial translation. Here we view $\ell^{\infty}(X)$ as a subalgebra of $\IC_u[X]$ by embedding a function $f\in\ell^{\infty}(X)$ as a diagonal operator $T_{f}$ defined by $(T_{f})_{xx}=f(x)$ and $(T_{f})_{xy}=0$ otherwise. It is direct to check that $\Phi(T)=\sum_{i=1}^Nf_i$. We define the \emph{$\ell^1$-norm} of $T\in\IC_u[X]$ to be
$$\|T\|_{\ell^1}=\inf\left\{\sum_{i=1}^N\|f_i\|_{\ell^{\infty}}\ \Big|\ T=\sum_{i=1}^Nf_iV_i\text{ with }f_i\in\ell^{\infty}(X), V_i\text{ a partial translation} \right\}.$$

\begin{Lem}
The $\ell^1$-norm of $\IC_u[X]$ is a well-defined norm.
\end{Lem}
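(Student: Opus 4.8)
The plan is to verify directly that $\|\cdot\|_{\ell^1}$ satisfies the four defining properties of a norm: finiteness, absolute homogeneity, the triangle inequality, and definiteness. Finiteness requires no new work: as recorded immediately above the statement, every $T\in\IC_u[X]$ admits at least one decomposition $T=\sum_{i=1}^N f_i V_i$ into finitely many partial translations $V_i$ with coefficients $f_i\in\ell^\infty(X)$, so the infimum defining $\|T\|_{\ell^1}$ is taken over a nonempty set of nonnegative reals and is therefore a finite nonnegative number.

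The homogeneity and triangle inequality are formal manipulations of decompositions. For $\lambda\in\IC$ with $\lambda\neq 0$, the assignment $(f_i)_i\mapsto(\lambda f_i)_i$ is a bijection between decompositions of $T$ and decompositions of $\lambda T$ that multiplies the quantity $\sum_i\|f_i\|_{\ell^\infty}$ by $|\lambda|$, whence $\|\lambda T\|_{\ell^1}=|\lambda|\,\|T\|_{\ell^1}$; the case $\lambda=0$ is immediate. For the triangle inequality I would fix $\varepsilon>0$, choose decompositions $S=\sum_i f_i V_i$ and $T=\sum_j g_j W_j$ whose coefficient sums are within $\varepsilon$ of $\|S\|_{\ell^1}$ and $\|T\|_{\ell^1}$ respectively, and observe that their concatenation is a valid decomposition of $S+T$ with total coefficient sum $\sum_i\|f_i\|_{\ell^\infty}+\sum_j\|g_j\|_{\ell^\infty}$; taking the infimum and letting $\varepsilon\to 0$ gives $\|S+T\|_{\ell^1}\leq\|S\|_{\ell^1}+\|T\|_{\ell^1}$.

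The only step with genuine content is definiteness, and here the key is to prove the entrywise domination
\[
\sup_{x,y\in X}|T_{xy}|\ \leq\ \|T\|_{\ell^1}.
\]
To this end I would fix a pair $(x,y)$ and an arbitrary decomposition $T=\sum_{i=1}^N f_i V_i$. Since each $f_i$ is embedded as the diagonal multiplication operator, one computes $(f_i V_i)_{xy}=f_i(x)\,(V_i)_{xy}$, and since $V_i$ is a partial translation its entries satisfy $(V_i)_{xy}\in\{0,1\}$. Summing over $i$ and using $|f_i(x)|\leq\|f_i\|_{\ell^\infty}$ then gives $|T_{xy}|\leq\sum_{i=1}^N\|f_i\|_{\ell^\infty}$. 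As the decomposition was arbitrary, taking the infimum yields the displayed estimate. Consequently, if $\|T\|_{\ell^1}=0$ then $T_{xy}=0$ for every $x,y\in X$, that is $T=0$; the converse implication $T=0\Rightarrow\|T\|_{\ell^1}=0$ is trivial, using $0=0\cdot V$ for any partial translation $V$.

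There is no serious obstacle here: the lemma is essentially routine once one is careful about the single nonformal point, namely the computation of the matrix coefficient $(f_iV_i)_{xy}$ together with the $\{0,1\}$-valued structure of partial translations. This is precisely what makes the $\ell^1$-norm dominate the entrywise supremum norm, and hence separate points.
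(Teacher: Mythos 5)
Your verification of the vector-space norm axioms is correct, and it distributes its effort differently from the paper. Your triangle-inequality argument (concatenating two near-optimal decompositions of $S$ and $T$) is exactly the paper's. Where you diverge is on definiteness: the paper dismisses it with ``it is direct to see,'' while you supply the actual reason, namely the entrywise domination $\sup_{x,y}|T_{xy}|\leq\|T\|_{\ell^1}$ coming from $(f_iV_i)_{xy}=f_i(x)(V_i)_{xy}$ and the $\{0,1\}$-valued entries of partial translations; this is a genuine and worthwhile fleshing-out. Conversely, the bulk of the paper's proof is devoted to a property you do not address at all: submultiplicativity, $\|ST\|_{\ell^1}\leq\|S\|_{\ell^1}\|T\|_{\ell^1}$ (called ``compatibility'' there). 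That step is not formal: given decompositions $S=\sum_i f_i^S V_i^S$ and $T=\sum_j f_j^T V_j^T$, one needs the covariance relation $V_i^S f_j^T=(t_i^S)^*(f_j^T)\,V_i^S$, the bound $\|(t_i^S)^*(f_j^T)\|_{\ell^\infty}\leq\|f_j^T\|_{\ell^\infty}$, and the fact that a product of two partial translations is again a partial translation, in order to turn the product of the two decompositions into a valid decomposition of $ST$. Since $\IC_u[X]$ is an algebra and this norm underlies the notion of contractive representation and the Banach-algebra completions used throughout the paper, the authors evidently intend the lemma to assert an algebra norm; read literally (a norm on the underlying vector space) your proposal is complete and correct, but relative to what the paper actually proves and later relies on, the missing submultiplicativity is a substantive omission you should add.
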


\begin{proof}
It is direct to see the $\ell^1$-norm is positive definite and homogeneous. To show the triangle inequality, take any $T, S\in \IC_u[X]$. For any $\varepsilon>0$, we can then find a partial translation decomposition $S=\sum_{i=1}^Nf_i^SV^S_i$, $T=\sum_{j=1}^Mf_j^TV^T_j$ such that
$$\sum_{i=1}^N\|f^S_i\|_{\ell^{\infty}}\leq\|S\|_{\ell^1}+\varepsilon\quad\text{and}\quad \sum_{j=1}^M\|f^T_j\|_{\ell^{\infty}}\leq\|T\|_{\ell^1}+\varepsilon.$$
Then $S+T=\sum^N_{i=1}\sum^M_{j=1}f_i^SV^S_i+f_j^TV^T_j$ gives a partial translation decomposition for $S+T$. We then conclude that
$$\|S+T\|_{\ell^1}\leq \sum^N_{i=1}\sum^M_{j=1}\|f_i^S\|+\|f_j^T\|\leq \|S\|_{\ell^1}+\|T\|_{\ell^1}+2\varepsilon.$$
As $\varepsilon$ is arbitrarily taken, we have that $\|S+T\|_{\ell^1}\leq \|S\|_{\ell^1}+\|T\|_{\ell^1}$, this proves the triangle inequality.

For the compatibility, take any $T, S\in \IC_u[X]$ with $\|S\|_{\ell^1}=\|T\|_{\ell^1}=1$. For any $\varepsilon>0$, we still take a partial translation decomposition $S=\sum_{i=1}^Nf_i^SV^S_i$, $T=\sum_{j=1}^Mf_j^TV^T_j$ such that
$$\sum_{i=1}^N\|f^S_i\|_{\ell^{\infty}}\leq 1+\varepsilon\quad\text{and}\quad \sum_{j=1}^M\|f^T_j\|_{\ell^{\infty}}\leq 1+\varepsilon.$$
Then $ST=\sum_{i=1}^N\sum_{j=1}^Mf_i^SV^S_if_j^TV^T_j$. Write $t_i^S: D_i^S\to R_i^S$ the local bijection associated with $V_i^S$. It is direct to check that $V^S_if_j^T=(t_i^S)^*(f_j^T)V_i^S$, where
$$(t_i^S)^*(f_j^T)(x)=\left\{\begin{aligned}&f((t_i^S)^{-1}(x)),&&\text{if }x\in R_i^S\\&0,&&\text{otherwise.}\end{aligned}\right.$$
Moreover, one also has that $\|(t_i^S)^*(f_j^T)\|_{\ell^{\infty}}\leq\|f_j^T\|_{\ell^{\infty}}$. Then $ST=\sum_{i=1}^N\sum_{j=1}^M(f_i^S\cdot(t_i^S)^*(f_j^T))(V^S_iV^T_j)$. Notice that $V^S_iV^T_j$ is still a partial translation, thus this gives a partial translation decomposition of $ST$. We then conclude that
\[\begin{split}\|ST\|_{\ell^1}&\leq \sum_{i=1}^N\sum_{j=1}^M\|f_i^S\cdot(t_i^S)^*(f_j^T)\|_{\ell^{\infty}}\leq \sum_{i=1}^N\sum_{j=1}^M\|f_i^S\|_{\ell^{\infty}}\cdot\|f_j^T\|_{\ell^{\infty}}\\
&=\left(\sum_{i=1}^N\|f_i^S\|_{\ell^{\infty}}\right)\cdot\left(\sum_{j=1}^M\|f_j^T\|_{\ell^{\infty}}\right)\leq (1+\varepsilon)^2.\end{split}\]
As $\varepsilon$ is arbitrarily taken, we have that $\|ST\|_{\ell^1}\leq 1$. This finishes the proof.
\end{proof}

Let $\fB$ be a Banach space, $\L(\fB)$ the Banach algebra of all bounded linear operators on $\fB$. A representation of $\IC_u[X]$ on $\fB$ is a unital homomophism $\pi:\IC_u[X]\to\L(\fB)$. Such a represtation is called \emph{contractive} if $\|\pi(T)\|_\fB\leq\|T\|_{\ell^1}$. Throughout this paper, when we talk about a representation of $\IC_u[X]$ on a Banach space $\fB$, we shall always assume it is contractive. If $\pi$ is both contractive and unital, then it is direct to see that $\pi$ is also \emph{isometric}, i.e., for any full partial translation $V$, i.e., a partial translation determined by a bijective from $X$ to itself, $\pi(V)$ is a surjective isometric operator. From now on, when we say $\pi$ is a representation, we mean it is a contractive representation.

A vector $\xi\in \fB$ is called an \emph{invariant vector} if
$$\pi(V)\xi=\pi(\chi_{R_V})\xi$$
for any partial translation $V$, where $\chi_{R_V}=\Phi(V)$ as we discussed before. Denote by $\fB^{\pi}$ the subset of $\fB$ consisting of all invariant vectors in $\fB$. It is direct to check that the set $\fB^{\pi}$ is a closed linear subspace of $\fB$, but $\fB^{\pi}$ is not a subrepresentation in general.
Notice that $\pi$ descends to a representation $\wt\pi$ on the quotient Banach space $\fB/\fB^{\pi}$. For $\varepsilon>0$, a representation of $\IC_u[X]$ is said to admit an \emph{$(E_0,\varepsilon)$-almost invariant vector} if there exists $\xi\in\fB$ with $\|\xi\|_{\fB}=1$ such that
$$\|\pi(V)\xi-\pi(\Phi(V))\xi\|\leq\varepsilon$$
for any partial translation $V$ with $\supp(V)\subseteq E_0$. In the sequel, we shall abbreviate $\varepsilon$-almost invariant vector for $(E_0,\varepsilon)$-almost invariant vector.

\begin{Def}[Geometric Banach property (T)]\label{def: geom T for a single space}
Let $\fB$ be a Banach space, $X$ a space. A representation $\pi:\IC_u[X]\to\L(\fB)$ has a \emph{spectral gap} if there exists $c>0$ such that for any unit vector $[\xi]\in \fB/\fB^{\pi}$, there always exists a partial translation $V$ with $\supp(V)\subseteq E_0$ such that
$$\|\wt\pi(V)[\xi]-(\wt\pi(\Phi(V)))[\xi]\|\geq c,$$
where $E_0$ is the generating entourage of the coarse structure $\E$ of $X$. Simply speaking, the representation $\wt\pi$ has no $c$-almost invariant vectors for some fixed $c>0$. The supremum of such constant $c$ is always called the \emph{spectral gap} for $\pi$.

The space $X$ is said to have \emph{geometric property} ($T_\fB$) if any representation of $\IC_u[X]$ on $\fB$ has a spectral gap. The space $X$ has \emph{uniform geometric property ($T_\fB$)} if there exists $c>0$ such that any representation $\pi:\IC_u[X]\to\L(\fB)$ has a spectral gap greater than $c$.
\end{Def}

\begin{Rem}\label{rem: delete uniform if one can take direct sum}
Comparing Definition \ref{def: geom T for a single space} with the original geometric property (T) for a space, it is direct to see that geometric property ($T_\fB$) coincides with geometric property (T) introduced in \cite{WY2014} when $\fB=\H$ is an infinite-dimensional Hilbert space. In this context, there is actually no need to distinguish between uniform geometric property (T) and geometric property (T), as they are equivalent. Assume for a contradiction that $X$ has geometric property (T) but not uniform geometric property (T), then for any $n\in\IN$, there exists a representation $\pi_n: \IC_u[X]\to\L(\H)$ such that the spectral gap for $\pi_n$ is less than $\frac 1n$. Then the direct sum $\bigoplus_{n\in\IN}\pi_n$ has no spectral gap.
\end{Rem}

Parallel to the case for Hilbert space, we also have the following characteristics for spectral gap:

\begin{Lem}\label{lem: equiv characteristic}
A space $X$ has uniform geometric property ($T_{\fB}$) if and only if there exists $c>0$ such that for any representation $\pi:\IC_u[X]\to\L(\fB)$ and $[\xi]\in\fB/\fB^{\pi}$, there exists an operator $T\in\IC_u[X]$ with $\supp(T)\subseteq E_0$ such that
$$\|\wt\pi(T-\Phi(T))[\xi]\|\geq c\cdot\sup_{x,y}|T_{xy}|\cdot\|[\xi]\|.$$
\end{Lem}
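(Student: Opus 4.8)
The plan is to prove the two implications separately, with essentially all the work in the reverse direction. The forward implication is immediate: if $X$ has uniform geometric property $(T_\fB)$ with constant $c$, then for any representation $\pi$ and any $[\xi]\in\fB/\fB^\pi$ the definition supplies a partial translation $V$ with $\supp(V)\subseteq E_0$ such that $\|\wt\pi(V)[\xi]-\wt\pi(\Phi(V))[\xi]\|\geq c\|[\xi]\|$. Since a partial translation has entries in $\{0,1\}$, taking $T=V$ gives $\sup_{x,y}|T_{xy}|=1$ (we may assume $V\neq 0$), so the displayed inequality of the lemma holds with the same $c$.

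For the converse, the key geometric input is that, because $X$ has bounded geometry, every $T\in\IC_u[X]$ with $\supp(T)\subseteq E_0$ admits a partial-translation decomposition $T=\sum_{i=1}^N f_iV_i$ in which the number of terms $N=N(E_0)$ is bounded by a constant depending only on the generating entourage, each $V_i$ satisfies $\supp(V_i)\subseteq E_0$, and each coefficient obeys $\|f_i\|_{\ell^\infty}\leq\sup_{x,y}|T_{xy}|$. I would obtain this by viewing $\supp(T)$ as a bipartite graph on $X\sqcup X$ whose degrees are bounded by $K=\sup_{x}\#B(x,R)$ (where $E_0=\Delta_R$), and edge-colouring it with boundedly many colours so that each colour class is a partial matching, i.e. the support of a partial translation; the coefficient $f_i$ merely records the relevant matrix entries of $T$, so its sup-norm is controlled as claimed. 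The uniformity of $N$ across all such $T$ and all representations is exactly what lets us keep a \emph{uniform} constant at the end.

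Granting this decomposition, I would use the identity $\Phi(f_iV_i)=f_i\Phi(V_i)$ to write $T-\Phi(T)=\sum_{i=1}^N f_i\bigl(V_i-\Phi(V_i)\bigr)$, apply $\wt\pi$, and estimate
\[
\bigl\|\wt\pi(T-\Phi(T))[\xi]\bigr\|\ \leq\ \sum_{i=1}^N\|\wt\pi(f_i)\|\cdot\bigl\|\wt\pi(V_i)[\xi]-\wt\pi(\Phi(V_i))[\xi]\bigr\|.
\]
Contractivity of $\pi$ (hence of $\wt\pi$) together with the one-term bound $\|f_i\|_{\ell^1}\leq\|f_i\|_{\ell^\infty}$ gives $\|\wt\pi(f_i)\|\leq\sup_{x,y}|T_{xy}|$. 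Combining this with the hypothesised lower bound $\|\wt\pi(T-\Phi(T))[\xi]\|\geq c\,\sup_{x,y}|T_{xy}|\,\|[\xi]\|$ and dividing through by $\sup_{x,y}|T_{xy}|$ yields $\sum_{i=1}^N\bigl\|\wt\pi(V_i)[\xi]-\wt\pi(\Phi(V_i))[\xi]\bigr\|\geq c\|[\xi]\|$, so by the pigeonhole principle some $V_i$ satisfies $\|\wt\pi(V_i)[\xi]-\wt\pi(\Phi(V_i))[\xi]\|\geq\tfrac{c}{N}\|[\xi]\|$. As $V_i$ is a partial translation with $\supp(V_i)\subseteq E_0$ and $N$ is independent of $\pi$, this establishes uniform geometric property $(T_\fB)$ with constant $c/N$.

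I expect the main obstacle to be the first step of the converse, namely producing the decomposition $T=\sum_{i=1}^N f_iV_i$ with $N$ bounded \emph{uniformly} in $T$: the general decomposition quoted earlier does not a priori control $N$, so the point is to run the edge-colouring argument and verify that bounded geometry forces only $N(E_0)$-many colour classes. The remaining steps—the $\Phi$-identity, the contractivity bound on $\|\wt\pi(f_i)\|$, and the pigeonhole extraction—are routine.
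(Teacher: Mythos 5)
Your proof is correct and follows essentially the same route as the paper's: decompose $T=\sum_{i=1}^N f_iV_i$ with $N$ depending only on $E_0$, bound $\|\wt\pi(f_i)\|$ by $\sup_{x,y}|T_{xy}|$ using contractivity, and extract a single partial translation by pigeonhole, yielding a uniform spectral gap $c/N$. The only difference is that you justify the uniformity of $N$ explicitly (edge-colouring the bipartite graph of $\supp(T)$ with boundedly many colours), a point the paper simply asserts as ``there exists $N$ determined by $E_0$.''
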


\begin{proof}
The $(\Rightarrow)$ part is clear, one can directly take $T$ to be the partial translation in Definition \ref{def: geom T for a single space}. We only need to show the $(\Leftarrow)$ part. Fix a representation $\pi$ and a unit vector $[\xi]\in\fB/\fB^{\pi}$. Let $T\in\IC_u[X]$ be the operator in the statement for this $[\xi]$. Then there exists $N$ determined by $E_0$ such that $T$ can be written as $T=\sum_{i=1}^Nf_iV_i$. Without loss of generality, we can assume that $\supp(f_i)\subseteq R_{V_i}$. Since the representation is contractive, we conclude that
$$\sup\|\pi(f_i)\|\leq\sup_{x,y}|T_{xy}|.$$
We then have that
\[\begin{split}
&c\cdot\sup_{x,y}|T_{xy}|\cdot\|[\xi]\|\leq \|\wt\pi(T-\Phi(T))[\xi]\|\leq \sum_{i=1}^N\|\wt\pi(f_iV_i)[\xi]-\wt\pi(f_i)[\xi]\|\\
&\leq \sum_{i=1}^N\|\wt\pi(f_i)\|\cdot\|(\wt\pi(V_i)-\wt\pi(\chi_{R_{V_i}}))([\xi])\|\leq \sup_{x,y}|T_{xy}|\cdot\sum_{i=1}^N\|(\wt\pi(V_i)-\wt\pi(\chi_{R_{V_i}}))([\xi])\|.
\end{split}\]
As a result, there exists at least one $i$ such that
$$\|(\wt\pi(V_i)-\wt\pi(\chi_{R_{V_i}}))([\xi])\|\geq \frac cN.$$
This finishes the proof.
\end{proof}

Before further discussion, we shall first show some basic facts on geometric property ($T_{\fB}$). Recall that a \emph{separated disjoint union} of a sequence of metric spaces $(X_n)_{n\in\IN}$ is a metric space whose underlying set is the disjoint union $X=\bigsqcup_{n\in\IN}X_n$, i.e., the metric $d$ on $X$ should satisfy $d$ restricts to the original metric on each $X_n$ and $d(X_n, X_m)=\infty$ whenever $n\ne m$.

\begin{Pro}\label{pro: first look}\begin{itemize}
\item[(1)] Let $X=\bigsqcup_{n\in\IN}X_n$ be the separated disjoint union of $(X_n)_{n\in\IN}$. If the sequence $(X_n)$ is uniformly bounded, then $X$ has uniform geometric property ($T_{\fB}$) for every Banach space $\fB$.
\item[(2)] Let $X$ be a space. Then $X$ has (uniform) geometric property ($T_{C_0(X)}$) if and only if $X$ is a separated disjoint union of a uniformly finite family of metric spaces.
\end{itemize}\end{Pro}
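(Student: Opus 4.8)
The plan is to handle the two parts separately, using (1) as the main engine and then combining it with one explicit representation to get (2).

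For (1), first observe that since $(X_n)$ is uniformly bounded, say $\diam(X_n)\le D$, bounded geometry forces each $X_n$ to be finite with $\#X_n\le N_0:=\sup_x\#B(x,D)$, and the coarse structure is generated by $E_0=\Delta_D=\bigsqcup_n X_n\times X_n$; in particular every partial translation already has support in $E_0$. The heart of the argument is an explicit \emph{averaging idempotent}: let $P\in\IC_u[X]$ be block-diagonal with $P_{xy}=\frac{1}{\#X_n}$ when $x,y$ lie in the same $X_n$ and $0$ otherwise. One checks $P^2=P$ and, crucially, $VP=\chi_{R_V}P$ for every partial translation $V$; hence $\pi(P)\fB\subseteq\fB^{\pi}$ for any representation $\pi$, so $\wt\pi(P)=0$ on $\fB/\fB^{\pi}$. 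I would then write $I-P$ as a uniformly bounded combination of partial-translation defects: choosing a cyclic order on each block and letting $W^{(n)}_k$ be the $k$-th cyclic shift of $X_n$, the identity $I_n-\frac{1}{\#X_n}J_n=\frac{1}{\#X_n}\sum_k(I_n-W^{(n)}_k)$ (with $J_n$ the all-ones block) assembles, after grouping over $k$, into
$$I-P=\sum_{k=1}^{N_0-1}g_k\,(\chi_{R_{U_k}}-U_k),$$
where each $U_k$ is a partial translation with support in $E_0$ and each $g_k\in\ell^\infty(X)$ satisfies $\|g_k\|_{\ell^1}=\|g_k\|_\infty\le 1$. Applying the contractive quotient representation and $\wt\pi(I-P)=\mathrm{id}$ then gives, for every $[\xi]$,
$$\|[\xi]\|\;\le\;\sum_{k=1}^{N_0-1}\big\|\wt\pi(\chi_{R_{U_k}})[\xi]-\wt\pi(U_k)[\xi]\big\|\;\le\;(N_0-1)\max_{1\le k\le N_0-1}\big\|\wt\pi(U_k)[\xi]-\wt\pi(\chi_{R_{U_k}})[\xi]\big\|,$$
so some $V=U_k$ witnesses a gap $\ge\frac{1}{N_0-1}\|[\xi]\|$. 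Thus $X$ has uniform geometric property $(T_\fB)$ with constant $\frac{1}{N_0-1}$, uniformly over all Banach spaces $\fB$.

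For (2), the direction ($\Leftarrow$) is immediate: within the class of spaces a separated disjoint union of a uniformly finite family is automatically uniformly bounded, since if $E_0=\Delta_R$ generates then each component is $R$-connected, so $\#X_n\le N$ forces $\diam(X_n)\le R(N-1)$; part (1) applied with $\fB=C_0(X)$ then yields uniform geometric property $(T_{C_0(X)})$.

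For ($\Rightarrow$) I argue by contraposition on the standard representation of $\IC_u[X]$ on $C_0(X)$, where $(T\xi)(x)=\sum_y T_{xy}\xi(y)$, which is unital and contractive, and where for a partial translation $V$ with local bijection $t_V$ one has $\|\pi(V)\xi-\pi(\chi_{R_V})\xi\|_\infty=\sup_{y\in D_V}\big|\xi(y)-\xi(t_V(y))\big|$. Decompose $X=\bigsqcup_\alpha C_\alpha$ into its finite-distance components (the classes of $x\sim y\iff d(x,y)<\infty$); these are exactly the separated metric pieces, so the failure of the right-hand condition means $\sup_\alpha\#C_\alpha=\infty$, and by bounded geometry large cardinality forces large diameter, giving components $C_{\alpha_k}$ (possibly one infinite component) of diameter $\ge k$. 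In each I would place a tent function $\xi_k(x)=\max(0,1-d(x,x_k)/M_k)$ supported on $C_{\alpha_k}$, with $M_k\to\infty$ but $M_k$ smaller than $\diam(C_{\alpha_k})$; since $d(\cdot,x_k)$ is $1$-Lipschitz and partial translations preserve components, $\|\xi_k\|_\infty=1$ while the defect above is $\le R/M_k\to 0$. Finally I would identify $\fB^{\pi}=\{\eta\in C_0(X):\eta\text{ constant on each }C_\alpha,\ \eta=0\text{ on the infinite ones}\}$, using single-point partial translations to force constancy on each component; this gives $\|[\xi_k]\|\ge\tfrac12$ (equal to $1$ on an infinite component, and $\ge\tfrac12$ on a finite one because $\xi_k$ attains both $0$ and $1$ there, so no constant approximates it within $\tfrac12$). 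Normalizing, the $[\xi_k]/\|[\xi_k]\|$ are unit vectors with defect tending to $0$, so this representation has no spectral gap and $X$ fails geometric property $(T_{C_0(X)})$.

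The main obstacle, and the genuinely new point relative to the Hilbert-space setting, is that $\fB^{\pi}$ is \emph{not} a subrepresentation, so one cannot simply restrict $\pi$ to a complement. The device that circumvents this is the idempotent $P$ together with the identity $VP=\chi_{R_V}P$: it forces $\wt\pi(P)=0$ and lets me express the identity on the quotient through partial-translation defects with $\fB$-independent coefficients, which is precisely what produces the \emph{uniform} gap. In (2) the analogous care lies in the book-keeping for partial (rather than full) translations, both in the tent-function estimate and in pinning down $\fB^{\pi}$ so as to bound the quotient norm from below.
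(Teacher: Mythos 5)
Your proof is correct, and both parts run on the same engines as the paper's proof, but with bookkeeping choices that differ enough to be worth recording. In part (1) the paper uses the \emph{unnormalized} all-ones block matrix $P$, observes that $\pi(P)\xi$ is always invariant, exploits that $\Phi(P)=\sum_n k_n\chi_n$ is invertible with contractive inverse to get $\|\wt\pi(P)[\xi]-\wt\pi(\Phi(P))[\xi]\|\geq\|[\xi]\|$, and then funnels the conclusion through the equivalent characterization of Lemma \ref{lem: equiv characteristic}; you instead normalize $P$ to an idempotent, note $\wt\pi(P)=0$, and decompose $I-P=\sum_{k}g_k(\chi_{R_{U_k}}-U_k)$ directly into partial-translation defects with $\ell^\infty$-coefficients of norm at most $1$, which makes the argument self-contained (no appeal to Lemma \ref{lem: equiv characteristic}) and yields the explicit constant $\tfrac{1}{N_0-1}$ rather than $\tfrac1N$. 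In part (2) the paper works with $f_k(x)=\frac{1}{k+d(x,x_0)}$ and splits into two cases (some component unbounded versus all components bounded with unbounded diameters, where the quotient-norm estimate $\|[f_k]\|\geq\frac{D_n}{2k(k+D_n)}$ requires the choice $k=D_n$); your tent functions $\max\bigl(0,1-d(\cdot,x_k)/M_k\bigr)$ handle both cases uniformly and make the lower bound $\|[\xi_k]\|\geq\tfrac12$ transparent, at the cost of pinning down $\fB^\pi$ explicitly via single-point partial translations --- a step the paper only asserts. You also make explicit the implication ``uniformly finite $\Rightarrow$ uniformly bounded'' via monogenicity and $R$-connectedness, which the paper compresses into ``follows directly from (1).'' One minor point to tighten: for $\xi_k$ to attain the value $0$ on a finite component you need a point at distance at least $M_k$ from the base point $x_k$, so either take $M_k\leq\tfrac12\diam(C_{\alpha_k})$ or choose $x_k$ to be an endpoint of a diametral pair; as stated, ``$M_k$ smaller than $\diam(C_{\alpha_k})$'' with an arbitrary base point does not quite guarantee this.
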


\begin{proof}
(1) Since $(X_n)$ is uniformly bounded, we shall take the generator of the coarse structure of $X$ to be $E_0=\bigsqcup_{n\in\IN}X_n\times X_n$. By definition, we can write $\IC_u[X]=\prod_{n\in\IN}\IC_u[X_n]$ and take the generating set to be $E_0=\bigsqcup_{n\in\IN}X_n\times X_n$. Since $X_n$ is finite, we can write the Roe algebra as a matrix algebra $C_u[X_n]=M_{k_n}(\IC)$, where $k_n=\#X_n$. For each $n\in\IN$, take $P_n$ to be the averaging matrix, i.e.,
\[P_n=\begin{pmatrix}1&1&\cdots&1\\1&1&\cdots&1\\\vdots&\vdots&\ddots&\vdots\\1&1&\cdots&1\end{pmatrix}_{k_n\times k_n.}\]
Set $P=\bigoplus_{n\in\IN}P_n\in\IC_u[X]$. For any $\xi\in\fB$, it is direct to see that $P\xi$ must be an invariant vector for $\IC_u[X]$ since $VP=\Phi(V)P$ for any partial translation $V\in\IC_u[X]$.

Denote by $N=\sup\{\#X_n\mid n\in\IN\}$. We now claim that $c=\frac 1N$ satisfies Definition \ref{def: geom T for a single space}. Indeed, for any $n\in\IN$, we can always find $k_n$ surjective partial translations $u^{(n)}_1,\cdots,u^{(n)}_{k_n}$ such that
$$P_n=\sum_{i=1}^{k_n}u^{(n)}_{k_n}.$$
For example, one can find these partial translations by identifying $X_n$ with the finite group $\IZ_{k_n}$. We extend this finite sequence by define $u^{(n)}_k=0$ for any $k_n< k\leq N$. Then for each $k\in\{1,\cdots, N\}$, define $u_k=\bigoplus_{n\in\IN}u_k^{(n)}\in\IC_u[X]$ which is a partial translation. Then for any representation $\pi: \IC_u[X]\to\L(\fB)$, take $[\xi]\in\fB/\fB^{\pi}$ with $\|[\xi]\|=1$, one has that $\sum_{k=1}^N\wt\pi(u_k)[\xi]=0$ in $\fB/\fB^{\pi}$, since $\pi(P)\xi$ must be an invariant vector for $\pi$. Moreover, $\Phi(P)=\sum_{n\in\IN}k_n\chi_n\in\ell^{\infty}(X)$ is a invertible function, where $\chi_n$ is the characteristic function on $X_n$. Since $\pi$ is contractive, then the norm of the inverse function of $\Phi(P)$ satisfies that
$$\left\|\wt\pi\left(\sum_{n\in \IN}\frac1{k_n}\chi_n\right)\right\|\leq 1.$$
As a result, we conclude that $\|\wt\pi(\Phi(P))[\xi]\|\geq [\xi]$. For any unit vector $[\xi]\in\fB/\fB^{\pi}$, we then have that
\[\|\wt\pi(P)[\xi]-\wt\pi(\Phi(P))[\xi]\|=\|\wt\pi(\Phi(P))[\xi]\|\geq \|[\xi]\|.\]
By Lemma \ref{lem: equiv characteristic}, we conclude that $X$ has uniform geometric property ($T_\fB$).

(2) The ($\Leftarrow$) part follows directly from (1), it suffices to prove the ($\Rightarrow$) part. Without loss of generality, we can write $X=\bigsqcup_{n\in\IN}X_n$ such that any two points in the same $X_n$ have a finite distance. Assume for a contradiction that the sequence $\{X_n\}_{n\in\IN}$ is unbounded. Consider the ``left-regular'' representation $\lambda_n: \IC_u[X_n]\to\L(C_0(X_n))$ by
$$(Tf)(x)=\sum_{y\in X_n}T_{xy}f(y).$$
The direct sum of $\lambda_n$ defines a representation of $C_u[X]$ on $C_0(X)$. Fix a based point $x_0\in X_n$ and define $f_k\in C_0(X_n)$ for each $k\in\IN$ to be
$$f_k(x)=\frac{1}{k+d(x,x_0)}.$$
For any $R>0$, one can check that
\begin{equation}\label{eq: slow oscilation}\begin{split}|f_k(x)-f_k(y)|&=\frac{1}{k+d(x,x_0)}-\frac{1}{k+d(y,x_0)} \\&=\frac{d(x,x_0)-d(y,x_0)}{(k+d(x,x_0))(k+d(y,x_0))}\leq \frac{d(x,y)}{k^2}. \end{split}\end{equation}
If there exists $n\in\IN$ such that $X_n$ is unbounded, then there are no invariant vectors in $C_0(X_n)$. Then for any partial translation $V$ such that $\supp(V)\subseteq\Delta_R$, by \eqref{eq: slow oscilation}, one has that
$$\sup_{x\in X}|(Vf_k)(x)-f_k(x)|\leq\frac{R}{k^2}=\frac{R}{k}\cdot\|f_k\|.$$
Thus $f_k$ is a $(\Delta_R,\frac{R}{k})$-almost invariant vector for $\lambda_n$. On the other hand, if $X_n$ is bounded for each $n\in\IN$, then invariant vectors in $C_0(X_n)$ must be constant functions. Denote $D_n=\diam(X_n)$. Then $[f_k(x)]$ has norm greater than $\frac{D_n}{2k(k+D_n)}$ in $C_0(X_n)/C_0(X_n)^{\lambda}$. Take $k=D_n$, then $\|[f_k]\|\geq\frac 1{4D_n}$. Then we still have that
$$\sup_{x\in X}|(Vf_k)(x)-f_k(x)|\leq\frac{R}{D_n^2}=\frac{4R}{D_n}\cdot\|[f_k]\|.$$
For each $X_n$, there exists a $(\Delta_R,\frac{4R}{D_n})$-almost invariant vector for $\lambda_n$. As $\{D_n\}_{n\in\IN}$ is unbounded, this finishes the proof.
\end{proof}

As a corollary of the above proposition, we have the following result.

\begin{Cor}
Let $X$ be a space. The following are equivalent:\begin{itemize}
\item[(1)] $X$ is a separated disjoint union of a uniformly finite family of metric spaces;
\item[(2)] $X$ has geometric property ($T_{C_0(X)}$);
\item[(3)] $X$ has uniform geometric property ($T_{\fB}$) for every Banach space $\fB$.
\end{itemize}\end{Cor}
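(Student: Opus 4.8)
The plan is to prove the three statements equivalent by running the cycle $(1)\Rightarrow(3)\Rightarrow(2)\Rightarrow(1)$, where every arrow is read off directly from Proposition \ref{pro: first look}. No new construction is needed: the corollary merely repackages the two halves of that proposition, so essentially all of the work lies in lining up the hypotheses. For $(1)\Rightarrow(3)$ I would invoke Proposition \ref{pro: first look}(1): if $X=\bigsqcup_{n}X_n$ is the separated disjoint union of a uniformly finite family, then (reading \emph{uniformly finite} as \emph{uniformly bounded}; see the last paragraph) the family $(X_n)$ is uniformly bounded, and part (1) of the proposition delivers uniform geometric property $(T_\fB)$ for every Banach space $\fB$, which is statement (3) verbatim.

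For $(3)\Rightarrow(2)$ I would specialize the Banach space in (3) to the single choice $\fB=C_0(X)$, and then simply drop uniformity: uniform geometric property $(T_{C_0(X)})$ is formally stronger than geometric property $(T_{C_0(X)})$, so (3) yields (2) immediately. Finally, $(2)\Rightarrow(1)$ is precisely the forward direction of Proposition \ref{pro: first look}(2), which produces—whenever the finite-distance components have unbounded diameter—the slowly oscillating almost-invariant vectors $f_k(x)=\frac{1}{k+d(x,x_0)}$ obstructing a spectral gap in the regular representation $\bigoplus_n\lambda_n$ on $C_0(X)$. This closes the cycle and establishes all three equivalences.

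The only point demanding care, rather than a genuine obstacle, is the terminological bridge used in the first implication: the phrase \emph{uniformly finite family of metric spaces} appearing in (1) must be understood as a \emph{uniform bound on the diameters} of the finite-distance components, which is exactly the hypothesis of Proposition \ref{pro: first look}(1) and exactly the quantity the proof of Proposition \ref{pro: first look}(2) controls. Under the standing bounded geometry assumption such a diameter bound automatically makes the components genuinely finite sets, so the two readings of \emph{uniformly finite} agree here. One should be careful not to weaken (1) to a uniform bound on \emph{cardinalities} alone: under bounded geometry this is strictly weaker (take two points at distance $n$, so $\#X_n=2$ but $\diam(X_n)=n\to\infty$), and by $(2)\Rightarrow(1)$ such a space fails geometric property $(T_{C_0(X)})$. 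I would therefore state the diameter identification explicitly before quoting part (1), after which the cycle is formal.
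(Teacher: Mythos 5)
Your proof is correct and is essentially the paper's own (implicit) argument: the corollary is stated as an immediate consequence of Proposition \ref{pro: first look}, with $(1)\Leftrightarrow(2)$ given by part (2), $(1)\Rightarrow(3)$ by part (1), and $(3)\Rightarrow(2)$ being the trivial specialization $\fB=C_0(X)$ plus dropping uniformity, which is exactly your cycle. One minor caveat on your cautionary remark: the two-points-at-distance-$n$ example is not monogenic (no entourage $\Delta_R$ can generate the pairs at distance $n>R$ inside a two-point component), hence is not a ``space'' under the paper's standing convention; within the corollary's scope a uniform bound on cardinalities already forces a uniform bound on diameters (a chain of steps of length at most $R$ through a component with at most $N$ points gives $\diam(X_n)\leq (N-1)R$), so the two readings of ``uniformly finite'' coincide and your terminological bridge is automatic rather than something to be imposed.
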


In particular, we shall also consider the geometric property ($T$) for a family of Banach spaces.

\begin{Def}[Geometric Banach property ($T$) for a family of Banach spaces]\label{def: family case of Banach Geom T}
Let $\sB$ be a family of Banach spaces. We say $X$ has \emph{geometric property ($T_{\sB}$)} if every representation of $\IC_u[X]$ on any $\fB\in\sB$ has a spectral gap as in Definition \ref{def: geom T for a single space}. The space $X$ has \emph{uniform geometric property ($T_{\sB}$)} if there exists $c>0$ such that all representations $\pi: \IC_u[X]\to\L(\fB)$ for any $\fB\in\sB$ have a uniform spectral gap greater than $c$.
\end{Def}

We would like to mention that the argument in Remark \ref{rem: delete uniform if one can take direct sum} holds for any family of Banach spaces which is closed under taking direct sum. For example, we denote $\sL^p$ to be the family of all $L^p$-spaces for any $p\in[1,\infty)$. Then $\sL^p$ is closed under taking $L^p$-direct sum. Thus we have the following result.

\begin{Pro}\label{pro: uniform Tlp and Tlp}
Let $X$ be a space. Then $X$ has uniform geometric property ($T_{\sL^p}$) if and only if $X$ has geometric property ($T_{L^p(\mu)}$) for any $L^p$-space $L^p(\mu)$.
\end{Pro}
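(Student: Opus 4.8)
The forward implication is immediate from the definitions: if $X$ has uniform geometric property $(T_{\sL^p})$, there is a single $c>0$ witnessing a spectral gap for every representation on every $L^p$-space, in particular for every representation on a fixed $L^p(\mu)$; hence $X$ has geometric property $(T_{L^p(\mu)})$. The whole content lies in the converse, and the plan is to run the direct-sum argument of Remark \ref{rem: delete uniform if one can take direct sum}, exploiting the fact that $\sL^p$ is closed under $\ell^p$-direct sums, so the construction never leaves the class of $L^p$-spaces.

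Concretely, I would argue by contradiction. Suppose $X$ has geometric property $(T_{L^p(\mu)})$ for every $L^p$-space but fails to have uniform geometric property $(T_{\sL^p})$. Then for each $n\in\IN$ I can choose an $L^p$-space $\fB_n=L^p(\mu_n)$ and a contractive representation $\pi_n:\IC_u[X]\to\L(\fB_n)$ whose spectral gap is strictly less than $1/n$; that is, there is a unit vector $[\xi^{(n)}]\in\fB_n/\fB_n^{\pi_n}$ with $\|\wt{\pi_n}(V)[\xi^{(n)}]-\wt{\pi_n}(\Phi(V))[\xi^{(n)}]\|<1/n$ for all partial translations $V$ with $\supp(V)\subseteq E_0$. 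I then form the $\ell^p$-direct sum $\fB=\bigl(\bigoplus_{n\in\IN}\fB_n\bigr)_{\ell^p}$, which is isometrically the space $L^p$ of the disjoint union of the measure spaces $\mu_n$ and hence again lies in $\sL^p$, and define $\pi=\bigoplus_n\pi_n$. Since each $\pi_n$ is unital and contractive, a coordinatewise estimate shows $\pi$ is a unital contractive representation of $\IC_u[X]$ on $\fB$.

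The heart of the argument is to identify the invariant subspace and the induced quotient representation compatibly with the direct-sum decomposition. Using the characterization that $\zeta=(\zeta_n)$ is invariant iff $\pi(V)\zeta=\pi(\chi_{R_V})\zeta$ for every partial translation $V$, which holds coordinatewise, I would show $\fB^\pi=\bigl(\bigoplus_n\fB_n^{\pi_n}\bigr)_{\ell^p}$, and then that the quotient norm factorizes, giving an isometric identification $\fB/\fB^\pi\cong\bigl(\bigoplus_n\fB_n/\fB_n^{\pi_n}\bigr)_{\ell^p}$ under which $\wt\pi=\bigoplus_n\wt{\pi_n}$. Granting this, the vector $[\Xi]\in\fB/\fB^\pi$ equal to $[\xi^{(n)}]$ in the $n$-th coordinate and $0$ elsewhere is a unit vector, and since $\wt\pi(V)-\wt\pi(\Phi(V))$ acts coordinatewise, $\|\wt\pi(V)[\Xi]-\wt\pi(\Phi(V))[\Xi]\|=\|\wt{\pi_n}(V)[\xi^{(n)}]-\wt{\pi_n}(\Phi(V))[\xi^{(n)}]\|<1/n$ for all $V$ with $\supp(V)\subseteq E_0$. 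Thus the spectral gap of $\pi$ is at most $1/n$ for every $n$, so $\pi$ has no spectral gap; this contradicts geometric property $(T_\fB)$ for the $L^p$-space $\fB$, completing the converse.

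The main obstacle I anticipate is the isometric identification of $\fB/\fB^\pi$ with the $\ell^p$-direct sum of the coordinate quotients, specifically verifying that the quotient norm factorizes as $\|[\zeta]\|^p=\sum_n\|[\zeta_n]\|^p$. The inequality $\geq$ is immediate from $\|\zeta-\eta\|^p=\sum_n\|\zeta_n-\eta_n\|^p$ for $\eta\in\fB^\pi$; the reverse requires choosing near-minimizers $\eta_n\in\fB_n^{\pi_n}$ coordinatewise while keeping $(\eta_n)$ inside $\ell^p$, which follows from a bound of the form $\|\eta_n\|\le 2\|\zeta_n\|+(\varepsilon/2^n)^{1/p}$ together with $\zeta\in\fB$. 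Once this bookkeeping is in place, the normalization of the lifted vector $[\Xi]$ and the whole contradiction fall out mechanically.
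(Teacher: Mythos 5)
Your proof is correct and takes essentially the same route as the paper: the paper also argues by contradiction, choosing for each $n$ a representation $\pi_n$ on some $L^p(\mu_n)$ with spectral gap smaller than $1/n$ and forming the $L^p$-direct sum $\bigoplus_n L^p(\mu_n)$, which is again an $L^p$-space, so that the direct-sum representation has no spectral gap. The only difference is one of detail: the paper asserts that the direct sum ``clearly has no spectral gap,'' whereas you supply the verification this hides (the coordinatewise identification $\fB^\pi=\bigl(\bigoplus_n\fB_n^{\pi_n}\bigr)_{\ell^p}$ and the isometric factorization of the quotient norm), and your bookkeeping there is sound.
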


\begin{proof}
The $(\Rightarrow)$ part is trivial. For the $(\Leftarrow)$ part, assume for a contradiction that $X$ does not have uniform geometric property $(T_{\sL^p})$, then for any $n\in\IN$, there exists a measure space $(Y_n,\mu_n)$ and a representation $\pi_n:\IC_u[X]\to\L(L^p(\mu_n))$ has a spectral gap smaller than $\frac 1n$. Then consider the $L^p$-direct sum of these representations $\pi:\IC_u[X]\to\L\left(\bigoplus_{p;n\in\IN}L^p(\mu_n)\right)$ clearly has no spectral gap. This leads to a contradiction.
\end{proof}

The motivation of the geometric Banach property (T) comes from the concept of \emph{Banach property (T)} for a group, which is first introduced in \cite{BFGM2007} and geometric property (T) for metric spaces first introduced in \cite{WilYu2012b}. Let $\Ga$ be a finitely generated, discrete group. Fix $S\subseteq \Ga$ a symmetric finite generating set of $\Ga$. A linear isometric representation of $\Ga$ on $\fB$ is a group homomorphism
$$\rho:\Ga\to O(\fB),$$
where $O(\fB)$ is the ``orthoginal'' group of all invertible linear isometries $\fB\to\fB$. Such a representation is said to admit an $\varepsilon$-almost invariant vector if there exists $\xi\in\fB$ with $\|\xi\|_{\fB}=1$ such that
$$\sup_{g\in S}\|\rho(g)\xi-\xi\|\leq\varepsilon.$$
A vector $\xi$ is called invariant if $\rho(g)\xi=\xi$ for any $g\in\Ga$, we still denote $\fB^{\rho}$ the space of all invariant vectors. The group $\Ga$ has \emph{property} ($T_{\fB}$) if for any representation $\rho:\Ga\to O(\fB)$,  there exists $c>0$ such that the induced representation $\wt\rho:\Ga\to O(\fB/\fB^{\rho})$ has no $c$-almost invariant vectors. The group $\Ga$ has \emph{uniform property} ($T_{\fB}$) if there exists $c>0$ such that for any representation $\rho:\Ga\to O(\fB)$, the induced representation $\wt\rho:\Ga\to O(\fB/\fB^{\rho})$ has no $c$-almost invariant vectors. The reader is referred to \cite{DN2019} for some relative discussion of Banach property (T) for a family of Banach spaces. Similarly, we also have \emph{property ($T_\sB$)} and \emph{uniform property ($T_\sB$)} for a group. From the definition above, one can view the geometric Banach property (T) as a geometric analogue of the Banach property (T) for groups.

\section{Representations of Roe algebras on uniformly convex Banach spaces}\label{sec: Representations of Roe algebras on uniformly convex Banach spaces}

In this section, we study the representations of Roe algebra on \emph{uniformly convex} Banach spaces. Parallel to Banach representations of group, we shall introduce the notion of complemented representation of Roe algebra and show that any representation of Roe algebra on a uniformly convex Banach space is complemented.

Let $\fB$ be a Banach space. The \emph{convexity modulus function} $\delta: [0,2]\to [0,1]$ of $\fB$ is defined to be
$$\delta(t)=\inf\left\{1-\frac{\|\xi+\eta\|}{2}\ \Big|\ \|\xi\|=\|\eta\|=1\text{ and }\|\xi-\eta\|\geq t\right\}$$
The Banach space $\fB$ is called \emph{uniformly convex} if the modulus function is strictly positive on $(0,2]$, i.e., $\delta(t)>0$ whenever $t\ne 0$. Recall that a uniformly convex Banach space is always reflexive, see \cite{BL2000}. Moreover, for any measured space $(X,\mu)$ and $p\in(1,\infty)$, we define the \emph{Lebesgue-Bochner space} $L^p(X,\mu,\fB)$ (or simply $L^p(\mu,\fB)$) to be the set of all $L^p$-functions from $X$ to $\fB$, equipped with the norm
$$\|\xi\|_{L^p(X,\mu,\fB)}=\left(\int_X\|\xi(x)\|^p_{\fB}d\mu(x)\right)^{\frac 1p}.$$
It is proved by M.~Day in \cite{Day1941} that if $\fB$ is uniformly convex, then the Lebesgue-Bochner space $L^p(\mu,\fB)$ is also uniformly convex.

Typical examples for our consideration of Banach Geometric property $(T)$ associated with a family of Banach spaces is \emph{a uniform convex family of Banach spaces}. A family of Banach spaces $\sB$ is \emph{uniformly convex} if the modulus of convexity of this family, defined by $\delta_{\sB}(\varepsilon)=\inf_{\fB\in\sB}\delta_{\fB}(\varepsilon)$, satisfies that $\delta_{\sB}(\varepsilon)>0$ whenever $\varepsilon>0$. When $p\in(1,\infty)$, the family $\sL^p$ is uniformly convex.

In fact, many results concerning group actions on uniformly convex Banach spaces can be extended to Roe algebras in a parallel manner. To facilitate comparison for readers, we first provide a brief review of group actions on Banach spaces. Let $\fB$ be a uniformly convex Banach space, $\Ga$ a countable discrete group. Consider an isometric group representation $\rho: \Ga\to\L(\fB)$. It induces a dual action $\rho^*:\Ga\to\L(\fB^*)$ by
$$\left(\rho^*(\gamma)f\right)(\xi)=f(\rho(\gamma^{-1})\xi)$$
where $f\in\fB^*$ and $\xi\in\fB$. It is direct to see that $\rho^*$ is also an isometric representation where $\fB^*$ is equipped with the dual norm. We shall denote $(\fB^*)^{\rho^*}$ to be all $\rho^*(\Ga)$-invariant vectors in $\fB^*$. Let $\fB_{\rho}\subseteq \fB\cong\fB^{**}$ be the annihilator of $(\fB^*)^{\rho^*}$, i.e., $\xi\in\fB_{\rho}$ if and only if $f(\xi)=0$ for all $f\in (\fB^*)^{\rho^*}$. It is proved in \cite[Proposition 2.10]{BFGM2007} that $\fB=\fB_{\rho}\oplus\fB^{\rho}$ and $\fB_{\rho}$ is isomorphic to $\fB/\fB^{\rho}$. Such a represnetation is called \emph{complemented}. Thus, to show $\Ga$ has property ($T_{\fB}$), it suffices to show that there exists $\varepsilon_0>0$ such that for any $\rho:\Ga\to\L(\fB)$, the subrepresentation $\wh\rho:\Ga\to\L(\fB_{\rho})$ has no $\varepsilon_0$-invariant vectors.

Similarly, we can also define the dual representation of the uniform Roe algebra on the dual space. For any $T\in \IC_u[X]$, the \emph{conjugate transpose} of $T$ is defined to be
$$T^*(x,y)=\overline{T(y,x)}.$$
Let $\pi:\IC_u[X]\to\L(\fB)$ be a representation. We define the \emph{dual representation} of $\pi$ to be the representation $\pi^*:\IC_u[X]\to\L(\fB)$ on the dual space $\fB^*$ given by
$$(\pi^*(T)f)(\xi)=f(\pi(T^*)\xi).$$
for any $T\in\IC_u[X]$, $\xi\in\fB$ and $f\in\fB^*$. One can also write it in the following form
\begin{equation}\label{eq: dual rep}\langle \pi^*(T)f,\xi\rangle=\langle f,\pi(T^*)\xi\rangle,\end{equation}
where the bracket $\langle,\rangle$ means the pairing between $\fB$ and $\fB^*$. Since $\fB$ is uniformly convex, by \cite[Lemma 2]{BL1962}, there exists a \emph{duality map} between the spheres of $\fB$ and $\fB^*$
$$*:S(\fB)\to S(\fB^*),\qquad \xi\mapsto f_{\xi}$$
such that $\langle f_{\xi},\xi\rangle=1$, where $f_{\xi}$ is uniquely determined by $\xi$. By \eqref{eq: dual rep}, it is direct to see that $\pi^*$ is also a contractive, isometric representation. Moreover, $\pi^{**}$ is equal to $\pi$ for any representation on uniformly convex Banach spaces.

To describe the complemented representation of Roe algebra, we need to use the following lemma, which is proved in \cite[Lemma 3.4]{Vergara2024}.

\begin{Lem}\label{lem: full partial translation}
For any monogenic metric space $X$ with bounded geometry, there exist \emph{full} partial translations $A_0,\cdots, A_n \in \IC_u[X]$ with $\supp(A_i)\subseteq E_0$ such that any partial translation $V$ with support in $E_0$ can be written as
$$V=\sum_{i=0}^n\chi_{i}A_i,$$
where $\{\chi_i\in\ell^{\infty}(X)\}$ is a set of characteristic functions of disjoint subsets of $X$ with $\Phi(V)=\bigsqcup_{i=0}^n\supp(\chi_i)$.\qed
\end{Lem}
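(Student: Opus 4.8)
The plan is to obtain the full partial translations $A_i$ by edge-colouring the graph carried by $E_0$, and then to read off the decomposition of an arbitrary $V$ almost for free. The only geometric input I would record at the start is that, since $E_0\subseteq\Delta_R$ for some $R$ and $X$ has bounded geometry, the symmetric relation $E_0$ has uniformly bounded degree, i.e. $N:=\sup_{x\in X}\#\{y\in X\mid (x,y)\in E_0\}<\infty$. I view the off-diagonal part $E_0\setminus\Delta_0$ as the edge set of an undirected, locally finite graph $G$ on the vertex set $X$, of maximum degree at most $N$. Every finite subgraph of $G$ is then $(N+1)$-edge-colourable by Vizing's theorem, and by the De~Bruijn--Erd\H{o}s compactness principle for colourings $G$ itself is $(N+1)$-edge-colourable; this produces finitely many matchings $M_1,\dots,M_n$ (the colour classes), each a set of pairwise disjoint edges of $E_0$, with $E_0\setminus\Delta_0=\bigcup_{i=1}^n M_i$.

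The crucial and clean point is the passage from matchings to \emph{full} partial translations. Because each $M_i$ is a matching, the partial involution it defines extends to a genuine involution $A_i\colon X\to X$ simply by declaring $A_i(x)=x$ on every vertex not met by $M_i$; the graph of $A_i$ is contained in $M_i\cup\Delta_0\subseteq E_0$, so $A_i$ is a full partial translation with $\supp(A_i)\subseteq E_0$. Taking $A_0$ to be the identity, which absorbs any diagonal part of a support, the collection $A_0,\dots,A_n$ covers $E_0$ in the sense that $E_0=\bigcup_{i=0}^n\supp(A_i)$.

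Finally, given any partial translation $V$ with $\supp(V)\subseteq E_0$, the defining condition on partial translations forces each target $x\in R_V$ to be the endpoint of a unique pair $(x,t_V^{-1}(x))\in\supp(V)$. I would assign such an $x$ to the least index $i$ with $(x,t_V^{-1}(x))\in\supp(A_i)$ and let $\chi_i$ be the characteristic function of the set of targets so assigned. By construction these supports are disjoint with union $R_V$, giving $\Phi(V)=\bigsqcup_{i=0}^n\supp(\chi_i)$. Since each $A_i$ is a bijection, left multiplication by $\chi_i$ keeps exactly the row $x\in\supp(\chi_i)$ together with its single entry, and a short check of matrix entries shows there are no cross terms; hence $V=\sum_{i=0}^n\chi_i A_i$.

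I expect the genuine obstacle to be the extension-to-full step rather than the bookkeeping. Decomposing $E_0$ into finitely many partial translations is routine, but completing an arbitrary partial bijection to a full bijection while keeping its support inside $E_0$ is in general impossible, because the unmatched ``source'' and ``target'' points of a chain need not lie within distance $R$ of one another. The device that rescues the argument is to colour the \emph{undirected} graph $G$ rather than the associated bipartite graph: the colour classes are then symmetric matchings, i.e. partial involutions, which extend to full involutions trivially via the identity on unmatched points, using precisely that $E_0$ is symmetric and contains the diagonal.
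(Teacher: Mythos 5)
Your argument is correct and is essentially the approach of the paper, which does not reprove this lemma but cites Vergara's Lemma 3.4 and illustrates exactly this mechanism in Example \ref{exa: explanation of Lemma 3.1}: split the off-diagonal part of the symmetric entourage $E_0$ into finitely many matchings, extend each matching to a full involution by the identity on unmatched points, and then sort the pairs of $\supp(V)$ by colour to produce the disjoint characteristic functions. The only cosmetic differences are your use of Vizing's theorem plus De~Bruijn--Erd\H{o}s compactness to obtain the edge-colouring (a greedy $(2N-1)$-colouring of the bounded-degree graph would serve equally well) and the standing assumption $\Delta_0\subseteq E_0$, which is harmless since the paper takes $E_0=\Delta_R$.
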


\begin{Exa}\label{exa: explanation of Lemma 3.1}
In other words, the lemma above shows that we may not extend a partial translation to a full one, but we can split it into finitely many parts such that each part can be extended to a full partail translation.
We shall not repeat the proof of Lemma \ref{lem: full partial translation}, but it is worth providing some special examples to get some inspiration.

\paragraph{\bf Case 1.} Let $X$ be a separated disjoint union of finite Cayley graphs with uniformly finite generating sets, i.e., $X=\bigsqcup_{n\in\IN}\Ga_n$ and $\{S_n\subseteq\Ga_n\}_{n\in\IN}$ is the sequence of generating sets such that $\sup_{n\in\IN}\#S_n$. In this case, it is direct to check $X$ has bounded geometry and $X$ is monogenic with $E_0=\{(x,xs)\mid x\in\Ga_n,s\in S_n,n\in\IN\}$ as a generating set of the coarse structure. Since the generating sets $(S_n)_{n\in\IN}$ is uniformly finite, we can label the elements in each $S_n\subseteq\Ga_n$ by $S_n=\{s^{(n)}_0=e_n,s^{(n)}_1,\cdots,s^{(n)}_{k_n}\}$, where $k_n=\#S_n$. Extend this finite sequence by define $s^{(n)}_k=e_n$ for any $k_n< k\leq N$, where $N=\sup_{n\in\IN}\#S_n$. Then for each $k\in\{1,\cdots, N\}$, define
$$A_k=\bigoplus_{n\in\IN}s_k^{(n)}\in\IC_u[X]$$
which is a full partial translation.

Let $V\in\IC_u[X]$ be a partial translation such that $\supp(V)\subseteq E_0$. Denote $B_{V,0}=P_1(\Delta_X\cap\supp(V))$, where $P_1: X\times X\to X$ is the projection onto the first coordinary, i.e., $P_1(x_1,x_2)\mapsto x_1$. For any $k\in\{1,\cdots,N\}$, we denote $B_{V,k}=P_1(\supp(A_k)\cap(\supp(V)\backslash\Delta_X))$. By definition, one has that $\chi_{B_{V,k}}\cdot V=\chi_{B_{V,k}}\cdot s_k$ for each $k=\{0,1,\cdots,N\}$. Since $\bigcup_{k=1}^N\supp(s_k)=E_0$, we conclude that $\bigsqcup_{k=1}^NB_{V,k}=P_1(\supp(V))$. Denote by $\chi_{k}$ the characteristic function on $B_{V,k}$. We then have that $V=\sum_{k=0}^N\chi_{k}A_k$ and $\{B_{V,k}\}$ is a disjoint family.

\paragraph{\bf Case 2.} Consider the space of natural number with the canonical metric inherited from $\IR$. Notice that $\Delta_1$ forms a generating set of the coarse structure of $\IN$. The right-shift operator $V$ on $\ell^2(\IN)$ which maps $\delta_n$ to $\delta_{n+1}$ forms a partial translation. We define the partial translation in $\IC_u[X]$ as follow:
\[A_0=I,\quad A_1=\begin{pmatrix}
  0&  1&  &  & \\
  1&  0&  &  & \\
  &  &  0&  1& \\
  &  &  1&  0& \\
  &  &  &  &\ddots
\end{pmatrix}_{\IN\times\IN},\quad A_2=\begin{pmatrix}
  1&  &  &  &  & \\
  &  0&  1&  &  & \\
  &  1&  0&  &  & \\
  &  &  &  0&  1& \\
  &  &  &  1&  0& \\
  &  &  &  &  &\ddots 
\end{pmatrix}_{\IN\times\IN}.\]
It is straightforward to observe that the union of the supports of these three operators covers the entirety of $\Delta_1$. Although it is impossible to directly extend $V$ into a full partial translation, by defining $\chi_1$ as the characteristic function on the even-indexed set and $\chi_2$ as the characteristic function on the odd-indexed set, we immediately see that $V = \chi_1 A_1 + \chi_2 A_2$, i.e., $V$ is some combination of full partial translations. In fact, using methods analogous to those in Case 1, one can prove that any partial translation with support contained in $\Delta_1$ admits a representation satisfying Lemma \ref{lem: full partial translation}. This specific partitioning approach is precisely the method outlined in \cite[Lemma 3.4]{Vergara2024}.
\end{Exa}

\begin{Lem}\label{lem: invariant vector in group language}
Let $\pi:\IC_u[X]\to\L(\fB)$ be a representation. A vector $\xi\in\fB$ is invariant if and only if for any full partial translation $A$, one has that $\pi(A)\xi=\xi$.
\end{Lem}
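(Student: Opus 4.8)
The plan is to reduce the invariance condition, which a priori must be checked against \emph{all} partial translations $V$, to the single family of full partial translations, using the splitting supplied by Lemma \ref{lem: full partial translation}. The forward implication is immediate: if $\xi$ is invariant and $A$ is a full partial translation, then its local bijection is a genuine bijection of $X$, so $R_A = X$ and $\chi_{R_A} = \chi_X$ is the constant function $1$, i.e. the unit of $\IC_u[X]$. Since $\pi$ is unital, $\pi(\chi_{R_A}) = \mathrm{id}_\fB$, and the defining relation $\pi(A)\xi = \pi(\chi_{R_A})\xi$ becomes exactly $\pi(A)\xi = \xi$.

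For the converse I would start from the hypothesis that $\pi(A)\xi = \xi$ for every full partial translation $A$ and aim to verify $\pi(V)\xi = \pi(\Phi(V))\xi$ for an arbitrary partial translation $V$. First consider the case $\supp(V) \subseteq E_0$. Lemma \ref{lem: full partial translation} lets me write $V = \sum_{i=0}^n \chi_i A_i$ with the $A_i$ full partial translations and the $\chi_i$ characteristic functions of disjoint sets satisfying $\Phi(V) = \sum_{i=0}^n \chi_i$. Applying $\pi$ and using $\pi(A_i)\xi = \xi$ gives
\[
\pi(V)\xi = \sum_{i=0}^n \pi(\chi_i)\pi(A_i)\xi = \sum_{i=0}^n \pi(\chi_i)\xi = \pi\Big(\sum_{i=0}^n \chi_i\Big)\xi = \pi(\Phi(V))\xi,
\]
which is precisely the invariance relation for this $V$.

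The hard part will be the partial translations whose support is not contained in $E_0$. Since $X$ is monogenic, any such $V$ has $\supp(V) \subseteq E_0^{\circ m}$ for some $m$, and $E_0^{\circ m}$ is again a symmetric entourage containing the diagonal (symmetry follows from $(E_0^{\circ m})^T = (E_0^T)^{\circ m}=E_0^{\circ m}$). I would therefore invoke Lemma \ref{lem: full partial translation} with $E_0$ replaced by $E_0^{\circ m}$: the proof of that lemma (\cite[Lemma 3.4]{Vergara2024}) uses only bounded geometry of $X$ and symmetry of the chosen generating entourage, so it applies verbatim and produces full partial translations $A_0', \dots, A_{n'}'$ (now of larger propagation) together with disjoint characteristic functions $\chi_i'$ with $V = \sum_i \chi_i' A_i'$ and $\Phi(V) = \sum_i \chi_i'$. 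Crucially, the hypothesis $\pi(A)\xi=\xi$ is assumed for \emph{all} full partial translations regardless of propagation, so it applies to each $A_i'$, and the same one-line computation as above yields $\pi(V)\xi = \pi(\Phi(V))\xi$, completing the proof. The only content beyond Lemma \ref{lem: full partial translation} is thus the observation that its statement is insensitive to the choice of generating entourage, which is what lets the full--partial-translation decomposition reach every propagation.
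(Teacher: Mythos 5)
Your proof is correct, and its core step --- decomposing a partial translation $V$ with $\supp(V)\subseteq E_0$ via Lemma \ref{lem: full partial translation} and applying the hypothesis $\pi(A_i)\xi=\xi$ termwise --- is exactly the paper's argument; the forward direction is likewise handled the same way. The genuine difference is your second step. The paper's proof of the converse stops after treating partial translations supported in $E_0$, even though the definition of an invariant vector quantifies over \emph{all} partial translations, so strictly speaking the paper leaves the reduction from arbitrary entourages to $E_0$ implicit. Your argument --- using monogenicity to place $\supp(V)$ inside $E_0^{\circ m}$, checking that $E_0^{\circ m}$ is again a symmetric entourage (your transpose computation is fine, since $(E\circ F)^T=F^T\circ E^T$ and all factors here equal $E_0$), and observing that the decomposition of \cite[Lemma 3.4]{Vergara2024} depends only on bounded geometry and symmetry of the chosen entourage rather than on it being the fixed generator --- is precisely what is needed to close that gap, and it is sound: the hypothesis is assumed for all full partial translations with no propagation restriction, so it applies to the new $A_i'$ of larger propagation. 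In short, your proof is a more complete version of the paper's; what the paper's shorter version buys is brevity, at the cost of an omitted step that the paper itself relies on later (e.g.\ in Lemma \ref{lem: complemented representation}(2), where it asserts that \emph{any} partial translation is a finite combination of full ones via a partition of unity).
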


\begin{proof}
It is clear to see that $(\Rightarrow)$ holds. To show $(\Leftarrow)$, let $A_0,\cdots, A_n\in \IC_u[X]$ be the full partial translations as in Lemma \ref{lem: full partial translation}. For any partial translation $V$ with $\supp(V)\subseteq E_0$, we can write $V=\sum_{n\in\IN}\chi_iA_i$ wiht $\Phi(V)=\bigsqcup_{i}\supp(\chi_i)$. Assume $\xi$ satisfies that $\pi(A)\xi=\xi$ for any full partial translation $A$. Then we have that
\[\begin{split}\pi(V)\xi&=\sum_{i=0}^n\pi(\chi_{i})\cdot\pi(A_i)\xi=\sum_{k=1}^N\pi(\chi_{i})\xi=\pi(\chi_{\bigsqcup\supp(\chi_i)})\xi=\Phi(V)\xi.
\end{split}\]
This proves that $\xi$ is invariant.
\end{proof}

For a full partial translation $A$, it is direct to see that $A^*$ is also a full partial translation such that $A^*A=AA^*=I$.

\begin{Lem}\label{lem: complemented representation}
Let $\sB$ be a uniformly convex family of Banach spaces.
\begin{itemize}
\item[(1)] For a representation $\pi: \IC_u[X]\to\L(\fB)$ with $\fB\in\sB$, let $\fB_{\pi}$ be the annihilator of $(\fB^*)^{\pi^*}$. Then $\fB\cong \fB^\pi\oplus\fB_{\pi}$. 
\item[(2)] A representation $\pi$ has a spectral gap if and only if for any $\xi\in S(\fB_{\pi})$, there exists a full partial translation $A\in\IC_u[X]$ with $\supp(A)\subseteq\Delta_R$ such that
$$\|\pi(A)\xi-\xi\|\geq c\|\xi\|.$$
\item[(3)] To show $X$ has uniform property ($T_{\sB}$), it suffices to show that for any representation $\pi:\IC_u[X]\to\L(\fB)$ with $\fB\in\sB$ and $\xi\in S(\fB_{\pi})$, there exists a full partial translation $A\in\IC_u[X]$ with $\supp(A)\subseteq\Delta_R$ such that
$$\|\pi(A)\xi-\xi\|\geq c\|\xi\|.$$
\end{itemize}\end{Lem}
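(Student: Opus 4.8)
The plan is to reduce everything to the representation theory of the \emph{group of full partial translations}, where the splitting theorem of \cite{BFGM2007} applies directly. Let $G$ denote the set of all full partial translations in $\IC_u[X]$; since the composition and the conjugate transpose (= inverse) of full partial translations are again full, $G$ is a group, and because $\pi$ is contractive and unital, $A\mapsto\pi(A)$ restricts to an isometric linear representation $\rho:=\pi|_G:G\to O(\fB)$. By Lemma \ref{lem: invariant vector in group language}, a vector is $\pi$-invariant precisely when it is fixed by every $\pi(A)$, $A\in G$; that is, $\fB^\pi=\fB^\rho$. Applying the same lemma to the dual representation $\pi^*$ (noting that $\pi^*|_G=\rho^*$, since $A^*=A^{-1}$ and $\Phi(A^*)=\chi_X$ is the unit for full $A$) gives $(\fB^*)^{\pi^*}=(\fB^*)^{\rho^*}$. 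Part (1) is then immediate from \cite[Proposition 2.10]{BFGM2007} applied to $\rho$ on the uniformly convex space $\fB$: one gets $\fB=\fB^\rho\oplus\text{Ann}((\fB^*)^{\rho^*})=\fB^\pi\oplus\fB_\pi$, with $\fB_\pi\cong\fB/\fB^\pi$.

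Before part (2) I record two facts used throughout. First, $\fB_\pi$ is $G$-invariant: for $A\in G$, $\xi\in\fB_\pi$ and $f\in(\fB^*)^{\pi^*}$ one has $\langle f,\pi(A)\xi\rangle=\langle\pi^*(A^*)f,\xi\rangle=\langle f,\xi\rangle=0$, because $A^*\in G$ and $f$ is $\pi^*$-invariant; hence $\pi(A)\xi\in\fB_\pi$. Thus the subrepresentation $\wh\pi:=\pi|_{\fB_\pi}$ of $G$ is intertwined with $\wt\pi$ by the restricted quotient map $q|_{\fB_\pi}:\fB_\pi\to\fB/\fB^\pi$, which is a $G$-equivariant isomorphism. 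Second, writing $Q$ for the projection of $\fB$ onto $\fB^\pi$ along $\fB_\pi$, a barycenter argument shows $\|Q\|\le 1$: for $v\in\fB$, reflexivity and strict convexity give a unique minimal-norm element of the closed convex hull of the $G$-orbit $Gv$; each $\pi(A)$ is an isometry permuting this hull, so the element is $G$-fixed, and since every $f\in(\fB^*)^{\pi^*}$ is constant on $Gv$ it differs from $v$ by an element of $\fB_\pi$, hence equals $Qv$; as $v$ lies in the hull, $\|Qv\|\le\|v\|$. Therefore $P:=I-Q$ satisfies $\|P\|\le 2$, and for $\zeta\in\fB_\pi$ we get $\tfrac12\|\zeta\|\le\|q(\zeta)\|\le\|\zeta\|$, a bound \emph{independent} of $\fB$ and $\pi$.

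For part (2) I use Lemma \ref{lem: full partial translation} to write any partial translation $V$ with $\supp(V)\subseteq E_0$ as $V=\sum_{i=0}^n\chi_iA_i$ with each $A_i\in G$ full, $\supp(A_i)\subseteq E_0$, and $\Phi(V)=\sum_i\chi_i$. Since $\pi$ is contractive, in the quotient
\[
\wt\pi(V)[\xi]-\wt\pi(\Phi(V))[\xi]=\sum_{i=0}^n\wt\pi(\chi_i)\big(\wt\pi(A_i)-I\big)[\xi],
\]
so the deviation of $V$ is at most $\sum_i\|(\wt\pi(A_i)-I)[\xi]\|$. Conversely a full $A\in G$ is itself a partial translation with $\Phi(A)=\chi_X$, whence $\wt\pi(A)[\xi]-\wt\pi(\Phi(A))[\xi]=\wt\pi(A)[\xi]-[\xi]$. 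Combining these with the equivariant isomorphism $q|_{\fB_\pi}$ and the estimate $\tfrac12\|\zeta\|\le\|q(\zeta)\|\le\|\zeta\|$ converts a spectral gap $c$ for $\wt\pi$ into the bound $\|\pi(A)\xi-\xi\|\ge\tfrac{c}{2(n+1)}\|\xi\|$ for some full $A$ and $\xi\in\fB_\pi$, and a bound $c'$ on $\fB_\pi$ into a spectral gap $\tfrac{c'}{2}$ for $\wt\pi$. (The scale $R$ and the fixed $E_0$ are interchangeable because $X$ is monogenic, so a full partial translation with support in $\Delta_R$ is a product of finitely many full ones with support in $E_0$, altering constants by a factor depending only on $X$.)

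Part (3) is the uniform form of the backward implication in (2). The factor $n+1$ depends only on $X$ and $E_0$ via Lemma \ref{lem: full partial translation}, and the comparison $\|q(\zeta)\|\ge\tfrac12\|\zeta\|$ holds with a constant independent of the representation and of $\fB\in\sB$; hence a single constant $c$ in the $\fB_\pi$-criterion, valid for all $\pi$ and all $\fB\in\sB$, yields the uniform spectral gap $\tfrac{c}{2}$, i.e.\ uniform geometric property ($T_\sB$). I expect the crux — and the only place the uniform convexity of $\sB$ is genuinely used — to be securing this \emph{uniform} two-sided comparison between $\fB_\pi$ and $\fB/\fB^\pi$; the point is that uniform convexity makes the barycenter projection norm-bounded by an absolute constant on every member of $\sB$, so the whole chain of estimates transfers with constants independent of $\fB$. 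The remaining difficulty is purely bookkeeping: verifying the $G$-invariance of $\fB_\pi$ and the exact intertwining of $\wh\pi$ with $\wt\pi$, so that $\pi(A)\xi-\xi$ never leaves $\fB_\pi$ and the quotient norms line up.
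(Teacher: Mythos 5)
Your proposal is correct, but it reaches the lemma by a genuinely different route than the paper, and the difference is worth recording. For part (1) you reduce to the group $G$ of full partial translations (using Lemma \ref{lem: invariant vector in group language} applied to both $\pi$ and $\pi^*$, together with $\pi^*|_G=\rho^*$) and then quote \cite[Proposition 2.10]{BFGM2007}; the paper instead reproves the splitting from scratch, showing that the duality map restricts to a bijection $S(\fB^{\pi})\to S((\fB^*)^{\pi^*})$ and then running a Hahn--Banach double-annihilator argument. Both are valid, and the paper's route has the side benefit that the $*$-bijection it establishes is reused elsewhere (e.g.\ in Theorem \ref{thm: TB to TB*} and in the paper's own proof of part (3)). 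The more significant divergence is in parts (2)--(3): your minimal-norm-point argument proves that the projection $Q$ of $\fB$ onto $\fB^{\pi}$ along $\fB_{\pi}$ satisfies $\|Q\|\le 1$, using only reflexivity and strict convexity of the single space $\fB$, and this yields the two-sided comparison $\tfrac12\|\zeta\|\le\|q(\zeta)\|\le\|\zeta\|$ for $\zeta\in\fB_{\pi}$ with constants independent of $\pi$ and of $\fB\in\sB$. Consequently (3) becomes an immediate corollary of (2), with explicit constants ($c/2$ for the gap, $c/(2(n+1))$ in the converse, $n$ coming from Lemma \ref{lem: full partial translation}). The paper instead gets the comparison per representation from the open mapping theorem, with a non-explicit constant, and then must prove uniformity in (3) by a separate contradiction argument invoking the equi-uniform continuity of the duality maps across the uniformly convex family; your approach makes that step unnecessary and in fact shows that family-level uniform convexity is not needed for this lemma at all, only uniform convexity of each member. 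One caution: your parenthetical claim that a full partial translation supported in $\Delta_R$ factors as a \emph{product} of finitely many full partial translations supported in $E_0$ is unjustified, and I do not see why it should hold in general. It is also unneeded: the paper's convention allows taking $E_0=\Delta_R$, and even when $E_0\subsetneq\Delta_R$ the independence of the spectral-gap condition from the choice of generating entourage follows from the standard sum-decomposition and triangle-inequality estimates (as in the forward direction of your own argument), not from a group-theoretic factorization.
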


\begin{proof}
(1) Let $\xi\in\fB^\pi$ with $\|\xi\|=1$. We denote $f_\xi$ the dual of $\xi$ under the $*$-map. For any full partial translation $A$, by Lemma \ref{lem: invariant vector in group language}, we conclude that
$$1=\langle \xi,f_\xi\rangle=\langle \pi(A^{*})\xi,f_\xi\rangle=\langle\xi,\pi^*(A)f_\xi\rangle.$$
Since $\pi^*$ is contractive and $*: S(\fB)\to S(\fB^*)$ is bijective, we conclude that $\pi^*(A)f_\xi=f_\xi$. Thus $f_\xi$ is $\pi^*$-invariant, i.e., $*$ restricts to a bijection from $S(\fB^{\pi})$ to $S((\fB^*)^{\pi^*})$. For any unit $\xi\in\fB^\pi$ and $\eta\in\fB_{\pi}$, if $a\xi+b\eta=0$, then we have that $f_{\xi}(a\xi+b\eta)=a=0$, thus $a=b=0$. This proves that $\fB^\pi\oplus\fB_\pi$ forms a closed subspace of $\fB$. To show it is actually dense in $\fB$, assume for a contradiction that there exists $f\in\fB^*$ with $\|f\|=1$ and $f|_{\fB^\pi\oplus\fB_\pi}=0$. Since $f|_{\fB_\pi}=0$, it means that $f$ is in the double annihilator of $(\fB^*)^{\pi^*}$. By Hahn-Banach Theorem, we conclude that $f\in (\fB^*)^{\pi^*}$, see \cite[Paragraph 2.3.6]{GTM118} for example. Since the $*$ map restricts to a bijection from $S(\fB^{\pi})$ to $S((\fB^*)^{\pi^*})$, there must exists $\lambda\in\fB^\pi$ such that $f=f_\lambda$. Thus $f|_{\fB^\pi}$ can never be $0$, which leads to a contradiction. This proves that $\fB\cong \fB^\pi\oplus\fB_{\pi}$.

(2) By the first item, for any representation $\pi:\IC_u[X]\to\L(\fB)$, there exists a direct sum decomposition $\fB\cong\fB_\pi\oplus\fB^\pi$. It induces a projection $p:\fB\to\fB_\pi$, which further induces a linear isomorphism $p:\fB/\fB^{\pi}\to\fB_\pi$ by open mapping theorem (we still denote this map by $p$ with a slight abuse of notation). Since $p:\fB/\fB^\pi\to\fB_\pi$ is an isomorphism, $p$ is both bounded and lower bounded, i.e., the quotient norm $\fB/\fB^{\pi}$ is equivalent to the norm of $\fB_\pi$. By the definition of the quotient norm, $\|p^{-1}\|\leq 1$. Thus there must exists $m\leq 1$ such that
$$\|[\xi]\|_{\fB/\fB^\pi}\in[m, 1]$$
for all $\xi\in S(\fB_\pi)$. One can then directly obtain ($\Rightarrow$) because of the equivalence of two norms. Moreover, by Lemma \ref{lem: invariant vector in group language}, any partial translation on $X$ is a uniformly finite combination (determined by bounded geometry) of full partial translations via a partition of unity. Thus it suffices to assume the supremum to be taken for all full partial translations, which implies ($\Leftarrow$).

(3) As we have explained in (2), for any representation $\pi:\IC_u[X]\to\L(\fB)$, the canonical map $p_{\pi}: \fB_\pi\to\fB/\fB^\pi$ is a contractive isomorphism. From the argument above, it suffices to prove that for a uniformly convex family of Banach spaces, the lower norm of the family $\{p_{\pi}\}$ is uniformly greater than $0$. Indeed, assume for a contradiction that there exists a sequence of representations $\{\pi_n:\IC_u[X]\to\L(\fB_n)\}_{n\in\IN}$ such that the lower norm of $p_{\pi_n}$ is smaller than $1/n$ for any $n\in\IN$. Then we can choose a unit vector $\xi_n\in(\fB_n)_{\pi_n}$ such that there exists a unit vector $\eta_n\in(\fB_n)^{\pi_n}$ such that $\|\xi_n-\eta_n\|<\frac 2n$. Let $f_{\xi_n},f_{\eta_n}$ be tha dual of $\xi_n,\eta_n$. We then conlude that $\|f_{\xi_n}-f_{\eta_n}\|\geq 1$ since $f_{\eta_n}(\xi_n)=0$ and $f_{\xi_n}(\xi_n)=1$. Notice that the duality map $\{*: S(\fB)\to S(\fB^*)\}_{\fB\in\sB}$ is equi-uniformly continuous since the uniform continuity of the dual map only depends on the convexity modulus function of $\fB$, see \cite[Proposition A.5]{BL2000} for example. This leads to a contradiction and we finish the proof.
\end{proof}

A one-sentence summary of Lemma \ref{lem: complemented representation} is that a representation $\pi: \IC_u[X]\to\L(\fB)$ is equivalent to $\pi$ has a spectral gap when restricting on $\fB_\pi$.

\begin{Thm}\label{thm: TB to TB*}
Let $X$ be a separated disjoint union of a sequence of finite metric spaces which is monogenic, $\fB$ is a uniformly convex Banach space, then $X$ has geometric property ($T_{\fB}$) if and only if $X$ has geometric property ($T_{\fB^*}$).
\end{Thm}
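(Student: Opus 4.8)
\emph{Proof strategy.} The plan is to reduce the spectral gap to the operator norm of a single self-adjoint \emph{Markov operator} acting on the quotient, and to exploit that this norm is manifestly preserved under dualization. Using Lemma \ref{lem: full partial translation}, fix full partial translations $A_0=I,A_1,\dots,A_k$ with support in $E_0$; after enlarging the family to $\{A_i,A_i^*\}$ we may assume it is symmetric, and set $M=\frac{1}{k+1}\sum_i A_i\in\IC_u[X]$, so that $M^*=M$. By Lemma \ref{lem: complemented representation}(1) every representation $\pi$ on a uniformly convex space splits as $\fB=\fB^{\pi}\oplus\fB_\pi$, and the canonical pairing identifies $(\fB/\fB^{\pi})^*$ with $(\fB^*)_{\pi^*}$. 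Under this identification, formula \eqref{eq: dual rep} shows that the Banach adjoint of $\wt\pi(M)$ on $\fB/\fB^{\pi}$ is $\wt{\pi^*}(M)$ on $\fB^*/(\fB^*)^{\pi^*}$; hence the quantity $q(\pi):=\|\wt\pi(M)\|$ satisfies $q(\pi)=q(\pi^*)$ with no appeal to convexity.

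Next I would record the two halves of the equivalence ``$\pi$ has a spectral gap $\iff q(\pi)<1$''. The easy half needs no geometry: since $A_0=I$, for any $[\xi]$ one has $\max_i\|\wt\pi(A_i)[\xi]-[\xi]\|\ge\|\wt\pi(M)[\xi]-[\xi]\|\ge(1-q(\pi))\|[\xi]\|$, so $q(\pi)<1$ already forces a gap of size $1-q(\pi)$. The hard half is where uniform convexity of the acting space is used: writing $\wt\pi(M)[\xi]$ as the average of the vectors $\wt\pi(A_i)[\xi]$, each of norm $\|[\xi]\|$ (the $A_i$ being full partial translations, $\wt\pi(A_i)$ is an isometry of the quotient), the averaging inequality for uniformly convex spaces forces, whenever $\|\wt\pi(M)[\xi]\|$ is close to $\|[\xi]\|$, all $\wt\pi(A_i)[\xi]$ to be close to $[\xi]=\wt\pi(A_0)[\xi]$; this contradicts a spectral gap unless $q(\pi)\le 1-\eta$, with $\eta$ depending only on the gap and the modulus of convexity of $\fB$.

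With these in hand the implication $(T_\fB)\Rightarrow(T_{\fB^*})$ is immediate. Given any representation $\sigma$ on $\fB^*$, its dual $\sigma^*$ acts on the uniformly convex space $\fB$ and has a gap by hypothesis; the hard half yields $q(\sigma^*)<1$, self-duality gives $q(\sigma)=q(\sigma^*)<1$, and the easy half produces a gap for $\sigma$. Note that this direction only ever invokes the hard half on $\fB$ itself.

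The reverse implication $(T_{\fB^*})\Rightarrow(T_\fB)$ is where the real work lies, and it is the step I expect to be the main obstacle. Given $\pi$ on $\fB$, it suffices by the easy half and self-duality to show $q(\pi)=q(\pi^*)<1$, and we know $\pi^*$ has a gap; but to convert this into $q(\pi^*)<1$ one would need the hard half for the representation $\pi^*$ on $\fB^*$, and $\fB^*$ is only uniformly smooth, not uniformly convex, so the averaging argument is unavailable. This is exactly the point at which the hypothesis that $X$ is a separated disjoint union of finite spaces is decisive. Since finite propagation cannot connect the infinitely separated pieces, each $A_i$, and hence $\pi$, is block diagonal for the central idempotents $\pi(\chi_{X_m})$, and on the $m$-th block the finite matrix algebra $\IC_u[X_m]=M_{k_m}(\IC)$ acts, forcing $\pi(\chi_{X_m})\fB\cong W_m^{\,k_m}$ for a multiplicity space $W_m\subseteq\fB$ that is uniformly convex with the modulus of $\fB$. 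On this block $\wt\pi(M)$ becomes $\wt{M}_m\otimes I_{W_m}$ for a fixed (symmetric) matrix $\wt M_m$, while its dual is $\wt M_m\otimes I_{W_m^*}$, and $\|\wt M_m\otimes I_{W_m^*}\|=\|\wt M_m\otimes I_{W_m}\|$ by $\|T\|=\|T^*\|$. Thus the per-block Markov norm is controlled entirely through the uniformly convex fibers $W_m$, so the hard half can be run block by block with a modulus uniform in $m$, giving $q(\pi^*)=\sup_m\|\wt M_m\otimes I_{W_m}\|<1$ once $\pi^*$ has a gap. The details I would still need to pin down are precisely this block reduction: that the global invariant subspace, spectral gap and Markov norm are assembled from their per-block analogues, that the multiplicity decomposition $\fB\cong\bigoplus_m W_m^{k_m}$ is compatible with the duality of Lemma \ref{lem: complemented representation}, and that the per-block gaps of $\pi$ and $\pi^*$ agree. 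Granting these, both implications follow and the theorem is proved.
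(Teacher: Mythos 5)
Your forward half is fine in outline, but the reverse implication $(T_{\fB^*})\Rightarrow(T_\fB)$ --- which you yourself flag as the crux --- has a genuine gap, and the block reduction you propose cannot close it. It is true that for a separated disjoint union $\IC_u[X]=\prod_m\IC_u[X_m]$ (with uniformly bounded blocks) and that the $\chi_{X_m}$ are central idempotents; but a unital representation of this algebra is \emph{not} assembled from its blocks, because $\sum_m\chi_{X_m}$ does not converge to $1$ in $\IC_u[X]$. Consequently $\bigoplus_m\pi(\chi_{X_m})\fB$ can be a proper, and even zero, subspace of $\fB$. Concretely, any representation factoring through the quotient $\IC_{u,\infty}[X]_\omega=\IC_u[X]/\I^\infty_\omega$ of Section \ref{sec: Geometric property (T) towards infinity}, or any ultraproduct representation $\pi_\omega(T)[\xi_m]=[T_m\xi_m]$ as in the proof of Theorem \ref{thm: Kazhdan projection for Lp case}, is unital and contractive yet satisfies $\pi(\chi_{X_k})=0$ for \emph{every} $k$: the sequence $\chi_{X_k}=(\delta_{mk}1)_m$ is supported at a single index, hence null along any free ultrafilter. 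For such representations all your multiplicity spaces $W_m$ vanish, so per-block gaps, per-block Markov norms and per-block invariant subspaces carry no information about $\pi$ --- and these ``at infinity'' representations are exactly the ones geometric property (T) is about. (Even when the blocks are faithful, the module isomorphism $\pi(\chi_{X_m})\fB\cong W_m^{k_m}$ is merely bounded, not isometric, and you would still owe uniformity in $m$.) So the reverse direction remains unproven in your proposal.

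For comparison, the paper proves precisely this direction by a nonlinear transfer that never invokes convexity of $\fB^*$: given $\pi$ on $\fB$ with almost invariant unit vectors $\xi_n\in\fB_\pi$, it applies the duality map $\xi\mapsto f_\xi$ of the uniformly convex space $\fB$, observes that the dual vector of $\pi(A)\xi$ is exactly $\pi^*(A)f_\xi$ for every full partial translation $A$ (since $\langle\pi(A)\xi,\pi^*(A)f_\xi\rangle=\langle\pi(A^*A)\xi,f_\xi\rangle=1$ and the dual vector is unique), and uses the uniform continuity of the duality map to conclude that the $f_{\xi_n}$ are almost invariant for $\pi^*$; this shows $T_{\fB^*}$ fails whenever $T_\fB$ fails. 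Your forward half, by contrast, is a legitimately different and workable route: it amounts to re-proving Lemma \ref{lem: spectral gap and Markov kernel} (spectral gap $\iff$ Markov operator norm $<1$, via uniform convexity) and then dualizing, whereas the paper's proof of Theorem \ref{thm: TB to TB*} never uses the Markov operator. One identity does need repair there: the Banach adjoint of $\wt\pi(M)$ acts on $(\fB/\fB^\pi)^*\cong(\fB^*)_{\pi^*}$ with the \emph{subspace} norm, not on the quotient $\fB^*/(\fB^*)^{\pi^*}$, so what duality actually gives is $\|\wt\pi(M)\|=\|\pi^*(M)|_{(\fB^*)_{\pi^*}}\|$ rather than $q(\pi)=q(\pi^*)$; exchanging subspace and quotient norms on the dual side costs the norm of the idempotent from Lemma \ref{lem: complemented representation}. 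This is harmless for the forward implication (run your easy half on the subspace $(\fB^*)_\sigma$, using Lemma \ref{lem: complemented representation}(2) extended to $\fB^*$ by reflexivity), but it is not a free self-duality.
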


\begin{proof}
Assume for a contradiction that $X$ does not have geometric property ($T_\fB$). Then by Lemma \ref{lem: complemented representation}, there exists a representation $\pi: \IC_u[X]\to\L(\fB)$ such that one can choose a sequence of vectors $\{\xi_n\}_{n\in\IN}$ in $\fB_{\pi}$ such that
$$\lim_{n\to\infty}\sup_{A\in\IC_u[X]\text{: full partial translation, }\atop\supp(A)\subseteq E_0}\|\pi(A)\xi_n-\xi_n\|=0.$$
Let $\{f_{\xi_n}\}_{n\in\IN}$ be the $*$ of the sequence $\{\xi_n\}$. By the Hahn-Banach theorem and the fact that $\fB$ is uniformly convex, $\fB^*_{\pi}$ is isometrically isomorphic to $\fB^*/(\fB^*)^{\pi^*}$. Thus $\{f_{\xi_n}\}_{n\in\IN}$ defines a sequence in $S(\fB^*/(\fB^*)^{\pi^*})$. Moreover, one can also check that
$$\langle\pi(A)\xi,\pi^*(A)f_\xi\rangle=\langle\pi(A^*A)\xi,f_\xi\rangle=\langle\xi,f_\xi\rangle=1.$$
Thus the $*$ of $\pi(A)\xi$ is exactly equal to $\pi^*(A)f_\xi$. Since the $*$-map for uniformly convex Banach space is uniformly continuous, see \cite[Proposition A.4]{BL2000} for example, we conclude that
$$\lim_{n\to\infty}\sup_{A\in\IC_u[X]\text{: full partial translation, }\atop\supp(A)\subseteq E_0}\|\pi^*(A)f_{\xi_n}-f_{\xi_n}\|=0.$$
This proves that $X$ does not have geometric property ($T_{\fB^*}$).
\end{proof}

\section{Kazhdan projections}\label{sec: Kazhdan Idempotent}

In this section, we shall characterize geometric property ($T_\fB$) by the existence of a certain idempotent in some certain completion of the Roe algebra, i.e., the so-called \emph{Kazhdan projection}.

Before we get into details, we shall first need some preparation. To facilitate our discussion, we begin by establishing some common conventions and definitions that will be used throughout this section. Throughout this section, we will always assume $\fB$ to be a uniformly convex Banach space, $\sB$ to be a uniformly convex family of Banach spaces. In this section, we only assume $X$ is a separated disjoint union of a family of finite space that has uniformly bounded geometry. Assume that $X$ is monogenic and $E_0\in\E$ is the generating entourage.

\begin{Def}\label{def: maximal norm for family of representations}
Let $\F$ be a set of representations of $\IC_u[X]$ on $\fB$ (resp. $\sB$). Define the norm $\|\cdot\|_{\F}$ on $\IC_u[X]$ by
$$\|a\|_{\F}=\sup\{\|\pi(a)\|\mid \pi\in\F\}$$
for any $a\in\IC_u[X]$. If $\F$ is the set of all representations on $\fB$ (resp. $\sB$), then we shall denote the norm by $\|\cdot\|_{\fB,\max}$ (resp. $\|\cdot\|_{\sB,\max}$).

The algebra $C_{\F}(X)$ is defined to be the completion of $\IC_u[X]$ under the norm $\|\cdot\|_\F$. If $\F$ is the set of all representations on $\fB$, then we shall denote this Banach algebra by $C_{\fB,\max}(X)$ (resp. $C_{\sB,\max}(X)$).
\end{Def}

If $\sB=\sL^p$, then the algebra $C_{\sL^p,\max}(X)$, which is also denoted by $C^p_{\max}(X)$, is called the \emph{maximal uniform $L^p$-Roe algebra}. When $\sB$ is the family of all Hilbert spaces, then $C_{\fB,\max}(X)$ is exactly the \emph{maximal Roe $C^*$-algebra} $C^*_{\max}(X)$. Analogue to the uniform Roe algebra and its maximal version, $C^p_{\max}(X)$ is a natural generalization, which is relevant to the $L^p$-version of the maximal coarse Baum-Connes conjecture. Fix a family of representation $\F$. For any representation $\pi\in\F$, the representation $\pi$ will give $\IC_u[X]$ a completion, which is denoted by $C_{\pi}(X)$. By the universal property of the norm  $\|\cdot\|_{\F}$, there exists a canonical quotient homomorphism
$$Q_{\pi}: C_{\F,\max}(X)\to C_{\pi}(X).$$

For our convenience, we shall denote $A_1,\cdots,A_n\in\IC_u[X]$ to be the full partial translation as in \cite[Lemma 3.4]{Vergara2024}, which means that for any partial translation $V$ with $\supp(V)\subseteq E_0$, there exists disjoint subsets $B_0,\cdots, B_n\in\ell^{\infty}(X)$ such that
$$V=\sum_{i=1}^n\chi_{B_i}A_i.$$

We define the \emph{Laplacian} associated with the decompostion $\{A_1,\cdots,A_n\}$ to be
$$\Delta=1-\frac 1n\cdot\sum_{i=1}^nA_i$$
Define $A\in\IC_u[X]$ by
$$A=1-\frac{\Delta}2=\frac{1}{n}\sum_{i=1}^n\frac{1+A_i}2.$$
Notice that for any representation $\pi:\IC_u[X]\to\L(\fB)$, the subspaces $\fB_{\pi}$ and $\fB^{\pi}$ are both invariant under the action of $\pi(A_i)$ by Lemma \ref{lem: complemented representation}. Set $p_{\pi}$ to be the idempotent onto $\fB^{\pi}$ along $\fB_\pi$. We then conclude that $\pi(A)$ commutes with $p_\pi$. On the other hand, for any $\xi\in\fB^{\pi}$, one has that $\pi(A_i)\xi=\xi$, thus $\pi(A)\xi=\xi$. We then conclude that $p_{\pi}=\pi(A)p_{\pi}=p_\pi\pi(A)$, the last equality is because of the fact that $p_\pi$ commutes with $\pi(A)$.

\begin{Lem}\label{lem: spectral gap and Markov kernel}
Let $\fB$ be a uniformly convex Banach space, $\pi:\IC_u[X]\to\L(\fB)$ a representation. Then the following are equivalent:\begin{itemize}
\item[(1)] $\pi$ has a spectral gap $c>0$;
\item[(2)] $\|\pi(A)|_{\fB_\pi}\|=\sup_{\xi\in\fB_{\pi}}\frac{\|\pi(A)\xi\|}{\|\xi\|}=\lambda<1;$
\item[(3)] the limit of $\pi(A^k)$ converges to $p_\pi$ with a summable rate, i.e., the series $\sum_{k=1}^\infty\|\pi(A^k)-p_\pi\|$ converges to some $S>0$.
\end{itemize}
Moreover, the parameters $c$, $\lambda$, and $S$ can be mutually determined from one another.
\end{Lem}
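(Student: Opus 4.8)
The plan is to exploit the block structure of $\pi(A)$ relative to the decomposition $\fB\cong\fB^\pi\oplus\fB_\pi$ from Lemma \ref{lem: complemented representation}(1). Since both summands are $\pi(A_i)$-invariant, $A$ preserves the decomposition, and as observed in the paragraph preceding the lemma $\pi(A)|_{\fB^\pi}=\mathrm{id}$ and $p_\pi=\pi(A)p_\pi$. Write $B:=\pi(A)|_{\fB_\pi}\in\L(\fB_\pi)$. Because $A=\tfrac12\cdot 1+\tfrac1{2n}\sum_{i=1}^nA_i$ is an average of full partial translations, $\|A\|_{\ell^1}\le 1$, so $B$ is a contraction and $\lambda:=\|B\|\le1$ in all cases. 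On $\fB^\pi$ the operator $\pi(A^k)-p_\pi$ vanishes, while on $\fB_\pi$ it equals $B^k=\pi(A^k)|_{\fB_\pi}$; hence $\|B^k\|\le\|\pi(A^k)-p_\pi\|\le\|1-p_\pi\|\,\|B^k\|$, which lets me pass freely between the two quantities. Finally, combining Lemma \ref{lem: full partial translation} with Lemma \ref{lem: complemented representation}(2), a spectral gap is equivalent to the existence of $c'>0$ with $\max_i\|\pi(A_i)\xi-\xi\|\ge c'$ for every unit $\xi\in\fB_\pi$: any full partial translation $A'$ with support in $E_0$ is a sum $\sum_j\chi_jA_j$ with $\sum_j\chi_j=1$, and since $\|\pi(\chi_j)\|\le1$ its defect $\pi(A')\xi-\xi=\sum_j\pi(\chi_j)(\pi(A_j)\xi-\xi)$ is controlled by $\max_i\|\pi(A_i)\xi-\xi\|$ up to the factor $n$.

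For $(1)\Leftrightarrow(2)$ I would start from the identity $\pi(A)\xi-\xi=\tfrac1{2n}\sum_{i=1}^n(\pi(A_i)\xi-\xi)$ valid for $\xi\in\fB_\pi$. The direction $(2)\Rightarrow(1)$ is immediate: a unit $\xi\in\fB_\pi$ satisfies $\|\pi(A)\xi-\xi\|\ge 1-\lambda$, whence $\max_i\|\pi(A_i)\xi-\xi\|\ge 2(1-\lambda)>0$. The substantive direction $(1)\Rightarrow(2)$ is where uniform convexity enters. Given the spectral gap constant $c'$ and a unit $\xi\in\fB_\pi$, pick $i_0$ with $\|\pi(A_{i_0})\xi-\xi\|\ge c'$; since $\pi(A_{i_0})$ is an isometry, both $\xi$ and $\pi(A_{i_0})\xi$ are unit vectors, so the convexity modulus yields $\big\|\tfrac{\xi+\pi(A_{i_0})\xi}2\big\|\le 1-\delta(c')$. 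Writing $B\xi=\tfrac1n\sum_i\tfrac{\xi+\pi(A_i)\xi}2$ as an average of midpoints of norm $\le1$, with the $i_0$-term of norm $\le1-\delta(c')$, the triangle inequality gives $\|B\xi\|\le 1-\tfrac{\delta(c')}n$, so $\lambda\le 1-\tfrac{\delta(c')}n<1$.

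The implication $(2)\Rightarrow(3)$ is the geometric-series estimate $\|\pi(A^k)-p_\pi\|\le\|1-p_\pi\|\,\|B\|^k=\|1-p_\pi\|\lambda^k$, so $\sum_k\|\pi(A^k)-p_\pi\|\le\|1-p_\pi\|\tfrac{\lambda}{1-\lambda}=:S<\infty$. For $(3)\Rightarrow(2)$ I would argue by contraposition, and this is the step I expect to be the main obstacle, since $\lambda=1$ does not by itself preclude $\|B^k\|\to0$ and summability in a general Banach space. Suppose $(2)$ fails; as $\lambda\le1$ always, this forces $\lambda=1$, so by the already-established equivalence $(1)\Leftrightarrow(2)$ the representation has no spectral gap. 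By the working characterization from the first paragraph there are unit vectors $\xi_m\in\fB_\pi$ with $\max_i\|\pi(A_i)\xi_m-\xi_m\|\to0$, hence $\|B\xi_m-\xi_m\|=\tfrac1{2n}\|\sum_i(\pi(A_i)\xi_m-\xi_m)\|\to0$. For each fixed $k$, the telescoping $B^k\xi_m-\xi_m=\sum_{j=0}^{k-1}B^j(B\xi_m-\xi_m)$ together with $\|B^j\|\le1$ gives $\|B^k\xi_m-\xi_m\|\le k\|B\xi_m-\xi_m\|\to0$, so $\|B^k\xi_m\|\to1$ and therefore $\|B^k\|=1$ for every $k$. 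Then $\sum_k\|\pi(A^k)-p_\pi\|\ge\sum_k\|B^k\|=\infty$, contradicting $(3)$. Tracking the constants through the three implications above (namely $c'\ge2(1-\lambda)$, $\lambda\le1-\delta(c')/n$, and $S\le\|1-p_\pi\|\lambda/(1-\lambda)$) yields the asserted mutual dependence of $c$, $\lambda$ and $S$.
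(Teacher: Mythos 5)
Your implications $(1)\Rightarrow(2)$ and $(2)\Rightarrow(3)$ are correct and essentially coincide with the paper's own argument: the same uniform-convexity estimate applied to the midpoint $\frac{\xi+\pi(A_{i_0})\xi}{2}$, and the same geometric decay of $\pi(A^k)-p_\pi$. Two of your refinements are genuine improvements in rigor: you justify the reduction of the spectral-gap condition to the finitely many generators $A_i$ (via Lemma \ref{lem: full partial translation} and $\sum_j\chi_j=1$), which the paper asserts without proof, and in $(2)\Rightarrow(3)$ you keep the factor $\|1-p_\pi\|$ explicit, whereas the paper's chain $\|\pi(A)-p_\pi\|=\|\pi(A)(1-p_\pi)\|=\|\pi(A)|_{\fB_\pi}\|$ silently assumes the idempotent is contractive, which need not hold in a Banach space; your bound $\|\pi(A^k)-p_\pi\|\le\|1-p_\pi\|\lambda^k$ is the correct repair.

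The genuine gap is in how you come back from $(3)$. The paper proves $(3)\Rightarrow(1)$ \emph{directly and quantitatively}: for a unit $\xi\in\fB_\pi$, writing $a_k=\|\pi(A^k)-p_\pi\|$ and $c_\xi=\max_i\|\pi(A_i)\xi-\xi\|$, it telescopes $\|\pi(A^k)\xi-\xi\|\le\bigl(1+\sum_{i=1}^{k-1}a_i\bigr)\|\pi(A)\xi-\xi\|\le c_\xi(1+S)$ --- the key point being that the intermediate powers are bounded by the \emph{summable} quantities $a_j$, not merely by $1$ --- and then lets $k\to\infty$, so $\|\pi(A^k)\xi-\xi\|\to\|\xi\|=1$ and hence $c_\xi\ge\frac{1}{1+S}$. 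Your contraposition ($\lambda=1$ forces $\|B^k\|=1$ for all $k$, hence divergence) establishes $(3)\Rightarrow(2)$ only as a yes/no statement: because your telescoping uses $\|B^j\|\le1$ and therefore carries the growing factor $k$, it can yield no bound on $c$ or $\lambda$ in terms of $S$. Consequently the ``Moreover'' clause is not fully proved: your three relations determine $c$ from $\lambda$, $\lambda$ from $c$, and $S$ from $\lambda$, but nothing from $S$ back, so the claimed \emph{mutual} determination fails in exactly one direction. This is not cosmetic, since the $(\Leftarrow)$ half of Theorem \ref{thm: property T via Kazhdan projection} invokes precisely the bound $c\ge\frac{1}{S+1}$ to turn the single summable rate supplied by the Kazhdan projection into a \emph{uniform} spectral gap over all representations; a qualitative implication would give each representation its own, possibly degenerating, gap. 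The fix is to replace the contraposition by the paper's direct estimate above.
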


\begin{proof}
$(1)\Rightarrow (2)$ By assumption that $\pi$ has a spectral gap, by Lemma \ref{lem: complemented representation}, there exists a constant $c>0$ such that any unit vector $\xi\in\fB_{\pi}$, one of the partial translation stated above $A_i\in\IC_u[X]$ should satisfy that
$$\|\pi(A_i)\xi-\xi\|\geq c.$$
Since $A_i$ is a full partial translation and $\pi$ is isometric, $\|\pi(A_i)\xi\|=1$. Since $\fB$ is uniformly convex, there exists $\delta\in(0,1)$ only depends on the spectral gap $c$ and the convexity modulus function, such that
$$\left\|\frac{\pi(A_i)\xi+\xi}2\right\|\leq\delta.$$
Notice that
\[\begin{split}\|\pi(A)\xi\|&\leq \frac 1n\left\|\frac{\pi(A_i)\xi+\xi}2\right\|+\frac 1n\sum_{j\ne i}\left\|\frac{\pi(A_j)\xi+\xi}2\right\|\\
&\leq \frac{\delta}n+\frac{n-1}n<1.\end{split}\]
Since $\xi$ is arbitrarily taken, this inequality holds for all unit vectors in $\fB_{\pi}$. 

$(2)\Rightarrow (3)$ As we discussed above, we have that
\[\|\pi(A)-p_\pi\|=\|\pi(A)-\pi(A)p_\pi\|=\|\pi(A)(1-p_\pi)\|=\|\pi(A)|_{\fB_\pi}\|\leq\lambda<1.\]
For any integer $k\geq 1$, we claim that
$$(\pi(A)-p_\pi)^k=\pi(A^k)-p_\pi.$$
Indeed, this equation holds for $k=1$ and we assume it holds for $k\leq N-1$. When $k=N$, we have that
\[\begin{split}(\pi(A)-p_\pi)^k&=(\pi(A)-p_\pi)(\pi(A^{k-1})-p_\pi)\\
&=\pi(A^k)-p_\pi\pi(A^{k-1})-\pi(A)p_\pi+p_\pi\\
&=\pi(A^k)-2p_\pi+p_\pi=\pi(A^k)-p_\pi.\end{split}\]
Thus, we conclude that
$$\|\pi(A^k)-p_\pi\|=\|(\pi(A)-p_\pi)^k\|\leq\|\pi(A)-p_\pi\|^k\leq\lambda^k\to 0,\quad\text{as }k\to\infty,$$
and the convergence is in a summable rate. 

$(3)\Rightarrow(1)$ Fix $\xi\in\fB_{\pi}$. Denote $a_k=\|\pi(A^k)-p_\pi\|=\|\pi(A)|_{\fB_{\pi}}\|$ and $c_{\xi}=\sup_{i\in\{1,\cdots,n\}}\|\pi(A_i)\xi-\xi\|$. Write $S=\sum_{k=1}^\infty a_k$. Then
$$\|\pi(A)\xi-\xi\|\leq \frac 1n\sum_{j=1}^n\left\|\frac{\pi(A_j)\xi-\xi}2\right\|\leq {c_\xi}.$$
Then for any $k\in\IN$, one has that
\[\begin{split}\|\pi(A^k)\xi-\xi\|&\leq \sum_{i=1}^k\|\pi(A^i)\xi-\pi(A^{i-1})\xi\|\\
&\leq \sum_{i=1}^k\|\pi(A^{i-1})|_{\fB_\pi}\|\cdot\|\pi(A)\xi-\xi\|\\
&\leq c_\xi\cdot\left(1+\sum_{i=1}^{k-1}a_i\right)\leq c_\xi(1+S).\end{split}\]
As $k$ tends to infinity, $\|\pi(A^k)\xi-\xi\|$ tends to $1$. Thus, $1\leq c_\xi(1+S)$. As a result, $c_\xi\geq\frac 1{1+S}$ for any $\xi\in\fB_\pi$.
This finishes the proof.
\end{proof}

Lemma \ref{lem: spectral gap and Markov kernel} shows that the Laplacian $\Delta$ acts as a $0$ function on $\fB^{\pi}$, and $\|\Delta|_{\fB_\pi}\|$ is strictly greater than $0$. This somehow shows that $\pi(\Delta)$ indeed has a spectral gap. With the aforementioned preparations in place, we are now poised to introduce the central concept of this section, namely, the \emph{Kazhdan projection}.

\begin{Def}\label{def: Kazhdan projection}
An element $p\in C_\F(X)$ is called a \emph{Kazhdan projection} if\begin{itemize}
\item[(1)] $Q_\pi(p)=p_\pi$ for any representation $\pi\in\F$, where $Q_\pi: C_\F(X)\to C_\pi(X)$;
\item[(2)] the sequence $\{A^k\}_{k\in\IN}$ converges to $p$ in $C_\F(X)$ with a summable rate.
\end{itemize}\end{Def}

The following remark will provide some initial insights into the Kazhdan projection.

\begin{Rem}
The Kazhdan projection $p\in C_{\F}(X)$ is indeed an idempotent. One can check that
$$\|p^2-p\|_{\F}=\lim_{k\to\infty}\|A^{2k}-A^k\|$$
Since $\{A^k\}_{k\in\IN}$ is Cauchy, the limit above tends to $0$, thus $p$ is indeed an idempotent. Moreover, $p$ commutes with all $A_i$. Indeed, there is a canonical inclusion homomorphism:
$$\iota: C_{\F}(X)\to\bigoplus_{\pi\in\F}C_\pi(X),$$
where the norm on the right side is given by $\|(a_{\pi})\|_{\pi\in\F}=\sup_{\pi}\|a_{\pi}\|_{C_\pi(X)}$ and the inclusion map $\iota$ is given by $a\mapsto (Q_\pi(a))_{\pi\in\F}$. It is direct to see that $\iota$ is an isometry on $\IC_u[X]$, thus giving an injection. Notice that $\iota(p)=(p_\pi)$ and $p_\pi$ commutes with all $\pi(A_i)=Q_\pi(A_i)$. Thus $\iota(A_i)$ commutes with $\iota(p)$ on the right side. Since $\iota$ is an injection, we conclude that $p$ commutes with all $A_i$.
\end{Rem}

The following is the main theorem of this section.

\begin{Thm}\label{thm: property T via Kazhdan projection}
Let $X$ be a separated disjoint union of finite spaces with bounded geometry such that $X$ is monogenic. Then the space $X$ has uniform geometric property ($T_{\sB}$) if and only if the algebra $C_{\sB,\max}(X)$ has a Kazhdan projection.
\end{Thm}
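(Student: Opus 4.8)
The plan is to run the two directions of the equivalence through the quantitative dictionary established in Lemma~\ref{lem: spectral gap and Markov kernel}, upgrading its per-representation estimates to bounds that are \emph{uniform} over the whole family $\sB$. The crucial observation is that the constant $\lambda<1$ produced in the implication $(1)\Rightarrow(2)$ of Lemma~\ref{lem: spectral gap and Markov kernel} has the form $\lambda=\frac{\delta}{n}+\frac{n-1}{n}$, where $n$ is the fixed number of full partial translations $A_1,\dots,A_n$ from Lemma~\ref{lem: full partial translation} and $\delta\in(0,1)$ depends only on the spectral gap and the convexity modulus. Because $\sB$ is a uniformly convex family, $\delta_\sB(\varepsilon)>0$ for all $\varepsilon>0$, so a single $\delta$, hence a single $\lambda$, works simultaneously for every representation on every $\fB\in\sB$. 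This uniformity is exactly what promotes the convergence of $\{A^k\}$ from the individual completions $C_\pi(X)$ to the max-norm completion $C_{\sB,\max}(X)$.

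For the forward direction, I would start from the uniform spectral gap $c>0$ guaranteed by uniform geometric property $(T_\sB)$. Feeding $c$ into Lemma~\ref{lem: spectral gap and Markov kernel} and using the uniform modulus of convexity yields a uniform $\lambda<1$ with $\|\pi(A)|_{\fB_\pi}\|\leq\lambda$, and hence $\|\pi(A^k)-p_\pi\|\leq\lambda^k$ for every representation $\pi$ on a space in $\sB$. Consequently $\|A^k-A^m\|_{\sB,\max}=\sup_\pi\|\pi(A^k)-\pi(A^m)\|\leq\lambda^k+\lambda^m$, so $\{A^k\}$ is Cauchy in $C_{\sB,\max}(X)$ and converges to some $p$. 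It then remains to verify that $p$ is a Kazhdan projection in the sense of Definition~\ref{def: Kazhdan projection}: the convergence is automatically of summable rate since $\sum_k\lambda^k<\infty$, and applying the quotient homomorphism gives $Q_\pi(p)=\lim_k Q_\pi(A^k)=\lim_k\pi(A^k)=p_\pi$ for every $\pi$, the last equality being the content of Lemma~\ref{lem: spectral gap and Markov kernel}(3).

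For the converse, suppose $p\in C_{\sB,\max}(X)$ is a Kazhdan projection. The summable-rate convergence of $\{A^k\}$ to $p$ means $S:=\sum_k\|A^k-p\|_{\sB,\max}<\infty$. Since $Q_\pi$ is contractive and $Q_\pi(p)=p_\pi$, for each representation $\pi$ on $\fB\in\sB$ we obtain $\sum_k\|\pi(A^k)-p_\pi\|\leq\sum_k\|A^k-p\|_{\sB,\max}=S$, with the \emph{same} bound $S$ for all $\pi$. Plugging this uniform estimate into the implication $(3)\Rightarrow(1)$ of Lemma~\ref{lem: spectral gap and Markov kernel} produces a spectral gap of at least $\frac{1}{1+S}$ for every such $\pi$, which is precisely uniform geometric property $(T_\sB)$ with constant $\frac{1}{1+S}$, as in Definition~\ref{def: family case of Banach Geom T}.

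The main obstacle, and the place where the hypotheses must be used with care, is this uniformity across $\sB$: a naive per-representation application of Lemma~\ref{lem: spectral gap and Markov kernel} yields only a constant $\lambda_\pi$ (resp. gap) depending on $\pi$, which would make $\{A^k\}$ converge in each $C_\pi(X)$ but not necessarily in the max-norm completion, so no single projection $p\in C_{\sB,\max}(X)$ would emerge. Controlling $\lambda$ uniformly hinges entirely on the fact that $\delta$ in the proof of Lemma~\ref{lem: spectral gap and Markov kernel} is extracted only from the spectral gap and the convexity modulus, combined with the defining property $\delta_\sB>0$ of a uniformly convex family. Once this uniform $\lambda$ (resp. the uniform bound $S$) is secured, both directions reduce to routine bookkeeping with geometric series.
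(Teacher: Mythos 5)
Your proposal is correct and follows essentially the same route as the paper: both directions are run through Lemma \ref{lem: spectral gap and Markov kernel}, with the uniform convexity of the family $\sB$ supplying the single constant $\lambda=\frac{\delta+n-1}{n}<1$ (resp.\ the single bound $S$) that makes $\{A^k\}$ Cauchy with summable rate in $C_{\sB,\max}(X)$ and yields the gap $\frac{1}{1+S}$ in the converse. Your bookkeeping via $\|\pi(A^k)-p_\pi\|\leq\lambda^k$ and the triangle inequality is a marginally cleaner way to get the Cauchy estimate than the paper's $\|\pi(A^n)-\pi(A^m)\|\leq 2\|\pi(A^m)|_{\fB_\pi}\|$, but the argument is the same.
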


\begin{proof}
For the $(\Leftarrow)$ part, assume that $C_{\sB,\max}(X)$ admits a Kazhdan projection. Thus $\lim_{k\to\infty}A^k=p$ and $\sum_{k=1}^\infty\|A^k-p\|_{\sB,\max}$ converges to $S$. From the definition of $\|\cdot\|_{\sB,\max}$, for any representation $\pi:\IC_u[X]\to\L(\fB)$ with $\fB\in\sB$, one has that $\sum_{i=1}^n\|\pi(A^k)-p_\pi\|_\fB\leq S$. Thus by Lemma \ref{lem: spectral gap and Markov kernel}, $\pi$ has a spectral gap greater than $\frac 1{S+1}$, i.e., $X$ has uniform geometric property ($T_{\sB}$).

For the $(\Rightarrow)$ part, assume that $X$ has uniform geometric property ($T_{\sB}$). For any representation $\pi$, for any $m\leq n$, one can check that
\[\|\pi(A^n)-\pi(A^m)\|=\|\pi(A^m)(\pi(A^{n-m})-1)\|\leq 2\|\pi(A^m)|_{\fB_{\pi}}\|\]
By Lemma \ref{lem: spectral gap and Markov kernel} and the fact that $\sB$ is uniformly convex, for any representation and $\fB\in\sB$,
$$\|\pi(A^m)|_{\fB_\pi}\|\leq\left(\frac{\delta+n-1}n\right)^m\to 0\quad \text{ as }m \to\infty.$$
Thus $\{A^k\}$ forms a Cauchy sequence in $C_{\sB,\max}(X)$ and $A^k$ converges to $p$ within a summable rate. By Lemma \ref{lem: spectral gap and Markov kernel}, $Q_{\pi}(p)=p_\pi$ for any representation $\pi$. This finishes the proof.
\end{proof}

Comparing Definition \ref{def: Kazhdan projection} with the traditional version of the Kazhdan projection for representations on Hilbert space, such as that found in \cite{Valette1984, Vergara2024}, it becomes evident that condition (1) is relatively more natural. From the perspective of Lemma \ref{lem: spectral gap and Markov kernel}, condition (2) also appears natural, although it carries a more technical flavor than its traditional counterpart. However, if we assume that the family of Banach spaces $\sB$ is sufficiently well-behaved, then condition (2) can effectively be rendered implicit.

We say a family of $\sB$ is \emph{closed under taking ultraproduct} if for any sequence $\{\fB_n\}_{n\in\IN}$ in $\sB$ and any non-principal ultrafilter $\omega\in\beta\IN$, where $\beta\IN$ is the set of all ultrafilters on $\IN$, the ultraproduct $\fB_{\omega}=\prod_{\omega}\fB_n$ is still an element in $\sB$. The reader is referred to \cite[Section 14.1]{Goldbring2022} for some discussion on the ultraproduct of a sequence of Banach space. For the convenience of the reader, we shall include a short explanation here. For a fixed ultrafilter $\omega$, a sequences $(\xi_n)\in\prod_{n\in\IN}\fB_n$ is said to be $C_{0,\omega}$ if
$$\lim_{n\to\omega}\|\xi_n\|_{\fB_n}=0.$$
Notice that the space
$$\prod_{C_{0,\omega}}\fB_n=\left\{(\xi_n)\in\prod_{n\in\IN}\fB_n\ \Big|\ (\xi_n) \text{ is }C_{0,\omega}\right\}$$
forms a closed subspace of $\prod_{n\in\IN}\fB_n$.
The ultraproduct of $\{\fB_n\}_{n\in\IN}$, denote by $\fB_{\omega}$, is defined to be the quotient Banach space
$$\fB_{\omega}=\frac{\prod_{n\in\IN}\fB_n}{\prod_{C_{0,\omega}}\fB_n}$$
and the norm on $\fB_{\omega}$ is defined by
$$\|[\xi_n]\|=\lim_{n\to\omega}\|\xi_n\|.$$
It is direct to see that $\mathscr H$, the family of all Hilbert spaces, is closed under taking ultraproduct. It is also known that the family $\sL^p$ is closed under taking ultraproduct for any $p\in[1,\infty)$, see \cite{DK1970}.

\begin{Lem}
Let $\sB$ be a uniformly convex family of Banach spaces. Denote by $\overline\sB$ to be the smallest family of all Banach spaces containing $\sB$ which is closed under taking ultraproduct. Then $\overline\sB$ is still a uniformly convex family of Banach spaces.
\end{Lem}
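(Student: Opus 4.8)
The plan is to bound the modulus of convexity of an arbitrary ultraproduct by the modulus $\delta_{\sB}$ of the original family, and then to propagate this bound through the (possibly transfinite) process generating $\overline{\sB}$ by interposing an auxiliary family that is transparently closed under ultraproducts. The uniform convexity of $\overline{\sB}$ will then follow from a single estimate at the level of moduli.

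The first and main step is the following estimate. For Banach spaces $\{\fB_n\}_{n\in\IN}$, a non-principal ultrafilter $\omega\in\beta\IN$, and the ultraproduct $\fB_\omega=\prod_{\omega}\fB_n$, I claim that for all $0<\varepsilon'<\varepsilon\leq 2$,
$$\delta_{\fB_\omega}(\varepsilon)\geq\inf_{n\in\IN}\delta_{\fB_n}(\varepsilon').$$
To prove this I would take unit vectors $[\xi_n],[\eta_n]\in\fB_\omega$ with $\|[\xi_n]-[\eta_n]\|\geq\varepsilon$. Since $\lim_{n\to\omega}\|\xi_n\|=1$, on the set $\{n:\xi_n\neq 0\}\in\omega$ one may rescale each representative so that $\|\xi_n\|=\|\eta_n\|=1$ for every $n$ without altering the classes in $\fB_\omega$. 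The set $A=\{n:\|\xi_n-\eta_n\|>\varepsilon'\}$ lies in $\omega$ because the ultralimit of $\|\xi_n-\eta_n\|$ is at least $\varepsilon>\varepsilon'$. For $n\in A$ the definition of the convexity modulus of $\fB_n$ gives $1-\tfrac12\|\xi_n+\eta_n\|\geq\delta_{\fB_n}(\varepsilon')\geq\inf_m\delta_{\fB_m}(\varepsilon')$, and passing to the ultralimit along $A$ yields $1-\tfrac12\|[\xi_n]+[\eta_n]\|\geq\inf_m\delta_{\fB_m}(\varepsilon')$. Taking the infimum over all such pairs gives the claimed bound.

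The second step packages this into a closed family. Let $\sB'$ be the class of all Banach spaces $\fB$ satisfying $\delta_{\fB}(\varepsilon)\geq\delta_{\sB}(\varepsilon')$ for every $0<\varepsilon'<\varepsilon\leq 2$. Monotonicity of each $\delta_{\fB}$ shows $\sB\subseteq\sB'$, and $\sB'$ is closed under ultraproducts: given $\{\fB_n\}\subseteq\sB'$ and $\varepsilon'<\varepsilon$, choose $\varepsilon''$ with $\varepsilon'<\varepsilon''<\varepsilon$, so the first step gives $\delta_{\fB_\omega}(\varepsilon)\geq\inf_n\delta_{\fB_n}(\varepsilon'')\geq\delta_{\sB}(\varepsilon')$, whence $\fB_\omega\in\sB'$. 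Since $\overline{\sB}$ is the intersection of all families containing $\sB$ and closed under ultraproducts (such an intersection is again closed under the operation, hence is the smallest such family), we get $\overline{\sB}\subseteq\sB'$. Finally, for any $\fB\in\overline{\sB}$ and any $\varepsilon>0$, the defining inequality of $\sB'$ with $\varepsilon'=\varepsilon/2$ gives $\delta_{\fB}(\varepsilon)\geq\delta_{\sB}(\varepsilon/2)>0$, so $\delta_{\overline{\sB}}(\varepsilon)=\inf_{\fB\in\overline{\sB}}\delta_{\fB}(\varepsilon)\geq\delta_{\sB}(\varepsilon/2)>0$ and $\overline{\sB}$ is uniformly convex.

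The only genuinely delicate point is the normalization in the first step: the chosen representatives of unit vectors in $\fB_\omega$ need not themselves be unit vectors, and the main obstacle is verifying that rescaling them on a set belonging to $\omega$ leaves their classes unchanged and is compatible with passage to the ultralimit. Once this is settled the modulus estimate is immediate, and the use of a strict inequality $\varepsilon'<\varepsilon$ (rather than $\varepsilon'=\varepsilon$) is precisely what avoids any appeal to continuity of the infimum of moduli.
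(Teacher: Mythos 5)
Your proposal is correct, and its core estimate is the same one the paper relies on: bound the convexity modulus of an ultraproduct from below by the moduli of the factors. The paper's entire proof, however, is a single sentence asserting $\delta_{\fB_\omega}(\varepsilon)\geq\delta_{\sB}(\varepsilon)$ for a sequence $\{\fB_n\}\subseteq\sB$, and it differs from yours in two ways that you handle more carefully. First, the paper only verifies closure under \emph{one} application of the ultraproduct operation to sequences drawn from $\sB$ itself, whereas $\overline\sB$ is generated by iterating ultraproducts (ultraproducts of ultraproducts, possibly transfinitely); your device of interposing the auxiliary family $\sB'=\{\fB:\delta_{\fB}(\varepsilon)\geq\delta_{\sB}(\varepsilon')\ \text{for all}\ 0<\varepsilon'<\varepsilon\leq 2\}$, which visibly contains $\sB$, is closed under ultraproducts, and carries a uniform modulus bound, is exactly what is needed to make the generation step rigorous, and the paper leaves it implicit. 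Second, the paper's pointwise inequality $\delta_{\fB_\omega}(\varepsilon)\geq\inf_n\delta_{\fB_n}(\varepsilon)$ at the \emph{same} $\varepsilon$ is delicate: the natural argument only shows that pairs of unit representatives satisfy $\|\xi_n-\eta_n\|>\varepsilon'$ on an $\omega$-large set for each $\varepsilon'<\varepsilon$, so one obtains the left-regularized bound $\sup_{\varepsilon'<\varepsilon}\inf_n\delta_{\fB_n}(\varepsilon')$; since $\delta_{\sB}$ is an infimum of moduli and need not be left-continuous, the paper's statement as written requires either this regularization or an extra continuity argument. Your strict-inequality formulation $\delta_{\fB_\omega}(\varepsilon)\geq\inf_n\delta_{\fB_n}(\varepsilon')$ for $\varepsilon'<\varepsilon$ sidesteps this entirely, and the loss is harmless for the conclusion, since $\delta_{\overline\sB}(\varepsilon)\geq\delta_{\sB}(\varepsilon/2)>0$ still certifies uniform convexity. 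In short: same idea, but your write-up closes two gaps the paper elides, at the cost only of a slightly weaker (and fully sufficient) modulus estimate.
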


\begin{proof}
For any ultrafilter $\omega\in\beta\IN$ and a sequence $\{\fB_n\}_{n\in\IN}\subseteq\sB$, from the definition, one can check that $\delta_{\fB_{\omega}}(\varepsilon)\geq\delta_{\sB}(\varepsilon)$ since $\delta_{\fB_n}(\varepsilon)\geq\delta_{\sB}(\varepsilon)$ for any $n\in\IN$.
\end{proof}

\begin{Thm}\label{thm: Kazhdan projection for Lp case}
Let $\sB$ be a uniformly convex family of Banach spaces which is closed under taking ultraproduct, $X$ a separated disjoint union of finite spaces with bounded geometry such that $X$ is monogenic. Then the following are equivalent\begin{itemize}
\item[(1)] $X$ has uniform geometric property ($T_{\sB}$);
\item[(2)] there exists a idempotent $p\in C_{\sB,\max}(X)$ such that $Q_\pi(p)=p_\pi$ for any representation $\pi:\IC_u[X]\to\L(\fB)$ with $\fB\in\sB$.
\end{itemize}\end{Thm}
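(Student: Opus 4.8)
The plan is to prove the two implications of Theorem~\ref{thm: Kazhdan projection for Lp case} by reducing to the previously established Theorem~\ref{thm: property T via Kazhdan projection}. The direction $(1)\Rightarrow(2)$ is essentially free: if $X$ has uniform geometric property ($T_\sB$), then by Theorem~\ref{thm: property T via Kazhdan projection} the algebra $C_{\sB,\max}(X)$ already contains a Kazhdan projection $p$, which in particular is an idempotent satisfying $Q_\pi(p)=p_\pi$ for every $\pi$. So condition (2) holds without even using the ultraproduct hypothesis. The whole content of the theorem is therefore in the converse $(2)\Rightarrow(1)$, where we are handed an idempotent $p$ with the correct image under every $Q_\pi$ but \emph{not} the summable-convergence condition (2) of Definition~\ref{def: Kazhdan projection}, and we must nonetheless recover uniform property ($T_\sB$). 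This is where closedness under ultraproducts is indispensable.

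For $(2)\Rightarrow(1)$ I would argue by contradiction. Suppose $X$ does \emph{not} have uniform geometric property ($T_\sB$). Then there is a sequence of representations $\pi_n:\IC_u[X]\to\L(\fB_n)$ with $\fB_n\in\sB$ whose spectral gaps tend to $0$. By Lemma~\ref{lem: spectral gap and Markov kernel}, this means $\|\pi_n(A)|_{(\fB_n)_{\pi_n}}\|\to 1$, equivalently $\|\pi_n(A)-p_{\pi_n}\|\to 1$ while each $\pi_n(A)$ is contractive; concretely, for each $n$ one can choose a unit vector $\xi_n\in(\fB_n)_{\pi_n}$ that is $(E_0,\varepsilon_n)$-almost invariant with $\varepsilon_n\to 0$, i.e.\ $\|\pi_n(A_i)\xi_n-\xi_n\|\to 0$ uniformly over the finitely many full partial translations $A_i$. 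The idea is then to assemble these representations into a single representation on the ultraproduct. Form $\fB_\omega=\prod_\omega\fB_n$ for a free ultrafilter $\omega$, which lies in $\sB$ by hypothesis, and let $\pi_\omega:\IC_u[X]\to\L(\fB_\omega)$ be the induced representation $\pi_\omega(T)[\eta_n]=[\pi_n(T)\eta_n]$; one checks this is well-defined, contractive and isometric using the uniform boundedness of $\|\pi_n(T)\|$ for $T\in\IC_u[X]$ (propagation is fixed, so only finitely many full partial translations are involved). The class $[\xi_n]\in\fB_\omega$ is then a genuine \emph{invariant} vector for $\pi_\omega$ because $\|\pi_\omega(A_i)[\xi_n]-[\xi_n]\|=\lim_{n\to\omega}\|\pi_n(A_i)\xi_n-\xi_n\|=0$, and by Lemma~\ref{lem: invariant vector in group language} invariance under all full partial translations suffices.

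The key point is to feed the idempotent $p$ into this ultraproduct representation and derive a contradiction. Since $p\in C_{\sB,\max}(X)$, the representation $\pi_\omega$ extends to $C_{\sB,\max}(X)$, so $\pi_\omega(p)=Q_{\pi_\omega}(p)=p_{\pi_\omega}$ is the idempotent onto $(\fB_\omega)^{\pi_\omega}$ along $(\fB_\omega)_{\pi_\omega}$. On one hand, since each $\xi_n\in(\fB_n)_{\pi_n}$ we have $p_{\pi_n}\xi_n=0$, and I would compute $\pi_\omega(p)[\xi_n]=[Q_{\pi_n}(p)\xi_n]=[p_{\pi_n}\xi_n]=[0]=0$, using that $Q_{\pi_n}(p)=p_{\pi_n}$ for the component representations. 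On the other hand, $[\xi_n]$ is a unit invariant vector, so $p_{\pi_\omega}[\xi_n]=[\xi_n]\ne 0$. This contradiction $0=\pi_\omega(p)[\xi_n]=[\xi_n]\ne 0$ completes the argument. The step I expect to be the main obstacle is verifying that the component-wise application of $p$ commutes with passage to the ultraproduct quotient, i.e.\ that $\pi_\omega(p)[\eta_n]=[Q_{\pi_n}(p)\eta_n]$: $p$ is a limit in $C_{\sB,\max}(X)$ of elements $a_k\in\IC_u[X]$ (for instance $a_k=A^k$, though the hypothesis does \emph{not} give summable convergence, only that $p$ exists as an idempotent in the completion), and one must check that the ultraproduct representation is compatible with these limits. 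This is handled by the uniform estimate $\|\pi_\omega(a)\|\le\|a\|_{\sB,\max}$ for $a\in\IC_u[X]$, which shows $\pi_\omega$ is continuous for the $\|\cdot\|_{\sB,\max}$-norm and hence extends to $C_{\sB,\max}(X)$; one then passes the identity $\pi_n(a_k)\eta_n$ vs.\ $Q_{\pi_n}(a_k)\eta_n$ (which holds on the dense subalgebra) to the limit in $k$ using equi-continuity of the whole family $\{Q_{\pi_n}\}$, which is where uniform convexity of $\sB$ and the resulting uniform operator bounds are used.
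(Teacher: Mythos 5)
Your proposal is correct and follows essentially the same route as the paper: prove $(1)\Rightarrow(2)$ by citing Theorem~\ref{thm: property T via Kazhdan projection}, and prove $(2)\Rightarrow(1)$ by contradiction, assembling a sequence of representations with vanishing spectral gaps into an ultraproduct representation $\pi_\omega$ on $\fB_\omega\in\sB$, observing that $[\xi_n]$ becomes a genuine invariant vector, and deriving $1=\|p_{\pi_\omega}[\xi_n]\|=\lim_{n\to\omega}\|p_{\pi_n}\xi_n\|=0$. In fact your explicit verification that $\pi_\omega(p)[\eta_n]=[Q_{\pi_n}(p)\eta_n]$ for the completed algebra (via density of $\IC_u[X]$ and the uniform bound $\|\pi_\omega(a)\|\le\|a\|_{\sB,\max}$) supplies a step the paper's proof passes over silently.
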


\begin{proof}
(1) $\Rightarrow$ (2) is implied by Theorem \ref{thm: property T via Kazhdan projection}, we shall only prove (2) $\Rightarrow$ (1). Assume for a contradiction that $X$ does not have uniform geometric property ($T_\sB$), i.e., for any $n\in\IN$, there exists a representation $\pi_n:\IC_u[X]\to\L(\fB_n)$ and a unit vector $\xi_n\in(\fB_n)_{\pi}$ such that for any full partial translation $V\in\IC_u[X]$ with $\supp(V)\subseteq E_0$, one has that
$$\|\pi(V)\xi_n-\xi_n\|\leq \frac 1n.$$
Define the representation $\pi_\omega: \IC_u[X]\to\L(\fB_\omega)$ to be
$$\|\pi_{\omega}(T)[\eta_n]\|=[\pi_n(T)\eta_n]$$
for any $[\eta_n]\in\fB_\omega$. It is direct to see the representation $\pi_\omega$ is an isometric representation since all representations $\pi_n$ are isometric. Moreover, by the universal property of the maximal norm, these representations extend canonically to $\pi_n: C_{\sB,\max}(X)\to\L(\fB_n)$ and $\pi_\omega: C_{\sB,\max}(X)\to\L(\fB_\omega)$. Consider $[\xi_n]\in\fB_\omega$, for any full partial translation with $\supp(V)\subseteq E_0$, we have that
$$\|\pi_\omega(V)[\xi_n]-[\xi_n]\|=\lim_{n\to\omega}\|\pi_n(V)\xi_n-\xi_n\|=0.$$
Thus $[\xi_n]$ is an invariant vector for $\pi_\omega$. However, take the Kazhdan projection $p\in C_{\sB,\max}(X)$, one has that
$$1=\|[\xi_n]\|=\|Q_{\pi_\omega}(p)[\xi_n]\|=\|\pi_\omega(p)[\xi_n]\|=\lim_{n\to\omega}\|\pi_n(p)\xi_n\|=0,$$
the last equation follows from that $\xi_n\in\fB_\pi$. This leads to a contradiction.
\end{proof}

As a corollary of Proposition \ref{pro: uniform Tlp and Tlp} and Theorem \ref{thm: Kazhdan projection for Lp case}, we have the following result. One can compare it with \cite[Theorem 1.1]{Vergara2024}.

\begin{Cor}
Let $X$ be a separated disjoint union of finite spaces with bounded geometry such that $X$ is monogenic. Then for any $p\in(1,\infty)$, the following are equivalent\begin{itemize}
\item[(1)] $X$ has geometric property ($T_{\sL^p}$);
\item[(2)] there exists a idempotent $p\in C^p_{\max}(X)$ such that $Q_\pi(p)=p_\pi$ for any representation $\pi:\IC_u[X]\to\L(L^p(\mu))$.\qed
\end{itemize}
\end{Cor}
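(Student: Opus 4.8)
The plan is to recognize that this Corollary is simply the instantiation of Theorem \ref{thm: Kazhdan projection for Lp case} at the particular family $\sB=\sL^p$, combined with the passage from the \emph{uniform} property to the ordinary one supplied by Proposition \ref{pro: uniform Tlp and Tlp}. So the proof is a matter of checking that $\sL^p$ meets the hypotheses and then chaining two already-established equivalences.

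First I would verify the two structural hypotheses demanded by Theorem \ref{thm: Kazhdan projection for Lp case}. For $p\in(1,\infty)$ the family $\sL^p$ is uniformly convex, as recorded immediately after the definition of a uniformly convex family; and $\sL^p$ is closed under taking ultraproducts for every $p\in[1,\infty)$ by \cite{DK1970}. Thus the theorem applies verbatim with $\sB=\sL^p$, giving that $X$ has \emph{uniform} geometric property $(T_{\sL^p})$ if and only if there is an idempotent $p\in C_{\sL^p,\max}(X)$ with $Q_\pi(p)=p_\pi$ for every representation $\pi:\IC_u[X]\to\L(\fB)$, $\fB\in\sL^p$. Since $C_{\sL^p,\max}(X)$ is by definition the algebra $C^p_{\max}(X)$ and every member of $\sL^p$ is an $L^p$-space $L^p(\mu)$, this is precisely condition (2) of the Corollary.

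It then remains to replace ``uniform geometric property $(T_{\sL^p})$'' by condition (1), ``geometric property $(T_{\sL^p})$''. Here I would invoke Proposition \ref{pro: uniform Tlp and Tlp}, noting that, upon unwinding Definition \ref{def: family case of Banach Geom T}, ``geometric property $(T_{\sL^p})$'' means exactly that every representation on every $L^p$-space $L^p(\mu)$ admits a spectral gap, which is the phrasing used in that proposition. Chaining $(1)\iff\text{uniform }(T_{\sL^p})$ (Proposition \ref{pro: uniform Tlp and Tlp}) with $\text{uniform }(T_{\sL^p})\iff(2)$ (Theorem \ref{thm: Kazhdan projection for Lp case}) then yields the Corollary. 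Since both cited results are already proved, there is no genuine analytic obstacle, and the only care required is bookkeeping of definitions. The one point I would flag as substantive is the gap between the uniform and non-uniform formulations: a priori condition (1) only asserts a spectral gap for each individual representation, whereas the idempotent in condition (2) encodes a \emph{uniform} gap through the summable convergence rate of $A^k$. It is exactly Proposition \ref{pro: uniform Tlp and Tlp}, which rests on the closure of $\sL^p$ under $L^p$-direct sums (the direct-sum argument of Remark \ref{rem: delete uniform if one can take direct sum}), that bridges this gap.
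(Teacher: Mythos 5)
Your proposal is correct and matches the paper's own argument exactly: the paper proves this corollary by simply combining Proposition \ref{pro: uniform Tlp and Tlp} (which removes the word "uniform" via closure of $\sL^p$ under $L^p$-direct sums) with Theorem \ref{thm: Kazhdan projection for Lp case} applied to $\sB=\sL^p$, whose hypotheses (uniform convexity for $p\in(1,\infty)$ and closure under ultraproducts) are recorded in the text just as you cite them. Your additional bookkeeping — identifying $C_{\sL^p,\max}(X)$ with $C^p_{\max}(X)$ and flagging the uniform/non-uniform gap — is precisely the content the paper leaves implicit.
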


One can check that the idempotent $P$ in Proposition \ref{pro: first look} is exactly a Kazhdan projection.

\section{Geometric property ($T_\fB$) at infinity}\label{sec: Geometric property (T) towards infinity}

In \cite{WilYu2012b}, one of the motivations of the geometric property (T) is to characterize Kazhdan's property (T) for a residually finite group by using the coarse geometric behavior of its box space. It is proved that a residually finite group $\Ga$ has property (T) if and only if any of its box spaces $\Box(\Ga)$ has geometric property (T). In this section, we shall discuss a parallel result on the geometric property ($T_{\fB}$). Moreover, we actually aim to prove a stronger result in the framework of \emph{limit groups}. In \cite{GQW2024}, the authors of this paper, along with J.~Qian, employed the ultraproduct construction to reprove the limit space theory initiated by R. Willett and J.~{\v S}pakula in \cite{SpaWil2017}. In this section, we will apply this set of techniques to a metric space consisting of a sequence of \emph{finite groups}.

\subsection{Primilinaries on limit space theory}

For a sequence of uniformly finitely generated finite groups $(\Ga_n)_{n\in\IN}$, we shall choose a sequence of uniformly finite symmetric generating set $S_n\subseteq\Ga_n$ for each $n\in\IN$. Then $E_0=\{(x,xs)\mid x\in\Ga_n,s\in S_n,n\in\IN\}$ forms a generating set of the coarse structure. Let $l_n$ be the length function on $\Ga_n$ associated with the generating set $S_n$, and we define $d_n(g,h)=l_n(g^{-1}h)$ for any $g,h\in\Ga_n$. Notice that $d_n$ is \emph{left} invariant. Throughout this section, we shall always assume that $X=\bigsqcup_{n\in\IN}\Ga_n$ is the separated disjoint union of $\{\Ga_n\}$. Denote $E=\{e_n\mid{n\in\IN}\}$ the set of all unit elements $e_n\in\Ga_n$.

Fix $\omega\in\partial_\beta\IN$ to be a free ultrafilter. A sequence $\{x_i\}_{i\in\IN}\subseteq X$ is said to be \emph{afar} if $\{x_i\}\cap\left(\bigsqcup_{n=1}^N\Ga_n\right)$ is finite. Denote by $X^{\omega}$ the set-theoretic ultraproduct of $X$, i.e., the set of all sequences in $X$ modulating the equivalent relationship
$$(x_i)\sim_\omega (y_i)\iff\{i\in\IN\mid x_i=y_i\}\in\omega.$$
The equivalent class determined by $(x_i)$ is denoted by $[x_i]$. Denote by $\IR^{\omega}$ the set-theoretic ultraproduct of $\IR$, which is a model of \emph{hyperreal numbers}. Then $X^{\omega}$ becomes a hyperreal-valued metric space, with metric function $d^{\omega}$ defined by
$$d^{\omega}([x_i],[y_i])=[d(x_i,y_i)]\in\IR^{\omega}\cup\{\infty\}.$$
Fix an afar element $x\in X^{\omega}$, where $x$ is the equivalent class of $(x_i)$, denoted by
$$\Ga^{\infty}_{\omega,x}=\{y\in X^{\omega}\mid d^{\omega}(x,y)\in\IR\subseteq\IR^{\omega}\cup\{\infty\}\},$$
which is called the \emph{limit space} of $X$ associated with $x$. Since $X$ is strongly discrete with bounded geometry, i.e., $d(g,h)\in\IN$ for any $g, h\in\Ga_n$, and $n\in\IN$, it is proved in \cite[Proposition 2.6]{GQW2024} that $\Ga^{\infty}_{\omega,x}$ is still a metric space with bounded geometry with the metric defined to be the restriction of $d^{\omega}$ on $\Ga^{\infty}_{\omega,x}$. The reader is referred to \cite[Section 2]{GQW2024} for more details.

An afar element $x\in X^{\omega}$ is called \emph{unit} if it is an equivalent class of an afar sequence in the unit set $E$. For a unit afar element $x$, the limit space $\Ga^{\infty}_{\omega,x}$ has a canonical group structure. Write $x=[a_i]\in E^{\omega}$, denote $n(i)\in\IN$ to be the index such that $a_i=e_{n(i)}\in\Ga_{n(i)}$. If $y=[y_i]\in\Ga^{\infty}_{\omega,x}$, then it means that $D_y=\{i\in\IN\mid y_i\in\Ga_{n(i)}\}\in\omega$ by definition. Then for any $y,z\in\Ga^{\infty}_{\omega,x}$, the multiplication of these two elements is defined to be $yz=[(yz)_i]$, where
\[(yz)_i=\left\{\begin{aligned}&y_iz_i,&&i\in D_y\cap D_z;\\&e_i,&&\text{otherwise.}\end{aligned}\right.\]
The inverse of $y$ is defined to be $y^{-1}=[y^{-1}_i]$. It is direct to see that $\Ga^{\infty}_{\omega,x}$ forms a group under the multiplication and inverse defined above with the unit element given by $x$. Thus $\Ga^{\infty}_{\omega,x}$ is also called a \emph{limit group} of $X=\bigsqcup_{n\in\IN}\Ga_n$. Actually, for any afar element $y=[y_i]\in\Ga^{\omega}$, let $x=[e_i]$ to be such that $e_i,x_i\in\Ga_{n(i)}$ for all $i\in\IN$. Then limit space $\Ga^{\infty}_{\omega,y}$ associated with $y$ is isometric to the limit group $\Ga^{\infty}_{\omega,x}$, the isometry is given by
$$\Ga^{\infty}_{\omega,y}\to \Ga^{\infty}_{\omega,x},\quad z\mapsto y^{-1}z$$
where $y^{-1}z$ is defined in the forms of multiplication and inverse as above. Thus, for such space $X$, it suffices to only consider the limit groups instead of all limit spaces.

\begin{Lem}\label{lem: finitely generated limit group}
For any unit afar element $x$, the limit group $\Ga^{\infty}_{\omega,x}$ is finitely generated.
\end{Lem}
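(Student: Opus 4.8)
The plan is to produce an explicit finite generating set built from the uniformly finite generating sets $S_n$. Write $N=\sup_{n\in\IN}\#S_n<\infty$ and, following the labelling of Example \ref{exa: explanation of Lemma 3.1}, list each $S_n=\{s^{(n)}_0=e_n,s^{(n)}_1,\dots,s^{(n)}_{k_n}\}$ with $k_n\le N$, padding by $s^{(n)}_j=e_n$ for $k_n<j\le N$. Writing the unit afar element as $x=[e_{n(i)}]_i$, I would set $\sigma_j=[\,s^{(n(i))}_j\,]_i\in X^{\omega}$ for each $j\in\{0,1,\dots,N\}$. Since $l_{n(i)}(s^{(n(i))}_j)\le 1$, we get $d^{\omega}(x,\sigma_j)\le 1\in\IR$, so every $\sigma_j$ lies in $\Ga^{\infty}_{\omega,x}$, with $\sigma_0=x$ the identity. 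The goal is then to show that $\{\sigma_1,\dots,\sigma_N\}$ generates $\Ga^{\infty}_{\omega,x}$.

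To see this, take any $y=[y_i]\in\Ga^{\infty}_{\omega,x}$. By definition of the limit space there is $R\in\IN$ with $D=\{i\in\IN : l_{n(i)}(y_i)\le R\}\in\omega$. For $i\in D$ each $y_i$ is a product of at most $R$ generators of $\Ga_{n(i)}$, and padding with the identity generator $s^{(n(i))}_0$ lets me write $y_i=s^{(n(i))}_{j_1(i)}\cdots s^{(n(i))}_{j_R(i)}$ with every index $j_\ell(i)\in\{0,1,\dots,N\}$. The main point of the argument is to convert these $i$-dependent choices into a single word valid $\omega$-almost everywhere. For each fixed position $\ell\in\{1,\dots,R\}$ the map $i\mapsto j_\ell(i)$ takes values in the finite set $\{0,1,\dots,N\}$; because a finite partition of $\IN$ has exactly one block belonging to $\omega$, there is a unique $j_\ell\in\{0,1,\dots,N\}$ with $\{i\in D:j_\ell(i)=j_\ell\}\in\omega$. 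Intersecting over the finitely many positions $\ell=1,\dots,R$ keeps the resulting index set in $\omega$, so $y_i=s^{(n(i))}_{j_1}\cdots s^{(n(i))}_{j_R}$ holds for $\omega$-almost all $i$.

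By the coordinatewise definition of the multiplication on $\Ga^{\infty}_{\omega,x}$ this says precisely that $y=\sigma_{j_1}\cdots\sigma_{j_R}$, with the factors equal to $\sigma_0=x$ dropping out; hence $y$ lies in the subgroup generated by $\{\sigma_1,\dots,\sigma_N\}$, which proves the claim and bounds the number of generators by $N$. The only genuine obstacle is this passage from pointwise to uniform choices: it works precisely because uniform finiteness of the $S_n$ makes the alphabet $\{0,\dots,N\}$ finite and membership in the limit space bounds the word length by a fixed $R$, after which the finite-partition property of the ultrafilter forces a uniform choice of generator at each of the $R$ positions. Without the uniform bound on $\#S_n$ this step would fail.
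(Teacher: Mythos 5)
Your proof is correct and uses essentially the same mechanism as the paper's: the uniform labelling of the sets $S_n$ (padded by the identity) together with the fact that an ultrafilter contains exactly one block of any finite partition of $\IN$, which converts coordinatewise choices of generators into the finitely many canonical elements $\sigma_0,\dots,\sigma_N$. The only difference is organizational: the paper first declares $S^{\infty}_{\omega,x}=\{[b_i]\mid b_i\in S_{n(i)}\}$ to be a generating set and then applies the ultrafilter argument once per element to show it has at most $N$ elements, while you apply the same argument letter-by-letter inside a word of bounded length, thereby also spelling out the generation step that the paper only asserts.
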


\begin{proof}
Say $x=[a_i]$ and $n(i)\in\IN$ to be such that $a_i=e_{n(i)}\in\Ga_{n(i)}$. For each $n\in\IN$, denote $S_n$ the generating set of $\Ga_n$.
Take $S^{\infty}_{\omega,x}=\{[b_i]\in G^{\infty}_{\omega,x}\mid b_i\in S_{n(i)}\}$, thus $S^{\infty}_{\omega,x}$ forms a generating set of $G^{\infty}_\omega$. For each $n\in\IN$, write $S_n=\{s_{n,1},\cdots, s_{n,k_n}\}$, where $k_n=\#S_n$. Since the sequence $(S_n)_{n\in\IN}$ is uniformly finite, set $N=\sup_{n\in\IN}k_n$. Then for a fixed sequence $(b_i)_{i\in\IN}$ satisfying that $b_i\in S_{n(i)}$ and $j\in\{1,\cdots,N\}$, define the set $D_j=\{i\in\IN\mid b_i=s_{n(i),j}\}$. Then $\{D_j\}_{j=1}^{N}$ forms a disjoint cover of $\IN$, by definition of ultrafilter, there will be an unique $j_0$ such that $D_{j_0}\in\omega$. Thus $(b_i)_{i\in\IN}$ is equivalent to the sequence
$$s^{(j_0)}_i=\left\{\begin{aligned}&e_{n(i)},&& {j_0>k_{n(i)}};\\&s_{n(i),j_0},&&j_0\leq k_{n(i)},\end{aligned}\right.$$
since it is equal to $(s^{(j_0)}_i)_{i\in\IN}$ on $D_{j_0}$. Thus $S^{\infty}_{\omega,x}$ is actually a finite set with at most $N$ elements (we should mention that $s^{(j)}=[s^{(j)}_i]$ could be equal to the unit element $x$ for large $j$). This finishes the proof.
\end{proof}

We denote $e\in X^{\omega}$ the element determined by the sequence $\{i\mapsto e_i\}_{i\in\IN}$, called the \emph{fundamental} afar element. For fixed ultrafilter $\omega\in\partial_{\beta}\IN$, the limit group associated with the fundamental afar element and $\omega$ is much easier to see. Denote by $\prod^b_{i\in\IN}\Ga_i$ be the set of all sequences $(\gamma_i)\in\prod_{i\in\IN}\Ga_i$ such that $\sup_{i\in\IN}l_i(\gamma_i)<\infty$. Notice that $\prod^b_{i\in\IN}\Ga_i$ is a group under pointwise multiplication. We define
$$\N_{0,\omega}={\left\{(\gamma_i)\in \prod_{i\in\IN}\Ga_i\mid \lim_{i\to\omega}l_i(\gamma_i)=0\right\}}$$
It is direct to see that $\N_{0,\omega}$ is a normal subgroup of $\prod_{i\in\IN}^b\Ga_i$. Then the \emph{limit group} of $(\Ga_i)_{i\in\IN}$ associated with $\omega$ is defined to be
$$\Ga^{\infty}_{\omega}=\frac{\prod^b_{i\in\IN}\Ga_i}{\N_{0,\omega}},$$
we shall abbreviate $\Ga^{\infty}_{\omega,e}$ as $\Ga^{\infty}_{\omega}$ for simplicity if the base point is chosen as the fundamental afar element.

\begin{Lem}\label{lem: limit group with fundamental base}
For any unit afar element $x$ and $\omega\in\partial_{\beta}\IN$, there exists $\mu\in\partial_\beta\IN$ such that $\Ga^{\infty}_{\omega,x}\cong\Ga^{\infty}_{\mu}$.
\end{Lem}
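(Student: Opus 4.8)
The plan is to realize the change of base point as a reindexing of the defining sequence, absorbed into a suitable ultrafilter on $\IN$. Write $x=[a_i]$ with $a_i=e_{n(i)}\in\Ga_{n(i)}$; this data defines a function $n\colon\IN\to\IN$. Since $x$ is afar, each fiber $n^{-1}(j)\subseteq\{i:n(i)\le j\}$ is finite and $n(i)\to\infty$. I would take the pushforward ultrafilter $\mu=n_*\omega=\{A\subseteq\IN\mid n^{-1}(A)\in\omega\}$. This $\mu$ is free: for a finite set $F$, $n^{-1}(F)$ is a finite union of finite fibers, hence finite, hence not in $\omega$, so $F\notin\mu$; thus $\mu\in\partial_\beta\IN$. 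The claim is that $\Ga^\infty_{\omega,x}\cong\Ga^\infty_\mu=\prod^b_{j}\Ga_j/\N_{0,\mu}$.

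Next I would identify $\Ga^\infty_{\omega,x}$ concretely. Finiteness of the hyperreal distance $d^\omega(x,[y_i])$ forces $y_i\in\Ga_{n(i)}$ and $l_{n(i)}(y_i)\le M$ on a set in $\omega$ for some $M$; modifying $y_i$ to $e_{n(i)}$ off that set does not change the class and realizes every element as a genuinely bounded sequence $(y_i)$ with $y_i\in\Ga_{n(i)}$, two such sequences representing the same element iff they agree on a set in $\omega$. With this description I would define $\phi\colon\prod^b_j\Ga_j\to\prod^b_i\Ga_{n(i)}$ by $\phi((\gamma_j)_j)=(\gamma_{n(i)})_i$; it is a homomorphism pointwise, and since $\sup_i l_{n(i)}(\gamma_{n(i)})\le\sup_j l_j(\gamma_j)$ it lands in the bounded product. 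The key compatibility is the defining property of the pushforward: $\{i:\gamma_{n(i)}=e\}=n^{-1}\{j:\gamma_j=e\}$ lies in $\omega$ iff $\{j:\gamma_j=e\}\in\mu$. This at once shows that $\phi$ carries $\N_{0,\mu}$ into the $\omega$-null sequences, so it descends to a well-defined homomorphism $\bar\phi\colon\Ga^\infty_\mu\to\Ga^\infty_{\omega,x}$, and that $\bar\phi$ is injective.

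The main obstacle is surjectivity, since $n$ need not be injective and its fibers, although finite, may be unbounded, so one cannot simply invert the reindexing by choosing an $\omega$-large transversal. The resolution is to exploit bounded geometry. Given a bounded representative $(y_i)$ with $l_{n(i)}(y_i)\le M$, each fiber value set $W_j=\{y_i:i\in n^{-1}(j)\}$ lies in the ball $B(e_j,M)\subseteq\Ga_j$, whose cardinality is at most $K:=\sup_{x\in X}\#B(x,M)<\infty$, \emph{uniformly} in $j$. Fixing an enumeration of each $W_j$ defines a function $v\colon\IN\to\{1,\dots,K\}$ recording the rank of $y_i$ within the value set of its fiber, and the resulting finite partition $\IN=v^{-1}(1)\sqcup\cdots\sqcup v^{-1}(K)$ has exactly one block $G=v^{-1}(t_0)\in\omega$. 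On $G$ the value $y_i$ depends only on $n(i)$, so setting $\gamma_j$ equal to this common value (and $e_j$ where $G$ misses the fiber) produces a bounded element of $\prod^b_j\Ga_j$ with $\gamma_{n(i)}=y_i$ for all $i\in G$; hence $\bar\phi([(\gamma_j)])=[(y_i)]$. This yields the isomorphism $\Ga^\infty_{\omega,x}\cong\Ga^\infty_\mu$, and it is precisely this bounded-geometry counting that rescues the otherwise-false naive reindexing map from failing to be onto.
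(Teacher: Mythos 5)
Your proof is correct and follows essentially the same route as the paper's: you form the pushforward ultrafilter $\mu=n_*\omega$, define the reindexing homomorphism $[(\gamma_j)]\mapsto[(\gamma_{n(i)})]$, check well-definedness and injectivity via the preimage identity $\{i:\gamma_{n(i)}=e\}=n^{-1}\{j:\gamma_j=e\}$, and prove surjectivity by exactly the bounded-geometry finite-partition trick the paper invokes (by analogy with its lemma on finite generation of limit groups). The only differences are that you explicitly verify freeness of $\mu$ and write the surjectivity argument out in full, both of which the paper leaves implicit.
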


\begin{proof}
Say $x=[(e_{n(i)})_{i\in\IN}]\in\Ga^{\omega}$, where $n: i\mapsto n(i)$ is a unbounded map determined by the unit afar element $x$ as above. Then $n:\IN\to\IN$ extends to a continuous map $\wt n:\beta\IN\to\beta\IN$, we shall denote $\mu=\wt n(\omega)\in\partial_\beta\IN$, i.e., $A\in\mu$ if and only if there exists $B\in\omega$ such that $n(B)\subseteq A$.

Define $\varphi:\Ga^{\infty}_{\mu}\to\Ga^\infty_{\omega,x}$ to be
$$[\gamma_i]\mapsto [\gamma_{n(i)}].$$
If $(\gamma_i)\sim_{\mu}(\gamma'_i)$, then $\{i\in\IN\mid \gamma_i=\gamma'_i\}\in\mu$. By definition, there exist $B\in\omega$ such that $\{n(i)\mid i\in B\}\subseteq\{i\in\IN\mid \gamma_i=\gamma'_i\}$. Thus, for any $i\in B$, one has that $\gamma_{n(i)}=\gamma'_{n(i)}$, which means that $(\gamma_{n(i)})\sim_{\omega}(\gamma'_{n(i)})$. This proves the map $\varphi$ is well-defined. It is direct to see this map is a group homomorphism. To see it is injective, if $\varphi([\gamma_i])=x$, then there exists $B\in\omega$ such that $\gamma_{n(i)}=e_{n(i)}$ for all $i\in B$. Thus, $\gamma_i=e_i$ for all $i\in n(B)\in\mu$, which means that $[\gamma_i]=e$. To see $\varphi$ is surjective, for any $y\in\Ga^{\infty}_{\omega,x}$, there exists $R>0$ such that $d(x,y)\leq R$. Notice that there are only finitely many elements in $B(x,R)$. With a similar proof as Lemma \ref{lem: finitely generated limit group}, one can see that $y$ can be represented by a sequence $(y_{(n(i))})$ such that $y_{n(i)}$ and $y_{n(j)}$ whenever $n(i)=n(j)$ for all $i,j\in B\in\omega$. For each $k=n(i)\in n(B)\IN$, just define $z_k=y_{n(i)}$, otherwise take $z_i=e_i$. Then $\varphi([z_i])=y$. This finishes the proof.
\end{proof}

By Lemma \ref{lem: limit group with fundamental base}, it suffices to consider the family of $\{\Ga^{\infty}_{\omega}\}_{\omega\in\partial_\beta\IN}$, which is much more convenient to discuss.

For a uniformly finite generated sequence of finite group extension $(1\to N_n\to \Ga_n\to Q_n\to 1)_{n\in\IN}$, we choose $(S_n\subseteq \Ga_n)_{n\in\IN}$ to be a uniformly finite generating set. Then $N_n$ is equipped with the subspace metric and $Q_n$ is equipped with the quotient metric. Then the length function on $N_n$ and $Q_n$ are given by $l_{N_n}(a)=d_{N_n}(a,e)$ and $l_{Q_n}(b)=d_{Q_n}(b,e)$, respectively.

\begin{Lem}\label{lem: exact sequence of limit groups}
Fix $\omega\in\partial_\beta\IN$. If $(1\to N_n\to \Ga_n\to Q_n\to 1)_{n\in\IN}$ be a sequence of uniformly finite generated finite group extensions, then there is a short exact sequence on their limit group
$$1\to N^{\infty}_{\omega}\to\Ga^{\infty}_{\omega}\to Q^{\infty}_{\omega}\to 1.$$
\end{Lem}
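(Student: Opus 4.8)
The plan is to build the two homomorphisms of the sequence directly at the level of bounded products and then descend them to the limit groups, checking exactness term by term. For each $n$ write $\iota_n\colon N_n\hookrightarrow\Ga_n$ for the inclusion and $q_n\colon\Ga_n\to Q_n$ for the quotient homomorphism. Since $N_n$ carries the subspace metric, its length function satisfies $l_{N_n}(a)=l_n(a)$ for every $a\in N_n$; since $Q_n$ carries the quotient metric, $q_n$ is length-nonincreasing, i.e. $l_{Q_n}(q_n(g))\le l_n(g)$. These two facts guarantee that $(\iota_n)$ and $(q_n)$ carry bounded sequences to bounded sequences, so they induce homomorphisms $\prod^b_{i\in\IN}N_i\to\prod^b_{i\in\IN}\Ga_i$ and $\prod^b_{i\in\IN}\Ga_i\to\prod^b_{i\in\IN}Q_i$. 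They likewise carry $\N_{0,\omega}$-sequences to $\N_{0,\omega}$-sequences (again because $l_{N_n}=l_n$ on $N_n$ and $q_n$ is $1$-Lipschitz), so they descend to group homomorphisms $\alpha\colon N^\infty_{\omega}\to\Ga^\infty_{\omega}$ and $\beta\colon\Ga^\infty_{\omega}\to Q^\infty_{\omega}$.

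Next I would settle injectivity of $\alpha$ and surjectivity of $\beta$. For injectivity, if $[a_i]\in N^\infty_{\omega}$ satisfies $\alpha([a_i])=e$, then $\lim_{i\to\omega}l_i(a_i)=0$; but $l_i(a_i)=l_{N_i}(a_i)$, so $(a_i)$ already lies in the null subgroup of $\prod^b_{i\in\IN}N_i$, whence $[a_i]=e$. For surjectivity, take $[\bar g_i]\in Q^\infty_{\omega}$ with $\sup_i l_{Q_i}(\bar g_i)=R<\infty$. Because each $\Ga_i$ is finite, the infimum defining the quotient metric is attained, so I can pick a lift $g_i\in q_i^{-1}(\bar g_i)$ with $l_i(g_i)=l_{Q_i}(\bar g_i)\le R$; then $(g_i)\in\prod^b_{i\in\IN}\Ga_i$ and $\beta([g_i])=[\bar g_i]$.

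The heart of the argument is exactness at $\Ga^\infty_{\omega}$. The inclusion $\alpha(N^\infty_{\omega})\subseteq\ker\beta$ is immediate, since $q_n\circ\iota_n$ is trivial for each $n$. For the reverse inclusion, suppose $[g_i]\in\Ga^\infty_{\omega}$ with $\beta([g_i])=e$, i.e. $\lim_{i\to\omega}l_{Q_i}(q_i(g_i))=0$, and let $R=\sup_i l_i(g_i)<\infty$. Using normality of $N_n$ I rewrite the quotient length as $l_{Q_i}(q_i(g_i))=\min_{a\in N_i}l_i(a g_i)$ and choose $a_i\in N_i$ realizing this minimum, so that $b_i:=a_i g_i$ satisfies $\lim_{i\to\omega}l_i(b_i)=0$. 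Setting $c_i:=a_i^{-1}\in N_i$ gives $c_i^{-1}g_i=a_i g_i=b_i$, hence $\lim_{i\to\omega}l_i(c_i^{-1}g_i)=0$, which says exactly that $[c_i]=[g_i]$ in $\Ga^\infty_{\omega}$. Finally $(c_i)\in\prod^b_{i\in\IN}N_i$: one has $l_i(b_i)=l_{Q_i}(q_i(g_i))\le l_i(g_i)\le R$ and $l_i(c_i)=l_i(a_i)=l_i(b_i g_i^{-1})\le l_i(b_i)+l_i(g_i)\le 2R$ for all $i$. Thus $[c_i]\in N^\infty_{\omega}$ and $\alpha([c_i])=[g_i]$, proving $\ker\beta\subseteq\alpha(N^\infty_{\omega})$.

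I expect the delicate point to be precisely this last inclusion: the $\omega$-smallness of the quotient length must be upgraded to an honest element of $N_i$ moving $g_i$ into the kernel direction, and this is where normality of $N_n$ (to pass freely between left and right cosets so that the minimizing representative sits on the correct side), finiteness of $\Ga_i$ (to attain that minimizer), and the uniform bound $R$ (to keep $(c_i)$ inside the bounded product) all enter at once. Everything else reduces to routine bookkeeping with the three length functions and their comparison $l_{N_n}=l_n|_{N_n}$ and $l_{Q_n}\circ q_n\le l_n$.
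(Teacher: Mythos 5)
Your proposal is correct, and in outline it coincides with the paper's proof: both construct the coordinatewise maps $[(a_n)]\mapsto[(\iota_n(a_n))]$ and $[(\gamma_n)]\mapsto[(q_n(\gamma_n))]$ and then verify exactness term by term. The difference lies in how the key step, exactness at $\Ga^\infty_\omega$, is carried out. The paper exploits discreteness: since all word lengths are integer-valued, a vanishing $\omega$-limit of lengths is the same as equality $\omega$-almost everywhere, so $\beta([\gamma_n])=e$ forces $q_n(\gamma_n)=e_{Q_n}$ for $\omega$-almost every $n$; pointwise exactness of each extension then puts $\gamma_n\in N_n$ $\omega$-almost everywhere, and one simply replaces $\gamma_n$ by $e_n$ off that set. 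You instead argue metrically: you identify $l_{Q_i}(q_i(g_i))$ with $\min_{a\in N_i}l_i(ag_i)$, choose minimizers, and produce an explicit sequence $(c_i)\in\prod^b_{i\in\IN}N_i$ with $\lim_{i\to\omega}l_i(c_i^{-1}g_i)=0$, never invoking the "agreement $\omega$-a.e." description of $\N_{0,\omega}$. Your route is somewhat longer than necessary in this finite, integer-valued setting (where the minimizer machinery collapses to the paper's observation), but it buys robustness — it would survive in settings where length functions are not discrete — and it also spells out the surjectivity of $\beta$ via minimal-length lifts, a point the paper dispatches with "similarly, one can also prove". Both proofs are complete and correct.
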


\begin{proof}
Set $\iota_n: N_n\to \Ga_n$ and $\pi_n: \Ga_n\to Q_n$. We then define $\iota^{\infty}_{\omega}:N^{\infty}_{\omega}\to\Ga^{\infty}_{\omega}$ to be $[(a_n)]\mapsto [(\iota_n(a_n))]$ and $\pi^{\infty}_{\omega}: \Ga^{\infty}_{\omega}\to Q^{\infty}_{\omega}$ to be $[(\gamma_n)]\to[(\pi_n(\gamma_n))]$. We prove that
$$1\to N^{\infty}_{\omega}\xrightarrow{\iota^{\infty}_{\omega}}\Ga^{\infty}_{\omega}\xrightarrow{\pi^{\infty}_{\omega}} Q^{\infty}_{\omega}\to 1$$
is a short exact sequence.

First of all, it is direct to see that $\pi^{\infty}_{\omega}\circ\iota^{\infty}_{\omega}$ is the trivial map from $N^{\infty}_{\omega}$ onto the unit element $[(e_{Q_n})]\in Q^{\infty}_{\omega}$. To see $\iota^{\infty}_{\omega}$ is injective, let $[(a_n)]\in N^{\infty}_{\omega}$ such that $\iota^{\infty}_{\omega}([(a_n)])=[(e_{\Ga_n})]\in\Ga^{\infty}_{\omega}$. This means that
$$\{n\in\IN\mid \iota_n(a_n)=e_{\Ga_n}\}\in\omega.$$
Since each $\iota_n$ is injective, this means that $\{n\in\IN\mid a_n=e_{N_n}\}\in\omega$, i.e., $(a_n)\in I_{0,\omega}$. Similarly, one can also prove that $\pi^{\infty}_{\omega}$ is surjective. For the last step, take $[(\gamma_n)]$ to be such that $\pi^{\infty}_{\omega}([(\gamma_n)])=[(e_{Q_n})]$. Then
$$\{n\in\IN\mid \pi_n(\gamma_n)=e_{Q_n}\}\in\omega.$$
Since each $1\to N_n\to \Ga_n\to Q_n\to 1$ is exact, this means that $\{n\in\IN\mid \gamma_n\in Im(\iota_n)\}\in\omega$. This 
means that $[(\gamma_n)]\in Im(\iota^{\infty}_\omega)$.
\end{proof}

\subsection{Banach property (T) for Limit groups}\label{subsec: Banach T for limit groups}

For fixed ultrafilter $\omega\in\partial_\beta\IN$, we define $\IC_{u,\infty}[X]_{\omega}$ to be as follows. Recall that $\IC_u[X]$ is a subalgebra of $\prod_{n\in\IN}\IC_u[\Ga_n]$. We shall then write an element in $\IC_u[X]$ as a sequence $(T_n)$. Denote by
$$\I^{\infty}_{\omega}=\{(T_n)\in\IC_u[X]\mid \lim_{n\to\omega}\|T_n\|_{\ell^1}=0\}.$$
It is direct to see that $\I^\infty_\omega$ is a two-side ideal of $\IC_u[X]$. We define $\IC_{u,\infty}[X]_{\omega}$ to be the quotient algebra of $\IC_u[X]$ by the ideal $\I_{\omega}^\infty$, i.e., 
$$\IC_{u,\infty}[X]_{\omega}=\frac{\IC_u[X]}{\I^\infty_\omega}.$$
We denote the quotient map by $\pi^\infty_\omega:\IC_u[X]\to\IC_{u,\infty}[X]_\omega$. Restrict the map $\pi^\infty_\omega$ on $\ell^{\infty}(X)$, then the image $\pi^\infty_\omega(\ell^\infty(X))$ forms a $C^*$-algebra under the norm
$$\|\pi^{\infty}_\omega(f)\|=\lim_{n\to\omega}\|f_n\|_{\infty},$$
where each $f_n$ is the restriction of $f$ on $\Ga_n$. We denote the quotient $C^*$-algebra by $C(Y)$, where $Y$ is obtained by Gelfand transformation.

We define the $\Ga^\infty_{\omega}$-action on $\ell^{\infty}(X)/\I^\infty_\omega\cong C(Y)$ by
$$[\gamma_n]\cdot[f_n]=[\gamma_nf_n],$$
where $\Ga_n$ acts on $\ell^\infty(\Ga_n)$ by left regular action. One can check this action is well-defined. Indeed, if $(\gamma_n)\sim_{\omega}(\gamma'_n)$, then for any $(f_n)\in\prod_{n\in\IN}\ell^{\infty}(\Ga_n)$, we have that
$$\{n\in\IN\mid \gamma_nf_n=\gamma_n'f_n\}\in\omega.$$
Thus, $\lim_{n\to\omega}\|\gamma_nf_n-\gamma_n'f_n\|=0$. Thus, $Y$ is induced with a $\Ga^\infty_{\omega}$-action.

For any $n\in\IN$, consider $\xi_n\in\ell^2(X)$ to be the normalized characteristic function on $\Ga_n\subseteq X$. Then $\phi_n: \ell^{\infty}(X)\to\IC$ defined by
$$f\mapsto\langle f\xi_n,\xi_n\rangle$$
defines a positive linear function on $\ell^{\infty}(X)$. For the fixed free ultrafilter $\omega\in\partial_{\beta}\IN$, define $\phi$ to be
$$\phi_\omega(f)=\lim_{i\to\omega}\phi_{i}(f)$$
which defines an element in $\ell^{\infty}(X)'$. Moreover, note that $\phi_\omega$ descends to a state on $\ell^{\infty}(X)/\I^\infty_\omega$. Indeed, if $f\in\I^\infty_\omega$, then for any $\varepsilon>0$, there exists $D\in\omega$ such that $\sup_{x\in\Ga_n}|f(x)|\leq\varepsilon$ for all $n\in D$. As a result, for any $n\in D$, we conclude that
$$|\phi_n(f)|=\frac{|\sum_{x\in\Ga_n}f(x)|}{\#\Ga}\leq\varepsilon.$$
With a similar proof as above, one can also check that the state $\phi_\omega$ is $\Ga^\infty_\omega$-invariant. According to the Riesz Representation Theorem, there exists a $\Ga^\infty_\omega$-invariant probability measure $\mu_{\phi_\omega}$ on $Y$ such that
$$\phi(f)=\int_{Y}f(x)d\mu_{\phi_\omega}(x)$$
for any $f\in C(Y)$. We denote $Y\rtimes\Ga^{\infty}_{\omega}$ the transformation groupoid.

\begin{Lem}\label{lem: crossed product}
The algebra $\IC_{u,\infty}[X]_\omega$ is isomorphic to algebraic crossed product $C_c(Y\rtimes\Ga^{\infty}_{\omega})\subseteq C(Y)\rtimes \Ga^\infty_\omega$.
\end{Lem}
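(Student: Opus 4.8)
The plan is to construct an explicit isomorphism $\bar\Psi$ by exhibiting a canonical ``Fourier-type'' decomposition of elements of $\IC_u[X]$ along the limit group and then passing to the quotient by $\I^\infty_\omega$. First I would fix $T=(T_n)\in\IC_u[X]$ of propagation at most $R$. Since each $d_n$ is left invariant, $T_n(x,y)\ne 0$ forces $x^{-1}y$ to lie in the ball $B_{e_n}(R)$, so $T_n$ is \emph{canonically} recovered from the coefficient functions $f^{(n)}_g(x)=T_n(x,xg)$ indexed by $g\in B_{e_n}(R)$; this records $T_n$ as a finite combination $\sum_g f^{(n)}_g V_g$ of right translations $V_g$ and diagonal multiplications. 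By bounded geometry the cardinalities $\#B_{e_n}(R)$ are uniformly bounded, hence $\omega$-constant, and this common value equals $\#\big(B_e(R)\cap\Ga^\infty_\omega\big)$ (the limit group has bounded geometry and is finitely generated by Lemma \ref{lem: finitely generated limit group}); thus for $\omega$-almost every $n$ the balls $B_{e_n}(R)$ are in bijection with the ball of radius $R$ in $\Ga^\infty_\omega$.

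Next I would package the coefficients. For $h\in\Ga^\infty_\omega$ with $l(h)\le R$, choose a representative $h=[(g_n)]$ and set $f_h\in C(Y)=\ell^\infty(X)/\I^\infty_\omega$ to be the class of $(f^{(n)}_{g_n})_n$; this is independent of the representative because $(g_n)\sim_\omega(g_n')$ means $g_n=g_n'$ on a set in $\omega$. Then $\Psi(T)=\sum_{l(h)\le R}f_h\,u_h\in C_c(Y\rtimes\Ga^\infty_\omega)$ defines a linear map (the sum is finite since $\Ga^\infty_\omega$ has bounded geometry). The crucial computation is multiplicativity: writing $(S_nT_n)(x,xk)=\sum_{g}s^{(n)}_g(x)\,t^{(n)}_{g^{-1}k}(xg)$ and recognising the right translate $x\mapsto t^{(n)}_{g^{-1}k}(xg)$ as the action of $[(g_n)]$ on the coefficient of $T$, one sees $\Psi(ST)$ equals the twisted convolution $\sum_{hh'=k}f^S_h\,(h\cdot f^T_{h'})\,u_k$, i.e.\ the crossed-product product associated with the $\Ga^\infty_\omega$-action on $C(Y)$ defined before the lemma. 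This covariance step, where the left-invariance of the metric and the definition of the action on $Y$ must be matched precisely, is the step I expect to be the main obstacle.

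Finally I would identify the kernel and prove surjectivity. For the kernel, the pointwise comparison $\sup_{g}\|f^{(n)}_g\|_\infty\le\|T_n\|_{\ell^1}\le\#B_{e_n}(R)\cdot\sup_g\|f^{(n)}_g\|_\infty$, combined with the fact that a finite maximum commutes with the $\omega$-limit together with the ball bijection above, yields the equivalence $\max_h\|f_h\|_{C(Y)}=\lim_{n\to\omega}\|T_n\|_{\ell^1}$ up to the uniform constant $\#B_e(R)$; hence $\Psi(T)=0$ iff every $f_h=0$ iff $\lim_{n\to\omega}\|T_n\|_{\ell^1}=0$, that is, iff $T\in\I^\infty_\omega$. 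Therefore $\Psi$ descends to an injective homomorphism $\bar\Psi\colon\IC_{u,\infty}[X]_\omega\to C_c(Y\rtimes\Ga^\infty_\omega)$. Surjectivity is then routine: given $\sum_{h\in F}f_h u_h$ with $F$ finite, lift each $f_h\in C(Y)$ to $(f^{(n)}_h)\in\ell^\infty(X)$, pick representatives $(g^h_n)$, and set $T_n=\sum_{h\in F}f^{(n)}_h V_{g^h_n}$; since distinct elements of $F$ have $\omega$-distinct representatives, $\Psi(T)=\sum_h f_h u_h$. This establishes the desired isomorphism.
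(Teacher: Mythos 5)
Your proof is correct and follows essentially the same route as the paper's: decompose $T$ into coefficient functions times group translations indexed ($\omega$-almost everywhere) by the ball of the limit group, map to $C_c(Y\rtimes\Ga^{\infty}_{\omega})$, and identify the kernel with $\I^\infty_\omega$ by comparing $\sup_g\|f^{(n)}_g\|_{\infty}$ with $\|T_n\|_{\ell^1}$ up to the uniform constant coming from bounded geometry. The points you make explicit --- the ultraproduct bijection of balls, the convolution/covariance computation, and the two-sided norm estimate --- are exactly the steps the paper asserts as ``direct,'' and the covariance you flagged as the main obstacle does match: conjugation by the translation operators implements the action $f\mapsto f(\cdot\,g)$ on $\ell^{\infty}(X)$, which is the action that descends to $C(Y)$ (the paper's phrasing ``left regular action'' is a harmless convention slip, since the two conventions yield isomorphic crossed products).
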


\begin{proof}
For any operator $T\in\IC_u[X]$, denote $R=\prop(T)$. Since the sequence $(\Ga_n)$ has uniform bounded geometry, there exists $N\in\IN$ such that $\#B(e_n,R)\leq N$. For any $n\in\IN$, we can label $B(e_n,R)$ with $\{1,\cdots, N\}$. By using a similar method as in Case 1 of Example \ref{exa: explanation of Lemma 3.1}, every operator $T\in\IC_u[X]$ can be written as a finite sum $T=\sum (f^{(n)}_{\gamma}\cdot\gamma^{(n)})$ with the sequence $(\gamma^{(n)})\in\prod^b_{n\in\IN}\Ga_n$ and $f^{(n)}_\gamma\in\ell^{\infty}(\Ga_n)$. Define the map
$$\Phi_\omega: \IC_u[X]\to C_c(Y\rtimes\Ga^{\infty}_{\omega})\quad\text{by}\quad T\mapsto \sum f_{\gamma}\cdot\gamma$$
where $\gamma=[\gamma^{(n)}]\in\Ga^\infty_\omega$ and $f_\gamma=\pi^\infty_\omega((f^{(n)}_\gamma))\in C(Y)$. Since $(\Ga_n)$ has uniform bounded geometry, the image of $T$ under $\Phi_\omega$ must be a finite sum. To see this map is well-defined, assume that there is another decomposition of $T=\sum (f^{(n)}_{\eta}\cdot\eta^{(n)})$ associated with another labelling. If $\eta=\gamma\in\Ga^\infty_\omega$, then by definition, we conclude that
$$D=\{n\in\IN\mid \eta_n=\gamma_n\}\in\omega.$$
From the construction of the decomposition, we have that $f^{(n)}_\gamma=f^{(n)}_\eta$ for any $n\in\IN$. As a result, $f_\gamma=f_\eta\in  C(Y)$. This shows that the map $\Phi_\omega$ is well-defined. From the construction of the map $\Phi_\omega$, it is also direct to see that $\Phi_\omega$ is a surjective homomorphism.

Moreover, if $T\in\IC_u[X]$ satisfies that $\Phi_\omega(T)=0$, then for a decomposition $T=\sum (f^{(n)}_{\gamma}\cdot\gamma^{(n)})$, we have that $\lim_{n\to\omega}\|f^{(n)}_\gamma\|=0$ for any $\gamma\in\Ga^{\infty}_\omega$. By uniform bounded geometry of $(\Ga_n)$, it implies that $\lim_{n\to\omega}\|T_n\|_{\ell^1}=0$. This means that the kernel of $\Phi_\omega$ is exactly equal to $\I^\infty_\omega$, i.e., $\IC_{u,\infty}[X]_\omega\cong C_c(Y\rtimes\Ga^{\infty}_{\omega})$.
\end{proof}

\begin{Pro}\label{pro: banach property T for limit group}
Let $(\Ga_n)_{n\in\IN}$ be a sequence of finite groups and $X=\bigsqcup_{n\in\IN}$ the separated disjoint union of $(\Ga_n)$. For any uniformly convex Banach space $\fB$ and $p\in(1,\infty)$, if $X$ has (uniform) geometric property $(T_{L^p(\mu_{\phi},\fB)})$, then $\Ga^\infty_{\omega}$ has (uniform) property $(T_{\fB})$.
\end{Pro}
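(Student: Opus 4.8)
The plan is to turn an isometric representation of the limit group into a representation of the Roe algebra on a Lebesgue--Bochner space, transport (almost) invariant vectors between the two, and read off the spectral gaps.

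First I would fix an isometric representation $\rho:\Ga^\infty_\omega\to O(\fB)$ and build an induced representation $\Pi$ of $\IC_u[X]$ on $L^p(\mu_\phi,\fB)$. Using Lemma~\ref{lem: crossed product}, which identifies $\IC_{u,\infty}[X]_\omega$ with the algebraic crossed product $C_c(Y\rtimes\Ga^\infty_\omega)$, together with the fact that $\mu_\phi$ is a $\Ga^\infty_\omega$-invariant probability measure on $Y$, I would let $C(Y)$ act by multiplication operators and let each $\gamma\in\Ga^\infty_\omega$ act by the Koopman-type operator $(U_\gamma\eta)(y)=\rho(\gamma)\,\eta(\gamma^{-1}y)$. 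A short check shows $U_{\gamma_1}U_{\gamma_2}=U_{\gamma_1\gamma_2}$ and, since $\mu_\phi$ is invariant and each $\rho(\gamma)$ is an isometry of $\fB$, each $U_\gamma$ is a surjective isometry of $L^p(\mu_\phi,\fB)$; this covariant pair defines a representation of the crossed product, and pulling back along the quotient map $\pi^\infty_\omega:\IC_u[X]\to\IC_{u,\infty}[X]_\omega$ produces $\Pi$. Estimating $\|\Pi(T)\|$ through a partial-translation decomposition $T=\sum f_iV_i$ gives $\|\Pi(T)\|\le\sum\|f_i\|_\infty$, so $\Pi$ is contractive for the $\ell^1$-norm and is therefore a legitimate representation to which geometric property $(T_{L^p(\mu_\phi,\fB)})$ applies.

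Next I would identify the invariant subspace. By Lemma~\ref{lem: invariant vector in group language} a vector $\eta$ is $\Pi$-invariant iff $\Pi(A)\eta=\eta$ for every full partial translation $A$; writing $\pi^\infty_\omega(A)=\sum_\gamma f_\gamma\gamma$ with $\sum_\gamma f_\gamma=\Phi(A)=1$, this reduces to $U_\gamma\eta=\eta$ for all $\gamma$, i.e.\ to the condition that $\eta$ be $\rho$-equivariant, $\eta(\gamma y)=\rho(\gamma)\eta(y)$. Thus $L^p(\mu_\phi,\fB)^\Pi$ is exactly the space of equivariant fields, and the constant field $\mathrm{const}_\zeta:y\mapsto\zeta$ lies in it iff $\zeta\in\fB^\rho$. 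The key quantitative step is the isometry
$$\operatorname{dist}\big(\mathrm{const}_\zeta,\,L^p(\mu_\phi,\fB)^\Pi\big)=\operatorname{dist}(\zeta,\fB^\rho),\qquad\zeta\in\fB.$$
The bound $\le$ is clear, since $\mathrm{const}_{\xi_0}$ is equivariant for $\xi_0\in\fB^\rho$ and $\mu_\phi$ is a probability measure; for $\ge$, given any equivariant $\eta$ I would average it against $\mu_\phi$ to get $\bar\eta=\int_Y\eta\,d\mu_\phi$, observe that invariance of $\mu_\phi$ and equivariance force $\bar\eta\in\fB^\rho$, and apply Jensen's inequality to the convex function $\|\cdot\|^p$ to obtain $\|\mathrm{const}_\zeta-\eta\|_{L^p}\ge\|\zeta-\bar\eta\|\ge\operatorname{dist}(\zeta,\fB^\rho)$.

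With this isometry the transfer of (almost) invariant vectors becomes immediate. Taking $\zeta=\xi$ shows $[\mathrm{const}_\xi]$ has the same norm in $L^p(\mu_\phi,\fB)/L^p(\mu_\phi,\fB)^\Pi$ as $[\xi]$ in $\fB/\fB^\rho$; and since a full partial translation $A_i$ with $\supp(A_i)\subseteq E_0$ maps under $\pi^\infty_\omega$ to a generator $s_i$ of the finitely generated group $\Ga^\infty_\omega$ (Lemma~\ref{lem: finitely generated limit group}) and $\Pi(A_i)\mathrm{const}_\xi=\mathrm{const}_{\rho(s_i)\xi}$, applying the isometry to $\zeta=\rho(s_i)\xi-\xi$ gives $\|\widetilde\Pi(A_i)[\mathrm{const}_\xi]-[\mathrm{const}_\xi]\|=\|\widetilde\rho(s_i)[\xi]-[\xi]\|$. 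Finally, decomposing an arbitrary partial translation $V$ with $\supp(V)\subseteq E_0$ as $V=\sum_i\chi_iA_i$ via Lemma~\ref{lem: full partial translation} and using $\|\widetilde\Pi(\chi_i)\|\le1$ yields $\sup_V\|\widetilde\Pi(V)[\mathrm{const}_\xi]-\widetilde\Pi(\Phi(V))[\mathrm{const}_\xi]\|\le N\sup_i\|\widetilde\rho(s_i)[\xi]-[\xi]\|$, where $N$ is the uniformly bounded number of generators. Hence a spectral gap $c_0$ for $\Pi$ forces a spectral gap at least $c_0/N$ for $\rho$, giving property $(T_\fB)$ for $\Ga^\infty_\omega$, and the uniform version follows since $N$ does not depend on $\rho$. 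I expect the main obstacle to be Paragraph~2: constructing the induced representation and, above all, verifying the invariant-subspace identification and the averaging step $\bar\eta\in\fB^\rho$ that powers the isometry; the convexity estimate itself is routine once these structural points are settled.
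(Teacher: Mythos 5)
Your construction is the same as the paper's: the induced representation you build (multiplication by $C(Y)$ together with the Koopman operators $U_\gamma=V_\gamma\ox\rho(\gamma)$ on $L^p(\mu_\phi,\fB)$, pulled back through $\IC_u[X]\to\IC_{u,\infty}[X]_\omega\cong C_c(Y\rtimes\Ga^\infty_\omega)$) is exactly the representation $\pi$ in the paper's proof, and your spectral-gap transfer is the contrapositive of the paper's transfer of almost invariant vectors. One step of yours is actually an improvement: the identity $\mathrm{dist}(\mathrm{const}_\zeta,L^p(\mu_\phi,\fB)^\Pi)=\mathrm{dist}(\zeta,\fB^\rho)$, proved by averaging an equivariant field against the invariant probability measure $\mu_\phi$ and applying Jensen, is a clean, self-contained substitute for the paper's appeal to \cite[Proposition 2.10]{BFGM2007}, and it makes rigorous the assertion that $[1_{\mu_\phi}\ox\xi]$ has the same quotient norm as $[\xi]$ (a point the paper leaves vague).

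However, your last step has a genuine gap: you invoke ``$\|\widetilde\Pi(\chi_i)\|\le 1$'', but $\widetilde\Pi(\chi_i)$ does not exist as an operator on $L^p(\mu_\phi,\fB)/L^p(\mu_\phi,\fB)^\Pi$. By your own identification, $L^p(\mu_\phi,\fB)^\Pi$ is the space of $\rho$-equivariant fields, and multiplying an equivariant field by a non-invariant function $\chi_i$ destroys equivariance; this is precisely the failure of invariant subspaces of Roe-algebra representations to be subrepresentations that the paper emphasizes in Section~2. Hence the inequality with the quotient norms $\|\widetilde\rho(s_i)[\xi]-[\xi]\|$ on the right-hand side is not justified as written (the paper's own final display commits the same notational sin, but is covered by its hypotheses, as explained below). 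What one gets rigorously is a bound by genuine norms: each $M_{\chi_i}(U_{s_i}-1)$ annihilates $L^p(\mu_\phi,\fB)^\Pi$, and since the $\chi_i$ have disjoint supports, $\sum_i\chi_i=\Phi(V)\le 1$, and $\mu_\phi$ is a probability measure, one has $\|(\Pi(V)-\Pi(\Phi(V)))\mathrm{const}_\xi\|_{L^p}\le\max_i\|\rho(s_i)\xi-\xi\|_{\fB}$, which dominates the quotient norm on the left. To convert the right-hand side into quotient norms you must choose the representative $\xi$ inside the complemented subspace $\fB_\rho$ — this changes no equivalence classes, since $\mathrm{const}_{\xi_0}$ is $\Pi$-invariant for $\xi_0\in\fB^\rho$ — and invoke Lemma~\ref{lem: complemented representation}: on $\fB_\rho$ the norm of $\fB$ and the quotient norm of $\fB/\fB^\rho$ are uniformly equivalent, so $\|\rho(s_i)\xi-\xi\|\le C\,\|\widetilde\rho(s_i)[\xi]-[\xi]\|$ with $C$ depending only on the modulus of convexity. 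This is exactly how the paper sidesteps the issue: its contradiction hypothesis is phrased from the start as producing $\xi\in\fB_\rho$ with $\|\rho(\gamma)\xi-\xi\|\le\varepsilon$ in the genuine norm of $\fB$. With this repair your argument goes through (with constant $C$, or $NC$ if you use the crude triangle inequality, in place of $N$), and both the uniform and non-uniform statements follow as you describe, since $C$ and $N$ are independent of $\rho$.
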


\begin{proof}
We shall only prove the uniform case of this proposition, the proof for the regular case is somehow parallel to the uniform case.

By Lemma \ref{lem: crossed product}, we denote $\iota: \IC\Ga^\infty_\omega\to\IC_{u,\infty}[X]_{\omega}\cong C_c(\Ga^\infty_\omega, C(Y))$ the canonical inclusion induced by the constant inclusion $\IC\to C(Y)$. For any representation $\rho:\IC\Ga^\infty_\omega\to\L(\fB)$ and $p\in(1,\infty)$, we claim that there exists a representation $\pi:\IC_{u,\infty}[X]_\omega\to\L(L^p(\mu_{\phi},\fB))$ such that $\pi\circ\iota: \IC\Ga^\infty_\omega\to\IC_{u,\infty}[X]_{\omega}\to\L(L^p(\mu_{\phi},\fB))$ contains $\rho:\IC\Ga^\infty_\omega\to\L(\fB)$ as a subrepresentation.

Define $M: C(Y)\to\L(L^p(\mu_{\phi_\omega}))$ to be the multiplication representation, i.e.,
$$(M_f(\xi))(x)=f(x)\xi(x),$$
for any $f\in C(Y)$ and $\xi\in L^p(\mu_{\phi_\omega})$. Notice that $L^p(\mu_{\phi_\omega})$ also admits a canonical $\Ga^\infty_\omega$ representation by
$$(V_\gamma\xi)(x)=\xi(\gamma^{-1}x)$$
for any $\gamma\in \Ga^\infty_\omega$ and $\xi\in L^p(\mu_{\phi_{\omega}})$. One can check that $V$ and $M$ are coherent, i.e., $V_\gamma M_fV^{-1}_{\gamma}=M_{\gamma f}$. Since $C_c(Y\rtimes\Ga^{\infty}_{\omega})$ is generated by $\IC\Ga^{\infty}_{\omega}$ and $C(Y)$, the two coherent representations $V\ox\rho: \IC\Ga^{\infty}_{\omega}\to\L(L^p(\mu_{\phi_{\omega}},\fB))$ and $M\ox 1:C(Y)\to \L(L^p(\mu_{\phi_{\omega}},\fB))$ defines a representation
$$\pi: C_c(Y\rtimes\Ga^{\infty}_{\omega})\to \L(L^p(\mu_{\phi_{\omega}},\fB)).$$
Since $C_c(Y\rtimes\Ga^{\infty}_{\omega})$ is a quotient of $\IC_u[X]$, $\pi$ lifts a representation $\IC_u[X]\to\L(L^p(\mu_{\phi_{\omega}},\fB))$, a little abuse of notation, we shall still denote this representation $\pi$. It is direct to see that $\pi$ is contractive. One can easily check that $\pi\circ\iota: \IC\Ga^\infty_{\omega}\to\IC_{u,\infty}[X]_\omega\to\L(L^p(\mu_{\phi_\omega},\fB))$ is equal to $V\ox\rho$. Since $\mu_{\phi_\omega}$ is a probability measure, the inclusion map
$$I:\fB\to L^p(\mu_{\phi_\omega},\fB),\qquad \xi\mapsto 1_{\mu_{\phi_\omega}}\ox\xi$$
gives an isometry, where $1_{\mu_{\phi_\omega}}$ is the constant function. Moreover, it is direct to check that $I$ is $\Ga^\infty_\omega$-equivariant, where $\fB$ is equipped with $\rho(\Ga^{\infty}_\omega)$-action and $L^p(\mu_{\phi_\omega},\fB)$ is equipped with $V\ox\rho(\Ga^\infty_\omega)$-action. Thus $\rho$ can be seen as a subrepresentation of $\pi\circ\iota$ and $I$ takes $\fB^\rho$ into $L^p(\mu_{\phi_\omega},\fB)^{\pi\circ\iota}$ by definition. Thus, the map $I$ induces an isometrical embedding by definition
$$\wh I:\fB/\fB^{\rho}\to L^p(\mu_{\phi_\omega},\fB)/L^p(\mu_{\phi_\omega},\fB)^{\pi\circ\iota}.$$
By Lemma \ref{lem: complemented representation}, $\wh I$ induces a continuous embedding $\wt I:\fB_\rho\to L^p(\mu_{\phi_\omega},\fB)_{\pi\circ\iota}$.

Now, assume for a contradiction that $\Ga^\infty_\omega$ does not have uniform property ($T_{\fB}$). Then for any $\varepsilon>0$, there exists a representation $\rho:\IC\Ga^\infty_\omega\to\L(\fB)$ and $\xi\in\fB_{\rho}$ with $\|\xi\|=1$ such that for any $\gamma\in S$, we have that
$$\|\rho(\gamma)\xi-\xi\|\leq\varepsilon.$$
By the construction above, for any such representation $\rho$, we take $\pi: \IC_{u,\infty}[X]_\omega\to\L(L^p(\mu_{\phi_\omega},\fB))$ such that $\pi\circ\iota$ contains $\rho$ as a subrepresentation. A little abuse of notation, we shall still denote it by $\pi$ the representation
$$\IC_u[X]\xrightarrow{\text{quotient map}}\IC_{u,\infty}[X]_\omega\to\L(L^p(\mu_{\phi_\omega},\fB))$$
Since $\rho$ is a subrepresentation of $\pi\circ\iota$, we conclude that the representation
$$\pi\circ\iota: \IC\Ga^\infty_{\omega}\to\IC_{u,\infty}[X]_\omega\to\L(L^p(\mu_{\phi_\omega},\fB))$$
also satisfies that for any $\gamma\in S^{\infty}_\omega$, by \cite[Proposition 2.10]{BFGM2007}
$$\|\pi\circ\iota(\gamma)[1_{\mu_{\phi_{\omega}}}\ox\xi]-[1_{\mu_{\phi_{\omega}}}\ox\xi]\|_{L^p(\mu_{\phi_\omega},\fB)/L^p(\mu_{\phi_\omega},\fB)^{\pi\circ\iota}}\leq \varepsilon.$$
By Lemma \ref{lem: invariant vector in group language}, the invariant space $\fB^{\pi\circ\iota}$ coincides with $\fB^{\pi}$. Thus, the space $\fB/\fB^\rho$ is isomorphic to $(1\ox\fB+L^p(\mu_{\phi_\omega},\fB)^{\pi})/L^p(\mu_{\phi_\omega},\fB)^{\pi}$. Then$[1_{\mu_{\phi_\omega}}\ox\xi]\in L^p(\mu_{\phi_\omega},\fB)/L^p(\mu_{\phi_\omega},\fB)^{\pi}$ also has norm $r_0$. For any $\gamma\in S^\infty_\omega$, one can always take $(\gamma^{(n)})\in \prod_{n\in\IN}S_n$ such that $\gamma=[\gamma^{(n)}]$. Thus 
\[\begin{split}
&\|\pi((\gamma^{(n)}))[1_{\mu_{\phi_\omega}}\ox\xi]-[1_{\mu_{\phi_\omega}}\ox\xi]\|_{L^p(\mu_{\phi_\omega},\fB)/L^p(\mu_{\phi_\omega},\fB)^{\pi}}\\
=&\|\pi(\iota(\gamma))[1_{\mu_{\phi_\omega}}\ox\xi]-[1_{\mu_{\phi_\omega}}\ox\xi]\|_{L^p(\mu_{\phi_\omega},\fB)/L^p(\mu_{\phi_\omega},\fB)^{\pi}}\\
=&\|\pi\circ\iota(\gamma)[1_{\mu_{\phi_{\omega}}}\ox\xi]-[1_{\mu_{\phi_{\omega}}}\ox\xi]\|_{L^p(\mu_{\phi_\omega},\fB)/L^p(\mu_{\phi_\omega},\fB)^{\pi\circ\iota}}\leq \varepsilon.
\end{split}\]
Since the canonical quotient map $\prod_{n\in\IN}S_n\to S^\infty_{\omega}$ is a surjection, again by Lemma \ref{lem: invariant vector in group language} and Case 1 in Example \ref{exa: explanation of Lemma 3.1}, for any partial translation $V\in\IC_u[X]$ with $\supp(V)\subseteq E_0$, one can write it as $V=\sum\chi_{B_{V,i}}\cdot A_i$, where $A_i=(s^{(n)}_i)\in\prod S_n$. By the construction, one also concludes that $\{B_{V,i}\}$ is pairwise-disjoint and $\sum_{i=1}^n\chi_{B_{V,i}}=\Phi(V)$. For each $i$, we denote $s_i=[s^{(n)}_i]\in\Ga^\infty_\omega$ the element in the limit group. Moreover, $\{s_i\}$ is equal to the generating set $S^\infty_\omega$ by definition. We finally conclude that
\[\begin{split}\|\pi(V)[1_{\mu_{\phi_\omega}}\ox\xi]-\pi(\Phi(V))[1_{\mu_{\phi_\omega}}\ox\xi]\|&=\left\|\sum\pi(\chi_{B_{V,i}}\cdot A_i)[1_{\mu_{\phi_\omega}}\ox\xi]-\sum\pi(\chi_{B_{V,i}})[1_{\mu_{\phi_\omega}}\ox\xi]\right\|\\
&\leq\sum_{s_i\in S^\infty_\omega}\left\|\pi(\chi_{B_{V,i}})\left(\pi(\iota(s_i))[1_{\mu_{\phi_\omega}}\ox\xi]-[1_{\mu_{\phi_\omega}}\ox\xi]\right)\right\|\\
&\leq\sum_{s_i\in S^\infty_\omega}\left\|\left(\pi(\iota(s_i))[1_{\mu_{\phi_\omega}}\ox\xi]-[1_{\mu_{\phi_\omega}}\ox\xi]\right)\right\|\leq\#S^\infty_\omega\cdot\varepsilon.\end{split}\]
Here the norm is taken in the quotient space $L^p(\mu_{\phi_\omega},\fB)/L^p(\mu_{\phi_\omega},\fB)^{\pi}$ and $\|\pi(\chi_{B_{V,i}})\|\leq 1$ because $\pi$ is assumed to be contractive. Since $\varepsilon$ is arbitrarily taken, this contradicts to $\IC_u[X]$ has uniform geometric property ($T_{L^p(\mu_{\phi},\fB)}$).
\end{proof}

\subsection{Residually finite groups and box spaces}\label{subsec: residually finite group}

Let $\Ga$ be a residually finite group. A \emph{filtration} is a nested sequence of finite index normal subgroups $\{N_n\}_{n\in\IN}$ of $\Ga$
$$\Ga \trianglerighteq N_1 \trianglerighteq N_2 \trianglerighteq\cdots \trianglerighteq N_n \trianglerighteq\cdots,$$
such that $\bigcap_{n\in\IN}N_n=\{e\}$. The \emph{box space} of $\Ga$ associated with this filtration is defined to be the disjoint union $\Box_{\{N_n\}}(\Ga)=\bigsqcup_{n\in\IN}\Ga/N_n$ equipped with the metric $d$ satisfying that $d$ is equal to the quotient metric on each $\Ga/N_n$ and $d(\Ga/N_n,\Ga/N_m)=\infty$ whenever $n\ne m$. If we fix $S$ a symmetric generating set of $\Ga$, then
$$E_0=\{(x,sx)\mid x\in \Ga/N_n, s\in S\}$$
will automatically form a generating set of the coarse structure of $X$.

\begin{Pro}\label{pro: example of box spaces}
Let $\Ga$ be a finitely generated, residually finite group with (uniform) property ($T_{\fB}$), then any box space $\Box_{\{\Ga_n\}}(\Ga)$ of $\Ga$ has (uniform) geometric property ($T_\fB$)
\end{Pro}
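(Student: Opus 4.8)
The plan is to realise $\Ga$ inside the multiplier structure of $\IC_u[X]$, thereby converting any representation of the Roe algebra into an isometric Banach representation of $\Ga$ with the \emph{same} invariant and co-invariant subspaces, so that property $(T_\fB)$ of $\Ga$ feeds directly into the spectral-gap criterion of Lemma \ref{lem: complemented representation}. Concretely, write $X=\bigsqcup_{n\in\IN}\Ga/N_n$ and fix the symmetric generating set $S$ so that $E_0=\{(x,sx)\mid x\in\Ga/N_n,\ s\in S\}$ generates the coarse structure. For each $\gamma\in\Ga$, left translation $x\mapsto\gamma x$ is a bijection of every $\Ga/N_n$, hence defines a full partial translation $A_\gamma\in\IC_u[X]$ with $\prop(A_\gamma)\le|\gamma|_S<\infty$, $\Phi(A_\gamma)=1$, and $A_\gamma^{-1}=A_{\gamma^{-1}}=A_\gamma^*$; moreover $\gamma\mapsto A_\gamma$ is a group homomorphism. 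Given a (contractive, unital) representation $\pi:\IC_u[X]\to\L(\fB)$ on a uniformly convex $\fB$, I would set
$$\rho:=\pi\circ A_{(\cdot)}:\Ga\to\L(\fB),\qquad \rho(\gamma)=\pi(A_\gamma).$$
Since $\pi$ is contractive and unital and each $A_\gamma$ is a full partial translation, each $\rho(\gamma)$ is a surjective linear isometry, so $\rho:\Ga\to O(\fB)$ is an isometric representation; a short computation using $A_\gamma^*=A_{\gamma^{-1}}$ shows its dual satisfies $\rho^*=\pi^*\circ A_{(\cdot)}$.

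The heart of the argument is the identification of invariant subspaces: $\fB^\pi=\fB^\rho$ and $\fB_\pi=\fB_\rho$. The inclusion $\fB^\pi\subseteq\fB^\rho$ is immediate, since each $A_s$ is a full partial translation and so any $\pi$-invariant vector is fixed by every $\rho(s)=\pi(A_s)$ (Lemma \ref{lem: invariant vector in group language}). For the reverse, let $\xi\in\fB^\rho$, so that $\pi(A_\gamma)\xi=\rho(\gamma)\xi=\xi$ for \emph{all} $\gamma\in\Ga$ (not merely the generators, as $\rho$ is a homomorphism). Given an arbitrary full partial translation $A$ with local bijection $t_A$, the infinite separation between the components of the box space forces $t_A(x)$ to lie in the same $\Ga/N_n$ as $x$, whence $t_A(x)=\gamma_x x$ for some $\gamma_x\in\Ga$ with $|\gamma_x|_S\le\prop(A)$; bounded geometry and finiteness of $S$ leave only finitely many values of $\gamma_x$, yielding a decomposition
$$A=\sum_{\gamma}\chi_{\gamma B_\gamma}\,A_\gamma,\qquad B_\gamma=\{x\mid \gamma_x=\gamma\},$$
where $\{\gamma B_\gamma\}$ is a finite disjoint cover of the range $R_A$ and $\sum_\gamma\chi_{\gamma B_\gamma}=\chi_{R_A}$. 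Applying $\pi$ and using $\pi(A_\gamma)\xi=\xi$ gives $\pi(A)\xi=\sum_\gamma\pi(\chi_{\gamma B_\gamma})\xi=\pi(\chi_{R_A})\xi=\xi$ (as $A$ is full, $R_A=X$), so $\xi$ is $\pi$-invariant by Lemma \ref{lem: invariant vector in group language}; thus $\fB^\rho\subseteq\fB^\pi$. Running the identical argument for the representation $\pi^*$ of $\IC_u[X]$ on $\fB^*$, and using $\rho^*=\pi^*\circ A_{(\cdot)}$, gives $(\fB^*)^{\pi^*}=(\fB^*)^{\rho^*}$; passing to annihilators yields $\fB_\pi=\fB_\rho$. (Note this subspace comparison is convexity-free; only the splittings themselves invoke uniform convexity.)

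With these identifications the conclusion is short. Fix a unit vector $\xi\in S(\fB_\pi)=S(\fB_\rho)$. Property $(T_\fB)$ of $\Ga$, together with the complementation $\fB=\fB^\rho\oplus\fB_\rho$ of \cite[Proposition 2.10]{BFGM2007}, provides $c>0$ (depending only on $\rho$, and in the uniform case independent of $\rho$) with $\sup_{s\in S}\|\rho(s)\xi-\xi\|\ge c\|\xi\|$. Since $\rho(s)=\pi(A_s)$ and each $A_s$ is a full partial translation with $\supp(A_s)\subseteq E_0$, this reads $\sup_{s\in S}\|\pi(A_s)\xi-\xi\|\ge c\|\xi\|$, which is exactly the spectral-gap criterion of Lemma \ref{lem: complemented representation}(2). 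Hence every such $\pi$ has a spectral gap, i.e.\ $X$ has geometric property $(T_\fB)$; in the uniform case the uniform constant coming from uniform property $(T_\fB)$ of $\Ga$ is independent of $\pi$, giving uniform geometric property $(T_\fB)$ via Lemma \ref{lem: complemented representation}(3).

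The main obstacle is the middle step: showing that fixity under the comparatively rigid family $\{A_\gamma\}_{\gamma\in\Ga}$ of genuine group translations already forces fixity under \emph{all} full partial translations, whose translating element is allowed to vary from point to point. Everything else—the homomorphism property, the isometry of $\rho$, the passage to duals, and the final comparison of gaps—is routine bookkeeping once the decomposition $A=\sum_\gamma\chi_{\gamma B_\gamma}A_\gamma$ is in hand.
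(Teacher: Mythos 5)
Your middle step is correct and is exactly the paper's key lemma: your operators $A_\gamma$ are the paper's $\iota(\gamma)$, and your decomposition $A=\sum_\gamma\chi_{\gamma B_\gamma}A_\gamma$ of a full partial translation, which forces $\fB^\pi=\fB^\rho$, is precisely Lemma \ref{lem: invariant spaces coincide} (your version is in fact slightly stronger, since you handle arbitrary finite propagation rather than only $\supp(V)\subseteq E_0$, and this extra care is genuinely needed to invoke Lemma \ref{lem: invariant vector in group language}). Where you diverge is the transfer step. The paper never touches $\fB_\pi$, duals, or annihilators: since $\fB^\pi=\fB^\rho$, the quotients $\fB/\fB^\pi$ and $\fB/\fB^\rho$ are \emph{the same} Banach space, so an $\varepsilon$-almost invariant unit vector for $\wt\pi$ (tested on partial translations supported in $E_0$, in particular on the $\iota(s)$, $s\in S$, which are full and satisfy $\Phi(\iota(s))=1$) is already an $\varepsilon$-almost invariant unit vector for the quotient group representation $\wt\rho$; contraposition against (uniform) property $(T_\fB)$ of $\Ga$ finishes both versions at once.

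Your route instead passes through $\rho^*=\pi^*\circ A_{(\cdot)}$, the identification $\fB_\pi=\fB_\rho$, the splitting $\fB=\fB^\rho\oplus\fB_\rho$ of \cite[Proposition 2.10]{BFGM2007}, and Lemma \ref{lem: complemented representation}. Each of these steps is sound as you state it, but every one of them requires $\fB$ to be uniformly convex, and you assume this explicitly. Proposition \ref{pro: example of box spaces}, however, is stated for an \emph{arbitrary} Banach space $\fB$, and the paper's quotient argument is convexity-free; for a space such as $C_0(X)$ or an $L^1$-space there is no duality map, the complementation $\fB=\fB^\rho\oplus\fB_\rho$ may fail, and Lemma \ref{lem: complemented representation} is unavailable, so your proof as written does not cover the full statement. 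The fix is cheap: discard the final duality/complementation paragraph and conclude via the direct quotient comparison above, which uses only your (convexity-free) identification $\fB^\pi=\fB^\rho$.
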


Before we can prove Proposition \ref{pro: example of box spaces}, we shall need the some preparation. For simplicity, we shall denote $X_n=\Ga/\Ga_n$ and $X=\Box_{\{\Ga_n\}}(\Ga)$. Denote by $\IC\Ga$ the group algebra of the group $\Ga$. Then a linear isometric representation of $\Ga$ is equivalent to an isometric representation of the algebra $\IC\Ga$. For each $X_n$, there is a canonical map
$$\iota_n:\IC\Ga\xrightarrow{\pi_n}\IC X_n\xrightarrow{i}\IC_u[X_n],$$
where $\pi_n$ is induced by the quotient homomorphism $\pi_n:\Ga\to\Ga/\Ga_n$ and $i:\IC X_n\to\IC_u[X_n]$ is the canonical inclusion. Since $\IC_u[X]$ is a subalgebra of $\prod_{n\in\IN}\IC_u[X_n]$, it is direct to see there is a canonical map $\iota: \IC\Ga\to\IC_u[X]$ which composes with the canonical projection on $\IC_u[X_n]$ is exactly $\iota_n$. Moreover, since $\bigcap\Ga_n=\{e\}$, the map $\iota$ is an injection. One should treat the following lemma as a uniform version of Lemma \ref{lem: invariant vector in group language}.

\begin{Lem}\label{lem: invariant spaces coincide}\begin{itemize}
\item[(1)] For any partial translation $V\in\IC_u[X]$ with $\supp(V)\subseteq E_0$, there exist a finite family $\{A_{V,s}\}_{s\in S}$ of subsets of $X$ such that $V=\sum_{s\in S}\chi_{A_{V,s}}\cdot\iota(s)$.
\item[(2)] The invariant space $\fB^{\rho}$ is equal to the invariant space $\fB^{\pi}$.\qed
\end{itemize}\end{Lem}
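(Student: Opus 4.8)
The plan is to prove (1) by the same partition-by-generator device used in Case~1 of Example~\ref{exa: explanation of Lemma 3.1}, now adapted to a single group acting on all the quotients $X_n=\Ga/\Ga_n$ simultaneously. Fix a partial translation $V$ with $\supp(V)\subseteq E_0$ and let $t_V\colon D_V\to R_V$ be its local bijection, so that $\supp(V)=\{(t_V(x),x):x\in D_V\}$. Since $(t_V(x),x)\in E_0$ and the $\Ga_n$ are normal, for each $x\in D_V$ there is a unique $s\in S$ with $t_V(x)=\bar s\,x$ in the ambient quotient group, where $\bar s$ is the image of $s$ and $\iota(s)$ is exactly the full partial translation induced by left multiplication by $\bar s$ on every $X_n$. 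I would then set
$$A_{V,s}=\{\,t_V(x): x\in D_V,\ t_V(x)=\bar s\,x\,\}\subseteq R_V,$$
which partition $R_V$ as $s$ ranges over $S$. Comparing matrix entries, $(\chi_{A_{V,s}}\iota(s))_{yx}=\chi_{A_{V,s}}(y)\,[\,y=\bar s\,x\,]$, and summing over $s$ reproduces $[\,t_V(x)=y\,]=V_{yx}$ precisely because each domain point selects one generator; this gives $V=\sum_{s\in S}\chi_{A_{V,s}}\iota(s)$. Moreover $\sum_{s\in S}\chi_{A_{V,s}}=\chi_{R_V}=\Phi(V)$, a fact I will record since it is what makes the telescoping in (2) work.

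For (2), I would use Lemma~\ref{lem: invariant vector in group language} to phrase $\fB^\pi$ as the set of $\xi$ fixed by every full partial translation, where $\pi\colon\IC_u[X]\to\L(\fB)$ and $\rho=\pi\circ\iota$. The inclusion $\fB^\pi\subseteq\fB^\rho$ is immediate: each $\iota(s)$ is full, so $\Phi(\iota(s))=1$ and $\rho(s)\xi=\pi(\iota(s))\xi=\pi(\Phi(\iota(s)))\xi=\xi$ for $\xi\in\fB^\pi$. For $\fB^\rho\subseteq\fB^\pi$, take $\xi$ with $\pi(\iota(s))\xi=\xi$ for all $s\in S$ (hence for all $g\in\Ga$). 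Using (1),
\begin{equation*}
\pi(V)\xi=\sum_{s\in S}\pi(\chi_{A_{V,s}})\pi(\iota(s))\xi=\sum_{s\in S}\pi(\chi_{A_{V,s}})\xi=\pi\!\left(\textstyle\sum_{s\in S}\chi_{A_{V,s}}\right)\xi=\pi(\Phi(V))\xi
\end{equation*}
for every $V$ with $\supp(V)\subseteq E_0$. To promote this to all partial translations (as the definition of $\fB^\pi$ demands), I would invoke the verbatim generalization of (1) with the finite generating set $S$ replaced by the finite ball $B(e,R)=\{g\in\Ga:|g|\le R\}$: any $V$ with $\supp(V)\subseteq\Delta_R$ decomposes as $\sum_{g\in B(e,R)}\chi_{A_{V,g}}\iota(g)$ with $\sum_g\chi_{A_{V,g}}=\Phi(V)$, so the same computation yields $\pi(V)\xi=\pi(\Phi(V))\xi$ for all $V$, i.e.\ $\xi\in\fB^\pi$.

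The only genuinely nontrivial point---and the step I expect to be the main obstacle---is the claim underlying both the displacement-to-generator identity in (1) and its ball version: that the bounded-propagation behaviour of $t_V$ on each quotient $X_n$ is realized by group elements drawn from one fixed finite subset of $\Ga$, uniformly in $n$. This is where the filtration structure is essential: because each $\Ga_n$ is normal, $X_n=\Ga/\Ga_n$ is a group whose word metric with respect to $\bar S$ is the quotient metric, and a word of length $\le R$ in $\bar S$ lifts letter-by-letter to a word of length $\le R$ in $S$; thus $d(\bar x,\bar y)\le R$ forces $\bar y\,\bar x^{-1}\in\overline{B(e,R)}$, with the lift living in $\Ga$ independently of $n$. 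I would spell this out carefully, together with the bookkeeping that the selected $g$ is unique for each domain point so that the $A_{V,g}$ really are disjoint, since it is exactly the ``uniform'' content that distinguishes this lemma from the pointwise Lemma~\ref{lem: invariant vector in group language}. The matrix-entry verifications and the identity $\sum\chi_{A_{V,s}}=\Phi(V)$ are then routine.
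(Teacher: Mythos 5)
Your proposal is correct and follows essentially the same route as the paper's proof: the paper also takes $A_{V,s}=P_1\bigl(\supp(\iota(s))\cap\supp(V)\bigr)$, obtains $V=\sum_{s\in S}\chi_{A_{V,s}}\cdot\iota(s)$ by the same comparison of entries, and proves (2) by exactly your telescoping computation. Two remarks on the details you flag. First, your claim that the generator $s\in S$ with $t_V(x)=\bar s\,x$ is \emph{unique} is false in general: distinct elements of $S$ can have the same image in a finite quotient $\Ga/\Ga_n$ (e.g.\ $\pm1$ in $\IZ/2\IZ$ for the filtration $\Ga_n=2^n\IZ$ of $\IZ$), so the sets $A_{V,s}$ as you define them may overlap and the sum would overcount. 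This is repaired precisely by the selection you call ``bookkeeping'': order $S$ and assign each point of $P_1(\supp(V))$ to the first generator that works, so the $A_{V,s}$ become disjoint by construction; note that the paper's own claim $\bigsqcup_{s\in S}A_{V,s}=P_1(\supp(V))$ needs the identical fix. Second, your extension of the decomposition from $S$ to the ball $B(e,R)$ — needed because invariance must be verified against \emph{all} partial translations, not only the $E_0$-supported ones — addresses a step the paper's written proof glosses over (it applies the $S$-decomposition to an arbitrary partial translation, which is only literally available when $\supp(V)\subseteq E_0$); your argument via normality of the $\Ga_n$ and letter-by-letter lifting of words is a correct way to close this, and is equivalent to first observing $\rho(g)\xi=\xi$ for all $g\in\Ga$ (since $\iota$ is multiplicative and $S$ generates $\Ga$) and then decomposing $V$ over $B(e,R)$.
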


\begin{proof}
(1) For any $\xi\in\fB^{\pi}$, since $\iota(s)$ is a partial translation in $\IC_u[X]$, it is direct to see that $\xi\in\fB^{\rho}$. On the other hand, fix a vector $\xi\in\fB^{\rho}$.  Let $V\in\IC_u[X]$ be a partial translation such that $\supp(V)\subseteq E_0$. For any generator $s\in S\subseteq \Ga$, we denote $A_{V,s}=P_1(\supp(\iota(s))\cap\supp(V))$, where $P_1: X\times X\to X$ is the projection onto the first coordinary, i.e., $P_1(x_1,x_2)\mapsto x_1$. By definition, one has that $\chi_{A_{V,s}}\cdot V=\chi_{A_{V,s}}\cdot\iota(s)$. Since $\bigcup_{s\in S}\supp(\iota(s))=E_0$, we conclude that $\bigsqcup_{s\in S}A_{V,s}=P_1(\supp(V))$. Denote by $\chi_{A_{V,s}}$ the characteristic function on $A_{V,s}$. We then have that
$$V=\Phi(V)\cdot V=\chi_{P_1(\supp(V))}\cdot V=\chi_{\bigsqcup_{s\in S}A_{V,s}}\cdot V=\sum_{s\in S}\chi_{A_{V,s}}\cdot V=\sum_{s\in S}\chi_{A_{V,s}}\cdot\iota(s).$$

(2) It is direct to see that $\fB^{\pi}\subseteq\fB^{\rho}$ since $\iota(s)$ is a partial translation for any $s\in S$. On the other hand, for any $\xi\in\fB^{\rho}$, for any partial translation $V\in\IC_u[X]$, one has that $\Phi(V)\cdot V=V$. Write $V=\sum_{s\in S}\chi_{A_{V,s}}\cdot\iota(s)$. We have that
\[\begin{split}\pi(V)\xi&=\sum_{s\in S}\pi(\chi_{A_{V,s}})\cdot\pi(\iota(s))\xi=\sum_{s\in S}\pi(\chi_{A_{V,s}})\cdot\rho(s)\xi\\
&=\sum_{s\in S}\pi(\chi_{A_{V,s}})\xi=\pi(\chi_{P_1(\supp(V))})\xi=\Phi(V)\xi.
\end{split}\]
This proves that $\xi\in\fB^{\pi}$.
\end{proof}

\begin{proof}[Proof of Proposition \ref{pro: example of box spaces}]
We still only prove the uniform version. Assume that $X$ has no uniform geometric property ($T_{\fB}$). There exists a representation $\pi: \IC_u[X]\to \L(\fB)$ such that the induced representation $\wt\pi:\IC_u[X]\to \L(\fB/\fB^{\pi})$ has $\varepsilon$-invariant vectors for any $\varepsilon>0$. By definition, we conclude that for any $\varepsilon>0$, there exists $[\xi]\in\fB/\fB^{\pi}$ with norm $1$ such that
$$\|\wt\pi(V)[\xi]-\wt\pi(\Phi(V))[\xi]\|\leq\varepsilon$$
for any partial translation whose graph is in $E_0\in \E$. Denote by $\rho$ the induced representation of $\pi$ by composing with $\iota$. Then for any $s\in S$, by Lemma \ref{lem: invariant spaces coincide}, one then has that
$$\|\wt{\rho}(s)[\xi]-[\xi]\|=\|\wt\pi(\iota(s))[\xi]-[\xi]\|\leq\varepsilon$$
for $[\xi]\in\fB/\fB^{\pi}=\fB/\fB^{\rho}$. This shows that $\rho: G\to O(\fB/\fB^{\rho})$ has $\varepsilon$-invariant vector, which means $\Ga$ does not have uniform property ($T_{\fB}$).
\end{proof}

Proposition \ref{pro: example of box spaces} provides a great number of examples of metric spaces with property ($T_{\fB}$). For example, by \cite[Theorem 1.1]{BFGM2007} and \cite{KazhdanT}, for $k\geq 3$, we concludes that any box space of $SL_k(\IZ)$ has (uniform) geometric property ($T_{L^p(\mu)}$) for any $\sigma$-unital measure $\mu$ and $1\leq p<\infty$. More strongly, combining the results proved by Oppenheimer in \cite{Oppen2023} and the results in \cite{LS2023} by T.~de~Laat and M.~de~la~Salle, we further deduce that for $k\geq 3$, any box space of $SL_k(\IZ)$ possesses property ($T_{\fB}$) with respect to any super-reflexive Banach space $\fB$.

On the other hand, it is proved in \cite[Example 2.11]{GQW2024} that all limit groups of $\Box(\Ga)$ is canonically isomorphic to $\Ga$ itself. By Proposition \ref{pro: banach property T for limit group}, we then have the inverse of Proposition \ref{pro: example of box spaces}. Let $\phi_n$ be the state of $\ell^\infty(X)$ defined as in the last section, and $\phi$ any cluster point of the sequence $\{\phi_n\}_{n\in\IN}$ in $\ell^{\infty}(X)'$. It descends to a positive functional on $\ell^{\infty}(X)/C_0(X)\cong C(\partial_\beta X)$. Then $\phi$ defines a $\Ga$-invariant state $\phi: C(\partial_\beta X)\to\IC$. By Riesz representation theorem, there exists a $\Ga$-invariant measure $\mu_{\phi}$ on $\partial_{\beta}X$ such that $\phi(f)=\int_{\partial_\beta X}f(x)d\mu_\phi(x)$. As a direct corollary of Proposition \ref{pro: banach property T for limit group}, we have the following result.

\begin{Pro}\label{pro: from box spaces to group}
With the notation as above, for any uniformly convex Banach space $\fB$ and $p\in(1,\infty)$ and any cluster $\phi\in\ell^\infty(X)'$, if $X$ has (uniform) geometric property $(T_{L^p(\mu_{\phi},\fB)})$, then $\Ga$ has (uniform) property $(T_{\fB})$.\qed
\end{Pro}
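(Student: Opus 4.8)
The plan is to obtain this statement as a formal consequence of Proposition \ref{pro: banach property T for limit group} together with the identification of the limit groups of a box space. First I would observe that the box space $X=\Box_{\{\Ga_n\}}(\Ga)=\bigsqcup_{n\in\IN}\Ga/\Ga_n$ is exactly a separated disjoint union of the \emph{finite} groups $\Ga/\Ga_n$ (each $\Ga_n$ has finite index), so Proposition \ref{pro: banach property T for limit group} applies verbatim to $X$ with the sequence $(\Ga/\Ga_n)_{n\in\IN}$ playing the role of $(\Ga_n)_{n\in\IN}$ there.

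The second step is to realize the cluster point $\phi$ as an ultrafilter limit. Since $\{\phi_n\}$ lies in the weak-$*$ compact unit ball of $\ell^\infty(X)'$, any cluster point is of the form $\phi=\phi_\omega$ with $\phi_\omega(f)=\lim_{n\to\omega}\phi_n(f)$ for a suitable free ultrafilter $\omega\in\partial_\beta\IN$. I would then check that the two measures appearing in the two propositions give the same Lebesgue--Bochner space. Restricted to the diagonal subalgebra $\ell^\infty(X)$ one has $\|f_n\|_{\ell^1}=\|f_n\|_\infty$, and any $f\in C_0(X)$ satisfies $\|f_n\|_\infty\to 0$ as $n\to\infty$ (its support meets only finitely many components of the separated union), hence $\lim_{n\to\omega}\|f_n\|_\infty=0$. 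Thus $C_0(X)\subseteq\I^\infty_\omega$ on $\ell^\infty(X)$, so $\phi_\omega$ annihilates $\I^\infty_\omega$ and the measure $\mu_\phi$ is carried by the closed subset $Y\subseteq\partial_\beta X$ with $C(Y)=\ell^\infty(X)/\I^\infty_\omega$. Consequently $L^p(\partial_\beta X,\mu_\phi,\fB)=L^p(Y,\mu_{\phi_\omega},\fB)$, so the hypothesis that $X$ has (uniform) geometric property $(T_{L^p(\mu_\phi,\fB)})$ is precisely the hypothesis of Proposition \ref{pro: banach property T for limit group} for this $\omega$.

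Applying that proposition then yields that the limit group $\Ga^\infty_\omega$ has (uniform) property $(T_\fB)$. The final step is to invoke \cite[Example 2.11]{GQW2024}, by which every limit group of $\Box_{\{\Ga_n\}}(\Ga)$ is canonically isomorphic to $\Ga$ itself; in particular $\Ga^\infty_\omega\cong\Ga$. Since property $(T_\fB)$ and its uniform version are manifestly invariant under group isomorphism (an isomorphism transports isometric representations, invariant subspaces, and almost invariant vectors), we conclude that $\Ga$ has (uniform) property $(T_\fB)$.

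The only genuinely delicate point is the measure bookkeeping identifying the cluster-point measure $\mu_\phi$ on $\partial_\beta X$ with the limit-space measure $\mu_{\phi_\omega}$ on $Y$; once the inclusion $C_0(X)\subseteq\I^\infty_\omega$ is noted, this is routine. Everything else is a formal concatenation of Proposition \ref{pro: banach property T for limit group} with the cited computation $\Ga^\infty_\omega\cong\Ga$, which is exactly why the statement is a direct corollary.
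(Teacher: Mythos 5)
Your proposal is correct and follows essentially the same route as the paper, which presents this proposition as a direct corollary of Proposition \ref{pro: banach property T for limit group} together with the identification (via \cite[Example 2.11]{GQW2024}) of all limit groups of $\Box_{\{\Ga_n\}}(\Ga)$ with $\Ga$ itself. Your two additional verifications---that every weak-$*$ cluster point $\phi$ of $\{\phi_n\}$ is an ultrafilter limit $\phi_\omega$, and that $\mu_\phi$ is carried by $Y$ so that $L^p(\partial_\beta X,\mu_\phi,\fB)$ coincides with $L^p(Y,\mu_{\phi_\omega},\fB)$---are exactly the bookkeeping the paper leaves implicit in calling the result a ``direct corollary,'' and both are carried out correctly.
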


As a direct corollary of Proposition \ref{pro: example of box spaces} and Proposition \ref{pro: from box spaces to group}, we have the following theorem.

\begin{Thm}\label{thm: box iff group in lp}
Let $\Ga$ be a finitely generated, residually finite group. For any $p\in(1,\infty)$, the following are equivalent:\begin{itemize}
\item[(1)] $\Ga$ has property ($T_{\sL^p}$);
\item[(2)] for any filtration $\{\Ga_n\}$, $\Box_{\{\Ga_n\}}(\Ga)$ has geometric property ($T_{\sL^p}$);
\item[(3)] there exists a filtration $\{\Ga_n\}$ such that $\Box_{\{\Ga_n\}}(\Ga)$ has geometric property ($T_{\sL^p}$).
\end{itemize}\end{Thm}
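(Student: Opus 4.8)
The plan is to establish the three conditions are equivalent by proving the cyclic chain of implications $(1)\Rightarrow(2)\Rightarrow(3)\Rightarrow(1)$, which lets us combine the two directions packaged in Proposition \ref{pro: example of box spaces} (group property implies box-space property) and Proposition \ref{pro: from box spaces to group} (box-space property implies group property). The only real work is to arrange the Banach-space bookkeeping so that the family $\sL^p$ is preserved under the constructions appearing in those two propositions, and to reconcile the uniform and non-uniform conventions via Proposition \ref{pro: uniform Tlp and Tlp} and the direct-sum remark (Remark \ref{rem: delete uniform if one can take direct sum}), under which geometric property $(T_{\sL^p})$ and its uniform counterpart coincide, and similarly for the group side since $\sL^p$ is closed under $L^p$-direct sums.

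For $(1)\Rightarrow(2)$, I would first unfold that $\Ga$ having property $(T_{\sL^p})$ means $\Ga$ has property $(T_{L^p(\mu)})$ for every $L^p$-space $L^p(\mu)$. Since $p\in(1,\infty)$, each such $L^p(\mu)$ is uniformly convex, so Proposition \ref{pro: example of box spaces} applies with $\fB=L^p(\mu)$ and yields that an arbitrary box space $\Box_{\{\Ga_n\}}(\Ga)$ has geometric property $(T_{L^p(\mu)})$. Ranging over all $L^p$-spaces gives geometric property $(T_{\sL^p})$ for every filtration, which is exactly $(2)$. The implication $(2)\Rightarrow(3)$ is immediate: residual finiteness of $\Ga$ guarantees that at least one filtration exists, so the universally quantified statement $(2)$ specializes to the existential statement $(3)$.

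The substantive step is $(3)\Rightarrow(1)$. Here I fix a filtration $\{\Ga_n\}$ for which $X=\Box_{\{\Ga_n\}}(\Ga)$ has geometric property $(T_{\sL^p})$, and I must show $\Ga$ has property $(T_\fB)$ for each $\fB\in\sL^p$. Fix an arbitrary $L^p$-space $\fB=L^p(\nu)$ and choose any cluster point $\phi\in\ell^\infty(X)'$ of the sequence of averaging states, giving the $\Ga$-invariant measure $\mu_\phi$ as in Proposition \ref{pro: from box spaces to group}. To invoke that proposition I need $X$ to have geometric property $(T_{L^p(\mu_\phi,\fB)})$. The key identification is that the Lebesgue--Bochner space satisfies
$$L^p(\mu_\phi,\fB)=L^p(\mu_\phi, L^p(\nu))\cong L^p(\mu_\phi\times\nu),$$
so it is again an honest $L^p$-space and hence lies in $\sL^p$. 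Since $X$ has geometric property $(T_{\sL^p})$, it in particular has geometric property $(T_{L^p(\mu_\phi,\fB)})$, and Proposition \ref{pro: from box spaces to group} then delivers property $(T_\fB)=(T_{L^p(\nu)})$ for $\Ga$. As $\nu$ was arbitrary, $\Ga$ has property $(T_{\sL^p})$, closing the cycle.

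I expect the main obstacle to be entirely conceptual rather than computational, namely verifying that the family $\sL^p$ is stable under the Lebesgue--Bochner tensoring $\fB\mapsto L^p(\mu_\phi,\fB)$ that is forced on us by Proposition \ref{pro: from box spaces to group}; this is precisely the identification $L^p(\mu,L^p(\nu))\cong L^p(\mu\times\nu)$, and it is what makes $\sL^p$ a self-contained family for which the two-sided correspondence goes through. A secondary point to state cleanly is that, throughout, ``property $(T_{\sL^p})$'' may be read either uniformly or non-uniformly without ambiguity, since $\sL^p$ is closed under $L^p$-direct sums; I would note this once at the outset so that the uniform and non-uniform hypotheses in Propositions \ref{pro: example of box spaces} and \ref{pro: from box spaces to group} line up with the family-level statement of the theorem.
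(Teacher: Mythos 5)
Your proposal is correct and follows essentially the same route as the paper: $(1)\Rightarrow(2)$ via Proposition \ref{pro: example of box spaces}, the trivial specialization $(2)\Rightarrow(3)$, and $(3)\Rightarrow(1)$ via Proposition \ref{pro: from box spaces to group} together with the key identification $L^p(\mu_\phi,L^p(\nu))\cong L^p(\mu_\phi\times\nu)$, which keeps the Lebesgue--Bochner space inside the family $\sL^p$. The only cosmetic difference is how the uniform/non-uniform bookkeeping is settled: you invoke closure of $\sL^p$ under $L^p$-direct sums (Proposition \ref{pro: uniform Tlp and Tlp}), while the paper records the quantitative comparison that the spectral gap for $\Ga$ is at least $\frac{1}{\#S}$ times that of the box space; both are valid.
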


\begin{proof}
By Proposition \ref{pro: example of box spaces}, one has that $(1)\Rightarrow (2)\Rightarrow (3)$. For any measure spaces $(X,\mu)$ and $(Y,\nu)$, one has that $L^p(X,\mu,L^p(Y,\nu))$ is isometric to $L^p(X\times Y,\mu\times\nu)$, which is also an $L^p$-space. 
From the proof of Proposition \ref{pro: from box spaces to group}, one can also conclude that the spectral gap for property ($T_{\fB}$) for $\Ga$ is greater than $\frac{1}{\#S}$ times that for property $(T_{L^p(\mu_{\phi},\fB)})$ for $X$. Then $(3)\Rightarrow (1)$ follows directly from Proposition \ref{pro: from box spaces to group}.
\end{proof}

When $p=2$, Theorem \ref{thm: box iff group in lp} provides an alternative approach to the proof of \cite[Theorem 7.1]{WY2014} without using the spectral criterion of property (T).

\subsection{A geometric description of Lafforgue's strong Banach property (T)}

In this section, we will discuss \emph{strong Banach property (T)}. Strong (Banach) property (T) was introduced by V.~Lafforgue in \cite{Lafforgue2008, Lafforgue2010}. For groups possessing this property, Lafforgue's approach to the Baum-Connes conjecture via Banach $KK$-theory is no longer applicable, see \cite{Lafforgue2002}. It is proved by M.~de~la~Salle that every lattice in a higher-rank group has strong property (T), consequently yielding an extensive collection of groups demonstrating this property, see \cite{dlS2019}. In this section, we shall introduce a notion of geometric strong Banach property (T) as a geometric counterpart of strong Banach property (T). 

In the following of this subsection, we shall not assume a representation to be isometric. Let $\sB$ be a family of Banach spaces closed under duality and complex conjugate. According to the framework established in \cite{dlS2016}, it is common to additionally assume that the type of $\sB$ is greater than $1$. However, in this section, we only assume that $\sB$ satisfies one of the following conditions:\begin{itemize}
\item[(1)] $\sB=\sH$ is the family of all Hilbert spaces;
\item[(2)] $\sB=\sL^{p,q}$ is the family of all $L^p$ and $L^q$ spaces with $1<p,q<\infty$ and $\frac 1p+\frac 1q=1$;
\item[(3)] $\sB$ is a uniformly convex family of Banach spaces, additionally closed under duality, conjugation, ultraproduct and $L^2$-Lebesgue-Bochner tensor product, i.e., $\fB\in\sB\Rightarrow L^2(\mu,\fB)\in\sB$ for any measure space $(X,\mu)$.
\end{itemize}
Let $\Ga$ be a countable discrete group. Fix a length function $\ell$ on $\Ga$. A representation $\pi: \Ga\to\L(\fB)$ is said to have \emph{$(\ell,s,c)$-small exponential growth}, if $\|\pi(\gamma)\|_{\fB}\leq c\cdot e^{s\ell(\gamma)}$ for every $\gamma\in\Ga$, where $s,c>0$. We define the following norm on the group algebra $C_c(\Ga)$:
$$\|f\|_{\ell,s,c}=\sup\{\|\pi(f)\|\mid \pi\text{ has }(\ell,s,c)\text{-small exponential growth}\}.$$
The completion of $C_c(\Ga)$ under $\|\cdot\|_{\ell,s,c}$ is denoted by $\C_{\ell,s,c}(\Ga)$. If $s=c=0$, then $\|\cdot\|_{\ell,0,0}$ is equal to $\|\cdot\|_{\sB,\max}$. Since $\sB$ is closed under duality, there exists an involution map in $\C_{\ell,s,c}(X)$, denoted by $T\mapsto T^*$, which is an isometric map.

We shall first recall the definition of the strong Banach property (T) for groups.

\begin{Def}
A countable discrete group $\Ga$ has \emph{strong Banach property (T)} associated with $\sB$ if for every length function $\ell$, there exists $s>0$ such that for any $c>0$, the Banach algebra $\C_{\ell,s,c}(\Ga)$ has a \emph{Kazhdan projection}, i.e., a selfadjoint idempotent $p$ such that $\pi(p)$ is a projection on the space of invariant vectors for every representation $\pi$ with $(s,c)$-small exponential growth.
\end{Def}

Now, let us shift our focus back to metric spaces. Consider $X$ as a discrete metric space with bounded geometry, and assume that its coarse structure is monogenic. Set $\E$ to be the coarse structure of $X$ and $E_0$ is the generator of $\E$. A metric $\delta$ on $X$ is said \emph{finer} than $d$ if the identity map $id: (X,d)\to (X,\delta)$ is bornologous, i.e., any controlled set in $\E_d$ must belong to $\E_{\delta}$. Let $\D$ be a set of metrics on $X$ which are finer than $d$. For any $E\in\E_d$, we denote the \emph{$\delta$-propagation} of $E$ to be
$$\prop_{\delta}(E)=\sup\{\delta(x,y)\mid (x,y)\in E\}.$$
For $T\in\IC_u[X]$, we shall direct denote $\prop_{\delta}(T)=\prop_{\delta}(\supp(T))$. Since $\delta$ is finer than $d$, the $\delta$-propagation of $T\in\IC_u[X]$ must be finite. For $s,c>0$, a representation $\pi:\IC_u[X]\to\L(\fB)$ is said to have $(\delta,s,c)$-small exponential growth, if $\|\pi(V)\|_{\fB}\leq c\cdot e^{s\cdot\prop_{\delta}(V)}$ for any partial translation $V\in\IC_u[X]$. We define the following norm on $\IC_u[X]$:
$$\|T\|_{\ell,s,c}=\sup\{\|\pi(T)\|\mid \pi\text{ has }(\delta,s,c)\text{-small exponential growth}\}.$$
The completion of $\IC_u[X]$ under $\|\cdot\|_{\delta,s,c}$ is denoted by $\C_{\delta,s,c}(X)$. Similarly, there is an isometric involution map on $\C_{\delta,s,c}(X)$. For every $(\delta,s,c)$-representation $\pi$, it is direct to see that $\pi$ extends to a representation of $\C_{\delta,s,c}(X)$ by the universal property.

\begin{Def}
Let $(X,d)$ be a space, $\D$ a set of metrics on $X$ finer than $d$. $X$ has \emph{geometric strong Banach property (T)} associated with $\D$ and $\sB$ if, for every metric $\delta\in\D$, there exist $s>0$ such that for any $c>0$, the Banach algebra $\C_{\delta,s,c}(X)$ has a \emph{Kazhdan projection}, i.e., a selfadjoint idempotent $p$ such that $\pi(p)$ is a projection on the space of invariant vectors for every representation $\pi$ with $(\delta,s,c)$-small exponential growth.
\end{Def}

\begin{Rem}
Strong Banach property (T) implies Banach property (T) by taking the length function $\ell$ to be a bounded function. Parallelly, take $\D$ to be the set of all bounded metrics on $X$, then geometric strong Banach property (T) of $X$ associated with $\sB$ and $\D$ is equivalent to the geometric property ($T_{\sB}$).
\end{Rem}

\begin{Lem}
If two metric $\delta_1$ and $\delta_2$ are quasi-isometric, then $X$ has geometric strong Banach property ($T$) associated with $\sB$ and $\delta_1$ if and only if $X$ has geometric strong Banach property ($T$) associated with $\sB$ and $\delta_2$.
\end{Lem}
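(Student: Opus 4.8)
The plan is to compare the two families of norms $\{\|\cdot\|_{\delta_1,s,c}\}$ and $\{\|\cdot\|_{\delta_2,s,c}\}$ after a controlled rescaling of the growth parameters, and then to transport a Kazhdan projection along the induced homomorphism of completions. Fix quasi-isometry constants $L\ge 1$ and $C\ge 0$ with $\tfrac1L\delta_1-C\le\delta_2\le L\delta_1+C$; by symmetry it suffices to prove one implication, say from $\delta_1$ to $\delta_2$. Taking suprema over supports, these inequalities yield $\prop_{\delta_2}(V)\le L\,\prop_{\delta_1}(V)+C$ for every partial translation $V\in\IC_u[X]$.

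First I would establish an inclusion of representation classes. Suppose $X$ has geometric strong Banach property (T) with respect to $\delta_1$, witnessed by some $s>0$, and set $s'':=s/L$. Given any $c''>0$ and any representation $\pi$ with $(\delta_2,s'',c'')$-small exponential growth, the propagation estimate gives
\[
\|\pi(V)\|_\fB\le c''\,e^{(s/L)\prop_{\delta_2}(V)}\le \big(c''e^{sC/L}\big)\,e^{s\,\prop_{\delta_1}(V)},
\]
so $\pi$ has $(\delta_1,s,c')$-small exponential growth with $c':=c''e^{sC/L}$. Hence every $(\delta_2,s'',c'')$-representation is a $(\delta_1,s,c')$-representation, and taking the supremum over the smaller class gives $\|T\|_{\delta_2,s'',c''}\le\|T\|_{\delta_1,s,c'}$ for all $T\in\IC_u[X]$.

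Both norms are submultiplicative and the involution $T\mapsto T^*$ is isometric for each, so the identity on $\IC_u[X]$ extends to a contractive, involution-preserving algebra homomorphism $\Theta\colon\C_{\delta_1,s,c'}(X)\to\C_{\delta_2,s'',c''}(X)$. By hypothesis $\C_{\delta_1,s,c'}(X)$ carries a Kazhdan projection $p$; I would set $q:=\Theta(p)$ and verify it is a Kazhdan projection in $\C_{\delta_2,s'',c''}(X)$. That $q$ is a selfadjoint idempotent is immediate since $\Theta$ is a $*$-preserving homomorphism. For the spectral property, let $\pi$ be any $(\delta_2,s'',c'')$-representation, with extension $\wt\pi_2$ to $\C_{\delta_2,s'',c''}(X)$ and, by the class inclusion just proved, with extension $\wt\pi_1$ to $\C_{\delta_1,s,c'}(X)$. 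Both $\wt\pi_2\circ\Theta$ and $\wt\pi_1$ are continuous and restrict to $\pi$ on the dense subalgebra $\IC_u[X]$, hence $\wt\pi_2\circ\Theta=\wt\pi_1$. Therefore $\wt\pi_2(q)=\wt\pi_1(p)$, which is the projection onto the space of invariant vectors of $\pi$ by the defining property of $p$ (note that the invariant subspace $\fB^\pi$ depends only on $\pi$, not on the metric or the growth constants). Since $c''>0$ was arbitrary while $s''=s/L$ remained fixed, $X$ has geometric strong Banach property (T) with respect to $\delta_2$, and the converse follows by interchanging $\delta_1$ and $\delta_2$.

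The main obstacle is the bookkeeping of the quantifier structure ``$\exists s\ \forall c$'': the quasi-isometry rescales the exponential rate by the factor $1/L$ and multiplies the constant by $e^{sC/L}$, and one must check this is compatible with holding a single rate $s''=s/L$ fixed while letting $c''$ range freely over $(0,\infty)$. The second delicate point is the uniqueness-of-extension identity $\wt\pi_2\circ\Theta=\wt\pi_1$; this is precisely what guarantees that the transported idempotent $q$ genuinely projects onto invariant vectors for every admissible representation, rather than being merely an abstract selfadjoint idempotent.
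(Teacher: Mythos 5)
Your proof is correct and takes essentially the same route as the paper: rescale the exponential rate to $s/L$, absorb the additive quasi-isometry constant into the multiplicative constant $c$, and push the Kazhdan projection forward along the contractive homomorphism of completions induced by the inclusion of representation classes. In fact your bookkeeping is slightly more careful than the paper's (your constant $c''e^{sC/L}$ is the correct one, where the paper loosely writes $ce^{C}$), and your density argument $\wt\pi_2\circ\Theta=\wt\pi_1$ makes explicit the step that the paper leaves implicit when asserting the image is again a Kazhdan projection.
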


\begin{proof}
By definition, there exists $L,C>0$ such that for any $T\in\IC_u[X]$, one has that
$$\frac 1L\cdot\prop_{\delta_1}(T)-C\leq\prop_{\delta_2}(T)\leq L\cdot\prop_{\delta_1}(T)+C.$$
Assume that $X$ has geometric strong Banach property ($T$) associated with $\sB$ and $\delta_1$. Then there exists $s>0$ such that for any $c>0$, the Banach algebra $\C_{\delta_1,s,c}(X)$ has a Kazhdan projection. Define $s‘=\frac{s}{L}$. Then for any $c>0$, we conclude that
$$s'\prop_{\delta_2}(T)+\ln(c)\leq s\prop_{\delta_1}(T)+\ln(c)+C$$
Thus a representation with $(\delta_2,s',c)$-small exponential growth must have $(\delta_1,s,c+C)$-small exponential growth. Then the canonical quotient map $\C_{\delta_1,s,ce^C}(X)\to \C_{\delta_2,s',c}(X)$ will send the Kazhdan projection to a Kazhdan projection.
\end{proof}

Let $\Ga$ be a countable, discrete, residually finite group, $\ell$ a proper length function on $\Ga$. A proper length function $\ell$, to some extent, determines the coarsest metric on the group. For any length function $\ell'$, the identity map $(\Ga,\ell)\to (\Ga,\ell')$ must be bornologous. Let $\Box(\Ga)$ be a box space of $\Ga$, equipped with a quotient metric induced from the metric on $\Ga$ determined by a proper length function, denoted by $d$. Then any length function on $\Ga$ will give a metric on $\Box(\Ga)$ which is finer than $d$. We shall denote by $\D_{\ell}$ the set of all metric on $\Box(\Ga)$ which is determined by a length function on $\Ga$. Now, we are ready to claim

\begin{Thm}\label{thm: strong property (T)}
Let $\sB$ be a uniformly convex family of Banach spaces, additionally closed under duality, conjugation, ultraproduct and $L^2$-Lebesgue-Bochner tensor product, $\Ga$ a countable, discrete, residually finite group. Then the following are equivalent:\begin{itemize}
\item[(1)] $\Ga$ has strong Banach property (T) associated with $\sB$;
\item[(2)] all box spaces of $\Ga$ have geometric strong Banach property (T) associated with $\sB$ and $\D_\ell$;
\item[(3)] there exists a box space of $\Ga$ which has geometric strong Banach property (T) associated with $\sB$ and $\D_\ell$.
\end{itemize}\end{Thm}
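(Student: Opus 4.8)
The strategy is to establish the cycle $(1)\Rightarrow(2)\Rightarrow(3)\Rightarrow(1)$, the implication $(2)\Rightarrow(3)$ being trivial. Both substantive directions run through the canonical inclusion $\iota\colon\IC\Ga\to\IC_u[X]$ of Lemma \ref{lem: invariant spaces coincide}, together with the elementary growth comparison $\prop_\delta(\iota(\gamma))\le\ell'(\gamma)$, valid whenever $\delta\in\D_\ell$ is the box-space metric induced by a length function $\ell'$ on $\Ga$ (the quotient metric is no longer than any representative), and with Lemma \ref{lem: invariant spaces coincide}(2), which identifies the invariant subspaces $\fB^{\pi}=\fB^{\rho}$ for $\rho=\pi\circ\iota$. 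Note that, since $\sB$ is closed under duality and conjugation, the algebras $\C_{\ell',s,c}(\Ga)$ and $\C_{\delta,s,c}(X)$ carry isometric involutions, so ``selfadjoint idempotent'' is meaningful throughout.

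For $(1)\Rightarrow(2)$, fix $\delta\in\D_\ell$ coming from $\ell'$. If $\pi\colon\IC_u[X]\to\L(\fB)$ has $(\delta,s,c)$-small exponential growth, then $\rho:=\pi\circ\iota$ satisfies $\|\rho(\gamma)\|=\|\pi(\iota(\gamma))\|\le c\,e^{s\,\prop_\delta(\iota(\gamma))}\le c\,e^{s\ell'(\gamma)}$, so $\rho$ has $(\ell',s,c)$-small exponential growth. Hence $\|\iota(f)\|_{\delta,s,c}\le\|f\|_{\ell',s,c}$ and $\iota$ extends to a contractive, involution-preserving homomorphism $\bar\iota\colon\C_{\ell',s,c}(\Ga)\to\C_{\delta,s,c}(X)$. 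By strong Banach property (T) of $\Ga$ applied to the length function $\ell'$, there is $s>0$ such that for every $c>0$ the algebra $\C_{\ell',s,c}(\Ga)$ has a Kazhdan projection $p_\Ga$; set $p_X:=\bar\iota(p_\Ga)$. Then $p_X$ is a selfadjoint idempotent, and for any $(\delta,s,c)$-growth $\pi$ one has $\pi(p_X)=\rho(p_\Ga)$, which is the projection onto $\fB^{\rho}=\fB^{\pi}$. Thus $p_X$ is a Kazhdan projection in $\C_{\delta,s,c}(X)$.

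For $(3)\Rightarrow(1)$, let $X$ be a box space as in $(3)$, whose limit group is $\Ga$ by \cite[Example 2.11]{GQW2024}, fix a length function $\ell$ with associated $\delta\in\D_\ell$, and let $s$ be the exponent furnished by the geometric strong Banach property (T) of $X$ for $\delta$. Given any $\rho\colon\Ga\to\L(\fB)$, $\fB\in\sB$, with $(\ell,s,c)$-small exponential growth, form the induced representation $\pi\colon\IC_u[X]\to\L(L^2(\mu_\phi,\fB))$ exactly as in the proof of Proposition \ref{pro: banach property T for limit group}, using the $\Ga$-invariant probability measure $\mu_\phi$ of Proposition \ref{pro: from box spaces to group} and $L^2$ in place of $L^p$; here $L^2(\mu_\phi,\fB)\in\sB$ by the $L^2$-Lebesgue--Bochner closure hypothesis. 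The crucial point is that $\pi$ again has $(\delta,s,c')$-small exponential growth: writing a partial translation $V$ with $\prop_\delta(V)=R$ as a finite sum $\sum_\gamma\chi_{B_\gamma}\iota(\gamma)$ with $\{B_\gamma\}$ disjoint and $\ell(\gamma)\le R+O(1)$ (Lemma \ref{lem: invariant spaces coincide}(1)), the disjointness of the ranges together with the Pythagorean identity in $L^2(\mu_\phi,\fB)$ yields $\|\pi(V)\|\le\max_\gamma\|\rho(\gamma)\|\le c'\,e^{sR}$. This preserves the exponent $s$ and is exactly where the $L^2$-structure is indispensable, since for a general $L^p$ one would only bound $\|\pi(V)\|$ by the sum of the $\|\rho(\gamma)\|$ and lose control of $s$.

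It remains to manufacture the group projection. Let $\phi$ be the $\Ga$-invariant state on $\ell^\infty(X)$; since $\phi$ factors through the limit/crossed-product structure of Lemma \ref{lem: crossed product}, the conditional expectation $E\colon\IC_u[X]\to\IC\Ga$, $\sum_\gamma f_\gamma\iota(\gamma)\mapsto\sum_\gamma\phi(f_\gamma)\gamma$, is well defined and involution-preserving. Using the isometric, $\Ga$-equivariant constant embedding $I\colon\fB\to L^2(\mu_\phi,\fB)$, $\xi\mapsto 1\ox\xi$, and the averaging map $J\colon L^2(\mu_\phi,\fB)\to\fB$, $\eta\mapsto\int\eta\,d\mu_\phi$, which is contractive, $\Ga$-equivariant and satisfies $JI=\mathrm{id}$, one checks on the generators $f\iota(\gamma)$ that $\rho(E(T))=J\,\pi(T)\,I$ for all $T\in\IC_u[X]$; hence $\|E(T)\|_{\ell,s,c}\le\|T\|_{\delta,s,c'}$ and $E$ extends to a contractive map $\bar E\colon\C_{\delta,s,c'}(X)\to\C_{\ell,s,c}(\Ga)$. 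Taking the Kazhdan projection $p_X\in\C_{\delta,s,c'}(X)$ (available for every $c'$ by $(3)$), set $p_\Ga:=\bar E(p_X)$. For every $(\ell,s,c)$-growth $\rho$ we get $\rho(p_\Ga)=J\,p_\pi\,I$; since $J,I$ are equivariant and $\fB=\fB^{\rho}\oplus\fB_\rho$, $L^2(\mu_\phi,\fB)=\fB^{\pi}\oplus\fB_\pi$ (Lemma \ref{lem: complemented representation}), a short computation gives $J\,p_\pi\,I=p_\rho$, the projection onto $\fB^{\rho}$. Selfadjointness of $p_\Ga$ follows from that of $E$, and idempotency follows since $\rho(p_\Ga^2-p_\Ga)=p_\rho^2-p_\rho=0$ for all such $\rho$ and $\|\cdot\|_{\ell,s,c}=\sup_\rho\|\rho(\cdot)\|$. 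Hence $p_\Ga$ is a Kazhdan projection in $\C_{\ell,s,c}(\Ga)$ for every $c$, proving $(1)$. I expect the main obstacle to be the growth-rate bookkeeping in $(3)\Rightarrow(1)$, namely showing that the induced representation does not inflate the exponential exponent $s$; this is precisely the step that forces the use of uniform convexity and the $L^2$-Lebesgue--Bochner closure of $\sB$, through the orthogonality of the pieces of a partial translation in the $L^2$-norm.
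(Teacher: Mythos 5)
Your proposal is correct, and it splits naturally against the paper: your $(1)\Rightarrow(2)$ is essentially the paper's argument (push the group Kazhdan projection forward along the contractive extension of $\iota\colon\IC\Ga\to\IC_u[X]$, identifying invariant subspaces via Lemma \ref{lem: invariant spaces coincide}), but your $(3)\Rightarrow(1)$ takes a genuinely different route. The paper proves that $\iota$ extends to an \emph{isometry} $\C_{\ell,s,c}(\Ga)\to\C_{\delta_\ell,s,c}(X)$ (using the same induced representation on $L^2(\mu_\phi,\fB)$ that you construct), observes that the geometric Kazhdan projection must be the limit of the powers $A^k$ of the Markov operator $A=\iota(a)$ with $a=\frac{1}{\#S}\sum_{s\in S}\frac{1+s}{2}$, concludes by isometry that $a^k$ converges in $\C_{\ell,s,c}(\Ga)$, and identifies the limit as the group Kazhdan projection by \cite[Theorem 4.4]{DN2019}. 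You instead transport the projection backwards directly via the coefficient-averaging expectation $E\colon\sum f_\gamma\iota(\gamma)\mapsto\sum\phi(f_\gamma)\gamma$ and the compression identity $\rho(E(T))=J\,\pi(T)\,I$; this avoids both the Markov-operator machinery and the external Drutu--Nowak input, at the cost of extra bookkeeping (well-definedness of $E$, which does hold because $\phi$ kills functions supported on finitely many components, and the intertwining identity, which checks out on generators $f\iota(\gamma)$). Your route is arguably more robust: the paper's assertion that the Kazhdan projection is still the limit of $A^k$ for small-exponential-growth representations requires rerunning Lemma \ref{lem: spectral gap and Markov kernel}, whose proof uses isometricity of $\pi(A_i)$, whereas your argument never needs this. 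Two repairs to your write-up, neither fatal: first, Lemma \ref{lem: complemented representation} is proved only for contractive representations, so you cannot cite it for the decompositions $\fB=\fB^\rho\oplus\fB_\rho$ and $L^2(\mu_\phi,\fB)=\fB^\pi\oplus\fB_\pi$ in the small-growth setting; but you do not need them, since $J$ carries $\pi$-invariant vectors into $\fB^\rho$ (by $\Ga$-invariance of $\mu_\phi$), $I$ carries $\fB^\rho$ into $\pi$-invariant vectors, and $JI=\mathrm{id}$, so $Q=J\,\pi(p_X)\,I$ is automatically an idempotent whose range is exactly $\fB^\rho$, which is all that the definition of a Kazhdan projection requires. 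Second, the disjoint-support estimate $\|\pi(V)\|\le\max_\gamma\|\rho(\gamma)\|$ holds verbatim in $L^p(\mu_\phi,\fB)$ for every $p$, so the $L^2$-structure is not forced by the growth bookkeeping as you suggest, but rather by the hypothesis that $\sB$ is closed under $L^2$-Lebesgue--Bochner products, which is what puts $L^2(\mu_\phi,\fB)$ back inside $\sB$.
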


\begin{proof}
The proof is a combination of the proof of Theorem \ref{thm: box iff group in lp} and Theorem \ref{thm: property T via Kazhdan projection}. Denote $X=\Box(\Ga)$. For any length function $\ell$ on $\Ga$, we shall denote the induced metric on $X$ to be $\delta_\ell$. By a similar argument with Proposition \ref{pro: example of box spaces}, the canonical inclusion $\iota:\IC\Ga\to\IC_u[X]$ extends to a homomorphism $\iota: \C_{\ell,s,c}(\Ga)\to\C_{\delta_{\ell},s,c}(X)$. Moreover, since $\sB$ is closed under $L^2$-Lebesgue-Bochner tensor product, with a similar construction with Proposition \ref{pro: from box spaces to group}, one has that the homomorphism $\iota$ is an isometry. Indeed, it is direct to see that $\|a\|\geq\|\iota(a)\|$ for any $a\in\IC\Ga$ since any representation of $\IC_u[X]$ with $(\delta_\ell,s,c)$-small exponential growth is a representation of $\IC\Ga$ with $(\ell,s,c)$-small exponential growth. For the other side, by using the construction in Proposition \ref{pro: from box spaces to group}, for any representation $\rho:\IC\Ga\to\L(\fB)$ with $(\ell,s,c)$-small exponential growth, one can construct a representation $\pi: \IC_u[X]\to\L(\fB')$ with $(\delta_\ell,s,c)$-small exponential growth for some $\fB'\in\sB$ such that $\fB\subseteq \fB'$ is a subspace and $\rho=\pi\circ\iota$ when restricting on $\fB$. This shows that $\|a\|\leq\|\iota(a)\|$ for any $a\in\IC\Ga$. Thus the Kazhdan projection $p\in\C_{\ell,s,c}(\Ga)$ will send to a Kazhdan projection $\iota(p)\in\C_{\delta_\ell,s,c}(X)$. This proves $(1)\Rightarrow (2)\Rightarrow (3)$.

For $(3)\Rightarrow (1)$, it is proved in Theorem \ref{thm: property T via Kazhdan projection} that the Kazhdan Projection must be the limit of $A^n$. This still holds for representations with $(\ell,s,c)$-small exponential growth by the same proof. Set $S\subseteq\Ga$, by Lemma \ref{lem: invariant spaces coincide}, one can directly take
$$A=\frac{1}{\#S}\cdot\sum_{s\in S}\frac{1+\iota(s)}2.$$
Notice that $A$ is in the image of $\iota$, i.e., it is the image of $a=\frac{1}{\#S}\cdot\sum_{s\in S}\frac{1+s}2\in\IC\Ga$. Since $\iota$ extends to an isometry, thus $a^k$ converges to $p\in \C_{\ell,s,c}(\Ga)$ if and only $A^k$ converges to $P\in\C_{\delta_\ell,s,c}(X)$. By \cite[Theorem 4.4]{DN2019}, the limit of $a^k$ is exactly the Kazhdan projection in $\C_{\ell,s,c}(\Ga)$. This finishes the proof.
\end{proof}

\subsection{FCE-by-FCE is incompatible with geometric property (T)}

As a corollary of the discussion before, in this subsection, we shall prove the following result.

\begin{Thm}\label{thm: FCE-by-FCE violates (T)}
Let $(1\to N_n\to \Ga_n\to Q_n\to 1)_{n\in\IN}$ be a sequence of uniformly finite generated finite group extensions which admits an ``FCE-by-FCE'' structure. If $(\Ga_n)$ is unbounded, then the sequence $(\Ga_n)_{n\in\IN}$ can not have geometric property (T).
\end{Thm}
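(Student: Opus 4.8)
The plan is to argue by contradiction, transporting geometric property (T) of $X=\bigsqcup_{n\in\IN}\Ga_n$ to a statement about its limit groups via the machinery of Section \ref{sec: Geometric property (T) towards infinity}, and then to exploit the FCE-by-FCE structure to derive a contradiction. Suppose that $(\Ga_n)$ has geometric property (T). Since $(\Ga_n)$ is unbounded we have $\sup_{n}\diam(\Ga_n)=\infty$, so we may fix a free ultrafilter $\omega\in\partial_\beta\IN$ with $\lim_{n\to\omega}\diam(\Ga_n)=\infty$. For every $R>0$ the set $\{n\mid \diam(\Ga_n)\geq R\}$ belongs to $\omega$, so one can choose a sequence $(\gamma_n)$ with $l_n(\gamma_n)=R$ on an $\omega$-large set; its class $[\gamma_n]\in\Ga^\infty_\omega$ has length exactly $R$. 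As distinct values of $R$ yield elements at distinct distances from the identity, the limit group $\Ga^\infty_\omega$ is infinite. On the other hand, because $L^2(\mu_\phi,\H)$ is itself a Hilbert space, geometric property (T) of $X$ is in particular geometric property $(T_{L^2(\mu_\phi,\H)})$ (recall that for Hilbert spaces uniform and non-uniform geometric property (T) coincide, Remark \ref{rem: delete uniform if one can take direct sum}). Thus Proposition \ref{pro: banach property T for limit group}, applied with $p=2$ and $\fB=\H$, shows that $\Ga^\infty_\omega$ has Kazhdan's property (T).

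Next I feed the extension structure into the limit group. By Lemma \ref{lem: exact sequence of limit groups} the extensions $1\to N_n\to\Ga_n\to Q_n\to 1$ induce a short exact sequence
$$1\to N^\infty_\omega\to\Ga^\infty_\omega\to Q^\infty_\omega\to 1.$$
The key geometric input is that the FCE-by-FCE structure forces both $N^\infty_\omega$ and $Q^\infty_\omega$ to be a-T-menable (to have the Haagerup property). Granting this, the argument closes by elementary group theory: property (T) passes to quotients, so $Q^\infty_\omega$ has property (T); a discrete group that is simultaneously a-T-menable and has property (T) must be finite, since the proper affine isometric action on a Hilbert space witnessing a-T-menability must, by property (T), admit a global fixed point, and properness then forces the group to be finite. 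Hence $Q^\infty_\omega$ is finite, so $N^\infty_\omega$ has finite index in $\Ga^\infty_\omega$. Property (T) passes to finite-index subgroups, so $N^\infty_\omega$ also has property (T); being a-T-menable as well, $N^\infty_\omega$ is finite. Then $\Ga^\infty_\omega$ is finite-by-finite, hence finite, contradicting the infiniteness established above.

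The main obstacle, and the only nonroutine step, is the bridging lemma that the FCE-by-FCE structure makes the limit groups $N^\infty_\omega$ and $Q^\infty_\omega$ a-T-menable. This is the limit-group refinement of the box-space dictionary ``$\Ga$ is a-T-menable $\iff$ $\Box(\Ga)$ admits a fibred coarse embedding into Hilbert space'' recorded in the introduction. I would prove it by unwinding the definition of fibred coarse embedding used in \cite{DGWY2025}: the fibred coarse embedding of the base sequence $(Q_n)$ provides, on each bounded region, Hilbert-space data that is coherent up to controlled error, and taking the ultralimit of these data along $\omega$ produces a genuine proper, conditionally negative definite function on $Q^\infty_\omega$, equivalently a proper affine isometric action on a Hilbert space; the length-preserving nature of the limit construction guarantees properness. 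Applying the same procedure to the fiberwise fibred coarse embedding of $(N_n)$ yields a-T-menability of $N^\infty_\omega$. The points requiring care are that the trivialization-on-bounded-sets and coherence conditions of the fibred coarse embedding survive the ultraproduct, and that the function so obtained is compatible with the normal-subgroup structure of $N^\infty_\omega\subseteq\Ga^\infty_\omega$ from Lemma \ref{lem: exact sequence of limit groups}. Once this lemma is established, the remainder of the proof is the short group-theoretic deduction above.
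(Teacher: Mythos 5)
Your proposal is correct and follows essentially the same route as the paper: pass to the limit group $\Ga^\infty_\omega$, use Proposition \ref{pro: banach property T for limit group} (with $p=2$, $\fB=\H$) to give it property (T), use Lemma \ref{lem: exact sequence of limit groups} for the exact sequence, prove a-T-menability of $N^\infty_\omega$ and $Q^\infty_\omega$ by taking ultralimits of the conditionally negative type data coming from the fibred coarse embeddings (this is the paper's Lemma \ref{lem: property for limit group}(1)), and finish with the (T)-plus-Haagerup-implies-finite argument. Your two deviations are only cosmetic refinements of the same proof: you fix at the outset an ultrafilter along which the diameters blow up so that $\Ga^\infty_\omega$ is visibly infinite (making the final contradiction cleaner than the paper's ``this only happens if $(\Ga_n)$ is uniformly bounded''), and you close the group theory via finite-index inheritance of property (T) rather than the paper's route through relative property (T) and a-T-menability of finite-by-a-T-menable extensions.
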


Recall that a sequence of group extension $(1\to N_n\to \Ga_n\to Q_n\to 1)_{n\in\IN}$ admits an \emph{FCE-by-FCE} structure if the sequence of normal subgroups and quotient groups both admit \emph{fibred coarse embedding into Hilbert space}. The reader is referred to \cite[Definition 2.1]{CWY2013} for the definition of fibred coarse embedding into Hilbert space, we shall also recall its definition in the proof of Lemma \ref{lem: property for limit group}. Spaces with an \emph{FCE-by-FCE} structure are first introduced in \cite{DGWY2025} and it is proved that the coarse Novikov conjecture holds for such spaces. However, whether the maximal coarse Baum-Connes conjecture holds for such spaces is still unknown. It is only known that the maximal coarse Baum-Connes conjecture holds if one strengthens the condition to \emph{A-by-FCE}, see \cite{GWZ2024}. Since geometric property (T) is an obstruction to the maximal coarse Baum-Connes conjecture, thus it is natural to ask whether FCE-by-FCE structure is compatible with Geometric property (T). We answer this question nagatively by Theorem \ref{thm: FCE-by-FCE violates (T)}.

\begin{Lem}\label{lem: property for limit group}
Let $(\Ga_n)_{n\in\IN}$ be a sequence of Cayley graphs with uniformly finite degree.\begin{itemize}
\item[(1)] If $(\Ga_n)_{n\in\IN}$ admits a fibred coarse embedding into Hilbert space, then $\Ga^\infty_\omega$ is a-T-menable.
\item[(2)] If $(\Ga_n)_{n\in\IN}$ has geometric property (T), then $\Ga^\infty_\omega$ has property (T).
\end{itemize}\end{Lem}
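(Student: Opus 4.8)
The plan is to deduce this directly from Proposition \ref{pro: banach property T for limit group}. Since $L^2(\mu_\phi,\H)$ is again a Hilbert space, geometric property (T) of $X=\bigsqcup_n\Ga_n$ — which demands a spectral gap for every representation of $\IC_u[X]$ on an arbitrary Hilbert space — in particular yields geometric property $(T_{L^2(\mu_\phi,\H)})$ (and for Hilbert targets the uniform and non-uniform versions coincide, as in Remark \ref{rem: delete uniform if one can take direct sum}). Applying Proposition \ref{pro: banach property T for limit group} with $p=2$ and $\fB=\H$ then gives that $\Ga^\infty_\omega$ has property $(T_\H)$. As $\Ga^\infty_\omega$ is finitely generated by Lemma \ref{lem: finitely generated limit group}, and property $(T_\H)$ for an infinite-dimensional Hilbert space is exactly Kazhdan's property (T) (Delorme--Guichardet, cf.\ \cite{BFGM2007}), this is property (T). The only point to check is that $L^2(\mu_\phi,\H)$ may be identified with $\H$, which is immediate from separability.

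\textbf{Part (1).} This is the substantive part. I would first recall the definition of a fibred coarse embedding from \cite{CWY2013}: a field of Hilbert spaces $(H_x)_{x\in X}$, a section $(s_x)$, nondecreasing controls $\rho_1\le\rho_2$ with $\rho_i(t)\to\infty$, and, for each $r>0$, a bounded set $K(r)$ such that every $x\in X\setminus K(r)$ carries a trivialization $t_x\colon (H_y)_{y\in B(x,r)}\to B(x,r)\times H$ satisfying the two-sided control $\rho_1(d(y,z))\le\|t_x(y)(s_y)-t_x(z)(s_z)\|\le\rho_2(d(y,z))$ and the compatibility condition: for $x,x'\notin K(r)$ there is an affine isometry $t_{xx'}$ of $H$ with $t_x=t_{xx'}\circ t_{x'}$ on $B(x,r)\cap B(x',r)$. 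Two structural features of $X=\bigsqcup_n\Ga_n$ make the argument run. First, any bounded set, in particular $K(r)$, meets only finitely many $\Ga_n$, so for $\omega$-large $n$ every point of $\Ga_n$ admits a scale-$r$ trivialization. Second, because $t_{xx'}$ is an affine isometry, the \emph{local kernel}
\[\kappa_r(y,z)=\|t_x(y)(s_y)-t_x(z)(s_z)\|^2\qquad(y,z\in B(x,r),\ x\notin K(r))\]
is independent of the center $x$; thus $\kappa_r$ is a well-defined, center-free, conditionally negative definite kernel on pairs at distance $\le r$, squeezed by $\rho_1(d)^2\le\kappa_r\le\rho_2(d)^2$.

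The goal is a proper, \emph{left-invariant}, conditionally negative definite function on $G=\Ga^\infty_\omega$, which by Schoenberg's theorem is precisely a-T-menability. The key idea is to manufacture invariance by averaging over the finite groups $\Ga_n$, which supply a canonical invariant mean (the normalized counting measure underlying $\phi_\omega$). For $\gamma=[\gamma_n],\gamma'=[\gamma'_n]\in G$ I would set
\[\Psi(\gamma,\gamma')=\lim_{n\to\omega}\frac{1}{\#\Ga_n}\sum_{x\in\Ga_n}\big\|t_x(x\gamma_n)(s_{x\gamma_n})-t_x(x\gamma'_n)(s_{x\gamma'_n})\big\|^2,\]
choosing the scale $r$ larger than all word lengths occurring among the finitely many group elements under consideration, so every point involved lies in the relevant balls; this is possible since every element of $G$ has finite length and $l_n$ is integer-valued, which simultaneously makes $\Psi$ independent of the chosen representatives. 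Conditional negative definiteness is then inherited: for fixed $n,x$ each summand has the form $\|F(\gamma)-F(\gamma')\|^2$, and finite averages and $\omega$-limits preserve this property. Left-invariance of $d_n$ places each summand in $[\rho_1(d_n(\gamma_n,\gamma'_n))^2,\rho_2(\cdots)^2]$, so $\psi(\gamma):=\Psi(e,\gamma)$ is finite and proper. Left-invariance of $\Psi$ follows from the substitution $x\mapsto x\alpha_n$ in the average: this moves the center from $x$ to $x\alpha_n^{-1}$, and the center-independence of $\kappa_r$ is exactly what lets me restore the center and conclude $\Psi(\alpha\gamma,\alpha\gamma')=\Psi(\gamma,\gamma')$. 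Hence $\psi$ is a proper conditionally negative definite function on the finitely generated group $G$, so $G$ is a-T-menable.

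The main obstacle is the tension between well-definedness and invariance: a center-fixed kernel is well-defined but not invariant, whereas the $\Ga_n$-average is invariant but a priori depends on the centers used in each summand. The resolution is that compatibility condition (b) renders the local kernel center-free, so the averaged kernel is simultaneously well-defined and left-invariant; this is where the finiteness of each $\Ga_n$ (giving a genuine invariant mean) is indispensable. The remaining points — bookkeeping the scale $r$ so that it dominates every relevant word length, and checking that conditional negative definiteness survives the $\omega$-limit — are routine given the finite-length nature of elements of $G$.
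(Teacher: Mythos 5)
Your proposal is correct and follows essentially the same route as the paper: part (2) is deduced from Proposition \ref{pro: banach property T for limit group} with $p=2$ and Hilbert coefficients, and part (1) averages the center-free local kernel supplied by the fibred-coarse-embedding trivializations over each finite group $\Ga_n$ and passes to the $\omega$-limit to produce a proper, left-invariant, conditionally negative definite function on $\Ga^\infty_\omega$, exactly as the paper does via $\psi_n(g)=\frac{1}{\#\Ga_n}\sum_{h\in\Ga_n}k(h,hg)$. The only differences are cosmetic (squared versus unsquared norm, and a two-variable kernel with an explicit invariance check versus the paper's direct one-variable average); the lone blemish is your appeal to ``separability'' to identify $L^2(\mu_\phi,\H)$ with $\H$ --- $L^2(\mu_{\phi_\omega})$ need not be separable --- but this is harmless, since geometric property (T) concerns representations on arbitrary Hilbert spaces.
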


\begin{proof}
\textbf{For (1)}. Assume that $\Ga_n$ admits a fibred coarse embedding into Hilbert space. Recall the definition of fibred coarse embedding, there exists\begin{itemize}
\item a field of Hilbert spaces $(H_g)_{g\in\Ga_n,n\in\IN}$;
\item a section $s:\Ga_n\to\sqcup_{g\in\Ga_n}H_g$ for all $n\in\IN$;
\item two non-decreasing functions $\rho_+$ and $\rho_-$ from $[0,\infty)$ to $[0,\infty)$ with $\lim_{r\to\infty}\rho_{\pm}(r)=\infty$;
\item a non-decreasing sequence of numbers $0\leq l_0\leq l_1\leq\cdots\leq l_n\leq\cdots$ with $\lim_{n\to\infty}l_n = \infty$.
\end{itemize}
such that for each $g\in \Ga_n$ there exists a "trivialization"
$$t_g:(H_h)_{h\in B_{\Ga_n}(g,l_n)}\to B_{\Ga_n}(g,l_n)\times H$$
such that the restriction of $t_g$ to the fiber $H_h$ for any $h\in B_{\Ga_n}(g,l_n)$ is an affine isometry $t_g(h):H_h\to H$, satisfying:
\begin{itemize}
\item [(1)]for any $h,h'\in B_{\Ga_n}(g,l_n)$,
$$\rho_-(d(h,h'))\leq \|t_g(h)(s(h))-t_g(h')(s(h'))\|\leq \rho_+(d(h,h'));$$
\item [(2)]for any $g,h\subset B_{\Ga_n}(g,l_n)\cap B_{\Ga_n}(h,l_n)\ne \emptyset$, there exists an affine isometry $t_{gh}:H\to H$ such that $t_g(k)\circ t^{-1}_h(k)= t_{xy}$ for all $k\in B_{\Ga_n}(g,l_n)\cap B_{\Ga_n}(h,l_n)$.
\end{itemize}
For each $n\in\IN$, we define $k_n:\Ga_n\times \Ga_n\to \IR$ by
$$k(g,h)=\left\{\begin{aligned}&\|t_g(g)(s(g))-t_g(h)(s(h))\|,&&\text{if }d(g,h)\leq l_n;\\&0,&&\text{otherwise}.\end{aligned}\right.$$
It is clear that $k$ is a kernel function conditionally of negative type in a $\frac{l_n}2$-bounded set, i.e., if $g_1,\cdots,g_n\in B(e,\frac{l_n}2)$ and $c_1,\cdots,c_n\in\IR$ with $\sum_{i=1}^nc_i=0$, one has that $\sum_{i,j=1}^nc_ic_jk(g_i,g_j)\leq 0$. That is because $k(g,h)=\|t_g(g)(s(g))-t_g(h)(s(h))\|=\|t_e(g)(s(g))-t_e(h)(s(h))\|$ by the second condition of fibred coarse embedding and $g\mapsto t_e(g)(s(g))$ forms a coarse embedding for $B(e,\frac{l_n}2)$. Define
$$\psi_n:\Ga_n\to\IR\quad\text{by}\quad \psi_n(g)=\frac{1}{\#\Ga_n}\sum_{h\in \Ga_n}k(h,hg).$$
Then $\psi_n$ is a function conditionally of negative type on $\Ga_n$ in a $\frac{l_n}2$-bounded set, indeed, it is an average of finite many kernel functions which are conditionally of negative type. Moreover, by definition, one can see that
$$|\psi_n(g)|\geq\rho_-(l_{\Ga_n}(g)),$$
this shows that $\psi$ is proper.

Now, define $\psi: \Ga^\infty_\omega\to\IR$ by $[(g_n)]\mapsto\lim_{n\to\omega}\psi_n(g_n)$. It is clear that this function is well-defined (it does not depend on the choice of the representation element). Moreover, since it is a limit of functions conditionally of negative type in larger and larger sets, it is also clear that $\psi$ is conditionally of negative type. Moreover, since $|\psi_n(g_n)|\geq\rho_-(l_{\Ga_n}(g_n))$ holds for all $n\in\IN$, passing to limit we also have that
$$|\psi([(g_n)])|\geq\rho_-(l_{\Ga_\omega^\infty}([(g_n)])),$$
this shows that $\Ga^\infty_\omega$ is a-T-menable.

\textbf{For (2)}. It is a direct corollary of Proposition \ref{pro: banach property T for limit group} if we take $\fB$ to be a Hilbert space and $p=2$.
\end{proof}

\begin{proof}[Proof of Theorem \ref{thm: FCE-by-FCE violates (T)}]
Assume for a contradiction that $(\Ga_n)_{n\in\IN}$ has geometric property (T). For any fixed ultrafilter $\omega\in\partial_{\beta}\IN$, by Lemma \ref{lem: exact sequence of limit groups}, we have the following short exact sequence
$$1\to N^{\infty}_{\omega}\to\Ga^{\infty}_{\omega}\to Q^{\infty}_{\omega}\to 1.$$
Since $(\Ga_n)_{n\in\IN}$ has geometric property (T), by Lemma \ref{lem: property for limit group}, the limit group $\Ga^\infty_\omega$ has property (T). Since $(\Ga_n)_{n\in\IN}$ has ``FCE-by-FCE'' structure, the limit groups $N^\infty_\omega$ and $Q^\infty_\omega$ should be a-T-menable. However, if $\Ga^\infty_\omega$ has property (T), then $(\Ga^\infty_\omega, N^\infty_\omega)$ and $Q^\infty_\omega$ should all have property (T), see \cite{KazhdanT}. Since $Q^\infty_\omega$ has both property (T) and Haagerup property, this implies that $Q^\infty_\omega$ is a finite group. Since $N^\infty_\omega$ is a-T-menable and $Q^\infty_\omega$ is finite, we can then conclude that $\Ga^\infty_\omega$ is a-T-menable. Now, we have that $\Ga^\infty_\omega$ has both property (T) and Haagerup property, this only happens if $(\Ga_n)_{n\in\IN}$ is a uniformly bounded sequence. This leads to a contradiction.
\end{proof}

\section{Coarse fixed point property}\label{sec: fixed point property}

In this section, we shall discuss a \emph{coarse fixed point property} for a sequence of Cayley graphs. The fixed point property for a bornological group is first introduced by R.~Tessera and J.~Winkel in \cite{TW2022}. In the same paper, they also provide a characterization of the geometric property
(T) for sequences of finite Cayley graphs regarding coarse fixed point property of a certain group. In this section, we shall generalize this result for geometric Banach property (T).

\subsection{Coarse fixed point property}

We shall first recall the definition of controlled action. Let $X=\bigsqcup_{n\in\IN}\Ga_n$ be the separated disjoint union of a sequence of finite Cayley graphs with uniform finite generators, $\fB$ a Banach space. Write $S_n\subseteq \Ga_n$ the generating set of $\Ga_n$ such that $\sup_{n\in\IN}\#S_n<\infty$. The set $S=\prod_{n\in\IN}S_n$ then forms a generating set the \emph{group of uniformly bounded product} $\prod^b_{n\in\IN}\Ga_n$. An isometric action of $\prod^b_{n\in\IN}\Ga_n$ on $\fB$ is a group homomorphism
$$\alpha: \prod^b_{n\in\IN}\Ga_n\to\text{Isom}(\fB),$$
where $\text{Isom}(\fB)$ is the group of all bijective isometries on $\fB$. By the Mazur-Ulam Theorem, the isometry group $\text{Isom}(\fB)$ can be decomposed into the semi-direct product
$$\text{Isom}(\fB)=O(\fB)\ltimes\fB.$$
Then the action $\alpha$ can also be split into two parts: the linear part $\pi: \prod^b_{n\in\IN}\Ga_n\to O(\fB)$ and a \emph{1-cocycle} $b:\prod^b_{n\in\IN}\Ga_n\to\fB$ associated with $\pi$, which means that
$$b(gh)=b(g)+\pi(g)b(h),$$
such that
$$\alpha(g)\xi=\pi(g)\xi+b(g)$$
for any $g,h\in\prod^b_{n\in\IN}\Ga_n$ and $\xi\in\fB$. This action $\alpha$ is \emph{controlled} if the corresponding cocycle $b$ is uniformly bounded on $S$. We shall also call the associated 1-cocycle $b$ a \emph{controlled 1-cocycle} if the affine action $\alpha$ is controlled.

\begin{Def}
With notations as above, the space $X$ is said to have \emph{coarse property $(F_{\fB})$} if for any controlled isometric action $\alpha: \prod^b_{n\in\IN}\Ga_n\to\text{Isom}(\fB)$, there exists a fix point for $\alpha$.
\end{Def}

Fix an isometric linear representation $\pi: \prod^b_{n\in\IN}\Ga_n\to O(\fB)$. Denote by $Z^1_{con}(\pi)$ the set of all controlled 1-cocycles associated with $\pi$. For any $\xi\in\fB$, one can construct a 1-cocycle $b_{\xi}$ associated with $\xi$ and $\pi$ by
$$b_{\xi}(g)=\xi-\pi(g)\xi.$$
Such a 1-cocycle is called a \emph{1-boundary} which is always controlled. The set of all 1-boundaries is denoted by $B^1(\pi)$, which is a subset of $Z^1_{con}(\pi)$. Notice that the affine isometric action $\alpha_{\xi}$ determined by $\pi$ and $b_{\xi}$ always has a fixed point, say $\xi$ itself. The proof of the following practical lemma can be found in \cite[Lemma 2.14]{BFGM2007}.

\begin{Lem}
Let $\fB$ be a uniformly convex Banach space, $\alpha: \prod^b_{n\in\IN}\Ga_n\to \text{Isom}(\fB)$ an affine isometric action. Let $\pi$ the linear part of $\alpha$, $b\in Z^1(\pi)$ the associated 1-cocycle. Then the following are equivalent:\begin{itemize}
\item[(1)]  all orbits of $\alpha$ is bounded;
\item[(2)] there exists a bounded orbit of $\alpha$;
\item[(3)] $\alpha$ has a fixed point;
\item[(4)] $b\in B^1(\pi)$.
\end{itemize}
As a corollary, $X$ has coarse property ($F_\fB$) if and only if for any isometric linear representation $\pi: \prod^b_{n\in\IN}\Ga_n\to O(\fB)$, the two set $Z^1_{con}(\pi)$ and $B^1(\pi)$ coincides.
\qed\end{Lem}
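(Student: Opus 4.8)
The plan is to prove the cyclic chain $(3)\Rightarrow(1)\Rightarrow(2)\Rightarrow(3)$ together with the equivalence $(3)\Leftrightarrow(4)$, and then read off the corollary. All of the implications except $(2)\Rightarrow(3)$ are formal; the analytic content sits entirely in $(2)\Rightarrow(3)$, which is exactly where uniform convexity is used.

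First I would dispatch the routine parts. For $(3)\Rightarrow(1)$: if $\xi_0$ is a fixed point, then since each $\alpha(g)$ is an isometry fixing $\xi_0$ one has $\|\alpha(g)\xi-\xi_0\|=\|\alpha(g)\xi-\alpha(g)\xi_0\|=\|\xi-\xi_0\|$ for every $\xi$, so each orbit lies on a single sphere about $\xi_0$ and is bounded. The implication $(1)\Rightarrow(2)$ is trivial. For $(3)\Leftrightarrow(4)$: writing $\alpha(g)\xi=\pi(g)\xi+b(g)$, a vector $\xi_0$ is fixed precisely when $b(g)=\xi_0-\pi(g)\xi_0$ for all $g$, which is exactly the assertion $b=b_{\xi_0}\in B^1(\pi)$.

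The heart of the argument is $(2)\Rightarrow(3)$, carried out via the Chebyshev center (circumcenter) construction. Suppose the orbit $A=\{\alpha(g)\xi\mid g\in\prod^b_{n\in\IN}\Ga_n\}$ of some $\xi$ is bounded. I would introduce $f(x)=\sup_{a\in A}\|x-a\|$, which is convex, $1$-Lipschitz, and coercive, with finite infimum $r=\inf_x f(x)$. Since a uniformly convex space is reflexive, the bounded closed convex sublevel sets of $f$ are weakly compact and $f$ is weakly lower semicontinuous, so $f$ attains its infimum. Uniform convexity forces the minimizer to be \emph{unique}: if $f(x_1)=f(x_2)=r$ with $x_1\ne x_2$, then for every $a\in A$ the vectors $x_1-a$ and $x_2-a$ have norm $\le r$ and differ by the fixed vector $x_1-x_2$, so the closed-ball form of the convexity modulus gives $\|\tfrac{x_1+x_2}{2}-a\|\le r\bigl(1-\delta(\|x_1-x_2\|/r)\bigr)<r$ uniformly in $a$, contradicting minimality at the midpoint. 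Call the unique minimizer $c$. Because each $\alpha(h)$ is a surjective affine isometry permuting $A$, one computes $f(\alpha(h)x)=\sup_{a\in A}\|\alpha(h)x-a\|=\sup_{a'\in A}\|x-a'\|=f(x)$, so $\alpha(h)$ preserves $f$ and hence maps the unique minimizer $c$ to itself; thus $c$ is a fixed point of $\alpha$.

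The corollary then follows immediately from $(3)\Leftrightarrow(4)$: an isometric action of $\prod^b_{n\in\IN}\Ga_n$ is controlled exactly when its associated cocycle lies in $Z^1_{con}(\pi)$, and it admits a fixed point exactly when that cocycle lies in $B^1(\pi)$. Since $B^1(\pi)\subseteq Z^1_{con}(\pi)$ always holds, the space $X$ has coarse property $(F_\fB)$ — that is, every controlled action has a fixed point — if and only if $Z^1_{con}(\pi)=B^1(\pi)$ for every linear isometric representation $\pi$. The main obstacle I anticipate is the existence-and-uniqueness of the circumcenter outside the Hilbert setting: uniqueness is the delicate point, and it is precisely there that the modulus of convexity $\delta$ must be invoked (in its closed-ball form), while existence rests on the reflexivity of uniformly convex spaces recorded earlier in the excerpt.
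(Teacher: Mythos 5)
Your proof is correct and complete. The paper itself gives no proof of this lemma --- it simply defers to \cite[Lemma 2.14]{BFGM2007} --- and your Chebyshev-center argument (the routine implications, existence of the minimizer of $f(x)=\sup_{a\in A}\|x-a\|$ via reflexivity and weak lower semicontinuity, uniqueness via the closed-ball form of the convexity modulus, and equivariance forcing the unique center to be fixed) is precisely the standard argument behind that citation for uniformly convex spaces, so you have in effect reproduced the proof the paper relies on.
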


It is direct to see that the sum and scale multiple of a 1-cocycle still define a 1-cocycle, which makes $Z^1_{con}(\pi)$ a linear space. There is a canonical norm on $Z^1_{con}(\pi)$ defined as follow:
\begin{equation}\label{eq: norm of cocycle}\|b\|=\sup_{g\in S}\|b(g)\|_{\fB}.\end{equation}
To see this norm is well-defined, for any $b\in Z_{con}^1(\pi)$ with $\|b\|=0$, one then has that $b(g)=0$ for all $g\in S$. Since $S$ is the generating set of $\prod_{n\in\IN}^b\Ga_n$, any element in $\prod_{n\in\IN}^b\Ga_n$ can be written in the form $g_1g_2\cdots g_k$, where $g_1,g_2,\cdots,g_n\in S$. By definition, one can check that
$$b(g_1g_2\cdots g_k)=b(g_1g_2\cdots g_{k-1})+\pi(g_1g_2\cdots g_{k-1})b(g_k)=b(g_1g_2\cdots g_{k-1}).$$
By induction, we conclude that $b(g_1g_2\cdots g_k)=b(g_1)=0$. Thus, $b$ is the constant zero function. This prove that the positive definiteness of the norm. It is direct to check this norm is homogeneous and it satisfies the triangle inequality.

\begin{Lem}
With the notation stated as above, the space $Z^1(\pi)$ is a Banach space under the norm defined as in \eqref{eq: norm of cocycle}.
\end{Lem}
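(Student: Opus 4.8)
The norm \eqref{eq: norm of cocycle} has already been shown to be a genuine norm on $Z^1_{con}(\pi)$, so the plan is simply to establish completeness. The guiding idea is that a $1$-cocycle is rigidly determined by its restriction to the generating set $S$ through the cocycle relation, so I would reconstruct the limit of a Cauchy sequence entirely from its values on $S$.

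First I would take a Cauchy sequence $(b_n)_{n\in\IN}$ in $Z^1_{con}(\pi)$. For each fixed $s\in S$, the inequality $\|b_n(s)-b_m(s)\|_{\fB}\le\|b_n-b_m\|$ shows that $(b_n(s))_n$ is Cauchy in $\fB$; since $\fB$ is complete, it converges to some $\beta(s)\in\fB$. To promote $\beta$ from $S$ to a cocycle on the whole group, I would take an arbitrary $g=g_1g_2\cdots g_k$ with $g_i\in S$ and iterate the cocycle identity to write
\[
b_n(g)=\sum_{i=1}^{k}\pi(g_1\cdots g_{i-1})\,b_n(g_i).
\]
Each summand converges as $n\to\infty$ (the factor $b_n(g_i)\to\beta(g_i)$ and $\pi(g_1\cdots g_{i-1})$ is a fixed bounded operator), so $\lim_n b_n(g)$ exists; I would define $b(g)$ to be this limit. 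Since $b(g)$ is the limit of the vectors $b_n(g)$, each of which depends only on the group element $g$ and not on its chosen word decomposition, $b$ is automatically a well-defined function on $\prod^b_{n\in\IN}\Ga_n$.

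It then remains to verify the three expected properties. Passing to the limit in $b_n(gh)=b_n(g)+\pi(g)b_n(h)$ and using the continuity of $\pi(g)$ shows that $b$ obeys the cocycle relation. Controlledness follows from the boundedness of any Cauchy sequence: fixing $M$ with $\|b_n\|\le M$ for all $n$, one gets $\|b(s)\|_{\fB}=\lim_n\|b_n(s)\|_{\fB}\le M$ for every $s\in S$, so $\|b\|\le M<\infty$ and $b\in Z^1_{con}(\pi)$. Finally, for $\varepsilon>0$ choose $N$ with $\|b_n-b_m\|<\varepsilon$ for $n,m\ge N$; letting $m\to\infty$ in $\|b_n(s)-b_m(s)\|_{\fB}<\varepsilon$ and taking the supremum over $s\in S$ yields $\|b_n-b\|\le\varepsilon$ for $n\ge N$, so $b_n\to b$ in norm.

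I expect the only delicate point to be this reconstruction step: convergence is controlled only on the finitely many generators in $S$, yet the limit must be shown to be a genuine cocycle on the full (in general infinite) group $\prod^b_{n\in\IN}\Ga_n$. The cocycle relation bridges this gap, but it is essential that $\pi$ is a fixed bounded (indeed isometric) representation, so that each $\pi(g_1\cdots g_{i-1})$ is continuous and the finite-sum limit above is legitimate; without this the pointwise limit need neither exist nor be a cocycle.
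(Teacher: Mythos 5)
Your proof is correct, and its skeleton matches the paper's: take a Cauchy sequence, use completeness of $\fB$ on the generating set $S$, use the cocycle identity to deal with the whole group, and finish with the standard uniform-convergence argument. The one genuine difference is in how the limit cocycle is constructed. The paper defines $b$ only on $S$ and then \emph{extends} it to $\prod^b_{n\in\IN}\Ga_n$ by the rule $b(gh)=b(g)+\pi(g)b(h)$, asserting that the extension is well-defined simply because $S$ generates; strictly speaking this needs justification, since a function prescribed on generators extends to a cocycle only if it is compatible with all relations of the group. You instead define $b(g)=\lim_n b_n(g)$ for \emph{every} group element, using the word expansion $b_n(g)=\sum_{i=1}^{k}\pi(g_1\cdots g_{i-1})b_n(g_i)$ only to prove that this limit exists; well-definedness is then automatic, because each $b_n$ is already a function on the group, and the cocycle identity for $b$ follows by passing to the limit in the identity for $b_n$. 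Your route therefore buys the well-definedness for free, which is exactly the point the paper's proof glosses over (indeed, the cleanest way to justify the paper's extension step is your observation that the extension agrees with the pointwise limit). Your explicit check that $b$ is controlled, via boundedness of the Cauchy sequence, is likewise a detail the paper leaves implicit.
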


\begin{proof}
Let $\{b_n\}_{n\in\IN}$ be a Cauchy sequence in $Z^1(\pi)$. For any $g\in S$, since $\{b_n\}_{n\in\IN}$ is a Cauchy sequence, then $\{b_n(g)\}_{n\in\IN}$ defines a Cauchy sequence in $\fB$. Since $\fB$ is a Banach space, we define
$$b(g)=\lim_{n\to\infty}b_n(g).$$
Extend $b$ to a function on $\prod_{n\in\IN}^b\Ga_n$ under the rule
$$b(gh)=b(g)+\pi(g)b(h).$$
Since $S$ is the generating set of $\prod_{n\in\IN}^b\Ga_n$, thus this extension is well-defined. Then $b$ determines an element in $Z^1(\pi)$. Since the family of sequences $\{\{b_n(g)\}_{n\in\IN}\}_{g\in S}$ is a uniform Cauchy sequence, thus $b_n(g)$ uniformly converges to $b(g)$ on all $g\in S$. This proves that $\|b-b_n\|$ tends to $0$ as $n$ tends to infinity.
\end{proof}

We then have the following Delorme-Guicharde type theorem on coarse fixed point property and geometric property (T) for Banach space. However, we are only able to prove the implications of one of the directions.

\begin{Thm}[$(F_\fB)\Rightarrow (T_\fB)$]\label{thm: FB to TB}
Let $X=\bigsqcup_{n\in\IN}\Ga_n$ be the separated disjoint union of a sequence of finite Cayley graphs with uniformly finite generators, $\fB$ a uniformly convex Banach space. If $X$ has coarse property ($F_\fB$), then $X$ has geometric property ($T_\fB$).
\end{Thm}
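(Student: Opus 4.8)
The plan is to argue by contraposition, realizing the statement as the Guichardet half of the Delorme--Guichardet theorem adapted to Roe algebras. The bridge between the two languages is the group $G:=\prod^b_{n\in\IN}\Ga_n$: each $g=(g_n)_n\in G$ determines a full partial translation $R_g\in\IC_u[X]$, namely right multiplication $x\mapsto xg_n$ on each $\Ga_n$, and one checks that $\supp(R_g)\subseteq E_0$ exactly when $g\in S=\prod_n S_n$. Composing with a representation $\pi\colon\IC_u[X]\to\L(\fB)$ yields a linear isometric representation $\rho\colon G\to O(\fB)$, $\rho(g)=\pi(R_{g^{-1}})$. By Case~1 of Example~\ref{exa: explanation of Lemma 3.1}, the finitely many full partial translations of Lemma~\ref{lem: full partial translation} used in the proof of Lemma~\ref{lem: invariant vector in group language} are precisely the $R_{\sigma_i}$ for $\sigma_i$ ranging over a finite subset of $S$; hence that proof shows $\fB^{\rho}=\fB^{\pi}$, i.e. invariance under all $\rho(\sigma)$, $\sigma\in S$, already forces Roe-algebra invariance.

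Assume $X$ does not have geometric property $(T_\fB)$, so some $\pi$ has no spectral gap. Since $\fB$ is uniformly convex, Lemma~\ref{lem: complemented representation} gives the splitting $\fB=\fB^{\pi}\oplus\fB_{\pi}$ together with unit vectors $\xi_m\in\fB_{\pi}$ satisfying $\sup_A\|\pi(A)\xi_m-\xi_m\|\to0$, the supremum taken over full partial translations $A$ with support in $E_0$. As each $R_\sigma$ ($\sigma\in S$) is such an $A$, this reads $\varepsilon_m:=\sup_{\sigma\in S}\|\rho(\sigma)\xi_m-\xi_m\|\to0$. Because $\fB^{\rho}=\fB^{\pi}$, the subspace $\fB_{\pi}$ contains no nonzero $\rho$-invariant vector, so writing $\rho_0:=\rho|_{\fB_{\pi}}$ the boundary map $\fB_{\pi}\to B^1(\rho_0)$, $\xi\mapsto b_\xi$ with $b_\xi(g)=\xi-\rho_0(g)\xi$, is injective; it is also surjective by definition of $B^1$ and bounded with $\|b_\xi\|\le2\|\xi\|$.

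Now I run the open-mapping argument. If $B^1(\rho_0)$ were closed in the Banach space $Z^1_{con}(\rho_0)$ (completeness being the lemma recorded above), the bounded inverse theorem would produce $c>0$ with $\|\xi\|\le c\,\|b_\xi\|$ for all $\xi\in\fB_{\pi}$; evaluating at $\xi_m$ gives $1\le c\,\varepsilon_m\to0$, a contradiction. Thus $B^1(\rho_0)$ is not closed, and I may choose $b\in\overline{B^1(\rho_0)}\setminus B^1(\rho_0)\subseteq Z^1_{con}(\rho_0)$, a controlled $1$-cocycle that is not a coboundary. Viewing $b$ as a $1$-cocycle for $\rho$ on all of $\fB$ (it is valued in the $\rho$-invariant subspace $\fB_{\pi}$), I form the affine isometric action $\alpha(g)v=\rho(g)v+b(g)$ of $G$ on $\fB$, which is controlled since $\sup_{\sigma\in S}\|b(\sigma)\|<\infty$. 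A fixed point $v=\eta+\zeta\in\fB^{\pi}\oplus\fB_{\pi}$ would force $b=b_v$, and since $\rho$ fixes $\eta$ this equals $b_\zeta\in B^1(\rho_0)$, contradicting the choice of $b$. Hence $\alpha$ has no fixed point, contradicting coarse property $(F_\fB)$ via the reformulation $Z^1_{con}(\rho)=B^1(\rho)$.

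I expect the main obstacle to be the open-mapping step, together with the bookkeeping that (i) identifies the $\rho$-invariant vectors with the Roe-algebra invariants $\fB^{\pi}$ through Lemma~\ref{lem: invariant vector in group language}, and (ii) matches almost-invariance in the spectral-gap sense of Definition~\ref{def: geom T for a single space} with almost-invariance over the generating set $S$; uniform convexity is used only to obtain the complemented decomposition $\fB=\fB^{\pi}\oplus\fB_{\pi}$ of Lemma~\ref{lem: complemented representation}.
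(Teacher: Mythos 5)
Your proof is correct and takes essentially the same route as the paper's: contraposition, passing from $\pi$ to the isometric representation of $\prod^b_{n\in\IN}\Ga_n$, the coboundary map into $Z^1_{con}$, and the open mapping/bounded inverse theorem to show that $B^1$ cannot be closed, hence $B^1\neq Z^1_{con}$, contradicting coarse property ($F_\fB$). The only cosmetic differences are that the paper runs the bounded-inverse argument on $\fB/\fB^{\pi}$ rather than on $\fB_{\pi}$ (equivalent via Lemma \ref{lem: complemented representation}) and concludes by citing the fixed-point reformulation, whereas you explicitly build the fixed-point-free affine action from a cocycle in $\overline{B^1}\setminus B^1$ and spell out the identification $\fB^{\rho}=\fB^{\pi}$ that the paper uses implicitly.
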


\begin{proof}
Assume for a contradiction that $X$ does not have geometric property ($T_\fB$), i.e., there exists a representation $\pi:\IC_u[X]\to\L(\fB)$ which admits a $\varepsilon$-almost invariant vector for any $\varepsilon>0$. As we discussed above, this representation restricts to an isometric linear representation $\pi: \prod_{n\in\IN}^b\Ga_n\to O(\fB)$, a little abuse of notation, we still denote this group representation by $\pi$.

Define $\tau: \fB\to Z^1_{con}(\pi)$ by $\xi\mapsto b_\xi$ whose image is $B^1(\pi)$. Notice that this map is a bounded linear map with $\|\tau\|\leq 2$. Moreover, the kernel of this map is exactly the invariant subspace $\fB^{\pi}$. Assume that $B^1(\pi)$ is a close subspace in $Z^1_{con}(\pi)$. By open mapping theorem, the inverse map $\tau^{-1}: B^1(\pi)\to\fB/\fB^{\pi}$ also defines a bounded linear map. Then there exists $M>0$ such that
$$\|[\xi]\|\leq M\cdot\|\tau(\xi)\|=M\cdot\sup_{g\in S}\|\pi(g)\xi-\xi\|_{\fB}.$$
By Lemma \ref{lem: complemented representation}, this means the representation $\pi$ has a spectral gap which leads to a contradiction. Thus $B^1(\pi)$ is not close in $Z^1_{con}(\pi)$. Thus $B^1(\pi)\ne Z^1_{con}(\pi)$, which means that $X$ does not have coarse property ($F_\fB$).
\end{proof}

From the proof above, one can discern hints of defining \emph{uniform coarse property ($F_\fB$)}. If $X$ has coarse property ($F_\fB$), then for any representation $\pi$ of $\prod_{n\in\IN}^b\Ga_n$, the map $\tau_{\pi}:\fB\to Z^1_{con}(\pi)$ defined in the proof of Theorem \ref{thm: FB to TB} is a bounded linear surjection. We say $X$ has uniform coarse property ($F_\fB$) if $X$ has coarse property ($F_\fB$) and for any representation $\pi$, the family of inverse maps $\{\tau^{-1}_{\pi}\}$ is uniformly bounded. One can compare this with the uniform fixed point property for a group, for which the definition can be found in \cite[Remark 4.8]{DN2019}. Parallelly, we have the following result which can be proved similarly with Theorem \ref{thm: FB to TB}.

\begin{Cor}
Let $X=\bigsqcup_{n\in\IN}\Ga_n$ be the separated disjoint union of a sequence of finite Cayley graphs with uniform finite generators, $\fB$ a uniformly convex Banach space. If $X$ has uniform coarse property ($F_\fB$), then $X$ has uniform geometric property ($T_\fB$).\qed
\end{Cor}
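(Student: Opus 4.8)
The plan is to re-run the argument of Theorem~\ref{thm: FB to TB}, now tracking the quantitative constant that uniformity provides at each step. Fix an arbitrary representation $\pi:\IC_u[X]\to\L(\fB)$ and restrict it to the isometric linear representation $\pi:\prod^b_{n\in\IN}\Ga_n\to O(\fB)$ exactly as in that proof. Since $X$ has coarse property ($F_\fB$), the boundary map $\tau_\pi:\fB\to Z^1_{con}(\pi)$, $\xi\mapsto b_\xi$, is a bounded linear surjection with kernel $\fB^\pi$, so it descends to a bounded linear bijection $\bar\tau_\pi:\fB/\fB^\pi\to Z^1_{con}(\pi)$ between Banach spaces, which the open mapping theorem upgrades to an isomorphism. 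The hypothesis of \emph{uniform} coarse property ($F_\fB$) says precisely that $M:=\sup_\pi\|\bar\tau_\pi^{-1}\|<\infty$, the supremum being over all representations.

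First I would record the resulting representation-independent estimate: for every $\xi\in\fB$,
$$\|[\xi]\|_{\fB/\fB^\pi}=\|\bar\tau_\pi^{-1}(b_\xi)\|\leq M\cdot\|b_\xi\|=M\cdot\sup_{g\in S}\|\pi(g)\xi-\xi\|,$$
where $S=\prod_{n\in\IN}S_n$ is the fixed generating set. I would then observe that each $g\in S$ determines a full partial translation $A_g\in\IC_u[X]$ with $\supp(A_g)\subseteq E_0$, $\Phi(A_g)=\chi_X$ and $\pi(A_g)=\pi(g)$, as in Case~1 of Example~\ref{exa: explanation of Lemma 3.1}; hence the supremum over all full partial translations $A$ with support in $E_0$ dominates $\sup_{g\in S}\|\pi(g)\xi-\xi\|$, and the displayed estimate gives $\sup_A\|\pi(A)\xi-\xi\|\geq\frac1M\|[\xi]\|$ for every $\xi\in\fB$.

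The final step is to feed this into Lemma~\ref{lem: complemented representation}(3). Regarding the single uniformly convex space $\fB$ as the trivially uniformly convex family $\{\fB\}$, the proof of that lemma supplies a constant $m>0$, depending only on the convexity modulus of $\fB$, with $\|[\xi]\|_{\fB/\fB^\pi}\geq m$ for every representation $\pi$ and every $\xi\in S(\fB_\pi)$. Combining the two bounds, for all such $\xi$,
$$\sup_A\|\pi(A)\xi-\xi\|\geq\frac1M\|[\xi]\|\geq\frac mM,$$
so every representation of $\IC_u[X]$ on $\fB$ has spectral gap at least $m/M$. By the sufficiency criterion of Lemma~\ref{lem: complemented representation}(3), this is exactly uniform geometric property ($T_\fB$) with uniform constant $c=m/M$.

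The only point requiring care---and the sole difference from Theorem~\ref{thm: FB to TB}---is that both constants are genuinely independent of $\pi$: $M$ is uniform by the definition of uniform coarse property ($F_\fB$), while $m$ is uniform because the duality-map continuity invoked in the proof of Lemma~\ref{lem: complemented representation}(3) depends only on the convexity modulus of $\fB$, which is fixed. I therefore do not expect a substantive obstacle; the uniform definition was engineered precisely so that this bookkeeping closes.
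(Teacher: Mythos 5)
Your proof is correct and is exactly the argument the paper intends: the corollary is stated in the paper with only the remark that it can be proved similarly to Theorem \ref{thm: FB to TB}, and your quantitative re-run of that proof---extracting the uniform bound $M$ on the inverse maps $\tau_\pi^{-1}$ from the definition of uniform coarse property ($F_\fB$), and combining it with the representation-independent lower bound $m$ supplied by the proof of Lemma \ref{lem: complemented representation}(3), which depends only on the convexity modulus---is precisely that similar proof. The bookkeeping closes as you expect, giving the uniform spectral gap $c=m/M$, so there is no gap.
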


It is worth mentionint that in \cite{MdlS2023}, A.~Marrakchi and M.~de~la Salle proved that any countable discrete group admits a proper affine isometric action on an $L^p$-space for sufficiently large $p$, which means any group can not have fixed point property for sufficiently large $p$ ($p$ could be $\infty$). It is appropriate to speculate that every space has a property ($T_{\sL ^ {\infty}}$). Now that we have proven Theorem \ref{thm: FB to TB}, it is natural to consider whether the implication holds in the opposite direction. Unfortunately, this is not the case. Parallelly thinking, the fixed point property for groups for a family Banach spaces is rather stronger than the associated property (T), see \cite{BFGM2007}. Therefore, it is reasonable to suspect that the reverse proposition is incorrect, even for a single Banach space. Indeed, one is referred to the following counterexample.

\begin{Exa}\label{exa: counter example of TB to FB}
Let $\fB=\IR$ be the $1$-dimensional Euclidean space with the canonical norm. Set $\Ga_n=\IZ_{2n+1}$. To understand a representation $\pi: \IC_u[X]\to\L(\IR)$, it suffices to clarify the orthogonal representations $\pi_O: \prod^b_{n\in\IN}\Ga_n\to O(\IR)$ and the representation of $\pi_{\ell^\infty}: \ell^2(X)\to\L(\IR)$. Since $O(\IR)=\IZ_2$ and the composition of $\pi_O$ and the canonical inclusion $\iota_n:\Ga_n\to\prod_{n\in\IN}^b\Ga_n$ by $\gamma\mapsto (e,\cdots,e,\gamma,e,\cdots)$ gives a group homomorphism from $\Ga_n=\IZ_{2n+1}$ to $\IZ_n$. Thus $\pi_O$ must be the unit map. Then the invariant space $\IR^{\pi}$ coincides with the entire space $\IR$. This proves that $X$ has geometric property $(T_\fB)$ directly from the definition.

However, $\prod^b_{n\in\IN}\Ga_n$ does not have coarse property ($F_\fB$). Notice that $X$ is actually a box space of $\IZ$. Thus $\IZ$ is a limit subgroup of $\prod^b_{n\in\IN}\Ga_n$ as we discussed in Section \ref{subsec: residually finite group}. Thus, there is a canonical group homomorphism from $\prod^b_{n\in\IN}\Ga_n\to\IZ$ as a quotient whenever we fix an ultrafilter. We shall define an isometric action $\alpha$ of $\prod^b_{n\in\IN}\Ga_n$ on $\IR$ by
$$\alpha_{z}:\xi\mapsto \xi+z$$
for any $z\in\IZ$, and it induces a controlled action of $\prod^b_{n\in\IN}\Ga_n$ via the quotient map. It is clear that this action is well-defined without fixed points.

Moreover, we should mention that this construction holds for any Banach space whose orthogonal group is $\IZ_2$. One can construct such a space of arbitrary dimensions even within the class of uniformly convex Banach space. For example, one can take a sufficiently asymmetric convex set in a Hilbert space and make it a unit ball for some norm of this Hilbert space. For such a space $\fB$, one can prove with a similar argument that geometric property ($T_\fB$) does not imply ($F_\fB$).
\end{Exa}

\subsection{Coarse property ($F_{\sL^p}$) and geometric property ($T$)}

In this subsection, we shall discuss the relationship between geometric property (T) and coarse fixed point property for $L^p$-spaces. The following theorem is the main result of this subsection.

\begin{Thm}\label{thm: TH to FLp}
Let $X$ be a separable disjoint union of a sequence of finite Cayley graphs. If $X$ has geometric property (T), then for any $p\in (1,2]$ and any subspace $\fB$ of any $L^p$-space $L^p(\mu)$, $X$ also has coarse property ($F_{\fB}$).
\end{Thm}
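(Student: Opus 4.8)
The plan is to combine the Delorme--Guichardet-type description of coarse property $(F_\fB)$ with a conditionally-negative-type embedding of $L^p$ into a Hilbert space, thereby reducing the whole statement to the Hilbert case $p=2$, which is exactly the content of geometric property (T) for $X$. Concretely, I would fix an isometric linear representation $\pi:\prod^b_{n\in\IN}\Ga_n\to O(\fB)$ with $\fB\subseteq L^p(\mu)$ and a controlled $1$-cocycle $b\in Z^1_{con}(\pi)$, and recall from the characterization preceding Theorem \ref{thm: FB to TB} that it suffices to prove the associated affine isometric action $\alpha$ has a bounded orbit, equivalently $b\in B^1(\pi)$. Since $\fB$ carries the restriction of the $L^p$-metric and $p\le 2$, the kernel $(\xi,\eta)\mapsto\|\xi-\eta\|_{L^p}^p$ is conditionally negative definite on $L^p(\mu)$, hence on $\fB$.

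By Schoenberg's theorem there are then a Hilbert space $\H$ and a map $\Psi:\fB\to\H$ with $\|\Psi(\xi)-\Psi(\eta)\|_\H^2=\|\xi-\eta\|_{L^p}^p$, and every linear $L^p$-isometry of $\fB$ (preserving $\|\xi-\eta\|_{L^p}$, hence preserving the Hilbert-realizable metric) extends through $\Psi$ to an affine isometry of the closed affine span of $\Psi(\fB)$ in $\H$. Consequently $\alpha$ induces an affine isometric action $\tilde\alpha$ of $\prod^b_{n\in\IN}\Ga_n$ on $\H$, with some linear part $\tilde\pi$ and cocycle $\tilde b(g)=\tilde\alpha(g)\Psi(0)-\Psi(0)$ satisfying $\|\tilde b(g)\|_\H^2=\|b(g)\|_{L^p}^p$ for every $g$. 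In particular $\sup_{g\in S}\|\tilde b(g)\|_\H=\big(\sup_{g\in S}\|b(g)\|_{L^p}\big)^{p/2}<\infty$, so $\tilde b$ is again controlled and $\tilde\alpha$ is a controlled isometric action on $\H$.

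Now I would invoke the Hilbert case: since $X$ has geometric property (T), it has coarse property $(F_\H)$, so $\tilde\alpha$ has a fixed point and hence a bounded orbit. Using $\|b(g)\|_{L^p}=\|\tilde b(g)\|_\H^{2/p}$, the $\alpha$-orbit of $0$ is then bounded in $\fB$; and because $\fB\subseteq L^p(\mu)$ with $p>1$ is uniformly convex (this is exactly where $p>1$ is used), the Delorme--Guichardet-type lemma upgrades a bounded orbit to a fixed point for $\alpha$. As $\pi$ and $b$ were arbitrary, $Z^1_{con}(\pi)=B^1(\pi)$ for every such $\pi$, which is precisely coarse property $(F_\fB)$.

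The main obstacle is the Hilbert base case, geometric property (T) $\Rightarrow$ coarse $(F_\H)$, i.e.\ the forward Delorme direction (due to Tessera--Winkel). If one wants this self-contained rather than cited, the route is contrapositive: from a controlled cocycle with unbounded orbit one builds a twisted translation representation of $\IC_u[X]$ on $\ell^2(X)\otimes\H$ and exhibits, via renormalized cocycle data, $c$-almost invariant vectors modulo the invariant subspace for every $c>0$, contradicting the spectral gap supplied by Lemma \ref{lem: spectral gap and Markov kernel} and Lemma \ref{lem: complemented representation}. The delicate point is that these almost invariant vectors must detect the \emph{global} (large-scale) growth of the cocycle without being absorbed into $(\ell^2(X)\otimes\H)^{\Pi}$; this is exactly where the strength of the infinite-dimensional notion of geometric (T) --- equivalently, property (T) of the limit groups $\Ga^\infty_\omega$ --- is genuinely needed, and an ultraproduct argument along the coarse structure at infinity appears unavoidable, since a controlled action of $\prod^b_{n\in\IN}\Ga_n$ need not factor through any limit quotient.
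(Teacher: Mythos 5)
Your reduction is sound and the overall argument is correct, but it takes a genuinely different route from the paper, and the difference sits exactly where you flag the difficulty. You use Schoenberg's theorem in metric-embedding form ($\|\Psi(\xi)-\Psi(\eta)\|_{\H}^{2}=\|\xi-\eta\|^{p}$) to transport the affine action itself to a Hilbert space, then outsource the base case, geometric (T) $\Rightarrow$ coarse $(F_{\H})$, to Tessera--Winkel \cite{TW2022}, and finally pull the bounded orbit back through the identity $\|\tilde b(g)\|_{\H}^{2}=\|b(g)\|^{p}$ and apply the uniform-convexity lemma; each of these steps checks out. The paper never uses the Hilbert case as input: it uses Schoenberg in kernel form, building for every $s>0$ the Hilbert space $\H_{s}$ generated by the positive-definite kernel $e^{-s\|\xi-\eta\|^{p}}$, so that $\alpha$ induces a family of \emph{linear} unitary representations $\rho_{s}$ of $\prod^{b}_{n\in\IN}\Ga_{n}$, and then---this is precisely the step your fallback sketch calls delicate but does not carry out---promotes each $\rho_{s}$ to a representation $\pi_{s}$ of all of $\IC_{u}[X]$ on $L^{2}(\mu_{\phi})\ox\H_{s}$, where $\mu_{\phi}$ is a $\prod^{b}_{n\in\IN}\Ga_{n}$-invariant measure on $\beta X$ arising as a cluster point of the normalized counting states $\phi_{n}$, the group acting by $\lambda\ox\rho_{s}$ and $\ell^{\infty}(X)$ by multiplication on the first factor. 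Controlledness of $\alpha$ makes the canonical vectors of $\H_{s}$ almost invariant as $s\to 0$, so the spectral gap supplied by geometric (T) forces $\rho_{s}$ to have an invariant vector for small $s$, which bounds the $\alpha$-orbit; uniform convexity then yields the fixed point, as in your last step. The comparison: your route is shorter and cleanly modular, but the theorem then rests on an external citation whose content is exactly the $p=2$ instance of the statement being proved; the paper's route costs the $L^{2}(\mu_{\phi})\ox\H_{s}$ construction but is self-contained, treats $p=2$ and $p\in(1,2]$ uniformly, and shows that the ``global growth'' issue you raise is resolved by an invariant measure at infinity rather than by limit groups or ultraproducts of actions. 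If the citation to \cite{TW2022} were not admissible, your contrapositive sketch (twisted translation representations plus renormalized cocycle data) would remain a genuine gap, and the missing construction is precisely the paper's $\pi_{s}$.
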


\begin{proof}
Let $X=\bigsqcup_{n\in\IN}\Ga_n$. Assume that $\alpha: \prod_{n\in\IN}^b\Ga_n\to\text{Isom}(\fB)$ to be a controlled isometric action. Define
$$\IH_s=\left\{\sum^m_ia_i\xi_i\,\Big|\, \xi_i\in\fB, a_i\in\IC\right\}.$$
For any $s>0$, we define a inner product on $\IH$ by
\begin{equation}\label{eq: inner product}\left\langle \sum^m_ia_i\xi_i, \sum^n_ja_j\eta_j\right\rangle_s=\sum_{i,j}a_i\overline{b_j}e^{-s\|\xi_i-\eta_j\|^p}.\end{equation}
By \cite{BDK1965} (or \cite[Theorem 5.1]{WW1975}), one has that $(\xi,\eta)\mapsto\|\xi-\eta\|^{p}$ is conditionally of negative type when $p\in(1,2]$. By Schoenberg's Theorem, see \cite{Schoenberg1938}, the inner product defined as in \eqref{eq: inner product} is positive definite, thus making $\IH$ a pre-Hilbert space. Denote by $\H_s$ the completion of $\IH$ under the inner product $\langle,\rangle_s$. We then define
$$\iota_s:\fB\to\H_s\quad\text{by}\quad \xi\mapsto \xi\in\IH\subseteq\H_s.$$
Notice that the image of $\iota_s$ spans $\IH$. Moreover, we define $\rho_s:\prod_{n\in\IN}^b\Ga_n\to U(\H_s)$ by
$$\rho_s(\gamma)\iota_s(\xi)=\iota_s(\alpha(\gamma)(\xi)).$$
Since the linear span of the image of $\iota_s$ is dense in $\H_s$, the unitary representation $\rho_s$ is well-defined. 

Let $\phi_n:\ell^{\infty}(X)\to\IC$ be the state we defined in Section \ref{subsec: Banach T for limit groups}. Choose $\phi$ to be any cluster point of the sequence $\{\phi_n\}_{n\in\IN}$, which defines a $\prod_{n\in\IN}^b\Ga_n$-invariant positive functional on $\ell^{\infty}(X)$. By Riesz representation theorem, we denote by $\mu_{\phi}$ the corresponding $\prod_{n\in\IN}^b\Ga_n$-invariant measure $\mu_{\phi}$ on ${\beta}X$. We denote $\lambda$ the left-regular representation of $\prod_{n\in\IN}^b\Ga_n$ on $L^2(\mu_\phi)$ and $\varphi$ the multiplication representation of $\ell^\infty(X)$ on $L^2(\mu_\phi)$. Consider the representation $\pi_s:\IC_u[X]\to\L(L^2(\mu_\phi)\ox\H_s)$, defined by
$$\pi_s(\gamma)=\lambda\ox\rho_s(\gamma)\quad\text{for any }\gamma\in\prod_{n\in\IN}^b\Ga_n,\quad\text{and}\quad \pi_s(f)=\varphi\ox 1(f)\quad\text{for any }f\in\ell^\infty(X).$$
It is easy to check that this representation is well-defined. 

We now claim that $\rho_s$ must have an invariant vector in $\H_s$ for sufficiently large $s$. Fix $\xi\in\fB$. Since the action $\alpha$ is controlled, thus $S\xi$ is bounded in $\fB$, where $S\subseteq\prod^b_{n\in\IN}\Ga_n$ is the generating set. We denote
$$R_0=\sup_{\gamma\in S}\|\gamma\xi-\xi\|<\infty.$$
Thus we have that
$$\inf_{\gamma\in S}|\langle\pi_s(\gamma)\iota_s(\xi),\iota_s(\xi)\rangle_s|=e^{-s\|\alpha(\gamma)\xi-\xi\|^p}\geq e^{-sR_0^p}\to 1\quad\text{as }s\to 0.$$
For any $\varepsilon>0$, one can find an $\varepsilon$-invariant vector for $\rho_s$ when $s$ is sufficiently small. Assume for a contradiction that $\rho_s$ has no non-zero invariant vectors for any $s>0$. Then $\H_s$ is isomorphic to the quotient space $(1\ox\H_s+L^2(\mu_\phi,\H_s)^{\pi_s})/L^2(\mu_\phi,\H_s)^{\pi_s}$. By the fact that $X$ has geometric property (T), the representation $\pi_s$ should have a spectral gap. It leads to a contradiction that $\pi_s$ has an $\varepsilon$-invariant vector when $s$ is sufficiently small.

Say $v$ is the invariant vector of $\rho_s$. Fix $\varepsilon_0>0$, then there exists $\xi_0\in\fB$ such that $\|\iota_s(\xi_0)-v\|\leq\varepsilon_0$. Then for any $\gamma\in\prod_{n\in\IN}^b\Ga_n$, we conclude that
$$\|\rho_s(\gamma)\iota_s(\xi_0)-\iota_s(\xi_0)\|\leq\|\rho_s(\gamma)\iota_s(\xi_0)-\rho_s(\gamma)v\|+\|v-\iota_s(\xi_0)\|\leq 2\varepsilon_0.$$
By definition, we have that
$$\langle\rho_s(\gamma)\iota_s(\xi_0),\iota_s(\xi_0)\rangle_s=e^{-s\|\alpha(\gamma)\xi_0-\xi_0\|^p}\geq 1-2\varepsilon^2_0.$$
Thus, the orbit of $\xi_0$ must be bounded. This means that $X$ has coarse property ($T_{\fB}$).
\end{proof}

We should mention that the proof of the above theorem is a modified Delorme–Guichardet argument for $(T)\Rightarrow (F_\H)$. As a direct corollary, we have the following result which compares the two closely related concepts of Geometric property ($T$) and Geometric property ($T_{L^p(\mu)}$). For $p\geq 2$, Theorem \ref{thm: TH to FLp} may not hold. For example, G.~Yu proved in \cite{Yu2005} that hyperbolic groups admit a proper group action on $\ell^p$ for sufficiently large $p$. Fix a residually finite, hyperbolic group $\Ga$ with property (T), then the box space of $\Ga$ does not have coarse property $(F_{\ell^p})$ using the same construction in Example \ref{exa: counter example of TB to FB} as $\Ga$ is the limit space of this box space. However, by Theorem \ref{thm: box iff group in lp}, such box space has geometric property ($T$). 

\begin{Thm}\label{thm: lp vs l2}
Let $X$ be a separable disjoint union of a sequence of Cayley graphs. If $X$ has geometric property (T), then for any $p\in (1,\infty)\backslash\{2\}$, $X$ also has geometric property ($T_{L^p(\mu)}$) for any $L^p$-space $L^p(\mu)$. Moreover, $X$ has uniform geometric property $(T_{\sL^p})$.
\end{Thm}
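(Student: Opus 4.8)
The plan is to split the range of exponents at $p=2$, treating $1<p<2$ directly and reducing $p>2$ to that range by duality. Throughout, write $X=\bigsqcup_{n\in\IN}\Ga_n$ for the separated disjoint union of finite Cayley graphs with uniformly finite generators (the hypotheses under which the preceding theorems apply), and assume $X$ has geometric property (T).

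First I would dispose of the range $1<p\leq 2$. Fix such a $p$ and an arbitrary $L^p$-space $L^p(\mu)$. Since $L^p(\mu)$ is a (closed) subspace of itself and $p\in(1,2]$, Theorem \ref{thm: TH to FLp} gives that $X$ has coarse property $(F_{L^p(\mu)})$. As $L^p(\mu)$ is uniformly convex for every $p\in(1,\infty)$, Theorem \ref{thm: FB to TB} converts this fixed point property into geometric property $(T_{L^p(\mu)})$. Since $L^p(\mu)$ was arbitrary, $X$ has geometric property $(T_{L^p(\mu)})$ for every $L^p$-space in this range.

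Next, for $2<p<\infty$ I would pass to the conjugate exponent $q=p/(p-1)\in(1,2)$. By the previous paragraph, $X$ has geometric property $(T_{L^q(\mu)})$ for every $L^q$-space $L^q(\mu)$. Since $1<q<\infty$, the dual of $L^q(\mu)$ is isometrically $L^p(\mu)$ for an arbitrary measure space (here $q>1$ removes the need for any $\sigma$-finiteness). Because $L^q(\mu)$ is uniformly convex and $X$ is a monogenic separated disjoint union of finite metric spaces, Theorem \ref{thm: TB to TB*} applies and yields that $X$ has geometric property $(T_{(L^q(\mu))^*})=(T_{L^p(\mu)})$. Thus $(T_{L^p(\mu)})$ holds for every $L^p$-space with $p>2$ as well, establishing the first assertion for all $p\in(1,\infty)\setminus\{2\}$.

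Finally, having obtained $(T_{L^p(\mu)})$ for every $L^p$-space at a fixed exponent $p$, I would invoke Proposition \ref{pro: uniform Tlp and Tlp}, which asserts that geometric property $(T_{L^p(\mu)})$ for all $L^p$-spaces is equivalent to uniform geometric property $(T_{\sL^p})$; this delivers the ``moreover'' clause. The only delicate point is the duality step: one must check that the annihilator/dual identifications furnished by Theorem \ref{thm: TB to TB*} genuinely match $(L^q(\mu))^*$ with $L^p(\mu)$, which is precisely where uniform convexity (hence reflexivity) of $L^q(\mu)$ together with $q>1$ enters. By contrast, the $p\leq 2$ half is essentially a concatenation of Theorems \ref{thm: TH to FLp} and \ref{thm: FB to TB} and should present no difficulty.
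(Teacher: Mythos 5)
Your proposal is correct and follows essentially the same route as the paper's own proof: geometric property (T) $\Rightarrow$ coarse property $(F_{L^p(\mu)})$ for $p\in(1,2]$ via Theorem \ref{thm: TH to FLp}, then $\Rightarrow$ geometric property $(T_{L^p(\mu)})$ via Theorem \ref{thm: FB to TB}, duality via Theorem \ref{thm: TB to TB*} for $p\in(2,\infty)$, and finally Proposition \ref{pro: uniform Tlp and Tlp} for uniformity. Your added care about the isometric identification $(L^q(\mu))^*\cong L^p(\mu)$ for arbitrary measures is a reasonable elaboration of the duality step the paper leaves implicit.
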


\begin{proof}
Since $X$ has geometric property (T), by Theorem \ref{thm: TH to FLp} and Theorem \ref{thm: FB to TB}, we conclude that $X$ has geometric property ($T_{L^p(\mu)}$) for any $p\in(1,2]$. By Theorem \ref{thm: TB to TB*}, we conclude that $X$ also has geometric property ($T_{L^p(\mu)}$) for any $p\in[2,\infty)$. By Proposition \ref{pro: uniform Tlp and Tlp}, we conclude that $X$ has uniform geometric property ($T_{\sL^p}$).
\end{proof}

\subsection{Geometric property ($T_{\sL^p}$) vs. Geometric property (T)}

In this subsection, we shall provide another approach to Theorem \ref{thm: lp vs l2} without using the fixed point property.

Before we prove this theorem, we shall first recall some basic facts on $L^p$-spaces. For any $p,q\in[1,\infty)$, one can define the \emph{Mazur map} $M_{p,q}:S(L^p(\mu))\to S(L^q(\mu))$ by
$$M_{p,q}(f)=sign(f)\cdot|f|^{p/q}.$$
It is direct to check that $M_{p,q}$ and $M_{q,p}$ are inverse to each other. For any $p\in (1,\infty)\backslash\{2\}$, let $V$ be an isometric linear map on $L^p(\mu)$. Then the conjugation $M_{p,2}\circ V\circ M_{2,p}$ can extends to an isometric linear map on $L^2(\mu)$. It is not hard to see this linear map is isometric. To see it is linear, one should need the celebrated theorem proved by Banach for $([0,1],\lambda)$ and generalized by Lamperti for $\sigma$-finite measure space, see \cite[Theorem 3.1]{Lamperti}.

\begin{Thm}[Banach/Lamperti]
Let $p\in (1,\infty)\backslash\{2\}$, $V$ an isometric linear map on $L^p(Y,\mu)$. Then
$$(Vf)(x)=f(T_V(x))\cdot h(x)\cdot \left(\frac{dT_*\mu}{d\mu}(x) \right),$$
where $T_V: Y\to Y$ is a measurable, regular set isomorphism, $h$ is a measurable function on $Y$ such that $|h|=1$ almost everywhere.
\end{Thm}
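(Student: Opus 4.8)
The plan is to follow Lamperti's classical strategy, whose engine is the observation that for $p\neq 2$ a linear isometry of $L^p$ must be \emph{disjointness preserving}, and that any disjointness-preserving linear isometry is forced to be a weighted composition operator. Thus the argument splits into two halves: extracting disjointness preservation from the isometry hypothesis, and then reconstructing the underlying point transformation $T_V$, the unimodular multiplier $h$, and the Radon--Nikodym factor from the way $V$ acts on indicator functions.

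First I would isolate the scalar inequality that detects disjointness. For $p\in(1,\infty)\setminus\{2\}$ and scalars $a,b$, the quantity $|a+b|^p+|a-b|^p-2(|a|^p+|b|^p)$ has a strict sign, positive for $p>2$ and negative for $1<p<2$, unless $ab=0$, in which case it vanishes; this is a strict-convexity computation for $t\mapsto|t|^p$, the complex case being reduced to the real one by an averaging argument. Integrating the corresponding pointwise functional, one obtains that for $f,g\in L^p(\mu)$ the equality
$$\|f+g\|_p^p+\|f-g\|_p^p=2\bigl(\|f\|_p^p+\|g\|_p^p\bigr)$$
holds if and only if $fg=0$ almost everywhere. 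Since $V$ is a linear isometry it preserves each term on the left (using $V(f\pm g)=Vf\pm Vg$), so $V$ carries disjointly supported functions to disjointly supported functions.

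Next I would build the point transformation from this disjointness property. For measurable $A$ of finite measure, set $\Phi(A)=\supp(V\chi_A)$; disjointness preservation together with linearity (via $\chi_{A\sqcup B}=\chi_A+\chi_B$ for disjoint $A,B$) shows that $\Phi$ is an additive Boolean homomorphism of the measure algebra, and $L^p$-continuity upgrades this to countable additivity. The standard representation theorem for such homomorphisms produces a regular set isomorphism $T_V\colon Y\to Y$ with $\Phi(A)=T_V^{-1}(A)$ up to null sets, so that on supports $V\chi_A=w\cdot\chi_{T_V^{-1}(A)}$ for a measurable multiplier $w$. Writing $w=h\,|w|$ with $|h|=1$ almost everywhere, the isometry condition $\|V\chi_A\|_p^p=\mu(A)$, applied to every $A$, identifies $|w|^p$ with the Radon--Nikodym derivative $dT_{V*}\mu/d\mu$ by a differentiation argument. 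Checking the claimed formula first on simple functions (by linearity and the set map) and then extending by density, using that both sides are $L^p$-isometric, completes the proof.

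The main obstacle is the third step: converting the abstract Boolean homomorphism $\Phi$ into a genuine \emph{regular} point map $T_V$ while simultaneously extracting the Radon--Nikodym factor. This is precisely the technical heart of Lamperti's work, and for general $\sigma$-finite $\mu$ it requires care, either working entirely within the measure algebra and invoking a representation theorem, or passing to a sufficiently regular model of the measure space. The scalar disjointness lemma of the first step, though slightly delicate for complex scalars, is entirely elementary. As the statement is classical, I would cite \cite{Lamperti} for the set-isomorphism construction rather than reproduce it in full.
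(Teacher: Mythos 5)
Your proposal is correct, and it coincides with the paper's treatment in the only sense available: the paper does not prove this statement at all (it is quoted as a classical theorem with the citation \cite{Lamperti}), and what you have written is a faithful outline of Lamperti's own argument — the strict $p\neq 2$ version of the parallelogram-type inequality forcing disjointness preservation, then the Boolean $\sigma$-homomorphism $A\mapsto\supp(V\chi_A)$, its representation as a regular set isomorphism, and the extraction of the multiplier and Radon--Nikodym factor — with the same technical core deferred to the same citation. One remark: your derivation correctly produces $|w|^p=\frac{dT_{V*}\mu}{d\mu}$, i.e.\ the weight is the $1/p$-th power of the Radon--Nikodym derivative, whereas the paper's displayed formula omits this exponent; that is a typo in the paper's statement, not a defect of your argument.
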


By Banach-Lamperti Theorem, it is direct to check that $M_{p,2}\circ V\circ M_{2,p}$ is linear. The essential reason is that $V$ is essentially determined by a measurable transformation on the base space $Y$, while the Mazur map only changes the value of a function. Now, we are ready to prove Theorem \ref{thm: lp vs l2}. Actually, a similar result also holds for noncommutative $L^p$-spaces, see \cite{Olivier2012} for example.

\begin{proof}[Proof of Theorem \ref{thm: lp vs l2}]
Write $X=\bigsqcup_{n\in\IN}\Ga_n$. Assume for a contradiction that $X$ does not have geometric property ($T_{L^p(\mu)}$), i.e., there exists a representation $\pi:\IC_u[X]\to\L(L^p(\mu))$ such that one can choose a sequence of vectors $\{\xi_n\}_{n\in\IN}$ in $L^p(\mu)$ such that $d(\xi, L^p(\mu)^\pi)=1$ and
$$\lim_{n\to\infty}\sup_{V\in\prod_{n\in\IN}S_n\subseteq\IC_u[X]}\|\pi(V)\xi_n-\xi_n\|=0.$$
To clarify, we shall write the measure space as $(Y,\mu)$.

Restrict this representation $\pi$ on $\ell^{\infty}(X)$, it gives a contractive representation of $\ell^{\infty}(X)$ on $L^p(\mu)$. By \cite[Theorem 4.5]{PV2020}, we conclude that $\pi(\ell^{\infty}(X))$ lies in $L^\infty(Y,\mu)$. Then it induces a representation $\wt\pi:\IC_u[X]\to L^2(\mu)$ as follows. Since every element $T\in\IC_u[X]$ can be written as a finite sum of $f_{\gamma}\cdot\gamma$, where $\gamma=\bigoplus_{n\in\IN}\gamma_n\in\prod^b_{n\in\IN}\Ga_n$ and $f_\gamma\in\ell^{\infty}(X)$. Thus it suffices to clarify how $\ell^{\infty}(X)$ and $\prod^b_{n\in\IN}\Ga_n$ act on $L^2(\mu)$. For $\ell^{\infty}(X)$, we define $\wt\pi(f)=\pi(f)\in L^{\infty}(Y,\mu)\subseteq\B(L^2(\mu))$ for any $f\in\ell^\infty(X)$. For any $\gamma\in\prod_{n\in\IN}^b\Ga_n$, we define
$$\wt\pi(\gamma)=M_{p,2}\circ\pi(\gamma)\circ M_{2,p},$$
where $M_{2,p}: S(L^2(\mu))\to S(L^p(\mu))$ and $M_{p,2}: S(L^p(\mu))\to S(L^2(\mu))$ are the Mazur maps. From the discussion above, we have that $\wt\pi$ forms a unitary representation of $\prod^b_{n\in\IN}\Ga_n$ on $L^2(\mu)$. Thus $\wt\pi: \IC_u[X]\to\L(L^p(\mu))$ provides a $*$-representation on the Hilbert space $L^2(\mu)$.

Notice that $\xi\in L^p(\mu)^\pi$ if and only if $M_{p,2}(\xi)$ is in $L^2(\mu)^{\wt\pi}$ by definition. Denote $\eta_n=M_{p,2}(\xi_n)$ for each $n\in\IN$. Since the Mazur map is uniformly continuous, see \cite[Theorem 9.1]{BL2000}, we conclude that $d(\eta_n,L^2(\mu)^{\wt\pi})>\delta$ for some $\delta>0$. Moreover, one can check that for any $V=\bigoplus s^{(n)}\in\prod_{n\in\IN}S_n\subseteq\IC_u[X]$ where each $s^{(n)}\in S_n$ is a generator, 
\[\|\wt\pi(V)\eta_n-\eta_n\|=\|M_{p,2}(\pi(V)\xi_n)-M_{p,2}(\xi_n)\|.\]
By the uniform continuity of the Mazur map and the fact that $\|\pi(V)\xi_n-\xi_n\|$ is sufficiently small for sufficiently large $n\in\IN$, we conclude that
$$\lim_{n\to\infty}\sup_{V\in\prod_{n\in\IN}S_n\subseteq\IC_u[X]}\|\wt\pi(V)\eta_n-\eta_n\|=0.$$
This leads to a contradiction that $X$ has geometric property (T). The last claim holds as a direct corollary of Proposition \ref{pro: uniform Tlp and Tlp}.
\end{proof}

\section{Coarse invariance}\label{sec: Coarse invariance}

In this section, we shall prove that the geometric property ($T_{\sB}$) is a coarse invariant. The main result of this section is the following.

\begin{Thm}\label{thm: coarse invariance}
Let $\sB$ be a uniformly convex family of Banach spaces which is close under taking subspaces and finite direct sums, and let $X, Y$ be spaces. If $X$ is coarsely equivalent to $Y$, then $X$ has geometric property ($T_\sB$) if and only if $Y$ has geometric property ($T_\sB$).
\end{Thm}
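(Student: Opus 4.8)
The plan is to transfer Banach representations along a coarse equivalence. By the symmetry of coarse equivalence it suffices to prove one implication, say that if $Y$ has geometric property $(T_\sB)$ then so does $X$; equivalently, in contrapositive form, that a representation $\pi:\IC_u[X]\to\L(\fB)$ with $\fB\in\sB$ and no spectral gap forces the existence of a representation of $\IC_u[Y]$ on some $\fB'\in\sB$ with no spectral gap. The naive device --- pulling back matrix entries of operators along the coarse equivalence $f:X\to Y$ --- fails to be multiplicative precisely because $f$ is not a bijection, and circumventing this is the crux of the argument.

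First I would upgrade the coarse equivalence to an algebraic one. Fix a coarse equivalence $f:X\to Y$ with coarse inverse $g:Y\to X$. Bounded geometry bounds the cardinalities of the relevant $f$- and $g$-fibres at every fixed scale, and a standard argument then produces, after a \emph{finite} amplification, a bijective coarse equivalence between coarsely dense nets in $X\times\{1,\dots,N\}$ and $Y\times\{1,\dots,N\}$. Such a bijection is multiplicative on matrix entries, so it yields a unital, $\ell^1$-contractive (indeed isometric) embedding $\Psi:\IC_u[Y]\to M_N(\IC_u[X])\cong\IC_u[X]\ox M_N(\IC)$ realizing $\IC_u[Y]$ as a corner $p\big(M_N(\IC_u[X])\big)p$ for an idempotent $p\in\ell^\infty(X)\ox M_N(\IC)$. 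Crucially, $\Psi$ sends each partial translation of $Y$ with support in $E_0^Y$ to a finite combination of partial translations of $X$ whose supports lie in $E_0^{\circ k}$ for a fixed $k$ determined only by the control data of $f,g$. This step is exactly where the closure of $\sB$ under finite direct sums (for the amplification $\fB\mapsto\fB^{\oplus N}$) and under subspaces (for passage to the corner) is consumed.

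Next I would build the transferred representation and compare spectral gaps. Given $\pi:\IC_u[X]\to\L(\fB)$ I amplify to $\pi\ox\mathrm{id}_N:M_N(\IC_u[X])\to\L(\fB^{\oplus N})$, with $\fB^{\oplus N}\in\sB$, and set $\sigma=(\pi\ox\mathrm{id}_N)\circ\Psi:\IC_u[Y]\to\L(\fB^{\oplus N})$, a contractive representation. It remains to show that $\sigma$ has no spectral gap whenever $\pi$ has none. By Lemma \ref{lem: complemented representation} the failure of the spectral gap for $\pi$ furnishes unit vectors $\xi_m\in\fB_\pi$ that are almost fixed by every full partial translation with support in $E_0^X$; since almost-invariance at scale $E_0$ propagates, with controlled loss, to any fixed scale $E_0^{\circ k}$ (a word of length $k$ in the generators), these vectors are also almost fixed by the finitely many $X$-partial translations appearing in the image of $\Psi$. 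Feeding the $\xi_m$ into the corner $p\,\fB^{\oplus N}$ then produces almost-invariant vectors for $\sigma$. The one genuinely delicate point is to certify that these images remain \emph{bounded away} from the invariant subspace $(\fB^{\oplus N})^\sigma$, i.e.\ that the correspondence of invariant and annihilator subspaces induced by $\Psi$ is bounded below; here I would invoke the complemented-representation decomposition $\fB\cong\fB^\pi\oplus\fB_\pi$ of Lemma \ref{lem: complemented representation} together with the equi-uniform continuity of the duality maps across the uniformly convex family $\sB$, exactly as in its proof, to keep all quotient-norm comparisons uniform in $m$ and in the representation.

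I expect the main obstacle to be the algebraic step, namely manufacturing the honest homomorphism $\Psi$ from a mere coarse equivalence: this is the Banach-space analogue of the Morita-equivalence argument available in the Hilbert setting, and it is the reason the hypotheses on $\sB$ (finite direct sums and subspaces, rather than arbitrary ones) are imposed. The secondary difficulty is quantitative --- tracking the loss in the spectral-gap constant through amplification, corner compression, and the scale change $E_0\rightsquigarrow E_0^{\circ k}$ --- but uniform convexity of $\sB$ guarantees these losses depend only on $N$, $k$, and $\delta_\sB$, and not on the individual representation, which is what makes the equivalence of the (uniform) properties go through in both directions.
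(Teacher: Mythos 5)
Your proposal is correct and takes essentially the same route as the paper: the paper likewise factors the coarse equivalence as $X\xrightarrow{f} Y\xrightarrow{g} X\times N$, realizes $\IC_u[Y]$ inside $M_N(\IC_u[X])$, transfers representations by amplification to $\fB^N$ (consuming closure under finite direct sums) and by compression with $\chi_{g(Y)}$ (consuming closure under subspaces), and moves almost-invariant vectors through this chain. The delicate point you flag — keeping the transferred vectors bounded away from the invariant subspace — is exactly what the paper settles with Lemma \ref{lem: from X to Y}, Lemma \ref{lem: from Y to X} and Corollary \ref{cor: from Y to X} (invariant vectors of the amplification are diagonal invariant vectors; invariant vectors of the compression are compressions of invariant vectors), combined with Lemma \ref{lem: complemented representation}, which are the same ingredients you invoke.
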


Let $\fB_1,\cdots,\fB_n$ be Banach spaces, the direct sum of $\fB_1,\cdots,\fB_n$, denoted by $\fB_1\oplus\cdots\oplus\fB_n$, is equipped with the canonical norm
$$\|(\xi_1,\cdots,\xi_n)\|^2=\sum_{i=1}^n\|\xi_i\|^2_{\fB_i}.$$
We shall denote $\fB^n=\fB\oplus\cdots\oplus\fB$ the $n$-direct sum of $\fB$.

Let $f: X\to Y$ be an injective coarse equivalence. By \cite[Lemma 4.1]{WY2014}, one can find a net $X'\subseteq X$ and a net $Y'\subseteq Y$ such that $f|_{X'}$ is a bijection between $X'$ and $Y'$. A bijective coarse equivalence $f|_{X'}$ will induce an isomorphism between $\IC_u[X']$ and $\IC_u[Y']$. Thus, it suffices to prove that the case for $f$ is an injective coarse equivalence. For any $N\in\IN$, we shall treat $N$ as the set $\{1,\cdots, N\}$ with a bit of abuse of notation. Since $X$ and $Y$ both have bounded geometry, there exists $N\in\IN$ such that the first section inclusion $\iota: X\to X\times N$ given by $x\mapsto (x,1)$ factors through $f: X\to Y$ by injective coarse equivalence, i.e., there exists an injective coarse equivalence $g: Y\to X\times N$ such that $\iota=g\circ f$. These maps induce injective homomorphisms on the level Roe algebras as follow
$$\iota_*: \IC_u[X]\xrightarrow{f_*}\IC_u[Y]\xrightarrow{g_*}\IC_u[X\times N]\cong M_N(\IC_u[X]).$$
For any representation $\pi_X: \IC_u[X]\to\L(\fB_X)$, it induces a canonical representation $\pi^N_X: \IC_u[X\times N]\to\L(\fB^N_X)$ by matrix action. Since $Y$ is a subspace of $X\times N$, we shall denote $\fB_Y$ to be the range of $\pi^N_X(\chi_Y)$. Then $\pi_X^N$ restricts to a representation of $Y$
$$\pi_Y:\IC_u[Y]\to\L(\fB_Y),$$
and $\pi_X=\pi_Y\circ f_*$.

\begin{Lem}\label{lem: from X to Y}
With notation as above, $\fB^{\pi_X}_X=\pi_Y(\chi_X)(\fB_Y^{\pi_Y})$. As a result, the map $\chi_X: \fB_Y\to\fB_X$ descends to an operator from $\fB_X/\fB^{\pi_X}_X$ onto $\fB_Y/\fB_Y^{\pi_Y}$.
\end{Lem}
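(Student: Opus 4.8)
The plan is to prove the set equality $\fB_X^{\pi_X}=\pi_Y(\chi_X)(\fB_Y^{\pi_Y})$ by two opposite inclusions, and then read off the descent to the quotients. Throughout I would use the structural picture built into the construction: writing $\chi_X=\chi_{X\times\{1\}}$ and recalling that $X\times\{1\}=\iota(X)\subseteq g(Y)$, one has $\chi_X\leq\chi_Y$ in $\ell^{\infty}(X\times N)$, so $\fB_X=\pi_X^N(\chi_X)\fB_X^N$ is a subspace of $\fB_Y=\pi_X^N(\chi_Y)\fB_X^N$ and $\pi_Y(\chi_X)$ is the idempotent on $\fB_Y$ with range $\fB_X$. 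Three further ingredients are: the relation $\pi_X=\pi_Y\circ f_*$ read on $\fB_X$; the fact that $f_*$ sends a partial translation $V$ of $X$ to a partial translation of $Y$ with $\Phi(f_*(V))=f_*(\Phi(V))$, where under $g$ the function $\chi_X$ corresponds to $\chi_{f(X)}\in\ell^{\infty}(Y)$; and Lemma \ref{lem: invariant vector in group language}, which allows invariance to be tested only against full partial translations.

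For the inclusion $\pi_Y(\chi_X)(\fB_Y^{\pi_Y})\subseteq\fB_X^{\pi_X}$, take $\eta\in\fB_Y^{\pi_Y}$ and set $\zeta=\pi_Y(\chi_X)\eta\in\fB_X$. For any full partial translation $A$ of $X$, I would compute, using $\pi_X=\pi_Y\circ f_*$ on $\fB_X$ together with $f_*(A)\chi_X=f_*(A)$ and $\Phi(f_*(A))=f_*(1)=\chi_{f(X)}=\chi_X$,
\[\pi_X(A)\zeta=\pi_Y(f_*(A))\pi_Y(\chi_X)\eta=\pi_Y(f_*(A))\eta=\pi_Y(\Phi(f_*(A)))\eta=\pi_Y(\chi_X)\eta=\zeta,\]
the third equality being the $\pi_Y$-invariance of $\eta$ against the partial translation $f_*(A)$. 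By Lemma \ref{lem: invariant vector in group language} this gives $\zeta\in\fB_X^{\pi_X}$.

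The reverse inclusion $\fB_X^{\pi_X}\subseteq\pi_Y(\chi_X)(\fB_Y^{\pi_Y})$ is where the real work lies, since an $X$-invariant vector must be lifted to a $Y$-invariant one; I would route this through $X\times N$. Given $\xi\in\fB_X^{\pi_X}$, form the constant diagonal $\vec\xi=(\xi,\dots,\xi)\in\fB_X^N$ and first show it is $\pi_X^N$-invariant. By Lemma \ref{lem: invariant vector in group language} it suffices to check $\pi_X^N(B)\vec\xi=\vec\xi$ for every full partial translation $B=(B_{ij})$ of $X\times N$; here each block $B_{ij}\in\IC_u[X]$ is a partial translation of $X$ (each row and column of $B$ carries a single entry), and fullness of $B$ means $\sum_j\Phi(B_{ij})=1$. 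Then
\[(\pi_X^N(B)\vec\xi)_i=\sum_j\pi_X(B_{ij})\xi=\sum_j\pi_X(\Phi(B_{ij}))\xi=\pi_X\Big(\sum_j\Phi(B_{ij})\Big)\xi=\xi,\]
using the invariance of $\xi$ against each $B_{ij}$. Now put $\eta=\pi_X^N(\chi_Y)\vec\xi\in\fB_Y$. Since $\chi_X\chi_Y=\chi_X$ and $\pi_X^N(\chi_X)\vec\xi=(\xi,0,\dots,0)$, one obtains $\pi_Y(\chi_X)\eta=\xi$; and for any partial translation $W$ of $Y$, using $W\chi_Y=W$, $\Phi(W)\chi_Y=\Phi(W)$ and the $\pi_X^N$-invariance of $\vec\xi$,
\[\pi_Y(W)\eta=\pi_X^N(W)\vec\xi=\pi_X^N(\Phi(W))\vec\xi=\pi_Y(\Phi(W))\eta,\]
so $\eta\in\fB_Y^{\pi_Y}$ and $\xi\in\pi_Y(\chi_X)(\fB_Y^{\pi_Y})$.

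Combining the two inclusions yields $\pi_Y(\chi_X)(\fB_Y^{\pi_Y})=\fB_X^{\pi_X}$. Consequently the compression $\chi_X\colon\fB_Y\to\fB_X$ carries $\fB_Y^{\pi_Y}$ onto $\fB_X^{\pi_X}$, hence descends to a well-defined linear map $\fB_Y/\fB_Y^{\pi_Y}\to\fB_X/\fB_X^{\pi_X}$; it is surjective because $\chi_X$ restricts to the identity on $\fB_X\subseteq\fB_Y$ and therefore maps $\fB_Y$ onto $\fB_X$. I expect the main obstacle to be precisely this reverse inclusion: producing the $Y$-invariant lift forces the detour through $X\times N$, and its technical heart is the verification that the constant diagonal $\vec\xi$ is $\pi_X^N$-invariant, which rests on decomposing a full partial translation of $X\times N$ into blocks that are partial translations of $X$ whose ranges sum to the identity.
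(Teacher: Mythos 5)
Your proof is correct and follows essentially the same route as the paper: the detour through $X\times N$ and the matrix amplification $\pi_X^N$, the observation that constant diagonals $\vec\xi=(\xi,\dots,\xi)$ over $\fB_X^{\pi_X}$ are $\pi_X^N$-invariant, and compression first by $\chi_Y$ (landing in $\fB_Y^{\pi_Y}$) and then by $\chi_X$. The only differences are presentational: you spell out the steps the paper dismisses as ``by definition'' (notably the block-decomposition check $\sum_j\Phi(B_{ij})=1$ for a full partial translation of $X\times N$), you omit the cyclic-shift argument for the forward containment of the paper's Claim --- which is indeed not needed for this lemma --- and you correctly read the descent as a surjection $\fB_Y/\fB_Y^{\pi_Y}\to\fB_X/\fB_X^{\pi_X}$, the direction printed in the paper's statement being an evident typo.
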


\begin{proof}
From the definition, it is direct to see that $\pi_Y(\chi_X)(\fB_Y^{\pi_Y})\subseteq \fB_X^{\pi_X}$ since $\pi_X=\pi_Y\circ f_*$. To see $\fB_X^{\pi_X}\subseteq \pi_Y(\chi_X)(\fB_Y^{\pi_Y})$, we still need the following claim.

\noindent{\bf Claim.} $(\fB^N_X)^{\pi_X^N}=\{(\xi,\cdots,\xi)\in\fB^N_X\mid \xi\in\fB^{\pi_X}_X\}$.

\noindent{\emph{Proof of the Claim.}} Let $(\xi_1,\cdots,\xi_N)\in (\fB^N_X)^{\pi_X^N}$. By Lemma \ref{lem: invariant vector in group language}, it suffices to consider full partial translations. Notice that the right-shift operator in $M_N(\IC)$ determines a full partial translation in $M_N(\IC_u[X])$ by viewing the matrix entries as the identity of $\IC_u[X]$. Thus, we have that
$$(\xi_1,\cdots,\xi_n)=(\xi_n,\xi_1,\cdots,\xi_{n-1}).$$
As a result, $\xi_1=\xi_2=\cdots=\xi_n=\xi$. Moreover, the representation of $M_N(\IC_u[X])$ on $\fB^N_X$ is equal to $\pi_X$ when restricts to $\IC_u[X]$. Thus, $\xi\in \fB^{\pi_X}_X$. This proves the left side is contained in the right side. The other side of containment holds directly from the definition.

Notice that $\pi_X^N(\chi_Y)((\fB^N_X)^{\pi^N_X})\subseteq \fB_Y^{\pi_Y}$ by definition. Apply both sides by $\chi_X$, we have that
$$\pi_X^N(\chi_X)((\fB^N_X)^{\pi^N_X})\subseteq \pi_X^N(\chi_X)(\fB_Y^{\pi_Y}),$$
i.e.,
$$\fB^{\pi_X}_X\subseteq \pi_Y(\chi_X)(\fB_Y^{\pi_Y}).$$
This finishes the proof.
\end{proof}

On the other hand, since $f: X \to Y$ is an injective coarse equivalent, we shall identify $X$ as a net of $Y$. Take $\{N_x\}_{x\in X}$ to be a uniformly bounded disjoint cover of $Y$ such that $x\in N_x$ for any $x\in X$. There exists $N\in\IN$ such that $\sup_{x\in X}\#N_x\leq N$. Label each element in $N_x$ by $\{1,\cdots,\#N_x\}$, then $N_x=\{x(1),\cdots,x({\#N_x})\}$ where $x=x_1$. We shall denote $Y^{(n)}=\{x(n)\}_{x\in X}$, then $Y=\bigsqcup_{i=1}^N Y^{(i)}$ and $X= Y^{(1)}$.

For any representation $\pi_Y: \IC_u[Y] \to \L(\fB)$, we shall denote $\fB^{(n)}=\chi_{Y^{(n)}}\fB$. Then $\fB=\bigoplus_{n=1}^N\fB^{(n)}$ and $\pi_Y$ induces a representation $\pi_X: \IC_u[X]\to \L(\fB^{(1)})$. For each $n\in  N$, we shall denote
$$\iota^{(n)}: Y^{(n)}\to X, \quad x(n)\mapsto x(1).$$
Notice that $\iota^{(n)}$ is a partial translation from $Y^{(n)}$ to $X^{(n)}=Im(\iota^{(n)})$. We shall denote $V^{(n)}$ the associated partial translation operator. Inspired by Lemma \ref{lem: from X to Y}, we have the following result.

\begin{Lem}\label{lem: from Y to X}
An vector $\xi=(\xi_1,\cdots,\xi_N)\in \fB$ is $\pi_Y$ invariant if and only if $\xi_1\in(\fB^{(1)})^{\pi_X}$ and $\pi_Y(V^{(n)})\xi_n=\pi_X(\chi_{X^{(n)}})\xi_1$.
\end{Lem}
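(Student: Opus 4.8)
The plan is to prove both implications using the characterization of invariant vectors by full partial translations (Lemma~\ref{lem: invariant vector in group language}) together with the elementary relations $V^{(n)*}V^{(n)}=\chi_{Y^{(n)}}$ and $V^{(n)}V^{(n)*}=\chi_{X^{(n)}}$, which hold because each $V^{(n)}$ is a partial translation with domain $Y^{(n)}$ and range $X^{(n)}\subseteq Y^{(1)}=X$. Throughout I would write $p_n=\pi_Y(\chi_{Y^{(n)}})$ for the projection onto $\fB^{(n)}$, so that $\xi=\sum_n\xi_n$ with $\xi_n=p_n\xi$. Note also that for $n=1$ one has $V^{(1)}=\chi_X$, so the asserted condition is vacuous there.

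For the ``only if'' direction, assume $\xi$ is $\pi_Y$-invariant. First I would test invariance against $V^{(n)}$ itself: since $R_{V^{(n)}}=X^{(n)}$ and $\Phi(V^{(n)})=\chi_{X^{(n)}}$, invariance gives $\pi_Y(V^{(n)})\xi=\pi_Y(\chi_{X^{(n)}})\xi$. Because $V^{(n)}$ kills every slot except $Y^{(n)}$ while $\chi_{X^{(n)}}$ kills every slot except $Y^{(1)}$, the two sides collapse to $\pi_Y(V^{(n)})\xi_n$ and $\pi_X(\chi_{X^{(n)}})\xi_1$ respectively, which is exactly the stated relation. Next, for any full partial translation $A\in\IC_u[X]$, viewed as a partial translation on $Y$ supported in $Y^{(1)}\times Y^{(1)}$ with range $Y^{(1)}$, invariance yields $\pi_Y(A)\xi=\pi_Y(\chi_{Y^{(1)}})\xi=\xi_1$; but $A$ annihilates $\xi_n$ for $n\ne1$ and agrees with $\pi_X(A)$ on $\fB^{(1)}$, so $\pi_X(A)\xi_1=\xi_1$. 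By Lemma~\ref{lem: invariant vector in group language} applied to $\pi_X$, this shows $\xi_1\in(\fB^{(1)})^{\pi_X}$.

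For the ``if'' direction I would first upgrade the hypothesis to the explicit formula $\xi_n=\pi_Y(V^{(n)*})\xi_1$, obtained by applying $\pi_Y(V^{(n)*})$ to $\pi_Y(V^{(n)})\xi_n=\pi_X(\chi_{X^{(n)}})\xi_1$ and using $V^{(n)*}V^{(n)}=\chi_{Y^{(n)}}$ together with $V^{(n)*}\chi_{X^{(n)}}=V^{(n)*}$. To verify invariance of $\xi$ it suffices to check $\pi_Y(W)\xi=\pi_Y(\chi_{R_W})\xi$ for an arbitrary partial translation $W$ on $Y$. Decomposing $W=\sum_{i,j}W_{ji}$ with $W_{ji}=\chi_{Y^{(j)}}W\chi_{Y^{(i)}}$, the transported operator $\widetilde W_{ji}=V^{(j)}W_{ji}V^{(i)*}$ is again a partial translation lying in $\IC_u[X]$, since the cover $\{N_x\}$ is uniformly bounded and conjugation by the $V^{(n)}$ changes propagation only by a uniform constant. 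Because $\xi_1$ is $\pi_X$-invariant, $\pi_X(\widetilde W_{ji})\xi_1=\pi_X(\chi_{R_{\widetilde W_{ji}}})\xi_1$. Using $W_{ji}V^{(i)*}=V^{(j)*}\widetilde W_{ji}$ and $\xi_i=\pi_Y(V^{(i)*})\xi_1$, I would rewrite $\pi_Y(W_{ji})\xi_i=\pi_Y(V^{(j)*})\pi_X(\chi_{R_{\widetilde W_{ji}}})\xi_1$ and sum over $i$ for fixed $j$. The ranges $R_{\widetilde W_{ji}}=V^{(j)}(R_{W_{ji}})$ are disjoint inside $X^{(j)}$ and reassemble, via $V^{(j)*}$, into $\chi_{R_{W,j}}V^{(j)*}$ where $R_{W,j}=R_W\cap Y^{(j)}$; this yields $\sum_i\pi_Y(W_{ji})\xi_i=\pi_Y(\chi_{R_{W,j}})\xi_j$, and summing over $j$ gives $\pi_Y(W)\xi=\pi_Y(\chi_{R_W})\xi$, as desired.

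The main obstacle I anticipate is the bookkeeping in this last step: one must confirm that the pieces $W_{ji}$ have pairwise disjoint ranges within each $Y^{(j)}$ (forced by injectivity of the partial translation $W$), that the transported ranges reassemble correctly under $V^{(j)*}$ into exactly $\chi_{R_{W,j}}V^{(j)*}$, and that every $\widetilde W_{ji}$ genuinely has finite propagation in $X$. Each of these is routine once the uniform boundedness of the net is invoked, but the index manipulation must be carried out carefully so that the slotwise sums telescope precisely to $\pi_Y(\chi_{R_W})\xi$.
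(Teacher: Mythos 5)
Your proof is correct and takes essentially the same route as the paper's: both directions rest on the identities $(V^{(n)})^*V^{(n)}=\chi_{Y^{(n)}}$ and $V^{(n)}(V^{(n)})^*=\chi_{X^{(n)}}$, the derived formula $\xi_n=\pi_Y((V^{(n)})^*)\xi_1$, Lemma~\ref{lem: invariant vector in group language} applied to $\pi_X$, and the transport of the blocks $\chi_{Y^{(j)}}W\chi_{Y^{(i)}}$ into $\IC_u[X]$ by conjugation with the $V^{(n)}$'s together with disjointness of the resulting ranges. The only deviation is cosmetic: in the ``if'' direction the paper first reduces to \emph{full} partial translations on $Y$ via Lemma~\ref{lem: invariant vector in group language}, using fullness to write $V_{mn}=\chi_{Y^{(m)}\cap t_V(Y^{(n)})}V$, whereas you verify the defining invariance condition against arbitrary partial translations $W$ directly, at the cost of slightly heavier range bookkeeping.
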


\begin{proof}
To see the $(\Rightarrow)$ part, for any full partial translation in $V\in \IC_u[X]\subseteq \IC_u[Y]$, we have that
$$\pi_Y(V)\xi=\pi_X(V)\xi_1=\xi_1.$$
This means that $\xi_1\in(\fB^{(1)})^{\pi_X}$. Consider $(V^{(n)})^*$ to be the conjugate of $V^{(n)}$, it is direct to see that
$$(V^{(n)})^*V^{(n)}=\chi_{Y^{(n)}},\quad V^{(n)}(V^{(n)})^*=\chi_{X^{(n)}}.$$
Then we have that
$$\xi_n=\chi_{Y^{(n)}}\xi=\pi_Y((V^{(n)})^*)\xi=\pi_Y((V^{(n)})^*)\xi_1.$$
Apply both sides with $\pi_Y(V^{(n)})$, we conclude that $\pi_Y(V^{(n)})\xi_n=\pi_X(\chi_{X^{(n)}})\xi_1$.

For the $(\Leftarrow)$ part, take $V\in \IC_u[Y]$ to be a full partial translation. Denote by $V_{mn}=\chi_{Y^{(m)}}V\chi_{Y^{(n)}}$. To show such $\xi$ as claimed is invariant, it suffices to prove that
$$\xi_m=\sum_{n=1}^NV_{mn}\xi_n.$$
Notice that $\xi_n=(V^{(n)})^*\xi_1$ and $V^{(m)}V_{mn}(V^{(n)})^*$ is a partial translation in $\IC_u[X]$. Moreover, since $V$ is a full partial translation, we can denote the associated bijection by $t_V: Y\to Y$. Then $V_{mn}=\chi_{Y^{(m)}\cap t_V(Y^{(n)})}V$ and 
$$t_V(Y^{(n)})\cap t_V(Y^{(k)})=\emptyset$$
whenever $n\ne k$. As a result,
$$\sum_{n=1}^NV^{(m)}V_{mn}=\sum_{n=1}^NV^{(m)}\chi_{Y^{(m)}\cap t_V(Y^{(n)})}V=V^{(m)}\chi_{Y^{(m)}}V=\chi_{X^{(m)}}V^{(m)}V.$$
Then 
$$V^{(m)}\sum_{n=1}^NV_{mn}\xi_n=\sum_{n=1}^NV^{(m)}V_{mn}(V^{(n)})^*\xi_1.$$
Since $\xi_1$ is invariant by definition, $V^{(m)}V_{mn}(V^{(n)})^*$ is a partial translation in $\IC_u[X]$ and the ranges of $\{V^{(m)}V_{mn}(V^{(n)})^*\}$ are disjoint for different $n$, we conclude that
$$\sum_{n=1}^NV^{(m)}V_{mn}(V^{(n)})^*\xi_1=\chi_{X^{(m)}}\xi_1.$$
Apply both sides with $(V^{(m)})^*$, we have that
$$\xi_n=(V^{(n)})^*\chi_{X^{(m)}}\xi_1=(V^{(m)})^*V^{(m)}\sum_{n=1}^NV_{mn}\xi_n=\sum_{n=1}^N\chi_{Y^{(m)}}V_{mn}\xi_n=\sum_{n=1}^NV_{mn}\xi_n.$$
This finishes the proof.
\end{proof}

With a similar argument as in Lemma \ref{lem: from X to Y}, there is a direct corollary of Lemma \ref{lem: from Y to X} as follows.

\begin{Cor}\label{cor: from Y to X}
For any representation $\pi_Y: \IC_u[Y] \to \L(\fB)$, the invariant space $(\chi_X\fB)^{\pi_X}$ for the induced representation $\pi_X:\IC_u[X]\to\L(\chi_X\fB)$ is equal to $\chi_X(\fB^{\pi_Y})$. As a result, $\chi_X$ descends to a quotient map $\fB/\fB^{\pi_Y}\to \chi_X\fB/(\chi_X\fB)^{\pi_X}$.
\end{Cor}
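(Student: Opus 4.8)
The plan is to deduce the corollary directly from the characterization of invariant vectors furnished by Lemma \ref{lem: from Y to X}, by establishing the set equality $(\chi_X\fB)^{\pi_X}=\chi_X(\fB^{\pi_Y})$ and then reading off the induced map. I would prove the equality by two containments.

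For the easy inclusion $\chi_X(\fB^{\pi_Y})\subseteq(\chi_X\fB)^{\pi_X}$, I would take $\xi=(\xi_1,\dots,\xi_N)\in\fB^{\pi_Y}$ and invoke Lemma \ref{lem: from Y to X}, which already records that $\xi_1\in(\fB^{(1)})^{\pi_X}$. Since $\chi_X\fB=\fB^{(1)}$ and $\pi_Y(\chi_X)\xi=\xi_1$, this inclusion is immediate.

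The substantive direction is $(\chi_X\fB)^{\pi_X}\subseteq\chi_X(\fB^{\pi_Y})$: given $\eta\in(\chi_X\fB)^{\pi_X}=(\fB^{(1)})^{\pi_X}$, I must lift $\eta$ to a $\pi_Y$-invariant vector of $\fB$ whose $\chi_X$-component is $\eta$. The natural candidate is $\xi:=\sum_{n=1}^{N}\pi_Y\big((V^{(n)})^*\big)\eta$, using the partial translations $V^{(n)}$ and the relations $(V^{(n)})^*V^{(n)}=\chi_{Y^{(n)}}$, $V^{(n)}(V^{(n)})^*=\chi_{X^{(n)}}$ from the proof of Lemma \ref{lem: from Y to X}, together with $V^{(1)}=\chi_X$. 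I would set $\xi_n=\pi_Y\big((V^{(n)})^*\big)\eta$ and check three things: first, $\xi_n\in\fB^{(n)}$ because $\chi_{Y^{(n)}}(V^{(n)})^*=(V^{(n)})^*$; second, $\xi_1=\pi_Y(\chi_X)\eta=\eta$; and third, the compatibility relation $\pi_Y(V^{(n)})\xi_n=\pi_Y\big(V^{(n)}(V^{(n)})^*\big)\eta=\pi_Y(\chi_{X^{(n)}})\eta=\pi_X(\chi_{X^{(n)}})\xi_1$, where the last step uses $X^{(n)}\subseteq X=Y^{(1)}$ so that $\chi_{X^{(n)}}$ acts inside $\chi_X\fB$. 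These are precisely the hypotheses of Lemma \ref{lem: from Y to X}, so $\xi\in\fB^{\pi_Y}$ with $\chi_X\xi=\eta$, giving $\eta\in\chi_X(\fB^{\pi_Y})$.

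Finally, for the ``as a result'' clause I would observe that $\pi_Y(\chi_X)\colon\fB\to\chi_X\fB$ is a contractive surjection carrying $\fB^{\pi_Y}$ into $(\chi_X\fB)^{\pi_X}$ (by the inclusion just proved); composing with the quotient projection onto $\chi_X\fB/(\chi_X\fB)^{\pi_X}$ therefore annihilates $\fB^{\pi_Y}$, so the map factors through a bounded map $\fB/\fB^{\pi_Y}\to\chi_X\fB/(\chi_X\fB)^{\pi_X}$, whose surjectivity is inherited from that of $\chi_X$. The only real obstacle I anticipate is the bookkeeping with the $V^{(n)}$, namely verifying simultaneously that $\xi_n=\pi_Y\big((V^{(n)})^*\big)\eta$ has the correct support $\fB^{(n)}$ and satisfies the compatibility relation, and justifying the harmless replacement of $\pi_Y(\chi_{X^{(n)}})$ by $\pi_X(\chi_{X^{(n)}})$ via $X^{(n)}\subseteq X$; everything else is a direct transcription of Lemma \ref{lem: from Y to X}.
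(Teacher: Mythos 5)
Your proposal is correct and follows essentially the same route as the paper: the paper states this result as a direct corollary of Lemma \ref{lem: from Y to X} (argued ``similarly'' to Lemma \ref{lem: from X to Y}), and your two containments, with the lift $\xi_n=\pi_Y\bigl((V^{(n)})^*\bigr)\eta$ satisfying the lemma's compatibility conditions, are exactly the intended filling-in, matching the relations $(V^{(n)})^*V^{(n)}=\chi_{Y^{(n)}}$, $V^{(n)}(V^{(n)})^*=\chi_{X^{(n)}}$ already used inside the lemma's proof.
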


Now, we are ready to prove Theorem \ref{thm: coarse invariance}.

\begin{proof}[Proof of Theorem \ref{thm: coarse invariance}]
For the ($\Rightarrow$) part, assume that $X$ has geometric property ($T_{\sB}$). For any $\fB\in\sB$ and any representation $\pi_Y: \IC_u[Y]\to\L(\fB)$, it induces a representation $\pi_X:\IC_u[X]\to\L(\chi_X\fB)$ as above. Then $\pi_X$ has a spectral gap. Assume for a contradiction that $\pi_Y$ has no spectral gap, i.e., there exists a sequence $[\xi_n]\in\fB/\fB^{\pi_Y}$ such that for any full partial translation $V\in \IC_u[Y]$ with $\supp(V)\subseteq E^Y_0$, one has that $\|\pi(V)[\xi_n]-[\xi_n]\|\leq \frac 1n$. By Corollary \ref{cor: from Y to X}, this sequence determines a sequence of vectors in $\fB_X/\fB_X^{\pi_X}$ via $\chi_X$. Since any full partial translation in $\IC_u[X]$ extends to a full partial translation in $\IC_u[Y]$ by adding the identity operator on $X^c$, it is direct to see that the existence of the sequence $\{\chi_X([\xi_n])\}$ leads to a contradiction that $\pi_X$ has a spectral gap. This finishes the proof of the ($\Rightarrow$) part.

For the ($\Leftarrow$) part, assume that $Y$ has geometric property ($T_{\sB}$). Let $N\in\IN$ be such that
$$\iota: X\xrightarrow{f}Y\xrightarrow{g}X\times N$$
the first section inclusion $\iota: X\to X\times N$ given by $x\mapsto (x,1)$ factors through $f: X\to Y$ by injective coarse equivalence. As we have proved the ($\Rightarrow$) part, it is direct that $X\times N$ have geometric property ($T_{\sB}$). For any representation $\pi_X:\IC_u[X]\to\L(\fB)$, we shall consider the canonical representation
$$\pi_X^N: M_N(\IC_u[X])\to\L(\fB^N).$$
Assume for a contradiction that $\pi_X$ has no spectral gap, i.e., there exists a sequence $[\xi_n]\in\fB/\fB^{\pi_X}$ such that for any full partial translation $V\in \IC_u[X]$ with $\supp(V)\subseteq E^X_0$, one has that $\|\pi(V)[\xi_n]-[\xi_n]\|\leq \frac 1n$. As a corollary of Lemma \ref{lem: from X to Y}, the vector $[\eta_n]=[(\xi_n,\cdots,\xi_n)]$ determines a uniformly bounded vectors in $\fB^N_X/(\fB^N_X)^{\pi_X^N}$. For any full partial translation in $M_N(\IC_u[X])$, we can write it as $V=(V_{ij})_{i,j=1,\cdots,N}$, where each $V_{ij}$ is a partial translation. With a similar argument as Lemma \ref{lem: from Y to X}, one can similarly prove that the sequence $\{\eta_n\}$ satisfies that $\|V[\eta_n]-[\eta_n]\|$ tends to $0$ as $n$ tends to infinity since $\|\sum_{j}V_{ij}\xi_n-\xi_n\|\leq\sum_{j}\|V_{ij}\xi_n-\Phi(V_{ij})\xi_n\|$ tends to $0$ as $n$ tends to infinity. This leads to a contradiction with $X\times N$ having geometric property ($T_{\sB}$). We then finish the proof.
\end{proof}

\section{Open questions}\label{sec: Open questions}

In this last section, we list several interesting open problems.

\begin{Que}
Can the geometric property $(T_{\sL^p})$ provide a counterexample for the maximal $L^p$-coarse Baum Connes conjecture for $p\in(1,\infty)$ ?
\end{Que}

Let $X$ be a metric space with bounded geometry, and $Z$ a countable dense subset of $X$. For $p\in[1,\infty)$, recall that an operator $T\in\B(\ell^p(Z) \ox \ell^p(\IN))$ has finite propagation if there exists $R>0$ such that $\chi_V T\chi_U=0$ whenever $d(U,V)\geq R$. Here we view $\chi_{V}, \chi_U$ as multiplication operators on $\ell^p(Z) \ox \ell^p(\IN)$. The operator $T$ is called locally compact if $\chi_KT$ and $T\chi_K$ are in $\K(\ell^p(Z) \ox \ell^p(\IN))$ for any bounded subset $K$. The algebraic $L^p$-Roe algebra, denoted by $\IC^p[X]$, is defined to be the set of all bounded linear operators on $\ell^p(Z)\ox\ell^p(\IN)$ which is locally compact and has finite propagation. Its completion in $\B(\ell^p(Z) \ox \ell^p(\IN))$ is denoted by $C^p(X)$, which is called the \emph{(reduced) $L^p$-Roe algebra}. One is referred to \cite{ZhangZhou2021} for details of the $L^p$-Roe algebra and the $L^p$-coarse Baum-Connes conjecture.

Analogue to the $L^2$ case, we have the following two definitions for the maximal $L^p$-coarse Baum-Connes conjecture. The first one is to consider the completion of $\IC^p[X]$ under the norm
\[\|T\|_{\sL^p,\max}=\sup\{\|\pi(T)\|\mid \pi:\IC^p[X]\to\L(L^p(\mu))\text{ is a representation}, \ L^p(\mu)\in\sL^p\}\]
which is similar to Definition \ref{def: maximal norm for family of representations}. The other one is to consider the norm
\[\|T\|_{\max,\ell^p}=\sup\{\|\pi(T)\|\mid \pi:\IC^p[X]\to\L(\ell^p)\text{ is a representation}\}.\]
The completion of $\IC^p[X]$ under these two norms are denoted by $C^p_{\max,\sL^p}(X)$ and $C^p_{\max,\ell^p}(X)$, respectively.
By Yu's localization technique, one can define the localization algebra via the maximal versions of the $L^p$-Roe algebras, denoted by $C^p_{L,\max,\sL^p}(X)$ and $C^p_{L,\max,\ell^p}(X)$. The evaluation map from the localization algebra of the Rips complex to the Roe algebra of the Rips complex induces a canonical assembly map as the scale of the Rips complex tends to infinity. The maximal $L^p$-coarse Baum-Connes conjecture can be defined by claiming the assembly map is an isomorphism.

Notice that $L^p(0,1)$ is not isomorphic to $\ell^p$ and it is unknown to us whether the compact operator algebra $\K(L^p(\mu))$ is isomorphic to that of $\K(\ell^p)$. Thus, it is natural to ask whether the two completions above coincide, or have the same $K$-theory? And which one should be proper to define the maximal $L^p$-coarse Baum-Connes conjecture? By Yu's cutting and pasting argument \cite{Yu1997}, can one prove that the $K$-theory of these two maximal $L^p$-versions of localization algebras is isomorphic to the $K$-homology group? As we proved in Section \ref{sec: Kazhdan Idempotent}, when $p\in(1,\infty)$, one can find Kazhdan projections in both $C^p_{\max,\sL^p}(X)$ and $C^p_{\max,\ell^p}(X)$ for spaces with geometric property ($T_{\sL^p}$). It is proved in \cite{ChungNowak2023} that expanders are counterexamples to the $L^p$-coarse Baum-Connes conjecture. Then, can the existence of Kazhdan projections provide counterexamples to the maximal version of the $L^p$-coarse Baum-Connes conjecture, as in \cite{WilYu2012b}?

\begin{Que}
Can one prove the $K$-amenability for the $L^p$-Roe algebras for spaces that admit a coarse embedding into an $\ell^q$-space?
\end{Que}

Building upon the discussion in the preceding question, we may further investigate the $L^p$-analogue of K-amenability. The $K$-amenability of a Roe algebra implies that the maximal and reduced completions of the uniform Roe algebra share identical K-theory. In \cite{SpaWil2013}, J. Špakula and R. Willett proved that metric spaces with bounded geometry admitting coarse embeddings into Hilbert spaces are $K$-amenable. Their methodology relies crucially on the geometric Dirac-dual-Dirac method pioneered by Yu in \cite{Yu2000}.

Recently, J. Wang, Z. Xie, G. Yu, and B. Zhu proved in \cite{WXYZ2024} that if a bounded geometry metric space admits a coarse embedding into an $\ell^p$-space, then the $L^q$-coarse Baum-Connes conjecture holds for such spaces. This proof similarly employs a Dirac-dual-Dirac-type approach. This naturally raises the following question: For bounded geometry metric spaces that admit coarse embeddings into $\ell^p$-spaces, do they satisfy the $L^q$-version of $K$-amenability, i.e.,
$$K_*(C^p(X))\cong K_*(C^p_{\max,\sL^p}(X))\cong K_*(C^p_{\max,\ell^p}(X)) ?$$

\begin{Que}
Is FCE$_{\fB}$ (or {FCE$_{\fB}$-by-FCE$_{\fB}$}) compatible with geometric property ($T_\fB$)?
\end{Que}

This question is a Banach analogue of Theorem \ref{thm: FCE-by-FCE violates (T)}. .We say a sequence of finite groups has FCE$_\fB$ if it admits a fibred coarse embedding into $\fB$. If $\fB$ is an $L^p$-space and the sequence of finite groups forms a box space of a residually finite group, then the limit group admits a proper isometric action on some $L^p$-space. This proof employs the ultraproduct construction for $L^p$-spaces, and we believe it can be extended to general limit groups. However, when attempting to replicate Theorem \ref{thm: FCE-by-FCE violates (T)}, one should notice that our proof fundamentally relies on the Delorme-Guichardet theorem, i.e., the fixed-point characterization of Property (T). Regrettably, for general Banach spaces, Property ($T_{\fB}$) lacks an analogous fixed-point characterization. This raises a question: for $\fB$ an $L^p$-space (even an arbitrary uniformly convex Banach space), are \emph{FCE$_{\fB}$} and property ($T_{\fB}$) mutually compatible?

\bibliographystyle{alpha}
\bibliography{ref}
\end{document}